\newcommand{\CC}{{\mathbb C}}
\newcommand{\cB}{{\mathscr B}}
\newcommand{\cD}{{\mathscr D}}
\newcommand{\cO}{{\mathscr O}}
\newcommand{\cP}{{\mathscr P}}  
\newcommand{\cR}{{\mathscr R}}
\newcommand{\cS}{{\mathscr S}}
\newcommand{\cV}{{\mathscr V}} 
\newcommand{\cX}{{\mathscr X}}
\newcommand{\es}{\emptyset}
\newcommand{\FF}{{\mathbb F}}
\newcommand{\gh}{\mathfrak{h}}
\newcommand{\gU}{\mathfrak{U}}
\newcommand{\gp}{\mathfrak{p}}
\newcommand{\HH}{{\mathbb H}}
\newcommand{\hra}{\hookrightarrow}
\newcommand{\KK}{{\mathbb K}}
\newcommand{\la}{\langle}
\newcommand{\lra}{\longrightarrow}
\newcommand{\n}{\noindent}
\newcommand{\NN}{{\mathbb N}}
\newcommand{\ov}{\overline}
\newcommand{\PP}{{\mathbb P}}
\newcommand{\QQ}{{\mathbb Q}}
\newcommand{\ra}{\rangle}
\newcommand{\RR}{{\mathbb R}}
\newcommand{\wh}{\widehat}
\newcommand{\wt}{\widetilde}
\newcommand{\ZZ}{{\mathbb Z}}
\newcommand{\Gr}{\mathrm{Gr}}
\theoremstyle{plain}
\newtheorem{thm}{Theorem}[section]
\newtheorem{clm}[thm]{Claim}
\newtheorem{crl}[thm]{Corollary}
\newtheorem{lmm}[thm]{Lemma}
\newtheorem{prp}[thm]{Proposition}
\newtheorem{prp-dfn}[thm]{Proposition-Definition}
\theoremstyle{definition}
\newtheorem{dfn}[thm]{Definition}
\newtheorem{ntn}[thm]{Notation}
\newtheorem{kob}[thm]{Key observation}
\theoremstyle{remark}
\newtheorem{expl}[thm]{Example}
\newtheorem*{qst*}{Main Question}
\newtheorem{rmk}[thm]{Remark}
\DeclareMathOperator{\Ann}{Ann}
\DeclareMathOperator{\charact}{char}
\DeclareMathOperator{\Det}{Det}
\DeclareMathOperator{\divisore}{div}
\DeclareMathOperator{\End}{End}
\DeclareMathOperator{\Id}{Id}
\DeclareMathOperator{\im}{Im}
\DeclareMathOperator{\KS}{KS}
\DeclareMathOperator{\Nm}{Nm}
\DeclareMathOperator{\PD}{PD}
\DeclareMathOperator{\Pf}{Pf}
\DeclareMathOperator{\re}{Re}
\DeclareMathOperator{\rk}{rk}
\DeclareMathOperator{\SL}{SL}
\DeclareMathOperator{\Span}{span}
\DeclareMathOperator{\Sym}{Sym}
\DeclareMathOperator{\Tors}{Tors}
\DeclareMathOperator{\Tr}{Tr}
\DeclareMathOperator{\vol}{vol}
\newcommand{\cit}[1]{{\rm \textbf{#1}}}
\newcommand{\Ref}[2]{\cit{%
\ifthenelse{\equal{#1}{thm}}{Theorem}{}%
\ifthenelse{\equal{#1}{ass}}{Assumption}{}%
%\ifthenelse{\equal{#1}{asswn}}{$W_n$-Assumption}{}%
%\ifthenelse{\equal{#1}{asswnplus}}{$W^{+}_n$-Assumption}{}%
\ifthenelse{\equal{#1}{chp}}{Chapter}{}%
\ifthenelse{\equal{#1}{prp}}{Proposition}{}%
\ifthenelse{\equal{#1}{lmm}}{Lemma}{}%
\ifthenelse{\equal{#1}{cnj}}{Conjecture}{}%
\ifthenelse{\equal{#1}{crl}}{Corollary}{}%
\ifthenelse{\equal{#1}{dfn}}{Definition}{}%
\ifthenelse{\equal{#1}{expl}}{Example}{}%
\ifthenelse{\equal{#1}{hyp}}{Hypothesis}{}%
\ifthenelse{\equal{#1}{rmk}}{Remark}{}%
\ifthenelse{\equal{#1}{clm}}{Claim}{}%
\ifthenelse{\equal{#1}{exe}}{Exercise}{}%
\ifthenelse{\equal{#1}{qst}}{Question}{}%
\ifthenelse{\equal{#1}{sec}}{Section}{}%
\ifthenelse{\equal{#1}{subsec}}{Subsection}{}%
\ifthenelse{\equal{#1}{subsubsec}}{Subsubsection}{}%
\ifthenelse{\equal{#1}{univ}}{Universal Property}{}%
\ifthenelse{\equal{#1}{trm}}{Terminology}{}%
\ifthenelse{\equal{#1}{tbl}}{Table}{}%
\ifthenelse{\equal{#1}{ntn}}{Notation}{}%
\ifthenelse{\equal{#1}{kob}}{Key observation}{}%
\  \ref{#1:#2}%
}}
\begin{document}
 \title[Compact tori associated to HK manifolds of Kummer type]{Compact tori  associated to hyperk\"ahler manifolds  of Kummer type}
 \author{Kieran G. O'Grady}
\dedicatory{Dedicato alla piccola Mia}
\date{\today}
\thanks{Partially supported by PRIN 2015, Coll\`ege de France, ENS (PSL) and Universit\'e Paris VII}
  \maketitle
%\tableofcontents
\bibliographystyle{amsalpha}
\section{Introduction}\label{sec:intro}
\subsection{Background and motivation}\label{subsec:retromotivi}
\setcounter{equation}{0}
Let $X$ be a  hyperk\"ahler manifold, i.e.~(for us) a simply connected compact K\"ahler manifold carrying a holomorphic symplectic form whose cohomology class spans $H^{2,0}(X)$. 
The Kuga-Satake construction~\cite{kuga,deligne-k3} associates to $X$ a compact complex torus $\KS(X)$, and an inclusion of Hodge structures 
$H^2(X)\subset H^1(\KS(X))\otimes H^1(\KS(X)^{\vee})$.
The definition of $\KS(X)$ is transcendental: one constructs a weight $1$ H.S.~out of the weight $2$ H.S.~on $H^2(X)$. If $X$ is projective with ample line bundle $L$, the Kuga-Satake construction applied to the primitive cohomology $H^2(X)_{pr}$ produces an abelian variety $\KS(X, L)$  and  an injective homomorphism of H.S.'s 
\begin{equation}\label{dueinuno}
H^2(X)_{pr}\subset H^1(\KS(X,L))\otimes H^1(\KS(X,L)).
\end{equation}
One might wonder whether it is possible to relate the geometry of $X$ and that of $\KS(X)$, or of $\KS(X,L)$. A famous instance of such a relation is provided by Deligne's proof  of the Weil conjectures for (projective) $K3$ surfaces starting from the validity of the Weil conjectures for abelian varieties~\cite{deligne-k3}. In this respect we notice that if $X$ is projective,  the Hodge conjecture predicts the existence of a \emph{Kuga-Satake algebraic cycle} on $X\times \KS(X,L)\times\KS(X,L)$  realizing  the homomorphism of H.S.'s in~\eqref{dueinuno}. 

There are very few families of hyperk\"ahler manifolds for which one has a geometric description of the corresponding Kuga-Satake varieties and a proof of existence of a Kuga-Satake algebraic cycle: Kummer surfaces~\cite{morrison-ks} and $K3$ surfaces obtained as minimal desingularization of the double cover of a plane ramified over 6 lines~\cite{paranjape}. 

The present paper grew out of the desire to understand the Kuga-Satake torus associated to hyperk\"ahler manifolds of Kummer type, i.e.~deformations of the $2n$-dimensional generalized Kummer manifold associated to an abelian surface (for $n\ge 2$).  

Among known examples of 
hyperk\"ahler manifolds, those of Kummer type are distinguished by the fact that they have non zero odd cohomology. Let  $X$ be such a manifold.  Then $b_3(X)=8$, and hence there is an associated 4 dimensional intermediate Jacobian $J^3(X)$. Most of our paper is actually concerned with $J^3(X)$. Our starting point is the proof that there is an analogue of the key cohomological property of the Kuga-Satake torus (see~\eqref{dueinuno}) valid with $J^3(X)$ replacing the Kuga-Satake torus. From this it follows that if $X$ is projective with polarization $L$, then $\KS(X,L)$ is isogenous to $J^3(X)^4$. Thus $J^3(X)$ is a smaller dimensional version of the Kuga-Satake torus.
Moreover, it is easier to relate geometrically  $X$ to   $J^3(X)$ than it is to relate it to $\KS(X)$ (or $\KS(X,L)$),  e.g.~via the Abel-Jacobi map. 

We will give an \emph{explicit} recipe that produces the weight $1$ H.S.~on $J^3(X)$ in terms of the weight $2$ H.S.~on $H^2(X)$. 

One fact that we discovered is that if $X$ is projective, then $J^3(X)$ is an abelian fourfold of Weil type. More precisely, as $(X,L)$ varies in a complete family of polarized hyperk\"ahlers of Kummer type with fixed discrete invariants, the corresponding polarized   intermediate Jacobians $J^3(X)$ sweep out a complete family of polarized 
abelian fourfolds of Weil type with fixed discrete invariants. Notice that the number of moduli for both families is equal to $4$. This result suggests that we will be able to describe \emph{explicitly} locally complete families of projective hyperk\"ahlers of Kummer type starting from the locally complete families of abelian fourfolds of Weil type which are known (see~\cite{schoen-hc}). In this respect, we notice that several  locally complete families of projective hyperk\"ahlers have been explicitly described, but the varieties in those familes are all of $K3^{[n]}$ type (deformations of the Hilbert scheme of length $n$ subschemes of a $K3$ surface). 

There is a series of papers related to the present work. The first one is~\cite{bert-ks}. Following the proof of Theorem 9.2 of that paper, one shows that the Kuga Satake $\KS(X,L)$ of a polarized HK of Kummer type $(X,L)$ is the fourth power of an abelian fourfold of Weil type. Since $\KS(X,L)$ is isogenous to $J^3(X)^4$, it follows that $J^3(X)$ is of Weil type. However we would like to stress that we have precise results on the \emph{integral} Hodge structure on $J^3(X)$, not only up to isogeny. Another paper related to this work is~\cite{lombardo-ks}. Lastly, the recent preprint~\cite{mar-on-weil} is strictly related to our work.

\subsection{Main results}\label{subsec:risulprinc}
\setcounter{equation}{0}
Let $X$ be a hyperk\"ahler  manifold  of dimension at least $4$, deformation equivalent to a generalized Kummer  variety (following established terminology, we say that $X$ is of \emph{Kummer type}). Then $b^3(X)=8$, see~\cite{gothesis}, and of course $H^{3,0}(X)=0$. Thus
\begin{equation}
J^{3}(X)=H^3(X)/(H^{2,1}(X)+ H^3(X;\ZZ))
\end{equation}
 is a  $4$ dimensional  compact complex torus. If $X$ is projective, and $L$ is an ample line bundle on $X$, then $J^{3}(X)$ is an abelian $4$-fold (all of $H^3(X)$ is  primitive because $H^1(X)=0$), and we let  $\Theta_L$ be the polarization defined by $L$.

Recall that, given a HK manifold $X$, there is a class $q^{\vee}_X\in H^{2,2}_{\QQ}(X)$ which corresponds to the Beauville-Bogomolov-Fujiki quadratic form of $X$   (see~\Ref{subsec}{defgenkum} for details). Now assume that  $X$ is of Kummer type, of dimension $2n$.  Then  $\ov{q}_X:=2(n+1)q^{\vee}_X$ is an integral class  (see~\Ref{dfn}{qubarra}). 
Let  
\begin{equation}\label{vanitoso}
\phi\colon \bigwedge^2 H^3(X) \lra H^2(X)^{\vee}
\end{equation}
be the composition of the map 
\begin{equation*}
\begin{matrix}
\bigwedge^2 H^3(X) & \lra & H^{4n-2}(X)\\
%.
\gamma\wedge\gamma' & \mapsto & \gamma\smile \gamma'\smile \ov{q}_X^{n-2}
\end{matrix}
\end{equation*}
and the map $H^{4n-2}(X)\to H^2(X)^{\vee}$ defined by cup product followed by integration. 
\begin{thm}\label{thm:primoteor}
Let $X$ be a HK manifold of Kummer type, of dimension $2n$. 
\begin{enumerate}
\item
The map $\phi$ is surjective, and hence its transpose defines an inclusion of integral Hodge structures
\begin{equation}
H^2(X)\subset \bigwedge^2 H^1(J^3(X)).
\end{equation}
\item
The set
\begin{equation}
{\bf Q}(X):=\{[\gamma]\in \PP(H^3(X)) \mid \phi(\gamma\wedge H^3(X)) \not= H^2(X)^{\vee}\}
\end{equation}
is a smooth quadric hypersurface in $\PP(H^3(X))$. 
\item
The projectivization of $H^{2,1}(X)$ is a maximal linear subspace of ${\bf Q}(X)$.
\end{enumerate}
\end{thm}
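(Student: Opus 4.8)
The plan is to reduce all three assertions to $\mathrm{Spin}_7$-representation theory by identifying $H^3(X)$ as a spinor module. Since the Beauville--Bogomolov--Fujiki form $q$ on $H^2(X)$ has signature $(3,4)$, one has $\mathrm{Spin}(H^2(X),q)\otimes\CC\cong\mathrm{Spin}_7(\CC)$, and the crucial first step is to show — from the description of the monodromy action on $H^*(X)$, equivalently from the Kuga--Satake realisation of $H^3(X)$ inside a Clifford algebra — that $H^3(X)\otimes\CC$ is the irreducible $8$-dimensional spin representation $S$ of $\mathrm{Spin}_7(\CC)$, with $H^2(X)\otimes\CC$ the standard representation $V=V_7$. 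As $\phi$ is assembled from cup product, the monodromy-invariant Hodge class $\ov q_X$, and integration, it is at once $\mathrm{Spin}_7$-equivariant and (up to the evident Tate twist, with target $H^2(X)^\vee(-4)$) a morphism of Hodge structures. The proof then rests on standard facts about $S$: (i) $\bigwedge^2 S\cong\bigwedge^2 V\oplus V=\mathfrak{so}(V)\oplus V$ as $\mathrm{Spin}_7$-modules, while $\Sym^2 S$ contains no copy of $V$; (ii) $S$ carries an invariant nondegenerate symmetric form $\beta$ (so $\mathrm{Spin}_7\subset\mathrm{SO}(S)=\mathrm{SO}_8$), Clifford multiplication $c\colon V\otimes S\to S$ satisfies $c(v,c(v,s))=q(v)\,s$, and $c(v,-)$ is $\beta$-skew for every $v$; (iii) the highest weight line of $S$ is $\beta$-isotropic (because $\beta$ pairs opposite weight spaces), so the $6$-dimensional closed $\mathrm{Spin}_7$-orbit in $\PP(S)$ is an irreducible closed subvariety of the irreducible $6$-dimensional quadric $Q:=\{[\gamma]\mid\beta(\gamma,\gamma)=0\}$ and hence equals it; moreover the stabiliser in $\mathrm{Spin}_7$ of a non-isotropic vector $\gamma$ of $S$ is $G_2$, under which $S=\CC\gamma\oplus\gamma^{\perp_\beta}$ with $\gamma^{\perp_\beta}\cong V$.

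For part (1): by (i) and the self-duality $V\cong V^\vee$ (via $q$), Schur's lemma forces $\phi$ to annihilate the summand $\mathfrak{so}(V)$ and to factor (up to scalar) through the projection $\bigwedge^2 S\to V\cong H^2(X)^\vee\otimes\CC$; hence $\phi$ is surjective precisely when it is nonzero. That $\phi\ne0$ follows from hard Lefschetz: $\ov q_X^{\,n-2}$ lies in the subring of $H^*(X)$ generated by $H^2(X)$ (indeed $\ov q_X$ is, up to a constant, the class ``dual'' to $q$, a quadratic expression in $H^2(X)$-classes), and for a Kähler class $\omega$ the pairing $(\gamma,\gamma')\mapsto\int_X\omega\smile\gamma\smile\gamma'\smile\ov q_X^{\,n-2}$ on $H^3(X)$ is nonzero (this can also be checked directly on a generalized Kummer variety). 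The transpose of $\phi$, combined with the canonical self-duality of $H^3(X)$ afforded by the invariant integral form $\beta$ (itself a Hodge class by the theorem of the fixed part), then yields the asserted inclusion of integral Hodge structures $H^2(X)\hookrightarrow\bigwedge^2 H^1(J^3(X))$, the Tate twists matching up.

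For part (2): for $\gamma\in S$ write $\phi(\gamma\wedge-)\colon S\to V$ as $s\mapsto\psi(\gamma,s)$ with $\psi\colon\bigwedge^2 S\to V$ the canonical equivariant map; the space of $\mathrm{Spin}_7$-invariant trilinear forms on $S\times S\times V$ is one-dimensional (again by (i), since $V\not\subset\Sym^2 S$), so after rescaling $\langle\psi(\gamma,s),v\rangle_q=\beta\bigl(c(v,\gamma),s\bigr)$. Thus $\ker\phi(\gamma\wedge-)=c(V,\gamma)^{\perp_\beta}$, and $[\gamma]\in{\bf Q}(X)$ precisely when $\dim c(V,\gamma)\le6$, i.e.\ when $c(v,\gamma)=0$ for some $v\ne0$. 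If $\beta(\gamma,\gamma)\ne0$ then $\mathrm{Stab}_{\mathrm{Spin}_7}(\gamma)=G_2$, and $G_2$-equivariance of $v\mapsto c(v,\gamma)$ together with faithfulness of $S$ as a Clifford module forces $c(V,\gamma)=\gamma^{\perp_\beta}$, whence $\ker\phi(\gamma\wedge-)=\CC\gamma$ and $[\gamma]\notin{\bf Q}(X)$; so ${\bf Q}(X)\subseteq Q$. On the other hand ${\bf Q}(X)$ is Zariski closed, $\mathrm{Spin}_7$-invariant, and nonempty: in the model $S=\bigwedge^\bullet L$ attached to a decomposition $V=L\oplus L'\oplus\CC h$ into dual maximal isotropics and a line, the null spinor $\gamma=1\in\bigwedge^0 L$ is annihilated by $L'$, so $\dim c(V,\gamma)\le4$ and $[\gamma]\in{\bf Q}(X)$. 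Since $Q$ is a single $\mathrm{Spin}_7$-orbit, ${\bf Q}(X)=Q$, a smooth quadric hypersurface in $\PP(H^3(X))$ ($\beta$ being nondegenerate).

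For part (3): since $\phi$ is a morphism of Hodge structures and $H^{3,0}(X)=0$, for $\gamma\in H^{2,1}(X)=F^2H^3(X)_\CC$ we have $\gamma\wedge H^3(X)_\CC\subseteq F^3\bigl(\bigwedge^2 H^3(X)\bigr)$ — the part $\gamma\wedge H^{2,1}(X)$ has type $(4,2)$, hence lies in $F^4$, and the part $\gamma\wedge H^{1,2}(X)$ has type $(3,3)$, hence in $F^3$ — so $\phi\bigl(\gamma\wedge H^3(X)\bigr)\subseteq F^3\bigl(H^2(X)^\vee(-4)\bigr)$, which has dimension $1+5=6<7$; therefore $\phi(\gamma\wedge H^3(X))\ne H^2(X)^\vee$ and $\PP(H^{2,1}(X))\subseteq{\bf Q}(X)$. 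By part (2), the maximal linear subspaces of ${\bf Q}(X)\subset\PP^7$ are $3$-planes, and $\dim H^{2,1}(X)=4$ (as $b^3(X)=8$ and $H^{3,0}(X)=0$), so the $3$-plane $\PP(H^{2,1}(X))$ is maximal. The main obstacle is the structural input of the first paragraph — establishing that the monodromy representation on $H^3(X)$ is genuinely spinorial, i.e.\ does not descend to $\mathrm{SO}(H^2(X))$; granting it, the only remaining delicate point is the claim in part (2) that ${\bf Q}(X)$ exhausts the quadric $Q$ rather than being a proper $\mathrm{Spin}_7$-invariant subvariety, for which the orbit structure in (iii) is essential, the verifications that $\phi\ne0$ and that the Tate twists in (1) and (3) are as stated being routine.
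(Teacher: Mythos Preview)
Your approach via $\mathrm{Spin}_7$ representation theory is genuinely different from the paper's, and conceptually attractive, but it has a real gap that you yourself flag at the end. The paper does \emph{not} assume that $H^3(X)$ carries the spin representation of $\mathrm{Spin}(H^2(X),q)$; instead it proves the theorem by explicit computation on generalized Kummers. Concretely, the paper uses the decomposition $H^3(K_n(A);\ZZ)\cong H^3(A;\ZZ)\oplus H^1(A;\ZZ)$ together with the monodromy of the family of compact complex tori (giving $\SL_4(\ZZ)$-equivariance, not $\mathrm{Spin}_7$) to show that $\phi$ is determined by three integers $\vartheta_1,\vartheta_2,\vartheta_3$. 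It then computes $\vartheta_1$ and $\vartheta_2$ explicitly via the Lehn--Sorger description of the ring $H^*(A^{[n+1]})$, finds them nonzero, deduces surjectivity of $\phi$ (and hence $\vartheta_3\ne 0$), and reads off Items~(2) and~(3) from the resulting explicit shape of $\phi$. The spinor interpretation you invoke is, in the paper, a \emph{consequence} of these computations (it is the content of the second main theorem, proved in the following section), not an input to the first.

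Your ``Kuga--Satake realisation of $H^3(X)$ inside a Clifford algebra'' is likewise circular: Kuga--Satake is built from $H^2$, and the statement that $H^3(X)$ sits inside it as the spin module is exactly what the paper establishes (up to isogeny) \emph{after} proving the theorem you are trying to prove. There is also a smaller gap: your appeal to hard Lefschetz to show $\phi\ne 0$ does not work as stated, since hard Lefschetz controls $\smile\omega^{2n-3}$, not $\smile\omega\smile\ov q_X^{\,n-2}$; the paper instead computes the pairing directly on $K_n(A)$. If you could supply an independent proof that the monodromy action on $H^3(X)$ is the spin representation (e.g.\ by citing suitable results of Markman on the monodromy of Kummer type, or via the Looijenga--Lunts--Verbitsky Lie algebra), your argument would become a clean alternative that avoids the laborious Lehn--Sorger computations; as written, however, the key structural input is asserted rather than established.
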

If $X$ is a HK manifold of Kummer type, let ${\bf Q}^{+}(X)$ be  the irreducible component of the variety parametrizing maximal dimensional linear subspaces of ${\bf Q}(X)$ containing $\PP(H^{2,1}(X))$ (this definition makes sense by~\Ref{thm}{primoteor}). We recall that  
${\bf Q}^{+}(X)\subset \PP(S^{+}(X))$, where  $S^{+}(X)$ is one of the two spinor representations of $O({\bf Q}(X))$.  Recall also that $S^{+}(X)$ is $8$-dimensional. Since $H^3(X)$ has an integral structure, so does $S^{+}(X)$. There is a unimodular integral quadratic form ${\bf q}^{+}_X$ on 
$S^{+}(X)$ (unique up to multiplication by $\pm 1$) such that ${\bf Q}^{+}(X)$ is the set of zeroes of ${\bf q}^{+}_X$. 
Moreover, if $\pi\colon\cX\to B$ is a family of HK manifolds of Kummer type, the flat connection on $R^3\pi_{*}\ZZ$ induces a flat connection on the fibration $S^{+}(\pi)\to B$ with fiber $S^{+}(\pi^{-1}(b))$ over $b$.
Next, we make following
\begin{kob}\label{kob:chiave}
Let $\phi$ be the map in~\eqref{vanitoso}. Then $\phi(\bigwedge^2 H^{2,1}(X))$  is equal the one dimensional subspace $\Ann F^1 H^2(X)$. 
\end{kob}
In fact  $\phi(\bigwedge^2 H^{2,1}(X))$  is contained in $\Ann F^1 H^2(X)$ because $\phi$ is a morphism of Hodge structures, and   equality follows from   surjectivity of $\phi$. Notice that Item~(3) of~\Ref{thm}{primoteor} follows from the~\Ref{kob}{chiave}.

The result below is motivated by the~\Ref{kob}{chiave}.
\begin{thm}\label{thm:secondoteor}
Let $X$ be a HK manifold of Kummer type, of dimension $2n$. There exists a codimension $1$ subspace $T^{+}(X)\subset S^{+}(X)$ defined over $\ZZ$ such that the following hold:
\begin{enumerate}
\item
Given a $4$-dimensional vector subspace  $\Gamma\subset H^3(X)$, the subspace $\phi(\bigwedge^2 \Gamma)$ has dimension $1$ if and only if 
$\PP(\Gamma)$ is a linear subspace of ${\bf Q}(X)$ parametrized by a point  $[\sigma]\in\PP(T^{+}(X))\cap {\bf Q}^{+}(X)$. If this is the case, then $\phi(\bigwedge^2 \Gamma)=[\sigma]$.
\item
There exist an isomorphism $\iota\colon H^{2}(X)^{\vee}\overset{\sim}{\lra} S^{+}(X)$ defined over $\QQ$,  invariant  up to sign under monodromy, 
and a choice of \lq\lq  sign\rq\rq\ for ${\bf q}^{+}_X$, such 
 that the restriction  of ${\bf q}^{+}_X$ to $H^{2}(X)^{\vee}\subset S^{+}(X)$ is equal to  the dual  of the BBF quadratic form. 
\end{enumerate}
\end{thm}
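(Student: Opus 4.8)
\medskip
\noindent\textbf{Plan of proof.}
The statement is essentially triality for $\mathrm{Spin}(8)$, the only external input being the position of $H^{2}(X)$ inside $\bigwedge^{2}H^{3}(X)\cong\mathfrak{so}(H^{3}(X))$, which is governed by the monodromy. Write $V:=H^{3}(X)$ and let $q_{V}$ be the integral primitive quadratic form cutting out ${\bf Q}(X)$ (which exists by \Ref{thm}{primoteor}(2), uniquely up to scalar); after rescaling we may take the inclusion of \Ref{thm}{primoteor}(1) to be $\,{}^{t}\phi$ composed with the identification $(\bigwedge^{2}V)^{\vee}\cong\bigwedge^{2}V$ coming from $q_{V}$, so that its image $W:={}^{t}\phi(H^{2}(X))\subset\bigwedge^{2}V=\mathfrak{so}(V)$ satisfies $\ker\phi=W^{\perp}$ for the Killing form. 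If $\Gamma\subset V$ is $4$-dimensional with $\dim\phi(\bigwedge^{2}\Gamma)\le 1$, the rank estimate from the proof of \Ref{thm}{primoteor} forces $[\gamma]\in{\bf Q}(X)$ for all $0\neq\gamma\in\Gamma$, so $\PP(\Gamma)$ is a $\PP^{3}\subset{\bf Q}(X)$, i.e. $\Gamma$ is maximal isotropic; for such $\Gamma$, writing $\bigwedge^{2}\Gamma\subset\mathfrak{so}(V)$ for the abelian nilradical of the parabolic $\mathfrak p_{\Gamma}$ stabilising $\Gamma$, one has at once
\[
\dim\phi\big({\textstyle\bigwedge^{2}}\Gamma\big)=6-\dim\big(W^{\perp}\cap{\textstyle\bigwedge^{2}}\Gamma\big).
\]
Recall also the triality data on $V$: $\mathrm{Spin}(V)=\mathrm{Spin}(8)$, half-spin representations $S^{\pm}(X)$ with invariant quadratic forms $q^{\pm}$ (up to sign, $q^{+}={\bf q}^{+}_{X}$), $\{q^{+}=0\}={\bf Q}^{+}(X)\subset\PP(S^{+}(X))$ the variety of pure spinors, a point $[\sigma]$ corresponding to the $4$-dimensional isotropic $\Gamma_{\sigma}\subset V$ it annihilates, $\bigwedge^{2}V\cong\mathfrak{so}(S^{+})\cong\mathfrak{so}(S^{-})$ compatibly with Killing forms, and --- by \Ref{thm}{primoteor}(3) and the definition of ${\bf Q}^{+}(X)$ --- a unique $[\sigma_{0}]\in{\bf Q}^{+}(X)$ with $\Gamma_{\sigma_{0}}=H^{2,1}(X)$.

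Since $\phi$ is monodromy-equivariant, $W$ is a monodromy-stable $7$-dimensional submodule of $\mathfrak{so}(V)$ isomorphic to $H^{2}(X)$. Using the Zariski-density of the monodromy in $\mathrm{O}(H^{2}(X;\QQ),q_{\mathrm{BBF}})$ --- or, equivalently, the known description of the monodromy action on $H^{3}$ of a Kummer-type manifold --- one finds that the Zariski closure $G$ of the monodromy group in $\mathrm{SO}(V)$ is $\mathrm{Spin}\big(H^{2}(X;\QQ),q_{\mathrm{BBF}}\big)\cong\mathrm{Spin}(7)$, acting on $V$ through the $8$-dimensional spin representation $\Delta$; consequently $\mathfrak{so}(V)$ decomposes under $G$ as $\mathfrak{so}(7)\oplus W$ (adjoint plus vector representation, each with multiplicity one), so $W$ is forced to be that unique $7$-dimensional submodule and $W^{\perp}=\mathfrak{so}(7)=\mathrm{Lie}(G)$. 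By triality exactly one of $S^{+}(X),S^{-}(X)$ restricts to $G$ as $\Delta$ and the other as $H^{2}(X)\oplus\QQ$; let $T^{+}(X)\subset S^{+}(X)$ be the unique monodromy-stable hyperplane (hence rational and defined over $\ZZ$), and let $\iota\colon H^{2}(X)^{\vee}\xrightarrow{\ \sim\ }T^{+}(X)$ be $q_{\mathrm{BBF}}^{-1}$ followed by the inclusion; by Schur's lemma ($H^{2}(X;\QQ)$ being $G$-irreducible) $\iota$ is monodromy-equivariant and unique up to a rational scalar, pinned down up to sign by part~(2).

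Now let $\mathcal{R}={\bf Q}^{+}(X)$ be the ruling of ${\bf Q}(X)$ through $\PP(H^{2,1}(X))$, and let $\mathcal{R}'$ be the other ruling. If $S^{+}(X)|_{G}\cong\Delta$ then $\mathcal{R}\cong G/P_{3}$ with $P_{3}$ the spin-node maximal parabolic, so $\dim\phi(\bigwedge^{2}\Gamma)$ would be constant on $\mathcal{R}$; and if that constant were $1$, then $[\sigma]\mapsto\phi(\bigwedge^{2}\Gamma_{\sigma})$ would be a $G$-equivariant morphism from $G/P_{3}$ onto the closed $G$-orbit of $\PP(H^{2}(X)^{\vee})$, namely the quadric $\{q^{\vee}_{\mathrm{BBF}}=0\}\cong G/P_{1}$ ($P_{1}$ the stabiliser of an isotropic line of $H^{2}(X)$), which is impossible since $P_{3}$ and $P_{1}$ are non-conjugate maximal parabolics with neither contained in the other; as $\dim\phi(\bigwedge^{2}H^{2,1}(X))=1$ by the \Ref{kob}{chiave}, this rules out $S^{+}(X)|_{G}\cong\Delta$, so $S^{+}(X)|_{G}\cong H^{2}(X)\oplus\QQ$, $T^{+}(X)$ is the $H^{2}(X)$-summand, and $\mathcal{R}={\bf Q}^{+}(X)$ has exactly two $G$-orbits: a closed $5$-dimensional one equal to $\PP(T^{+}(X))\cap{\bf Q}^{+}(X)$ and to $G\cdot[\sigma_{0}]$, and an open $6$-dimensional one on which $\Stab_{G}(\Gamma)\cong\mathrm{Spin}(6)\cong\mathrm{SL}_{4}$ sits inside the Levi $\mathrm{GL}(\Gamma)$ of $\mathfrak p_{\Gamma}$, so that $\mathrm{Lie}(G)\cap\bigwedge^{2}\Gamma=0$ and $\dim\phi(\bigwedge^{2}\Gamma)=6$. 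As $\{\,[\sigma]:\dim\phi(\bigwedge^{2}\Gamma_{\sigma})\le 1\,\}$ is closed and $G$-stable and the generic value on $\mathcal{R}$ is $6$, this locus is exactly the closed orbit $\PP(T^{+}(X))\cap{\bf Q}^{+}(X)$, while on $\mathcal{R}'$ (a single $G$-orbit) the constant value of $\dim\phi(\bigwedge^{2}\,\cdot\,)$ is $\ne 1$ by the argument above; this proves part~(1) except for $\phi(\bigwedge^{2}\Gamma)=[\sigma]$, for which both $[\sigma]\mapsto\iota^{-1}([\sigma])$ and $[\sigma]\mapsto\phi(\bigwedge^{2}\Gamma_{\sigma})$ are $G$-equivariant morphisms from $\PP(T^{+}(X))\cap{\bf Q}^{+}(X)\cong G/P_{1}$ with image the unique closed orbit $\{q^{\vee}_{\mathrm{BBF}}=0\}\cong G/P_{1}$, hence differ by a $G$-equivariant self-map of $G/P_{1}$, which is the identity since parabolics are self-normalising. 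Finally, for part~(2), $q^{+}|_{T^{+}(X)}$ is a $G$-invariant quadratic form on $T^{+}(X)\cong H^{2}(X)$, hence a rational multiple of $q_{\mathrm{BBF}}$; the multiple and the sign of ${\bf q}^{+}_{X}$ are determined by the unimodularity of ${\bf q}^{+}_{X}$ on the integral spinor lattice together with the relation between $H^{3}(X;\ZZ)$, $S^{\pm}(X;\ZZ)$ and the Kummer lattice $H^{2}(X;\ZZ)$ (whose discriminant form is known), which forces $\iota^{*}\!\big(q^{+}|_{T^{+}(X)}\big)=q^{\vee}_{\mathrm{BBF}}$.

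\medskip
\noindent\textbf{Main obstacle.}
The crux is the identification of which ruling is ${\bf Q}^{+}(X)$ --- equivalently, $S^{+}(X)|_{\mathrm{Mon}}\cong H^{2}(X)\oplus\QQ$ --- which rests on pinning down the monodromy action on $H^{3}$: one either derives $\overline{\mathrm{Mon}}^{\mathrm{Zar}}\cong\mathrm{Spin}(7)$ acting as the spin representation (from Zariski-density on $H^{2}$, monodromy-equivariance of $\phi$, the theorem of the fixed part to exclude a monodromy-invariant vector in $H^{3}$, and the non-existence of a faithful $8$-dimensional representation of $\mathrm{SO}(7)$ without invariant vectors), or imports it from the literature on Kummer-type manifolds; this is where the exceptional geometry really enters. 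The second, more technical, point is upgrading part~(2) from a rational multiple to the exact identity, and likewise checking the integrality of $T^{+}(X)$, which needs a careful comparison of the integral spin lattices with the Kummer lattice via unimodularity and discriminant forms.
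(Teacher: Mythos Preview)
Your approach is genuinely different from the paper's and, modulo the gap you flag, essentially correct. The paper does \emph{not} argue via the $\mathrm{Spin}(7)\subset\mathrm{Spin}(8)$ orbit structure. Instead it proceeds computationally: it determines $\vartheta(\ov{q}^{\,n-2})$ explicitly (\Ref{prp}{grancul}, \Ref{prp}{duetre}, \Ref{crl}{thirdman}), then classifies by direct linear algebra (\Ref{prp}{grafanti}) all $4$-spaces $\Gamma$ with $\dim\Phi_\vartheta(\bigwedge^2\Gamma)=1$ as graphs of antisymmetric $f$ with $\Pf(f)=\vartheta_1/\vartheta_2$ together with the degenerate $U\oplus U^{\bot}$ cases, identifies these via \Ref{lmm}{livella} with the hyperplane section $\{(n+1)\alpha=\vol(\beta)\}$ of ${\bf Q}^{+}$, and finally writes down the embedding $i\colon H^2(K_n(A))^{\vee}\hookrightarrow S^{+}$ by an explicit formula (equation~\eqref{immergo}) and checks the quadratic-form identity by hand. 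Your route replaces the case analysis of \Ref{prp}{grafanti} and all the Lehn--Sorger integrals by the decomposition $\mathfrak{so}(8)=\mathfrak{so}(7)\oplus V_7$ and the two-orbit structure of ${\bf Q}^{+}$ under $\mathrm{Spin}(7)$; this is cleaner, but the paper's route yields the explicit integral equation for $T^{+}(K_n(A))$ and the exact map $i$, which feed directly into the later determination of elementary divisors in \Ref{sec}{polandmon}.

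Two points deserve emphasis. First, the $\mathrm{Spin}(7)$ input. Your sketch (Zariski density on $H^2$ plus $\phi$-equivariance plus no fixed vector in $H^3$) can be made to work: the key steps are that any connected $G\subset\mathrm{SO}(8)$ surjecting onto $\mathrm{SO}(7)$ with no fixed vector must have $\mathfrak g\cong\mathfrak{so}(7)$ embedded via the spin representation (the standard embedding is excluded by the fixed-vector argument, and the centraliser in either case is trivial). But beware a circularity trap: the paper \emph{derives} Mongardi's constraint (\Ref{crl}{monlim}) from \Ref{thm}{secondoteor}, so you must source the Zariski density of monodromy on $H^2$ independently (e.g.\ from Verbitsky--Markman global Torelli), not from this corollary. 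Second, your justification of the exact normalisation in part~(2) is too quick. Knowing that $q^{+}|_{T^{+}}$ is a rational multiple of $q_{\mathrm{BBF}}$ (Schur) and that ${\bf q}^{+}$ is unimodular on $S^{+}(X;\ZZ)$ does not by itself pin down $\iota$ so that the restriction is \emph{equal} to $q_{\mathrm{BBF}}^{\vee}$; one really needs the explicit lattice comparison (the paper gets $\disc(T^{+})=2(n+1)$ from $u=1-(n+1)\tau$ with ${\bf q}^{+}(u)=2(n+1)$, matching $\disc(H^2(X;\ZZ))$), which is exactly what the paper's explicit formula for $i$ provides. Your argument would be complete once you supply that lattice check and a self-contained derivation of the $\mathrm{Spin}(7)$ Zariski closure.
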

Item~(1) of~\Ref{thm}{secondoteor} amounts to an \emph{explicit} description of the weight $1$ Hodge structure on $H^1(J^3(X))$ in terms of the 
weight $2$ Hodge structure on $H^2(X)$.

The result below was first proved by Mongardi by other methods. We will show that it is a simple consequence of~\Ref{thm}{secondoteor}. 
\begin{crl}[Mongardi~\cite{mongardimon}]\label{crl:monlim}
Let $X$ be a HK of Kummer type. Let $\rho\in O(H^2X;\ZZ),q_X)$ be a monodormy operator. Then either $\rho$ acts trivially on the discriminant group $H^2(X;\ZZ)^{\vee}/H^2(X;\ZZ)$ (here $H^2(X;\ZZ)$ is embedded into $H^2(X;\ZZ)^{\vee}$ by the BBF quadratic form) and it has determinant $1$, or 
it acts as multiplication by $-1$ on the discriminant group and it has determinant $-1$.
\end{crl}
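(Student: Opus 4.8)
The plan is to deduce Corollary~\ref{crl:monlim} directly from Item~(2) of \Ref{thm}{secondoteor}, using the representation theory of the spin/orthogonal groups that is already packaged into the spinor space $S^{+}(X)$. Let $\rho\in O(H^2(X;\ZZ),q_X)$ be a monodromy operator. By \Ref{thm}{secondoteor}(2) there is a $\QQ$-isomorphism $\iota\colon H^2(X)^{\vee}\overset{\sim}{\lra} S^{+}(X)$, defined over $\QQ$ and monodromy-invariant up to sign, carrying the dual BBF form to (a fixed sign choice of) ${\bf q}^{+}_X$. The monodromy action on $H^2(X)$ induces a monodromy action on $H^3(X)$, hence on the quadric ${\bf Q}(X)$, hence on its spinor representation $S^{+}(X)$; the compatibility statement in the theorem says that, after conjugating by $\iota$, these two actions of $\rho$ agree \emph{up to an overall sign} $\varepsilon(\rho)\in\{\pm1\}$. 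So the first step is to extract from this the identity $\iota\circ(\rho^{\vee})^{-1}\circ\iota^{-1}=\varepsilon(\rho)\cdot s(\rho)$, where $s(\rho)$ is the action of a spin lift of $\rho$ on $S^{+}(X)$.

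Next I would use two standard facts about the even Clifford / spin setup on an even-rank quadratic space. First, the action of $\mathrm{Spin}$ on a half-spinor representation has \emph{trivial determinant}: an element of $\mathrm{Spin}(V)$ mapping to $g\in SO(V)$ acts on $S^{+}$ with $\det = 1$ (indeed $\mathrm{Spin}$ is connected, or one checks it on reflections composed in pairs). Second, the action of a \emph{reflection} $r_v$ (which has determinant $-1$ on $V$, so lies in $O(V)\setminus SO(V)$) swaps the two half-spinor spaces $S^{+}\leftrightarrow S^{-}$ and, on $S^{+}(X)$ alone, the natural operator one gets is the multiplication-by-$v$ map $S^{+}\to S^{-}$ followed by an identification — concretely, the induced orthogonal transformation of $S^{+}(X)$ (once one fixes a consistent identification $S^{+}\cong S^{-}$ via a fixed vector) acts on the discriminant group of the \emph{unimodular} lattice $(S^{+}(X),{\bf q}^{+}_X)$ trivially, since that discriminant group is zero. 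The key point is therefore to transport the question about the discriminant group of $H^2(X;\ZZ)$ to the integral structure on $S^{+}(X)$: $\iota$ is only a $\QQ$-isomorphism, but it identifies $H^2(X;\ZZ)^{\vee}$ with a finite-index overlattice inside $S^{+}(X)_{\ZZ}$ (or the other way around), and the discriminant form of $(H^2(X;\ZZ),q_X)$ is the same, up to sign, as a piece of glue data relating these two integral structures.

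Concretely, the steps in order would be: (i) observe that $SO$-monodromy operators $\rho$ (determinant $1$ on $H^2$) lift to $\mathrm{Spin}$, so $s(\rho)$ has determinant $1$ on $S^{+}(X)$; combined with $\det$ being multiplicative and the sign ambiguity $\varepsilon(\rho)$ contributing $\varepsilon(\rho)^{8}=1$ in dimension $8$, this forces $\det((\rho^{\vee})^{-1})=1$, i.e.\ $\det\rho=1$ on $H^2(X)$; (ii) for such $\rho$, the same spin lift acts on $S^{+}(X)_{\ZZ}$ through $\mathrm{Spin}(\ZZ)$, which because $(S^{+}(X),{\bf q}^{+}_X)$ is unimodular has trivial discriminant group, but more importantly one shows the induced map on $H^2(X;\ZZ)^{\vee}/H^2(X;\ZZ)$ is the \emph{spinor-norm–controlled} map, which for $\mathrm{Spin}$-elements is trivial — concluding the first alternative; (iii) if instead $\rho\notin SO$, write $\rho = -(\rho\circ(-\mathrm{id}))$ with $-\mathrm{id}\in O(H^2)$ acting as $-1$ on everything including the discriminant group with determinant $(-1)^{b_2}$; one checks $-\mathrm{id}$ is realized by an element interchanging $S^{\pm}$, reducing the odd case to the even case already handled, and the sign bookkeeping gives exactly \lq\lq acts as $-1$ on the discriminant group and has determinant $-1$.\rq\rq\ Alternatively, and perhaps more cleanly for the write-up, one packages (i)–(iii) as: the homomorphism $O(H^2;\ZZ)\to \{\pm 1\}^2$, $\rho\mapsto(\det\rho,\ \text{action on disc.\ group})$, factors — via $\iota$ — through the analogous homomorphism for the unimodular lattice $S^{+}(X)$, and on a unimodular lattice the discriminant group is trivial, so the two coordinates are forced to coincide, which is exactly the dichotomy asserted.

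The main obstacle I anticipate is step~(ii)/(iii): making precise how the $\QQ$-isomorphism $\iota$, which is \emph{not} defined over $\ZZ$, nevertheless transports the \emph{integral} discriminant-group data of $(H^2(X;\ZZ),q_X)$ faithfully. One must pin down the relation between the lattice $H^2(X;\ZZ)^{\vee}$ and the unimodular lattice $S^{+}(X)_{\ZZ}$ — presumably $\iota$ identifies $H^2(X;\ZZ)^{\vee}$ with $S^{+}(X)_{\ZZ}$ up to a scaling that is a unit at all primes except possibly $2$, and the discriminant form of $H^2$ is exactly the obstruction to $\iota$ being an integral isometry. Once that dictionary is set up correctly, the representation-theoretic inputs (trivial determinant of half-spin representations, reflections swapping half-spinors, spinor norm) are entirely standard and the dichotomy drops out. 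I would therefore spend the bulk of the argument on this lattice-theoretic comparison and treat the group theory as a black box citing a standard reference.
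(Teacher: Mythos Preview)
Your proposal has a genuine gap, and it stems from a misreading of what \Ref{thm}{secondoteor}(2) actually provides.

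First, a dimension count: $b_2(X)=7$ for Kummer type, while $S^{+}(X)$ is $8$-dimensional. The map $\iota$ is \emph{not} an isomorphism onto $S^{+}(X)$; it is an isomorphism onto the codimension-$1$ subspace $T^{+}(X)$ of \Ref{thm}{secondoteor}(1) (this is explicit in the proof of the theorem, see~\eqref{immergo}). Your entire strategy of ``transporting the problem to the unimodular lattice $S^{+}(X)$ and using that its discriminant group is trivial'' therefore cannot work as stated: $H^2(X)^{\vee}$ sits as a non-unimodular sublattice of $S^{+}(X)_{\ZZ}\otimes\QQ$, and it is precisely the relationship between $T^{+}(X)$ and its orthogonal complement in $S^{+}(X)$ that encodes the discriminant data you need.

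Second, the action of monodromy on $S^{+}(X)$ does \emph{not} arise as a spin lift of $\rho\in O(H^2(X;\ZZ),q_X)$. It arises from the monodromy action on $H^3(X)$, which lies in $O(H^3(X;\ZZ),{\bf q}_X)$ for the quadratic form ${\bf q}_X$ of~\eqref{eccobi}; this is a different orthogonal group. So statements like ``$SO$-monodromy operators lift to $\mathrm{Spin}$, hence act with $\det=1$ on $S^{+}$'' are arguing about the wrong group. The paper obtains $\det S^{+}(\lambda_{*})=1$ from Item~(3) of \Ref{thm}{primoteor}: monodromy preserves the family ${\bf Q}^{+}(X)$ of maximal linear subspaces (because $\PP(H^{2,1})$ always lies in that family), hence preserves $S^{+}$ rather than swapping it with $S^{-}$.

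What the paper actually does is a direct lattice computation. One writes down two explicit vectors $u=1-(n+1)\tau$ and $w=1+(n+1)\tau$ in $S^{+}(K_n(A);\ZZ)$, with $u$ spanning $T^{+}(X)^{\bot}$ and $w=i(2(n+1)\xi_n^{\vee})$. Since monodromy preserves $T^{+}$, it sends $u$ to $\pm u$; this sign, together with $\det=1$ on $S^{+}$, determines both $\det\rho$ on $H^2$ and the action on the discriminant group, by an elementary computation of $\rho(w)\pmod{2(n+1)}$. Your ``main obstacle'' paragraph correctly identifies that the integral comparison is the crux, but the resolution is not abstract lattice theory or spinor norms; it is this concrete calculation with $u$ and $w$. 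I would abandon the spin-representation framing and instead exploit the monodromy-invariance of $T^{+}(X)$ directly.
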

Below is our last main result.
\begin{thm}\label{thm:terzoteor}
Let $X$ be a hyperk\"ahler variety  of Kummer type, of dimension $2n$, and let $L$ be an ample line bundle on $X$. Then $(J^3(X),\Theta_L)$ is of Weil type, with an inclusion 
$$\QQ\sqrt{-2(n+1)q_X(L)}\subset \End(J^3(X),\Theta_L)_{\QQ},$$
where $q_X(L)$ is the value of the Beauville-Bogomolov-Fujiki (BBF) quadratic form on $c_1(L)$.  
By varying  $(X,L)$, one gets a complete (up to isogeny)  family of $4$ dimensional abelian varieties of Weil type  with associated field $\QQ[\sqrt{-2(n+1)q_X(L)}]$, and trivial determinant.
Moreover, tha Kuga-Satake variety $\KS(X,L)$ is isogenous to  $J^3(X)^4$.
\end{thm}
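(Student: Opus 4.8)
The plan is to deduce Theorem~\ref{thm:terzoteor} from Theorems~\ref{thm:primoteor} and~\ref{thm:secondoteor}, reading off the Weil-type structure from the explicit description of the weight $1$ Hodge structure on $H^1(J^3(X))$. First I would recall the precise setup: by Theorem~\ref{thm:secondoteor}(2) there is a $\QQ$-isomorphism $\iota\colon H^2(X)^{\vee}\overset{\sim}{\lra}S^{+}(X)$ carrying the dual BBF form to ${\bf q}^{+}_X$, and an identification (up to sign and monodromy) of $S^{+}(X)$ with a half-spinor representation of the orthogonal group of $({\bf Q}(X),\text{restriction of the natural form})$ on $H^3(X)$. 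The key point is that the even Clifford algebra $C^{+}({\bf Q}(X))$ acts on $S^{+}(X)\oplus S^{-}(X)=S(X)$, and $H^3(X)\cong S^{+}(X)\oplus S^{-}(X)$ is naturally a module; the Hodge structure on $H^3(X)$, hence on $S^{+}(X)$, is then governed by the BBF Hodge structure on $H^2(X)^{\vee}\cong S^{+}(X)$, which has the form of a spinorial weight-one Hodge structure. I would make this concrete by exhibiting, inside $\End(S^{+}(X))_\QQ$ or inside the relevant Clifford algebra, the element that squares to $-2(n+1)q_X(L)$ times the identity and commutes with the complex structure.

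The construction of that endomorphism is the technical heart. Given the ample class $h=c_1(L)\in H^2(X;\ZZ)$, consider $\ov q_X(L)$-related quantities; the natural candidate is (a rescaling of) the Clifford multiplication operator associated to $h$ viewed, via the BBF form, as a vector in $H^2(X)^{\vee}\cong S^{+}(X)\subset C^{+}$. More precisely, I expect that multiplication by $h$ gives an endomorphism of $S(X)=S^{+}(X)\oplus S^{-}(X)$ exchanging the two summands, so its square is an endomorphism of $S^{+}(X)$ equal to $q_X^{\vee}(h)$ (up to the normalizing constant $2(n+1)$) times the identity. Since $h$ is a Hodge class, this endomorphism is a morphism of Hodge structures on $H^1(J^3(X))=H^3(X)$, hence preserves the complex structure; so it generates a copy of $\QQ[\sqrt{-2(n+1)q_X(L)}]$ (an imaginary quadratic field, as $q_X(L)>0$ for $L$ ample in the Kummer case) inside $\End(J^3(X))_\QQ$, compatible with $\Theta_L$. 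I would then check the two defining properties of a polarized abelian fourfold of Weil type: that the field acts with eigenspaces of dimension $(2,2)$ on $H^{1,0}$ — this should come from Theorem~\ref{thm:primoteor}(3), which identifies $\PP(H^{2,1}(X))$ with a maximal isotropic subspace of ${\bf Q}(X)$ lying on a fixed ruling, forcing the two half-spinor eigenspaces to be balanced — and that the Hermitian form attached to $\Theta_L$ has trivial determinant, which I would get from unimodularity of ${\bf q}^{+}_X$ together with Theorem~\ref{thm:secondoteor}(2).

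For the completeness statement, I would argue by a dimension/moduli count: the period domain for polarized HK manifolds of Kummer type with fixed discrete invariants has dimension $b_2-2$, but the relevant sub-Hodge-structure $H^2(X)^{\vee}\cong S^{+}(X)$ that actually determines $J^3(X)$ only sees the $4$-dimensional moduli coming from the rank-$4$ transcendental-type piece; matching this against the known $4$-dimensional moduli of polarized abelian fourfolds of Weil type with associated field $\QQ[\sqrt{-2(n+1)q_X(L)}]$ and trivial determinant (Schoen~\cite{schoen-hc}), and using that the period map from the former to the latter is a local isomorphism (both local period domains being the same Hermitian symmetric space, namely a type $IV_4$ or equivalently $\mathbb{H}_2\times\mathbb{H}_2$-type domain), yields dominance, hence completeness up to isogeny. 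Finally, the assertion $\KS(X,L)\sim J^3(X)^4$ follows by comparing the standard representation underlying $\KS(X,L)$ — which is $H^2(X)_{pr}$-Kuga-Satake, i.e.\ a module over $C^{+}(H^2(X)_{pr})$ of rank giving dimension $2^{b_2-3}$-ish — with the fact that $H^2(X)^{\vee}$ embeds as a half-spinor $S^{+}(X)$ of an orthogonal space of the same type but dimension lowered; the spin representation of the big Clifford algebra decomposes, after the isogeny identifications, into four copies of the spin representation computing $J^3(X)$. I expect the main obstacle to be the bookkeeping in this last step: matching normalizations of the BBF form, the factor $2(n+1)$, and the choice of half-spinor ruling so that the isogeny $\KS(X,L)\sim J^3(X)^4$ is stated with the correct integral/rational precision rather than merely in dimension; getting the Weil field and the determinant exactly right (as opposed to up to a square) will require care with the quadratic form ${\bf q}^{+}_X$ and with orientation/monodromy signs from Theorem~\ref{thm:secondoteor}.
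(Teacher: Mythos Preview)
Your proposal has a genuine gap at the heart of the Weil-type construction: you build (or gesture at building) an endomorphism of the wrong space. You need an endomorphism of $H^3(X)\cong V\oplus V^{\vee}$ preserving $H^{2,1}$, since that is what descends to an endomorphism of $J^3(X)$. But your candidate is ``Clifford multiplication by $h$'' acting on $S(X)=S^{+}(X)\oplus S^{-}(X)$, and you then assert this is a morphism of Hodge structures on $H^1(J^3(X))=H^3(X)$ without explaining the passage from $S^{+}(X)$ to $H^3(X)$. These are different $8$-dimensional spaces; one is the spinor module, the other the vector representation. Moreover, $h$ lies in $H^2(X)$ (or its dual), not in $H^3(X)$, so ``Clifford multiplication by $h$'' in the Clifford algebra of $({\bf Q}(X),{\bf q}_X)$ is not defined; and your side remark $S^{+}(X)\subset C^{+}$ conflates a module with a subalgebra. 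One could try to rescue the idea via $D_4$-triality (so that an element of $S^{+}$ gives maps $H^3(X)\to S^{-}(X)\to H^3(X)$), but this is not what you wrote, and making it precise---including the compatibility with $\Theta_L$ and the computation of the square---is exactly the nontrivial content.

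The paper's argument is entirely different and much more explicit. It never invokes Clifford multiplication. Instead, writing $c_1(L)=h_0+s\zeta^{\vee}$ with $h_0\in\bigwedge^2 V^{\vee}$, it lets $g\colon V_{\CC}\to V_{\CC}^{\vee}$ be the antisymmetric map with $\omega_g=h_0$, and writes down by hand an endomorphism
\[
\Psi(v,\ell)=(g^{-1}(\ell)-b\,v,\ b\,\ell-N\,g(v))
\]
of $V_{\CC}\oplus V_{\CC}^{\vee}$ for suitable $N,b\in\QQ$. That $\Psi$ preserves $H^{1,0}_{[\sigma]}$ (the graph of an antisymmetric $f$) reduces to a quadratic identity for $g^{-1}f$, which the paper extracts from a Cayley--Hamilton-type identity for products of $4\times4$ antisymmetric matrices; the rationality of the trace term is exactly the condition $\langle h,\sigma\rangle=0$. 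The Weil $(2,2)$-eigenspace split, the compatibility $\langle\Psi\alpha,\Psi\beta\rangle_{\vartheta,h}=(N-b^2)\langle\alpha,\beta\rangle_{\vartheta,h}$, and $\det\equiv 1$ are then checked by direct computation in this model. Completeness is proved not by a dimension count but by showing that every $\Psi$-stable weight-$1$ Hodge structure on $V\oplus V^{\vee}$ is of the form $H^{1,0}_{[\sigma]}(\vartheta)$ for some $[\sigma]\in\cD_h$. The Kuga--Satake statement is dispatched separately, by citing results of Charles and of van Geemen--Voisin together with the inclusion $H^2(X)\subset\bigwedge^2 H^1(J^3(X))$ from Theorem~\ref{thm:primoteor}(1); your spinor-decomposition sketch for this part is in the right spirit but would need those references to be made rigorous.
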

\subsection{Organization of the paper}
\setcounter{equation}{0}
Most of~\Ref{sec}{genkum} 	is devoted to the proof of results on the cohomology of HK's of Kummer type. After recalling the definition of generalized Kummers, and establishing basic notation, we  compute the constants which enter into the formula for certain integrals on a HK of Kummer type (see~\Ref{prp}{bellaform}). 
In~\Ref{subsec}{intcohom} we describe explicitly the \emph{integral} 3rd cohomolgy group of a generalized Kummer. In dimension 4 this was done by Kapfer and Menet~\cite{cogekum}. We extend their result to arbitrary dimension by adapting arguments of Totaro~\cite{tothilb}. In~\Ref{subsec}{struttura} we show that, by invariance under the monodromy group of compact complex tori, the map $\phi$ in~\eqref{vanitoso} for $2n$-dimensional HK's of Kummer type  has a \lq\lq shape\rq\rq\ which depends on an apriori unknown  $\vartheta(\ov{q}^{n-2})\in\ZZ^3$. 
In~\Ref{subsec}{hilbring},  \Ref{subsec}{grancul} and~\Ref{subsec}{granculdue} we compute  the first two entries of $\vartheta(\ov{q}^{n-2})$ (the third entry will be determined up to sign in~\Ref{subsec}{twoone}). 
Most of the effort goes in a painful computation of the cup product of certain cohomology classes on a generalized Kummer. In order to do this we rely on the explicit description of the cohomology ring of Hilbert schemes of smooth projective surfaces with trivial canonical bundle given by Lehn and Sorger~\cite{mancrist}.  The last subsection of~\Ref{sec}{genkum} contains the proof of~\Ref{thm}{primoteor}.

In~\Ref{sec}{alglin} we prove~\Ref{thm}{secondoteor} and~\Ref{crl}{monlim}.  Actually we discuss an \lq\lq abstract\rq\rq\ map which has the same shape as $\phi$, depending on a choice of 
$\vartheta\in\ZZ^3$ with no vanishing entry. In such a set-up, we have a way of explicitly associating to a weight-$2$ H.S.~of $K3$ type a weight-$1$ H.S. If the weight-$2$ H.S.~is polarized, then the weight-$1$ H.S.~is also polarized.

In the short~\Ref{sec}{polandmon} we compute the elementary divisors of the natural polarization of $J^3(X)$ for a polarized HK fourfold $X$. 

\Ref{sec}{tipoweil} is devoted to the proof of~\Ref{thm}{terzoteor}. Actually we prove, more generally, that the polarized  weight-$1$ H.S.'s constructed in~\Ref{sec}{alglin}  (depending on 
a $\vartheta\in\ZZ^3$ with no vanishing entry) are of Weil type. 
\subsection{Conventions}\label{subsec:ginevra}
\setcounter{equation}{0}

We work over $\CC$:  projective varieties will be \emph{complex} projective varieties.

Throughout the present paper, $A$ is an abelian surface.

Notation: in dealing with cohomology, we omit to mention  the ring of coefficients when we consider complex coefficients. 

Let $\Lambda$ be a lattice. The \emph{divisibility} of a non zero $v\in\Lambda$ is the positive generator of $(v,\Lambda)$; we denote it by $\divisore(v)$.

Let $\n\in\NN_{+}$. The \emph{double factorial} of $n$ is equal to
\begin{equation}
n!!:=n\cdot (n-2)\cdot\ldots\cdot \left(n-2\left\lfloor\frac{n-1}{2}\right\rfloor\right). 
\end{equation}
It is convenient to set $0!!:=1$ and $(-1)!!:=1$.

\subsection*{Acknowledgements}
After giving a talk in Oberwolfach on some of the results of this paper~\cite{kieran-obw-2017}, I had various stimulating conversations related to this work. 
I am grateful to Francois Charles for  sharing an unpublished paper of his on the Kuga-Satake construction~\cite{charles-univ}. 
I am indebted to Eyal Markman, who pointed out the importance of triality - this motivated me to formulate~\Ref{thm}{secondoteor} in terms of spinor representations. 
 Thanks go to Eyal also for sending me his preprint~\cite{mar-on-weil} while I was finishing writing the present paper. 

It is a pleasure to thank Ruggero Bandiera for proving~\Ref{lmm}{ideban}.

\section{Generalized Kummers and their cohomology}\label{sec:genkum}
\subsection{Hilbert schemes parametrizing subschemes of finite length}\label{subsec:ischemia}
\setcounter{equation}{0}
Let $S$ be a smooth projective surface. Let $S^{[n]}$ be the Hilbert scheme parametrizing subschemes  $Z\subset S$ of length $n$, and let  $S^{(n)}$ be the symmetric product of $n$ copies of $S$. Given a point $[Z]\in S^{[n]}$, we let 
\begin{equation*}
|Z|:=\sum_{p\in S} \ell(\cO_{Z,p})p\in S^{(n)}.
\end{equation*}
The Hilbert-Chow map $\wt{\gh}_n\colon S^{[n]}\to S^{(n)}$ associates to $[Z]$ the cycle $|Z|$. Let $\wt{\Delta}_n(S)\subset S^{[n]}$ be the prime divisor parametrizing non reduced schemes.  The divisor class of $\wt{\Delta}_n(S)$ is  divisible by $2$. We let $\wt{\xi}_n(S)\in H^2(S^{[n]};\ZZ)/\Tors$ be characterized by
\begin{equation}\label{deldop}
2\wt{\xi}_n(S)=c_1(\cO_{S^{[n]}}(\wt{\Delta}_n(S))).
\end{equation}
 (In order to simplify notation, we will omit $S$  whenever there is no ambiguity.)

Let $R$ be a (commutative) ring. 
Given $\alpha \in H^m(S;R)$, let $\alpha^{(n)} \in H^m(S^{(n)};R)$ be characterized by the formula
\begin{equation}\label{simmetrizzo}
\pi^{*}\alpha^{(n)} =\sum_{i=1}^{n}p_i^{*}\alpha,
\end{equation}
 where  $\pi\colon S^{n}\to S^{(n)}$ is the quotient map and  $p_i\colon S^{n}\to S$ is the 
$i$-th projection. Let  
\begin{equation}
\begin{matrix}
 H^m(S;R) & \overset{\wt{\mu}_m}{\lra} &  H^m(S^{[n]};R) \\
 \alpha & \mapsto & \wt{\gh}_n^{*}\alpha^{(n)}.
\end{matrix}
\end{equation}
For $n\ge 2$ let
\begin{equation}
\Gamma_n(S):=\{(W,Z)\in  S^{[2]}\times S^{[n]} \mid W\in\wt{\Delta}_2\quad W\subset Z\}.
\end{equation}
Then $\Gamma_n(S)$ is irreducible of (complex) dimension $2n-1$. 
Let $p\colon \Gamma_n(S)\to S$ be the map sending $(W,Z)$ to the support of $W$, and let  $q\colon \Gamma_n(S)\to S^{[n]}$ be the projection. We let
\begin{equation}\label{nakmap}
\begin{matrix}
H^{m-2}(S;R) & \overset{\wt{\nu}_m}{\lra} & H^m(S^{[n]};R) \\
\beta & \mapsto & \PD(q_{*}([\Gamma_n(S)]\cap p^{*}\beta),
\end{matrix}
\end{equation}
where $\PD$ means Poincar\'e dual.

\subsection{Generalized Kummers}\label{subsec:defgenkum}
\setcounter{equation}{0}
Let  $A$ be an abelian surface.  Let $\sigma_r\colon A^{(r)}\to A$ be the summation map (in the group $A$). The $n$-th  generalized Kummer variety  is
\begin{equation*}
 K_n(A):=\{[Z] \in  A^{[n+1]} \mid \sigma_{n+1}(|Z|)=0\}.
\end{equation*}
Beauville~\cite{beaucy} proved that  $K_n(A)$ is a hyperk\"ahler variety of dimension $2n$. 
Let 
\begin{equation}\label{deldop}
 \Delta_n(A):=\wt{\Delta}_{n+1}(A)\cap K_n(A),\qquad  \xi_n(A):=\wt{\xi}_{n+1}(A)|_{K_n(A)},
\end{equation}
and
\begin{equation}\label{nakmap}
\quad
\begin{matrix}
\scriptstyle H^{m}(A;R) & \scriptstyle  \overset{\mu_m}{\lra} & \scriptstyle H^m(K_n(A);R) \\
\scriptstyle \alpha & \scriptstyle \mapsto & \scriptstyle \wt{\mu}_m(\alpha)|_{K_n(A)}.
\end{matrix}
\qquad \qquad
\begin{matrix}
\scriptstyle H^{m-2}(A;R) & \scriptstyle \overset{\nu_m}{\lra} & \scriptstyle H^m(K_n(A);R) \\
\scriptstyle \beta & \scriptstyle \mapsto & \scriptstyle \wt{\nu}_m(\beta)|_{K_n(A)}.
\end{matrix}
\end{equation}

Now suppose that $n\ge 2$. We have a direct sum decomposition
\begin{equation}\label{accaduekum}
H^2(K_n(A);\ZZ)=\mu_2(H^2(A;\ZZ))\oplus \ZZ\xi_n.
\end{equation}
Moreover, the map $\mu_2$ for $R=\CC$ is a homomorphisms of integral Hodge structures. The Beauville-Bogomolov-Fujiki bilinear form   $(,)$ is given by
\begin{equation}\label{bbfkum}
\scriptstyle
(\mu_2(\alpha)+x\xi_n,\mu_2(\beta)+y\xi_n)=\left(\int_{A}\alpha\wedge\beta\right)-2(n+1)xy,\quad \alpha,\beta\in H^2(A),\ x,y\in\CC,
\end{equation}
and the normalized Fujiki constant of $K_n(A)$
equals $n+1$, i.e.
\begin{equation}\label{duenne}
\int\limits_{K_n(A)}\alpha^{2n}=(n+1)(2n-1)!!(\alpha,\alpha)^n\quad \forall\alpha\in H^2(K_n(A);\CC).
\end{equation}
\begin{rmk}\label{rmk:fujipol}
Let $W$ be a complex vector space, equipped with a bilinear symmetric form $(,)$. 
Let us say that two permutations $\sigma,\tau\in\cS_{2r}$ are $\sim$-equivalent if we have equality of  multilinear symmetric functions
\begin{equation}\label{fujipol}
(\alpha_{\sigma(1)},\alpha_{\sigma(2)})\cdot\ldots\cdot(\alpha_{\sigma(2r-1)},\alpha_{\sigma(2r)})=
(\alpha_{\tau(1)},\alpha_{\tau(2)})\cdot\ldots\cdot(\alpha_{\tau(2r-1)},\alpha_{\tau(2r)}),
\end{equation}
Let $\wt{\cS}_{2r}$ be a set of representatives for  $\sim$-equivalence classes, and let $P\colon W^{2r}\to\CC$ be the multilinear symmetric function defined by
\begin{equation}\label{fujipol}
P(\alpha_{1},\ldots,\alpha_{2r}):=\sum\limits_{\sigma\in\wt{\cS}_{2r}}(\alpha_{\sigma(1)},\alpha_{\sigma(2)})\cdot\ldots\cdot(\alpha_{\sigma(2r-1)},\alpha_{\sigma(2r)}).
\end{equation}
Then  $P$ is the polarization of the  homogeneous polynomial $\alpha\mapsto (2r-1)!!(\alpha,\alpha)^r$, i.e.~$P(\alpha,\ldots,\alpha)=(2r-1)!!(\alpha,\alpha)^r$.
In particular, Equation~\eqref{duenne} is equivalent to the equation
\begin{equation}\label{fujipol}
\int\limits_{K_n(A)}\alpha_1\smile\ldots\smile \alpha_{2n}=(n+1)\sum\limits_{\sigma\in\wt{\cS}_{2n}}(\alpha_{i_1},\alpha_{i_2})\cdot\ldots\cdot(\alpha_{i_{2n-1}},\alpha_{i_{2n}}),
\end{equation}
\end{rmk}
Now let $X$ be a $2n$ dimensional hyperk\"ahler manifold, of Kummer type. The bilinear form $(,)$ defines an isomorphism  $H^2(X)\overset{\sim}{\to} H^2(X)^{\vee}$. The inverse  $H^2(X)^{\vee}\overset{\sim}{\to} H^2(X)$ defines an element in $\Sym^2 H^2(X)$, whose image by the cup-product map $\Sym^2 H^2(X)\to H^4(X)$ is a class in $H^{2,2}_{\QQ}(X)$ that we denote by $q^{\vee}_X$, or $q^{\vee}$ if there is no danger of misunderstanding. An explicit expression for $q^{\vee}_{K_n(A)}$ is obtained as follows. Let $e_1,f_1,e_2,f_2,e_3,f_3$ be a \emph{standard basis} of $H^2(A;\ZZ)$, i.e.
\begin{equation}\label{basestand}
\int\limits_{A}e_i^2=0,\quad \int\limits_{A}e_i\smile f_i=1,\quad \text{$\la e_i,f_i\ra$ is orthogonal to  $\la e_j,f_j\ra$ if $i\not=j$.}
\end{equation}
(Notice that each $\la e_i,f_i\ra$ is a hyperbolic plane.)    Then
\begin{equation}\label{qudukum}
q^{\vee}_{K_n(A)}=2\sum_{i=1}^3 \mu_2(e_i)\smile \mu_2(f_i) -\frac{1}{2(n+1)}\xi_n^2.
\end{equation}
Before proving a result on products of $q^{\vee}_{K_n(A)}$, we need an identity whose proof was kindly provided by Ruggero Bandiera. 
\begin{lmm}[Ruggero Bandiera]\label{lmm:ideban}
Let $k$ and $\ell\le n$ be natural numbers. Then
\begin{equation}\label{ideban}
\sum\limits_{i=0}^{\ell} {\ell\choose i}\frac{(2i+2k)!!}{(2k)!!}\frac{(2n-2i-1)!!}{(2n-2\ell-1)!!}=\frac{(2n+2k+1))!!}{(2n-2\ell+2k+1)!!}.
\end{equation}
\end{lmm}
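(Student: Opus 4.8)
The plan is to prove the identity by induction on $\ell$, for fixed $k$ and $n$. Actually, a cleaner route is to recognize both sides as "generating function" evaluations: the double factorials $(2i+2k)!!/(2k)!!=\prod_{j=1}^{i}(2k+2j)$ and $(2n-2i-1)!!/(2n-2\ell-1)!!$ are rising/falling products, so after dividing through by $\ell!$ the left-hand side becomes a Vandermonde-type convolution. Concretely, I would first rewrite
\begin{equation*}
\frac{(2i+2k)!!}{(2k)!!}=2^{i}\frac{(i+k)!}{k!},\qquad
(2j-1)!!=\frac{(2j)!}{2^{j}j!},
\end{equation*}
so that, pulling out the powers of $2$, the claimed identity reduces to a polynomial identity in the variables $k$ and $n$ (for each fixed $\ell$), equivalently an identity of binomial coefficients:
\begin{equation*}
\sum_{i=0}^{\ell}\binom{\ell}{i}\binom{i+k}{k}\binom{2n-2i}{n-i}\Big/\binom{2n-2\ell}{n-\ell}\cdot(\text{normalizing factor})
=\binom{n+k+\tfrac12}{\cdot}\ \text{type expression}.
\end{equation*}
The presence of half-integers suggests instead substituting the Pochhammer form $(2m-1)!!=2^{m}(1/2)_m$ and $(2m)!!=2^{m}m!$, turning everything into a hypergeometric ${}_2F_1$ at $z=1$.

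The cleanest approach: divide both sides by $(2n-1)!!$. After the substitutions above, the left side becomes
\begin{equation*}
\sum_{i=0}^{\ell}\binom{\ell}{i}\,\frac{(k+1)_i}{(1)_i}\cdot\frac{(-n+\tfrac12)_i}{(\text{const})}\cdot(\text{powers of }2),
\end{equation*}
i.e. a terminating Gauss hypergeometric series ${}_2F_1(-\ell,\,k+1;\,c;\,1)$ for an appropriate $c$ depending on $n$, $\ell$. Then the Chu–Vandermonde summation theorem
\begin{equation*}
{}_2F_1(-\ell,\,b;\,c;\,1)=\frac{(c-b)_\ell}{(c)_\ell}
\end{equation*}
evaluates it in closed form, and one checks that the resulting product of Pochhammer symbols, converted back to double factorials, equals $(2n+2k+1)!!/((2n-2\ell+2k+1)!!\,(2k)!!)\cdot(\text{matching powers of }2)$. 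The bookkeeping of the factors of $2$ and the shift $n\mapsto n-\ell$ inside the double factorials is the only real content; each double factorial of an odd number contributes a $2^{m}(1/2)_m$ and each of an even number a $2^{m}m!$, so the powers of $2$ must be tallied carefully on both sides.

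Alternatively — and this is probably the shortest fully elementary write-up — induct on $\ell$: the case $\ell=0$ is the trivial identity $1=1$. For the inductive step, use Pascal's rule $\binom{\ell+1}{i}=\binom{\ell}{i}+\binom{\ell}{i-1}$ to split the sum for $\ell+1$ into two sums, reindex the second, factor out the common $i$-independent ratios of double factorials, and apply the inductive hypothesis to each piece; the two resulting closed-form expressions combine, via the elementary recursions $(2m+1)!!=(2m+1)(2m-1)!!$, into the claimed value for $\ell+1$. The main obstacle in either approach is purely organizational: keeping the four double-factorial ratios aligned under the index shifts (especially the $(2n-2\ell-1)!!$ in the denominator, which changes when $\ell$ increments) without sign or off-by-one errors. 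No genuine difficulty is expected beyond this careful manipulation, which is exactly why the identity is being quoted as a lemma rather than proved inline.
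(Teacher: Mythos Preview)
Your proposal is correct: both routes you sketch do lead to a proof. In particular, after the substitutions $(2i+2k)!!/(2k)!!=2^i(k+1)_i$ and $(2n-2i-1)!!/(2n-1)!!=(-2)^{-i}/(1/2-n)_i$, the left-hand side is precisely $\frac{(2n-1)!!}{(2n-2\ell-1)!!}\cdot{}_2F_1(-\ell,\,k+1;\,\tfrac12-n;\,1)$, and Chu--Vandermonde gives the right-hand side on the nose. Your Pascal-rule induction also works: one finds $S(\ell+1,k,n)=(2n-2\ell-1)S(\ell,k,n)+(2k+2)S(\ell,k+1,n-1)$, and the two inductive values coincide, so the combination is immediate.

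The paper takes a genuinely different inductive route. Rather than splitting the binomial coefficient, it fixes $k$ and $\ell$, regards both sides as polynomials $p_\ell^k(n)$, $q_\ell^k(n)$ of degree~$\ell$ in the variable $n$, and verifies the finite-difference recurrence
\[
p_\ell^k(n+1)-p_\ell^k(n)=2\ell\,p_{\ell-1}^k(n),\qquad q_\ell^k(n+1)-q_\ell^k(n)=2\ell\,q_{\ell-1}^k(n),
\]
so that by induction on $\ell$ the difference operators agree; equality then follows from matching at the single half-integer value $n=(2\ell-1)/2$. Your Chu--Vandermonde argument is the slickest of the three (no induction, no boundary value), while the paper's difference-in-$n$ trick has the virtue of making the polynomial nature of both sides explicit and avoiding any Pochhammer bookkeeping. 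Your Pascal induction lies in between: elementary, but it requires noticing that $S(\ell,k,n)$ and $S(\ell,k+1,n-1)$ have the same closed form, which is a small miracle you did not flag.
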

\begin{proof}
For  fixed natural numbers $k,\ell$ the left and right hand sides of~\eqref{ideban}  are polynomials in $n$ (of degree $\ell$), that we denote 
$p_{\ell}^k$ and $q_{\ell}^k$ respectively. In particular $p_{\ell}^k(x)$ and $q_{\ell}^k(x)$ makes sense for any $x$, not only for $x$ an integer greater than $\ell$. One proves that 
$p_{\ell}^k=q_{\ell}^k$ by induction on $\ell$ arguing as follows. First $p_0^k=q_0^k$, because they are both equal to the constant polynomial $1$. A straightforward computation shows that
\begin{equation*}
p_{\ell}^k(n+1)-p_{\ell}^k(n)=2\ell p_{\ell-1}^k(n),\quad q_{\ell}^k(n+1)-q_{\ell}^k(n)=2\ell q_{\ell-1}^k(n),\quad \ell\ge 1,
\end{equation*}
and hence by the inductive hypothesis the difference operators of $p_{\ell}^k$ and of $q_{\ell}^k$ are equal. Since
\begin{equation*}
p_{\ell}^k(n)\left(\frac{2\ell-1}{2}\right)=q_{\ell}^k(n)\left(\frac{2\ell-1}{2}\right),
\end{equation*}
it follows that $p_{\ell}^k=q_{\ell}^k$.
\end{proof}
\begin{prp}\label{prp:bellaform}
Let $X$  be a $2n$ dimensional hyperk\"ahler manifold, of Kummer type. Then for all $\gamma\in H^2(X)$
\begin{equation*}
\int\limits_{[X]}(q^{\vee})^{\ell}\smile \gamma^{2n-2\ell}=(n+1)\frac{(2n+5)!!}{(2n+5-2\ell)!!}(2n-2\ell-1)!! q(\gamma)^{n-\ell}. 
\end{equation*}
\end{prp}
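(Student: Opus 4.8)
The plan is to deduce the formula from the polarized form of the Fujiki relation~\eqref{duenne}, recorded in Remark~\Ref{rmk}{fujipol}, together with the description of $q^{\vee}_X$ as the ``inverse'' of the BBF form. Fix a basis $h_1,\dots,h_b$ of $H^2(X)$, where $b=b_2(X)=7$ for a Kummer type manifold of dimension $\ge 4$ (by~\eqref{accaduekum} and deformation invariance); let $G=\big((h_s,h_t)\big)_{s,t}$ be the Gram matrix and $G^{-1}=(g^{st})$. Unwinding the definition of $q^{\vee}_X$ gives $q^{\vee}_X=\sum_{s,t}g^{st}\,h_s\smile h_t$. Substituting this into $(q^{\vee})^{\ell}\smile\gamma^{2n-2\ell}$ and applying the polarized Fujiki relation, I would write
$$\int_{[X]}(q^{\vee})^{\ell}\smile\gamma^{2n-2\ell}=(n+1)\sum_{s_1,t_1,\dots,s_\ell,t_\ell}\Big(\prod_{k=1}^{\ell}g^{s_kt_k}\Big)\sum_{M}\prod_{e\in M}(\,\cdot\,,\,\cdot\,),$$
where $M$ ranges over the perfect matchings of the $2n$ ``slots'' holding $h_{s_1},h_{t_1},\dots,h_{s_\ell},h_{t_\ell}$ and $2n-2\ell$ copies of $\gamma$, and each factor is the BBF pairing of the two classes sitting in the matched slots.

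Next I would carry out the sum over the indices $s_k,t_k$. For a fixed $M$, the $\ell$ ``$g$-edges'' $\{s_k,t_k\}$ together with those edges of $M$ that join two index-slots form a disjoint union of cycles and paths on the $2\ell$ index-slots, the two ends of each path being matched by $M$ to distinct $\gamma$-slots. Contracting the indices, a cycle produces the scalar $\operatorname{tr}\!\big((G^{-1}G)^{r}\big)=b=7$ (its length being irrelevant, since $G^{-1}G=\Id$), each path produces $(\gamma,\gamma)=q(\gamma)$, and each $\gamma$--$\gamma$ edge of $M$ also produces $q(\gamma)$. Counting the $2n-2\ell$ $\gamma$-slots shows that the number of paths plus the number of $\gamma$--$\gamma$ edges equals $n-\ell$ for every $M$, so
$$\int_{[X]}(q^{\vee})^{\ell}\smile\gamma^{2n-2\ell}=(n+1)\,q(\gamma)^{n-\ell}\,C(n,\ell),\qquad C(n,\ell):=\sum_{M}7^{\,c(M)},$$
with $c(M)$ the number of cycles. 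It remains to prove the purely combinatorial identity $C(n,\ell)=\frac{(2n+5)!!}{(2n+5-2\ell)!!}(2n-2\ell-1)!!$.

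To evaluate $C(n,\ell)$ I would split the matchings according to which of the $\ell$ index-pairs lie on cycles. If $j$ pairs lie on cycles, their contribution is $\binom{\ell}{j}\prod_{i=0}^{j-1}(7+2i)$, using the elementary fact that summing $x^{c'}$ over all perfect matchings $M'$ of a $2j$-element set, with $c'$ the number of cycles of $M'$ together with a fixed perfect matching of the same set, equals $\prod_{i=0}^{j-1}(x+2i)$. The remaining $\ell-j$ pairs must form paths whose $2(\ell-j)$ ends are injected into the $2(n-\ell)$ $\gamma$-slots, the leftover $\gamma$-slots being matched arbitrarily; since $r$ index-pairs can be strung into $r!\,2^{r-1}$ distinct paths, a short generating-function computation evaluates this ``path part''. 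After collecting terms, writing $\prod_{i=0}^{j-1}(7+2i)=(2j+5)!!/5!!$, and clearing the common factor $(2n-2\ell-1)!!$, the required equality becomes exactly Bandiera's Lemma~\Ref{lmm}{ideban} applied with $k=n-\ell-1$ and with its ``$n$'' taken to be $\ell+3$ (so that $(2\tilde n-2\ell-1)!!=5!!$ and the right-hand side is $\frac{(2n+5)!!}{(2n-2\ell+5)!!}$). The degenerate case $\ell=n$ (no $\gamma$) is handled directly: only cycles survive, and $C(n,n)=\prod_{i=0}^{n-1}(7+2i)=\frac{(2n+5)!!}{5!!}$, which agrees with the formula.

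The hard part will be the middle step: organizing the $g$-edges and the matching edges into cycles and paths, verifying that the exponent of $q(\gamma)$ is uniformly $n-\ell$ and that each cycle contributes exactly the factor $b_2(X)=7$, and then recognizing the resulting double-factorial sum as an instance of Lemma~\Ref{lmm}{ideban}. A more hands-on alternative would be to reduce to $X=K_n(A)$ by deformation invariance, insert the explicit expression~\eqref{qudukum} for $q^{\vee}_{K_n(A)}$, expand $(q^{\vee})^{\ell}$ by the binomial theorem into terms $P^{\ell-i}\smile\xi_n^{2i}$ with $P=2\sum_s\mu_2(e_s)\smile\mu_2(f_s)$, and evaluate each $\int P^{\ell-i}\smile\xi_n^{2i}\smile\gamma^{2n-2\ell}$ by the polarized Fujiki relation using $(\mu_2\alpha,\xi_n)=0$; this leads to the same combinatorial sum and again reduces to Lemma~\Ref{lmm}{ideban}.
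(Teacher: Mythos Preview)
Your argument is correct and takes a genuinely different route from the paper's. The paper first invokes Fujiki's theorem to reduce to determining a single constant $C_n^{\ell}$, then specializes to $X=K_n(A)$ and $\gamma=\xi_n$, expands $(q^{\vee})^{\ell}$ via the explicit formula~\eqref{qudukum}, evaluates each $\int_{K_n(A)}\sigma_n^{i}\smile\xi_n^{2n-2i}$ by the polarized Fujiki relation, and recognizes the resulting sum as Bandiera's Lemma~\ref{lmm:ideban} with $k=2$. You instead stay on an arbitrary $X$, feed $q^{\vee}=\sum_{s,t}g^{st}h_s\smile h_t$ directly into the polarized Fujiki relation, and organize the outcome as a sum over perfect matchings weighted by $7^{\,c(M)}$; after the substitution $j=\ell-i$ your identity is indeed Bandiera's Lemma with $k=n-\ell-1$ and $\tilde n=\ell+3$, as one checks line by line. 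Your route is more conceptual and makes it transparent that the answer depends on $X$ only through the Fujiki constant and $b_2(X)$: the same argument yields the analogous formula for any deformation type, with $7$ replaced by $b_2$. The paper's route is more hands-on but shorter once one commits to $\gamma=\xi_n$. Your ``hands-on alternative'' at the end is essentially the paper's own proof.

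Two remarks on execution. First, your ``path part'' is stated loosely: the phrase ``$2(\ell-j)$ ends'' is only literally correct when every remaining $g$-pair is its own path, though your comment about $r!\,2^{r-1}$ shows you know pairs can concatenate. Second, you can bypass Bandiera entirely here: setting $\widetilde C(r,m):=\sum_M x^{c(M)}$ (sum over perfect matchings of $r$ $g$-pairs together with $m$ extra $\gamma$-slots), looking at the $M$-partner of a fixed $\gamma$-slot gives the recursion $\widetilde C(r,m)=(m-1)\widetilde C(r,m-2)+2r\,\widetilde C(r-1,m)$, which with the boundary values $\widetilde C(r,0)=\prod_{i=0}^{r-1}(x+2i)$ solves immediately to $\widetilde C(r,m)=(m-1)!!\prod_{i=0}^{r-1}(x+m+2i)$. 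Taking $x=7$, $r=\ell$, $m=2n-2\ell$ gives $C(n,\ell)=(2n-2\ell-1)!!\cdot(2n+5)!!/(2n-2\ell+5)!!$ directly.
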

\begin{proof}
By a theorem of Fujiki~\cite{fujiki}, there exists a rational number $C_n^{\ell}$ (independent of $X$) such that 
\begin{equation}\label{propoli}
\int\limits_{[X]}(q^{\vee})^{\ell}\smile \gamma^{2n-2\ell}=C_n^{\ell} \cdot q(\gamma)^{n-\ell}\qquad \forall \gamma\in H^2(X). 
\end{equation}
In order to determine $C_n^{\ell}$, it suffices to compute the left hand side of~\eqref{propoli} for one $X$ and one $\gamma\in H^2(X)$ such that $q(\gamma)\not=0$. We will do the computation for  $X=K_n(A)$ and  $\gamma=\xi_n$. Let 
\begin{equation*}
\sigma_n:=\sum_{i=1}^3 \mu_2(e_i)\smile \mu_2(f_i)\in H^{2,2}_{\ZZ}(K_n(A)). 
\end{equation*}
Thus
\begin{equation*}
(q^{\vee}_{K_n(A)})^{\ell}=\left(2\sigma_n-\frac{1}{2(n+1)}\xi_n^2\right)^{\ell}=\sum_{i=0}^{\ell}{\ell\choose i}2^i\left(-\frac{1}{2(n+1)}\right)^{\ell-i}\sigma_n^i\smile \xi_n^{2(\ell-i)}. 
\end{equation*}
A straightforward computation shows that 
\begin{equation*}
\int_{K_n(A)}\sigma_n^i\smile\xi_n^{2n-2i}=(n+1)\frac{1}{2}i!(i+2)(i+1)(-2(n+1))^{n-i}(2n-2i-1)!!.
\end{equation*}
With some manipulations, it follows that
\begin{equation*}
\int_{K_n(A)}(q^{\vee}_{K_n(A)})^{\ell}\smile\xi_n^{2n-2\ell}=(n+1)q(\xi_n)^{n-\ell}\cdot 
\sum\limits_{i=0}^{\ell} \frac{\ell!}{(\ell-i)!}(i+2)(i+1)2^{i-1}(2n-2i-1)!!.
\end{equation*}
Thus it remains to show that
\begin{equation*}
\sum\limits_{i=0}^{\ell} \frac{\ell!}{(\ell-i)!}(i+2)(i+1)2^{i-1}(2n-2i-1)!!=\frac{(2n+5)!!}{(2n+5-2\ell)!!}(2n-2\ell-1)!! .
\end{equation*}
The above equality follows at once from the case $k=2$ of~\Ref{lmm}{ideban}.
\end{proof}
\begin{dfn}\label{dfn:qubarra}
If $X$ is a $2n$ dimensional hyperk\"ahler manifold  of Kummer type,  let $\ov{q}_X:=2(n+1)q^{\vee}_X$. 
\end{dfn}
The point of the above definition is that  $\ov{q}\in H^{2,2}_{\ZZ}(K_n(A))$ (by~\eqref{qudukum}).

\subsection{On the integral cohomology  of generalized Kummers}\label{subsec:intcohom}
\setcounter{equation}{0}
  We will prove the following two results.
\begin{prp}[Contained in~\cite{cogekum} for $n=2$.]\label{prp:accatre}
Let $\beta\in H^1(A;\ZZ)$. Then $\nu_3(\beta)$ is divisible by $2$ in $H^3(K_n(A);\ZZ)$. 
\end{prp}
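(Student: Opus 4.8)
The plan is to reduce the divisibility statement to a computation on the Hilbert scheme $A^{[n+1]}$, where the class $\wt\nu_3(\beta)$ is defined via the incidence variety $\Gamma_{n+1}(A)$, and then to exploit the structure of the cohomology ring together with the relation between $\nu_3$ and the Nakajima-type operators. First I would recall that $\nu_3(\beta) = \wt\nu_3(\beta)|_{K_n(A)}$, and that $H^3(A;\ZZ) = \bigwedge^3 H^1(A;\ZZ)$ is torsion-free; the group $H^1(A;\ZZ)$ has rank $4$. The claim is that for each $\beta \in H^1(A;\ZZ)$, the class $\nu_3(\beta) \in H^3(K_n(A);\ZZ)$ lies in $2\cdot H^3(K_n(A);\ZZ)$.

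The key idea I would pursue is to compare $\nu_3(\beta)$ with cup products of \emph{known} integral classes, following the strategy that Totaro~\cite{tothilb} used for Hilbert schemes and that is announced in~\Ref{subsec}{intcohom} as the method for extending the $n=2$ case of Kapfer--Menet~\cite{cogekum}. Concretely, $H^3(K_n(A);\ZZ)$ should be generated (modulo torsion, but $H^3$ is in fact torsion-free here) by the classes $\mu_2(\alpha)\smile \nu_1(\beta')$-type products coming from $H^2(A)\otimes H^1(A)$, together with classes of the form $\nu_3(\beta)$ and $\mu_1$-type classes; the point is to find an \emph{explicit integral generating set} and show that $\nu_3(\beta)$ is an even combination, or better, to produce a natural integral class $\zeta$ with $2\zeta = \nu_3(\beta)$. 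A clean way to do this: the divisor $\wt\Delta_{n+1}(A)$ is $2$-divisible and $\wt\xi_{n+1} = \tfrac12 c_1(\cO(\wt\Delta_{n+1}))$; the operator $\wt\nu_3$ is built from the cycle $\Gamma_{n+1}(A)$, which maps onto $\wt\Delta_{n+1}$ under $q$. I would try to show that $q_*([\Gamma_{n+1}(A)] \cap p^*\beta)$ is, up to a factor of $2$, Poincaré dual to an honest integral cohomology class by relating it to $\wt\xi_{n+1}\smile (\text{boundary restriction of } \mu_1(\beta))$ — i.e., the excess-intersection or self-intersection contribution of the boundary divisor accounts for exactly a factor $2$. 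This mirrors how $\wt\Delta$ being $2$-divisible forces $2 \mid \wt\nu_3$.

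The main obstacle I expect is making the comparison on $\Gamma_{n+1}(A)$ precise: one needs to understand the normal bundle of (a resolution of) $\Gamma_{n+1}(A)$ inside $A^{[2]}\times A^{[n+1]}$ and to identify the pushforward $q_*$ with a class that is manifestly twice an integral class. This is exactly the kind of painful geometric bookkeeping that the paper flags as the bulk of~\Ref{sec}{genkum}. The cleanest route is probably to pass to the universal setting: let $\Xi \subset A \times A^{[n+1]}$ be the universal subscheme, and express $\wt\nu_3(\beta)$ via $\pi_{A^{[n+1]},*}(\pi_A^*\beta \smile \gamma)$ for a suitable integral class $\gamma$ supported on the locus of non-reduced subschemes; the factor of $2$ then comes from the multiplicity $2$ with which the length-$2$ fat point sits in that locus (equivalently, the double cover $\Gamma_{n+1}\to$ its image recording a tangent direction has degree related to the ramification). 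Finally I would restrict everything from $A^{[n+1]}$ to $K_n(A)$, using that restriction is a ring homomorphism preserving integrality, so $2$-divisibility is inherited; care is needed only to check that the restriction of the auxiliary integral class is still integral and that no torsion is introduced, which follows since $H^3(K_n(A);\ZZ)$ is torsion-free (as recorded implicitly by the decomposition~\eqref{accaduekum}-type results and the computation of $b^3 = 8$ in~\cite{gothesis}).
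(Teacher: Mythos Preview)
Your proposal has a genuine gap: you correctly reduce to showing that $\wt{\nu}_3(\beta)$ is $2$-divisible in $H^3(A^{[n+1]};\ZZ)$, and you correctly cite Totaro as the source of the method, but the strategy you then describe is not Totaro's argument and is not carried out. You propose to construct an explicit integral class $\zeta$ with $2\zeta=\wt{\nu}_3(\beta)$ via excess-intersection or a cup-product formula like $\wt{\xi}_{n+1}\smile\wt{\mu}_1(\beta)$; no such formula is established, and the normal-bundle bookkeeping you allude to is never made precise. The heuristic ``the divisor $\wt{\Delta}$ is $2$-divisible, so classes built from it should be too'' does not by itself produce an integral halving of a Gysin pushforward.

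The paper's proof is both different and cleaner. One does \emph{not} construct an explicit half; instead one works modulo $2$. First, a codimension argument (\Ref{lmm}{restisom}) shows that the restriction map $H^3(A^{[n+1]};\ZZ)\to H^3(U_{n+1};\ZZ)$ is an isomorphism, where $U_{n+1}$ is the open set supporting only the two largest strata. Then one must show that the mod-$2$ reduction $\wt{\nu}_3(\ov{\beta})|_{U_{n+1}}$ vanishes. Via the long exact sequence of the pair $(U_{n+1},A^{[n+1]}_{(1,\ldots,1)})$ with $\FF_2$ coefficients and Thom isomorphism, this class is identified with the image of $\tau^{*}(\ov{\beta})\in H^1(A^{[n+1]}_{(2,1,\ldots,1)};\FF_2)$ under the map $\pi$ in the exact sequence. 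Totaro's Lemma~3.1 in~\cite{tothilb} then says precisely that $\tau^{*}(\ov{\beta})$ lies in the image of the boundary map $\partial$, hence dies under $\pi$. That is the actual content of ``adapting Totaro''.

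A side remark: your claim that $H^3(K_n(A);\ZZ)$ is torsion-free is not justified; the paper systematically writes $H^3(K_n(A);\ZZ)/\Tors$ for this reason. Fortunately the divisibility statement is in $H^3(K_n(A);\ZZ)$ itself and follows by restriction once one has it on $A^{[n+1]}$, so torsion is not an issue for the proposition, but you should not invoke torsion-freeness as part of the argument.
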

\begin{rmk}
Let  $\beta\in H^1(A;\ZZ)$. By~\Ref{prp}{accatre}, there is a well defined $\nu_3(\beta)/2\in H^3(K_n(A);\ZZ)/\Tors$
\end{rmk}
\begin{thm}[Proposition 6.2 in~\cite{cogekum} for $n=2$.]\label{thm:accatre}
The map 
\begin{equation}\label{bronte}
\begin{matrix}
H^3(A)\oplus H^1(A)(-1) & \overset{\mathsf F}{\lra} & H^3(K_n(A))/\Tors \\
%.
(\alpha,\beta) & \mapsto & \mu_3(\alpha)+\nu_3(\beta)/2
\end{matrix}
\end{equation}
is an isomorphism of integral 
Hodge structures. 
\end{thm}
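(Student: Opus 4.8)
The plan is to prove that $\mathsf{F}$ is an isomorphism of integral Hodge structures by first checking the Hodge-theoretic compatibility, then rationally that it is an isomorphism, and finally that it is integral. The Hodge structure statement is cheap: $\mu_3$ is induced by the inclusion $\mu_2$ (up to Tate twists) via the explicit geometric cycles defining $\mu_m$ and $\nu_m$, both of which are algebraic correspondences between $A$ (or its powers) and $K_n(A)$; hence $\mathsf F$ is a morphism of Hodge structures, and the Tate twist $(-1)$ on the second summand is forced by the fact that $\nu_3$ raises cohomological degree by $2$ from $H^1(A)$. So the content is the isomorphism assertion.

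For the rational isomorphism, I would invoke the representation-theoretic description of $H^*(K_n(A))$. One knows $b_3(K_n(A)) = 8$ (cited to \cite{gothesis}), while $\dim(H^3(A)\oplus H^1(A)) = 4+4 = 8$, so it suffices to prove injectivity, or equivalently that the images of $\mu_3$ and $\nu_3/2$ together span. The cleanest route is to use that the full cohomology of generalized Kummers is generated (over $\QQ$) by the classes $\mu_m(\alpha)$, $\nu_m(\beta)$, and $\xi_n$, following the Lehn--Sorger / Nakajima--type description already invoked elsewhere in the paper, and to observe that in degree $3$ the only contributions are $\mu_3(H^3(A))$ and $\nu_3(H^1(A))$ — there is no room for $\xi_n$-decorated classes of the wrong parity. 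A short nondegeneracy check, e.g. pairing $\mu_3(\alpha)$ and $\nu_3(\beta)$ against complementary classes using Poincaré duality and the Fujiki-type integral formulas established in~\Ref{prp}{bellaform} and its analogues, shows the map is injective; together with the dimension count this gives a rational isomorphism. Alternatively one can cite the $n=2$ case from \cite{cogekum} and run the Totaro-style degeneration/specialization argument the paper already uses in~\Ref{subsec}{intcohom} to reduce the general $n$ to controllable pieces.

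The integral statement is the heart of the matter and the main obstacle. We must show $\mathsf F$ is surjective onto $H^3(K_n(A);\ZZ)/\Tors$ and that its image is saturated — equivalently that $H^3(A)\oplus H^1(A)(-1)$ with the stated lattice structure (note $\nu_3(\beta)/2$, legitimate by~\Ref{prp}{accatre}) maps onto a unimodular-complement sublattice. The strategy is the one flagged in the introduction: adapt Totaro's arguments from~\cite{tothilb} on the integral cohomology of Hilbert schemes to the generalized Kummer setting. Concretely, I would (i) first establish~\Ref{prp}{accatre}, that $\nu_3(\beta)$ is $2$-divisible integrally — this should follow by identifying $\nu_3(\beta)$ with (twice) a geometric class supported on the exceptional/diagonal locus $\Gamma_n$, much as $\wt\Delta_n$ is $2$-divisible, using that the generic fibre of $p\colon\Gamma_n(A)\to A$ contributes a $\PP^1$-bundle-type class whose fundamental class is even; (ii) then bound $H^3(K_n(A);\ZZ)$ from above by restricting from $A^{[n+1]}$ (whose integral $H^3$ is understood via Totaro) along the closed embedding $K_n(A)\hookrightarrow A^{[n+1]}$ and using the known structure of $H^*(A^{[n+1]};\ZZ)$ together with the summation-map fibration $A^{[n+1]} \to A$ whose fibre is $K_n(A)$; (iii) finally match the upper bound with the explicit sublattice $\mu_3(H^3(A;\ZZ)) + \ZZ\cdot\nu_3(H^1(A;\ZZ))/2$, checking no index remains. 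The delicate point throughout is torsion: one must verify that the classes $\mu_3(\alpha)$ and $\nu_3(\beta)/2$ are not merely rational combinations of integral classes but generate the torsion-free quotient on the nose, which is exactly where the Kapfer--Menet computation for $n=2$ and Totaro's inductive machinery are needed. I expect step (iii), the exact saturation/index-$1$ verification in arbitrary dimension, to be where most of the work lies.
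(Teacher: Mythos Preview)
Your outline is reasonable but takes a genuinely different route from the paper for the integral step, and in one place your strategy is vaguer than it should be.

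The paper agrees with you on the preliminary points: $\mathsf F$ is a morphism of Hodge structures because $\mu_3$ and $\nu_3$ come from algebraic correspondences, it is integral by~\Ref{prp}{accatre}, and the target has rank $8$ by G\"ottsche. Where you diverge is in proving that the image is saturated. You propose to bound $H^3(K_n(A);\ZZ)$ ``from above'' by restricting from $A^{[n+1]}$ and invoking Totaro's description of the Hilbert scheme, then to match this against the explicit sublattice. The paper does something far more direct: it writes down, by hand, eight explicit smooth oriented real $3$-manifolds $\Gamma_1,\ldots,\Gamma_4,\Theta_1,\ldots,\Theta_4\subset K_n(A)$ (built from slices $\Sigma_i,\Omega_i\subset A$ and a fixed set of points) and computes the $8\times 8$ matrix of evaluations of $\mu_3(\eta_i^{\vee})$ and $\nu_3(\eta_j)/2$ on these cycles. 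The matrix is block upper-triangular with $\pm 1$ on the diagonal, hence has determinant $\pm 1$, so the image is a saturated rank-$8$ subgroup --- done. No spectral sequences, no comparison with $A^{[n+1]}$, no inductive machinery.

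Two remarks on your proposed route. First, Totaro's argument from~\cite{tothilb} is used in the paper only to prove~\Ref{prp}{accatre} (the $2$-divisibility), not the theorem itself; you are overloading it. Second, your step~(ii) is not quite coherent as stated: restriction $H^3(A^{[n+1]};\ZZ)\to H^3(K_n(A);\ZZ)$ produces classes in the target, not an upper bound on it. To get control in the direction you want you would need either the Leray spectral sequence for the fibration $\sigma\colon A^{[n+1]}\to A$, or K\"unneth for the finite \'etale cover $K_n(A)\times A\to A^{[n+1]}$ (which is degree $(n+1)^4$, not an isomorphism), and then disentangle the contributions --- doable, but considerably more work than the paper's two-paragraph pairing computation.
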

We recall that $A^{(r)}$ is naturally stratified, with strata indexed by partitions of $r$. The stratification of $A^{(r)}$  defines  a stratification of
  $A^{[r]}$ via pull back by the Hilbert-Chow map. Let $\lambda=(\lambda_1,\ldots,\lambda_s)$ be a partition of $r$, where $\lambda_1\ge \lambda_2\ge\ldots\ge \lambda_s$. The stratum $A^{[r]}_{\lambda}$ is equal to the set of $Z$ such that $|Z|=\lambda_1 a_1+\ldots+\lambda_s a_s$, where the points $a_1,\ldots, a_s\in A$ are pairwise distinct. Each stratum is irreducible, and 
\begin{equation}\label{dimstrat}
\dim A^{[r]}_{\lambda}=r+s.
\end{equation}
  Since  
$\{A^{[r]}_{\lambda}\}_{\lambda\in\cP_{r}}$ is a stratification, the dimension formula~\eqref{dimstrat} shows that 
\begin{equation}\label{urvur}
\scriptstyle
U_r:=A^{[r]}_{(1,\ldots,1)}\sqcup A^{[r]}_{(2,1,\ldots,1)},\quad 
V_r:=A^{[r]}_{(1,\ldots,1)}\sqcup A^{[r]}_{(2,1,\ldots,1)}\sqcup A^{[r]}_{(3,1,\ldots,1)}\sqcup A^{[r]}_{(2,2,1,\ldots,1)}
\end{equation}
are open (dense) subsets of $A^{[r]}$.
\begin{lmm}\label{lmm:restisom}
The restriction map $H^3(A^{[r]};\ZZ)\to H^3(U_r;\ZZ)$ is an isomorphism.
\end{lmm}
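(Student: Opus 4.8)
The plan is to prove that the restriction $H^3(A^{[r]};\ZZ) \to H^3(U_r;\ZZ)$ is an isomorphism by analyzing the complement $A^{[r]} \setminus U_r$ and using the long exact sequence of the pair (or Borel–Moore homology / cohomology with supports). By construction, $U_r = A^{[r]}_{(1,\dots,1)} \sqcup A^{[r]}_{(2,1,\dots,1)}$, so the complement $Z := A^{[r]} \setminus U_r$ is the union of the deeper strata $A^{[r]}_\lambda$ for partitions $\lambda$ other than $(1^r)$ and $(2,1^{r-2})$. By the dimension formula $\dim A^{[r]}_\lambda = r + s$ (where $s$ is the number of parts of $\lambda$), the biggest such stratum is $A^{[r]}_{(3,1^{r-3})}$ or $A^{[r]}_{(2,2,1^{r-4})}$, both of which have $s = r - 2$ parts, hence complex dimension $r + (r-2) = 2r - 2$, i.e.\ complex codimension $2$ in $A^{[r]}$ (which has dimension $2r$). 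So $Z$ is a closed subset of real codimension $4$.

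The key step is then a standard codimension estimate: if $Z \subset M$ is a closed subset of a complex manifold $M$ of real codimension $\ge c$, then the restriction map $H^k(M;\ZZ) \to H^k(M \setminus Z;\ZZ)$ is an isomorphism for $k \le c - 2$ and injective for $k = c - 1$. This follows from the long exact sequence $\cdots \to H^k_Z(M;\ZZ) \to H^k(M;\ZZ) \to H^k(M\setminus Z;\ZZ) \to H^{k+1}_Z(M;\ZZ) \to \cdots$ together with the vanishing $H^k_Z(M;\ZZ) = 0$ for $k < c$. To get this vanishing I would stratify $Z$ by smooth locally closed pieces (the $A^{[r]}_\lambda$ themselves are smooth, being quotients of products — or at worst one refines further), and run a Leray/Cousin spectral sequence or induct on the strata: each smooth stratum $S$ of complex codimension $d_S \ge 2$ in $M$ contributes, via its own cohomology with supports, nothing below degree $2 d_S \ge 4$. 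With $c = 4$ we get that $H^3(A^{[r]};\ZZ) \to H^3(U_r;\ZZ)$ is an isomorphism (since $3 \le c - 1 = 3$ we get injectivity automatically, and since $3 \le c - 1$... one needs $3 \le c-2 = 2$ for surjectivity, so I should be more careful — see below).

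Here is the subtlety, and where the real care is needed: with $c = 4$, the general codimension bound only gives isomorphism for $k \le 2$ and injectivity for $k = 3$. To upgrade to an isomorphism in degree $3$ I need the extra input that $H^4_Z(A^{[r]};\ZZ)$ injects into $H^4(A^{[r]};\ZZ)$, equivalently that $H^3(A^{[r]} \setminus Z) \to H^4_Z(A^{[r]})$ is zero. This is where I would use that the top-dimensional pieces of $Z$ — the codimension-$2$ strata $A^{[r]}_{(3,1^{r-3})}$ and $A^{[r]}_{(2,2,1^{r-4})}$ — are irreducible (stated in the excerpt) of complex codimension exactly $2$, so their contribution to $H^4_Z$ is generated by the fundamental classes of their closures, which are genuine algebraic cycles and hence map to nonzero (or at least their span is controlled) classes in $H^4(A^{[r]};\ZZ)$; more precisely the Gysin map $H^0(\overline{A^{[r]}_\lambda}) \to H^4(A^{[r]})$ on fundamental classes is what must be injective. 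Equivalently, one argues that $H^3(U_r;\ZZ)$ has no classes that fail to extend, because any obstruction lives in the cokernel of restriction, which by the exact sequence is a subgroup of $H^4_Z$ built from cycle classes that are already visible on $A^{[r]}$. I expect this last point — controlling $H^4_Z$ precisely rather than just bounding its degree of appearance — to be the main obstacle, and I would handle it by combining the irreducibility of the relevant strata with Totaro's method (referenced in the excerpt as \cite{tothilb}), i.e.\ comparing with the analogous statement for $S^{[r]}$ and using the known structure of the boundary divisor $\wt\Delta_r$ and its self-intersections. The remaining lower strata (codimension $\ge 3$) contribute only in degree $\ge 6$ and are harmless.
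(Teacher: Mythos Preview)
Your overall architecture is correct and essentially matches the paper's: use the long exact sequence of the pair, get injectivity from $H^3_Z=0$ (real codimension $\ge 4$), and reduce surjectivity to showing that the map $H^4_Z(A^{[r]};\ZZ)\to H^4(A^{[r]};\ZZ)$ is injective, which amounts to the $\ZZ$-independence of the fundamental classes of the codimension-$2$ strata $\overline{A^{[r]}_{(3,1,\ldots,1)}}$ and $\overline{A^{[r]}_{(2,2,1,\ldots,1)}}$. You correctly flag this as the crux.

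However, you do not actually prove that crux, and your proposed route to it does not work as stated. Saying that these are ``genuine algebraic cycles and hence map to nonzero (or at least their span is controlled)'' is not an argument: algebraic cycle classes can perfectly well vanish or become dependent in integral cohomology, and irreducibility of the strata says nothing about independence of their classes. Your invocation of Totaro's paper is also misplaced here; in the present paper \cite{tothilb} is used later for a mod-$2$ computation, not for this lemma.

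The paper closes the gap in two moves. First, it introduces the intermediate open set $V_r\supset U_r$ (adjoining the two codimension-$2$ strata); the complement $A^{[r]}\setminus V_r$ has complex codimension $\ge 3$, so $H^3(A^{[r]};\ZZ)\to H^3(V_r;\ZZ)$ is an isomorphism by the naive codimension bound with no further work. This also makes $V_r\setminus U_r$ a disjoint union of two \emph{smooth} submanifolds inside $V_r$, so excision and the Thom isomorphism give $H^4(V_r,U_r;\ZZ)\cong\ZZ$ or $\ZZ^2$ cleanly. Second, to prove the injectivity of $\rho_r\colon H^4(V_r,U_r;\ZZ)\to H^4(V_r;\ZZ)$, the paper specializes to $A=E\times F$ a product of elliptic curves and constructs explicit projective surfaces $\Sigma,\Omega\subset A^{[r]}$ on which the classes $[A^{[r]}_{(3,1,\ldots,1)}]$ and $[A^{[r]}_{(2,2,1,\ldots,1)}]$ evaluate to a nondegenerate $2\times 2$ matrix. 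That intersection computation is the missing ingredient in your sketch.
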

\begin{proof}
The complement of the open $V_r\subset A^{[r]}$ has (complex) codimension $3$; it follows by a standard argument  that the map $H^3(A^{[r]};\ZZ)\to H^3(V_r;\ZZ)$ is an isomorphism. Thus it suffices to prove that  the map $H^3(V_r,\ZZ)\to H^3(U_r,\ZZ)$ is an isomorphism. (Notice that $V_r\supset U_r$.) If $r\le 2$, then $V_r=U_r$, and hence we are done. From now on we assume that $r\ge 3$. 
A piece of the long exact sequence of cohomology with $\ZZ$ coefficients for the couple $(V_r,U_r)$ reads
\begin{equation*}
H^3(V_r,U_r;\ZZ)\lra H^3(V_r;\ZZ)\lra H^3(U_r;\ZZ)\lra H^4(V_r,U_r;\ZZ)\overset{\rho_r}{\lra} H^4(V_r;\ZZ)
\end{equation*}
By excision and Thom's isomorphism, $H^3(V_r,U_r;\ZZ)=0$, hence $H^3(V_r;\ZZ)\lra H^3(U_r;\ZZ)$ is injective. On the other hand,  excision and Thom's isomorphism give that
$$H^4(V_r,U_r;\ZZ)\cong
\begin{cases}
\ZZ & \text{if $r=3$,} \\
\ZZ^2  & \text{if $r\ge 4$,}
\end{cases}
\quad 
\im\rho_r=
\begin{cases}
\la [A^{[r]}_{(3,1,\ldots,1)}]\ra, & \text{if $r=3$,} \\
\la [A^{[r]}_{(3,1,\ldots,1)}], [A^{[r]}_{(2,2,1,\ldots,1)}]\ra,  & \text{if $r\ge 4$,}
\end{cases}
$$
where $[A^{[r]}_{(3,1,\ldots,1)}]$ and $[A^{[r]}_{(2,2,1,\ldots,1)}]$ are the fundamental classes of $A^{[r]}_{(3,1,\ldots,1)}$ and $A^{[r]}_{(2,2,1,\ldots,1)}$ respectively.  In order to finish the proof it suffices to show that $\rho_r$ is injective, i.e.~that $[A^{[r]}_{(3,1,\ldots,1)}], [A^{[r]}_{(2,2,1,\ldots,1)}]$ are independent over $\ZZ$ (if $r=3$ this is to be interpreted as stating that $[A^{[r]}_{(3,1,\ldots,1)}]$ is not a torsion class). We may assume that $A$ is a special abelian surface; we will assume that $A=E\times F$, where $E,F$ are elliptic curves. Given $q\in F$, we let $i_q\colon E\hra E\times F$ be the embedding as the slice $E\times\{q\}$. Let $D\subset E^{(3)}$ be a generic very ample divisor. Thus $D$ meets the curve $\{3p\mid p\in E\}$ in a finite non empty set. 
 Choose $q\in F$, and  let $x_1,\ldots,x_{r-3}$ be pairwise distinct points of $A\setminus i_{q}(E)$. Let    $\Sigma\subset  A^{[r]}$ be defined by
 $$\Sigma:=\{ i_q(Z)\sqcup \{x_1,\ldots,x_{r-3}\} \mid Z\in D\}.$$
 If $r\ge 4$, choose distinct $q_1,q_2\in F$, and let  $y_1,\ldots,y_{r-4}$ be pairwise distinct points of $A\setminus i_{q_1}(E)\setminus  i_{q_2}(E)$. Let  $\phi\colon E\to\PP^1$ be a degree $2$ map.  
 We let $D_E\subset E^{(2)}$  be the $g^1_2$ defined by $\phi$, i.e.~$D_E:=\{\phi^{*}(p)\mid p\in\PP^1\}$.  Let $\Omega\subset A^{[r]}$ be defined  by
 $$\Omega:=\{ i_{q_1}(W)\sqcup  i_{q_2}(Z)\sqcup \{y_1,\ldots,y_{r-4}\} \mid W,Z\in D_E\}.$$
 Both $\Sigma$ and $\Omega$ are projective, and have  (pure) dimension $2$. Thus we  
   may evaluate the classes $[A^{[r]}_{(2,2,1,\ldots,1)}]$ and $[A^{[r]}_{(3,1,\ldots,1)}]$ on $\Sigma$ and $\Omega$.  Now notice that $\Sigma$ meets $A^{[r]}_{(3,1,\ldots,1)}$ in  a finite non empy set, and that $\Omega$  meets $A^{[r]}_{(2,2,\ldots,1)}$ in a finite non empty set, and it does not  meet $A^{[r]}_{(3,1,\ldots,1)}$. It follows that the $2\times 2$ matrix 
 describing the evaluation of the classes  $[A^{[r]}_{(3,1,\ldots,1)}]$, $[A^{[r]}_{(2,2,1,\ldots,1)}]$  on 
 $\Sigma$ and $\Omega$ is non degenerate.  (If $r=3$, this is to  be interpreted as stating that  the evaluation of the class $[A^{[r]}_{(3,1,\ldots,1)}]$ on  $\Sigma$ is non zero). This proves that $\rho_r$ is injective.
\end{proof}
\begin{proof}[Proof of~\Ref{prp}{accatre}]
It suffices to prove that $\wt{\nu}_3(\beta)\in H^3(A^{[n+1]};\ZZ)$ is divisible by $2$. Let $U_{n+1}\subset A^{[n+1]}$ be the open dense subset defined in~\eqref{urvur}; by~\Ref{lmm}{restisom} it suffices to prove that $\wt{\nu}_3(\beta)|_{U_{n+1}}$ is divisible by $2$ in 
$H^3(U_{n+1};\ZZ)$. Let $\ov{\beta}\in H^1(A;\FF_2)$ be the reduction modulo $2$ of $\beta$; thus  
$\wt{\nu}_3(\ov{\beta})$ is  the reduction modulo $2$ of $\wt{\nu}_3(\beta)$. We must show that 
\begin{equation}\label{modue}
 \wt{\nu}_3(\ov{\beta})|_{U_{n+1}}=0.
\end{equation}
A piece of the long exact sequence of cohomology with $\FF_2$ coefficients for the couple $(U_{n+1},A^{[n+1]}_{(1,1,\ldots,1)})$ reads
\begin{equation}\label{coppiadue}
\scriptstyle
H^2(A^{[n+1]}_{(1,1,\ldots,1)};\FF_2)\overset{\partial}{\lra} H^3(U_{n+1},A^{[n+1]}_{(1,1,\ldots,1)};\FF_2)
\overset{\pi}{\lra} H^3(U_{n+1};\FF_2)\lra H^3(A^{[n+1]}_{(1,1,\ldots,1)};\FF_2)
\end{equation}
Thom's isomorphism gives an identification  
\begin{equation}\label{datreauno}
H^3(U_{n+1},A^{[n+1]}_{(1,1,\ldots,1)};\FF_2)\cong H^1(A^{[n+1]}_{(2,1,\ldots,1)};\FF_2).
\end{equation}
 Let $\tau\colon 
A^{[n+1]}_{(2,1,\ldots,1)}\to S$ be the composition of  $A^{[n+1]}_{(2,1,\ldots,1)}\to S\times S^{(n-1)}$ (the restriction of Hilbert-Chow) and the projection 
$S\times S^{(n-1)}\to S$.  Then
\begin{equation}\label{thompoin}
\pi(\tau^{*}(\ov{\beta}))=\wt{\nu}_3(\ov{\beta})|_{U_{n+1}}.
\end{equation}
 (The above equation makes sense by~\eqref{datreauno}).  By Lemma~3.1 in~\cite{tothilb} (Totaro's Lemma is stated for $n=1$, but the same proof gives the statement in general), $\tau^{*}(\ov{\beta})\in\im(\partial)$, and hence~\eqref{modue} holds.
\end{proof}
\begin{proof}[Proof of~\Ref{thm}{accatre}]
The map in~\eqref{bronte} is a morphism of Hodge structures, integral by~\Ref{prp}{accatre}, hence we are left with the task of proving  that it defines an isomorphism 
between $H^3(A;\ZZ)\oplus H^1(A;\ZZ)$ and $H^3(K_n(A);\ZZ)/\Tors$. We proceed as in the proof of Proposition 6.2 in~\cite{cogekum}.

Let $\{\eta_1,\eta_2,\eta_3,\eta_4\}$ be an \emph{oriented} basis of $H^1(A;\ZZ)$, i.e.~such that $\eta_1\smile \ldots\smile \eta_4$ is the orientation class, and let $\{\eta^{\vee}_1,\ldots,\eta^{\vee}_4\}$ be the dual  basis.
Since we have the perfect pairing
\begin{equation}\label{31duality}
\begin{matrix}
H^3(A)\times  H^1(A) & \overset{\la,\ra}{\lra} & \CC\\
(\alpha,\beta) & \mapsto & \left(\beta\mapsto \int_{A}\alpha\smile\beta\right)
\end{matrix}
\end{equation}
we may view each $\eta_i^{\vee}$ as an element of $H^3(A;\ZZ)$.  Let $\Sigma_1,\ldots,\Sigma_4\subset A$ be generic smooth oriented $1$-manifolds representing the Poincar\'e duals of  $\eta^{\vee}_1,\ldots,\eta^{\vee}_4$, and let  $\Omega_1,\ldots,\Omega_4\subset A$ be generic smooth oriented $2$-manifolds representing the Poincar\'e duals of  $\eta_4\smile \eta_1,\eta_1\smile \eta_2,\eta_2\smile \eta_3,\eta_3\smile \eta_4$.  Choose generic (distinct) points $x_1,\ldots,x_{n-2},y_1,\ldots,y_{n-2}\in A$. Let $\Gamma_1,\ldots,\Gamma_4,\Theta_1,\ldots,\Theta_4\subset K_n(A)$ be the smooth oriented $3$ manifolds 
\begin{eqnarray*}
%\scriptstyle
%
\Gamma_i & := & \{(Z_0\sqcup \{x_1,\ldots,x_{n-2}\})\in K_n(A) \mid Z_0\cap \Sigma_i\not=\es,\ Z_0\cap \Omega_i\not=\es\},\\
\Theta_j & := & \{(Z_0\sqcup \{x_1,\ldots,x_{n-2}\})\in K_n(A) \mid |Z_0|=2p+q,\ \ p\in \Sigma_j\}.
\end{eqnarray*}
A straightforward computation shows that the $8\times 8$ matrix whose entries are the evaluations of the classes $\mu_3(\eta^{\vee}_1),\ldots,\mu_3(\eta^{\vee}_4),\nu_3(\eta_1 /2),\ldots,\nu_3(\eta_4 /2)$ on the $3$-homology classes represented by $\Sigma_1,\ldots,\Sigma_4,\Theta_1,\ldots,\Theta_4$ is 
 a  matrix $\begin{pmatrix}
C & * \\
0_{4,4} & D
\end{pmatrix}$, where $C,D$ are diagonal matrices with entries
 $\pm 1$ on the diagonals. This proves that the image  of $H^3(A;\ZZ)\oplus H^1(A;\ZZ)$ under the map in~\eqref{bronte} is a  rank $8$ saturated subgroup of  $H^3(K_n(A);\ZZ)/\Tors$. By G\"ottsche~\cite{gothesis} the rank of the latter is $8$ , and hence~\Ref{thm}{accatre} follows. 
\end{proof}
\subsection{Structure of $\phi$ for $X$ a generalized Kummer}\label{subsec:struttura}
\setcounter{equation}{0}
Let $X$ be a HK of Kummer type, of dimension $2n$. Let $\gU_X\in H^{2n-4,2n-4}_{\ZZ}(X)$ be an integral  Hodge class which remains of Hodge type for all deformations of $X$. Thus $\gU_X$ might be $\ov{q}^{n-2}$, where $\ov{q}$ is as in~\Ref{dfn}{qubarra}, or a weight $4n-8$ poynomial in the Chern classes of $X$. We let  
\begin{equation}\label{ficonu}
\phi(\gU_X)\colon \bigwedge^2 H^3(X) \lra H^2(X)^{\vee}
\end{equation}
be the composition of the map 
\begin{equation*}
\begin{matrix}
\bigwedge^2 H^3(X) & \lra & H^{4n-2}(X)\\
%.
\gamma\wedge\gamma' & \mapsto & \gamma\smile \gamma'\smile \gU_X
\end{matrix}
\end{equation*}
and the map $H^{4n-2}(X)\to H^2(X)^{\vee}$ defined by cup product. Because of our hypothesis on $\gU_X$, the map $\phi(\gU_X)$ is a morphism of Hodge structures and is flat  for the Gauss-Manin connnection.

Now let $A$ be an abelian surface, and let $\gU=\gU_{K_n(A)}$. 
By~\Ref{prp}{accatre} have the isomorphism   $\mathsf\colon H^3(A)\oplus H^1(A)\overset{\sim}{\lra} H^3(K_n(A))$.   We let
\begin{equation*}
 \begin{matrix}
\bigwedge^2(H^3(A)\oplus H^1(A))  & \overset{\Phi(\gU)}{\lra} & H^2(K_n(A))^{\vee} \\
(\alpha,\beta)\wedge (\alpha',\beta') & \mapsto & \phi(\gU)({\mathsf F}( \alpha,\beta)\wedge {\mathsf F}(\alpha',\beta'))
\end{matrix}
\end{equation*}
We will describe the general structure of $\Phi(\gU)$.
\begin{ntn}\label{ntn:xiduale}
The projection  $H^2(K_n(A))\to H^2(A)$ defined by~\eqref{accaduekum}, defines an embedding $H^2(A)^{\vee}\hra H^2(K_n(A))^{\vee}$, and similarly we may define $\xi_n^{\vee}\in  H^2(K_n(A))^{\vee}$ to be  the function which is zero on $\mu_2(H^2(A))$, and takes the value $1$ on $\xi_n$.  
\end{ntn}
The codomain of $\Phi(\gU)$ is identified with $H^2(A)^{\vee}\oplus\CC\xi_n^{\vee}$. On the other hand  $H^2(A)$ is naturally identified with $\bigwedge^2 H^1(A)$, hence   $H^2(A)^{\vee}$ is naturally identified with $\bigwedge^2 H^1(A)^{\vee}$.  We also have the isomorphism $\bigwedge^2 \lambda\colon \bigwedge^2 H^1(A)^{\vee}\overset{\sim}{\to}\bigwedge^2 H^3(A)$
(see~\eqref{31duality}), hence we may write 
\begin{equation}\label{chiave}
\Phi(\gU)\colon \bigwedge^2(H^3(A)\oplus H^1(A)) \lra  \bigwedge^2 H^3(A)\oplus\CC\xi_n^{\vee}.
\end{equation}
\begin{dfn}
Let $\iota\colon \bigwedge^2 H^1(A)\overset{\sim}{\lra} \bigwedge^2 H^3(A)$ be the composition
\begin{equation}\label{stranacoppia}
 \bigwedge^2 H^1(A)\overset{\sim}{\lra} \bigwedge^2 H^1(A)^{\vee}\overset{\bigwedge^2\lambda}{\lra} \bigwedge^2 H^3(A),
\end{equation}
where  the first map 
is defined by wedge-product $\bigwedge^2 H^1(A)\times \bigwedge^2 H^1(A)  \lra  \CC$.
\end{dfn}
\begin{prp}\label{prp:eccotheta}
There exists $\vartheta(\gU)=(\vartheta_1(\gU),\vartheta_2(\gU),\vartheta_3(\gU))\in\ZZ^3$ such that 
\begin{equation}\label{espansione}
\scriptstyle
\Phi(\gU)((\alpha,\beta)\wedge(\alpha',\beta'))=\vartheta_1(\gU)\alpha\wedge \alpha'+\vartheta_2(\gU)\iota( \beta\wedge \beta')+\vartheta_3(\gU)(\la\alpha,\beta'\ra-\la \alpha',\beta\ra))\xi_n^{\vee}
\end{equation}
for all $(\alpha,\beta),(\alpha',\beta')\in H^3(A)\oplus H^1(A)$. (The expressions $\la\alpha',\beta\ra$, $\la\beta',\alpha\ra$ in~\eqref{espansione} are given by the perfect pairing in~\eqref{31duality}.)
\end{prp}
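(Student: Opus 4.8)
The plan is to exploit the monodromy invariance of the map $\phi(\gU)$ together with the decomposition of $\bigwedge^2(H^3(A)\oplus H^1(A))$ into irreducible pieces under a suitably large symmetry group. First I would fix an abelian surface $A$ and recall that $H^1(A)$ is a $4$-dimensional space; the relevant monodromy acting on $H^3(K_n(A))$ preserves the splitting $H^3(A)\oplus H^1(A)(-1)$ of \Ref{thm}{accatre} and acts on each summand compatibly with the natural action of $\Sp(H^1(A))$ (or, over $\QQ$, of the full $\SL(H^1(A))=\SL_4$). Indeed, since $H^2(A)=\bigwedge^2 H^1(A)$, the class $\xi_n$ is monodromy-invariant, and all the maps $\mu_2,\mu_3,\nu_3$ are natural with respect to automorphisms of $A$, so both $\phi(\gU)$ and hence $\Phi(\gU)$ are equivariant for this group action.

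Next I would decompose the source and target as $\SL_4$-representations. Writing $U=H^1(A)$, we have $H^3(A)\cong U^{\vee}$ (via the pairing in~\eqref{31duality}, using the chosen orientation, which trivializes $\bigwedge^4 U$), so the source is $\bigwedge^2(U^{\vee}\oplus U)=\bigwedge^2 U^{\vee}\oplus (U^{\vee}\otimes U)\oplus \bigwedge^2 U$, and the target is $\bigwedge^2 U^{\vee}\oplus\CC$ (the $\CC$ being the trivial representation spanned by $\xi_n^{\vee}$, and $H^2(A)^{\vee}=\bigwedge^2 U^{\vee}$). Then I would invoke Schur's lemma componentwise: $\Hom_{\SL_4}(\bigwedge^2 U^{\vee},\bigwedge^2 U^{\vee})=\CC$, giving one scalar $\vartheta_1$ for the $\alpha\wedge\alpha'$ term; $\Hom_{\SL_4}(\bigwedge^2 U,\bigwedge^2 U^{\vee})=\CC$ (the unique invariant pairing $\bigwedge^2 U\otimes\bigwedge^2 U\to\bigwedge^4 U=\CC$ composed with $\bigwedge^2\lambda$), giving the scalar $\vartheta_2$ multiplying $\iota(\beta\wedge\beta')$; and for the cross term $U^{\vee}\otimes U=\mathfrak{sl}_4\oplus\CC$, the $\mathfrak{sl}_4$-part maps to zero in both the $\bigwedge^2 U^{\vee}$ and the trivial summand, while the trivial part (spanned by the contraction $\alpha\otimes\beta\mapsto\la\alpha,\beta\ra$) can only map to $\CC\xi_n^{\vee}$, giving the scalar $\vartheta_3$; antisymmetry in $(\alpha,\beta)\leftrightarrow(\alpha',\beta')$ forces the $\la\alpha,\beta'\ra-\la\alpha',\beta\ra$ combination. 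Putting these together yields exactly the shape~\eqref{espansione} with complex coefficients $\vartheta_i(\gU)$.

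Finally I would promote the $\vartheta_i(\gU)$ to integers. Here I use that $H^3(K_n(A);\ZZ)/\Tors$ is computed integrally by \Ref{thm}{accatre}, that $\gU_X$ is an integral class, and that $H^2(K_n(A);\ZZ)^{\vee}$ is the target, so $\Phi(\gU)$ carries the integral lattice $H^3(A;\ZZ)\oplus H^1(A;\ZZ)$ into $H^2(K_n(A);\ZZ)^{\vee}$; evaluating~\eqref{espansione} on appropriate integral basis vectors (a standard symplectic basis of $U=H^1(A;\ZZ)$ and its image in $H^3(A;\ZZ)$, together with $\xi_n^{\vee}$ which is integral by \Ref{ntn}{xiduale}) shows each $\vartheta_i(\gU)\in\ZZ$. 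The main obstacle I anticipate is the bookkeeping around which group to use and over which field: a priori the monodromy of $R^3\pi_*\ZZ$ need not surject onto $\Sp(U)$, let alone $\SL_4$, so one must argue either that the Zariski closure of the monodromy is large enough for the Schur-lemma step to apply, or — more robustly — that the \emph{geometric} automorphisms of $A$ (translations and, for special $A$, extra isogenies) already act with enough symmetry, together with the structural naturality of $\mu_2,\mu_3,\nu_3$, to pin down the three unknown scalars; checking that $\Sp_4$-invariance alone (rather than $\SL_4$) already forces the stated shape, since $\bigwedge^2 U$ and $\bigwedge^2 U^{\vee}$ remain related by the symplectic form and the cross term still decomposes as (adjoint)$\oplus$(trivial), is the point that needs care.
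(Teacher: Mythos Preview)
Your approach is essentially the paper's: restrict $\Phi(\gU)$ to the three summands of $\bigwedge^2(H^3(A)\oplus H^1(A))$, use monodromy equivariance plus Schur's lemma to pin down three scalars, then invoke integrality. The decomposition and the Schur arguments you outline are exactly right once you have $\SL_4$-equivariance.

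The obstacle you flag is real, and your proposed fallback does not work. Under $\Sp_4$, the representation $\bigwedge^2 U$ is \emph{not} irreducible: it splits as the trivial representation (spanned by the symplectic form) plus a $5$-dimensional irreducible. Hence $\Hom_{\Sp_4}(\bigwedge^2 U^\vee,\bigwedge^2 U^\vee)$ is $2$-dimensional and $\Hom_{\Sp_4}(\bigwedge^2 U^\vee,\CC)$ is $1$-dimensional, so $\Sp_4$-invariance alone leaves too many free parameters to force the shape~\eqref{espansione}. Likewise, translations of $A$ act trivially on cohomology and give you nothing.

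The paper's resolution is the one you almost reach but do not state: deform $A$ through \emph{all} $2$-dimensional compact complex tori, not just abelian surfaces. The generalized Kummer $K_n(T)$ makes sense for any such $T$, and with no polarization to preserve, the monodromy image in $H^1(A;\ZZ)$ is the full $\SL_4(\ZZ)$. Zariski-closing gives $\SL_4(\CC)$-equivariance, and then your Schur-lemma analysis goes through verbatim. This single sentence is what your argument is missing.
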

\begin{proof}
Let $\Phi_i(\gU)$ be the restriction of $\Phi(\gU)$ to the $i$-th summand of the direct sum decomposition
\begin{equation*}
%\scriptstyle
%
\bigwedge^2(H^3(A)\oplus H^1(A))= \bigwedge^2 H^3(A)\oplus  \bigwedge^2 H^1(A)\oplus
  H^3(A)\otimes H^1(A).
\end{equation*}
Each of the maps $\Phi_i(\gU)$ is equivariant for the natural action of the monodromy group of $2$ dimensional compact complex tori  on domain and codomain. We stress that we deform $A$ to arbitrary $2$ dimensional compact complex tori, in general not projective; this makes sense because the generalized Kummer $K_n(T)$ is well-defined for an arbitrary $2$ dimensional compact complex torus. Since the images of the monodromy group  in $H^1(A;\ZZ)$ and $H^3(A;\ZZ)$ are the full (integral) special linear groups, each of the maps $\Phi_i(\gU)$ is equivariant for the natural actions of the (complex) groups 
 $\SL(H^1(A))$ and $\SL(H^3(A))$.
It follows that there exist $\vartheta_1(\gU),\vartheta_2(\gU),\vartheta_3(\gU)\in\CC$ such that $\Phi_1(\gU)=(\vartheta_1(\gU)\Id,0)$,  $\Phi_2(\gU)=(\vartheta_2(\gU)\iota,0)$,  and $\Phi_3(\gU)((\alpha,0)\wedge (0,\beta'))=\vartheta_3(\gU)\la \alpha,\beta'\ra\xi_n^{\vee}$. Since $\Phi(\gU)$ is integral, one gets that each $\vartheta_i(\gU)$ is an integer.
\end{proof}
\begin{dfn}\label{dfn:eccotheta}
Let $X$ be a HK manifold of Kummer type (of dimension  $2n\ge 4$), and let $\gU_X\in H^{2n-4,2n-4}_{\ZZ}(X)$ be an integral  Hodge class which remains of Hodge type for all deformations of $X$. Deforming $(X,\gU_X)$ to  $(K_n(A),\gU_{K_n(A)})$, we may set (unambiguously)  $\vartheta(\gU_X):=\vartheta(\gU_{K_n(A)})$, where $\vartheta(\gU_{K_n(A)})$ is   the triple of  integers defined in~\Ref{prp}{eccotheta}. 
\end{dfn}
\subsection{The cohomology ring of $A^{[m]}$.}\label{subsec:hilbring}
\setcounter{equation}{0}
Let $S$ be a smooth projective surface with torsion canonical class.  Lehn and Sorger~\cite{mancrist} have identified the cohomology \emph{ring}  of $S^{[m]}$ with a ring  
functorially associated to $H(S)$, the cohomology ring of $S$.  In the present subsection we will recall the construction of Lehn and Sorger for an abelian surface $A$ (there is one semplification, because the Euler characteristic  vanishes). If $Z$ is a  topological space, we let $H(Z)$ be its rational cohomology  ring. 
Throughout this subsection we will adhere  to the notation of~\cite{mancrist}.  In particular, we shift the grading of $H(A)$ by $2$, i.e.~we set
\begin{equation}\label{calodue}
\deg H^p(A):=p-2,
\end{equation}
\subsubsection{The ring $H(A)^{[m]}$}
Let $I$ be a finite set. One sets
\begin{equation}
H(A)^{\otimes I}:=H(A^I).
\end{equation}
Suppose that $I$ has  cardinality $r$. Let $[r]:=\{1,2,\ldots,r\}$. A choice of bijection $f\colon [r]\overset{\sim}{\to} I$ defines an isomorphism $H(A)^{\otimes r}\overset{\sim}{\to} H(A)^{\otimes I}$.   
We define a grading of $H(A)^{\otimes I}$ according to~\eqref{calodue}, i.e.
\begin{equation}\label{caloduerre}
\deg H^{p_{1}}(A)\otimes\ldots\otimes H^{p_{r}}(A)=p_1+\ldots+p_r-2r.
\end{equation}
The degree of a homogeneous element $\alpha\in H(A)^{\otimes I}$ is denoted $|\alpha|$. 
One defines 
\begin{equation*}
\begin{matrix}
\underbrace{H(A)\otimes \ldots\otimes H(A)}_{r} & \overset{T_r}{\lra} & \CC \\
\alpha_1\otimes\ldots\otimes \alpha_r & \mapsto & (-\int_{A}\alpha_1)\cdot\ldots\cdot(-\int_{A}\alpha_r)
\end{matrix}
\end{equation*}
(notice the minus signs), where $\int_{A}\alpha$ is the evaluation of the degree $4$ component of $\alpha$ over the $4$-cycle defined by $A$ with its complex orientation. 
Given a finite set $I$ of cardinality $r$, we may define $T_I\colon H(A)^{\otimes I}\to \CC$ by choosing a bijection $[r]\overset{\sim}{\lra} I$, and $T_I$ is clearly independent of the bijection.
Notice that $T_I$ is a non-degenerate bilinear form.

Let $I,J$ be finite sets, and let $f\colon I\to J$ be  a surjection; by taking the cup-product map  $H(A)^{f^{-1}(j)}\to H(A)$ for every $j\in J$ (see  p.~307 of~\cite{mancrist}), one defines a  map 
\begin{equation}\label{calo}
f^{*}\colon H(A)^{\otimes I}\to H(A)^{\otimes J}. 
\end{equation}
Let 
\begin{equation}\label{aumento}
f_{*}\colon H(A)^{\otimes J}\to H(A)^{\otimes I}
\end{equation}
 be the adjoint of $f^{*}$ with respect to the non degenerate bilinear forms  $T_J$ and $T_I$. In particular, let $\Delta_r\colon H(A)^{\otimes r}\to H(A)$ be multiplication. Then 
 $\Delta_{r,*}\colon H(A)\to H(A)^{\otimes r}$ is the adjoint of multiplication:
 \begin{equation}\label{diagonale}
T(\Delta_{r,*}(\alpha)\cdot \beta_1\otimes\ldots\otimes\beta_r)=-\int_{A}\alpha\smile \beta_1\smile\ldots\smile \beta_r.
\end{equation}
Next, let
\begin{equation}
H(A)\{\cS_m\}:=\bigoplus_{\pi\in\cS_m} H(A)^{\otimes\la\pi\ra\backslash [m]}\cdot\pi.
\end{equation}
Here $ \la\pi\ra\backslash [m]$ is the set of orbits in $[m]$ of the subgroup of $\cS_m$ spanned by $\pi$. If $\zeta\in H(A)^{\otimes\la\pi\ra\backslash [m]}$ is homogeneous, the \emph{degree} of $\zeta\pi$ is defined to be $|\zeta|$.

One defines a 
 multiplication on $H(A)\{\cS_m\}$ proceeding as follows (see Proposition~2.13 of~\cite{mancrist}). Let $\pi,\rho\in\cS_m$. The \emph{graph defect} $g(\pi,\rho)\colon 
 \la\pi,\rho\ra\backslash [m]\to\NN$ is the function (see Lemma 2.7 in~\cite{mancrist}) defined by
 \begin{equation*}
g(\pi,\rho)(B)=\frac{1}{2}\left(|B|+2-|\la\pi\ra\backslash B|-|\la\rho\ra\backslash B|-|\la\pi\rho\ra\backslash B|\right).
\end{equation*}
The surjections $\langle\pi\rangle \backslash [m]\lra \langle\pi,\rho\rangle \backslash [m]$ and $\langle\rho\rangle \backslash [m]\lra \langle\pi,\rho\rangle \backslash [m]$ define maps
\begin{equation*}
f^{\pi,\langle\pi,\rho\rangle}\colon H(A)^{\otimes \langle\pi\rangle \backslash [m]}\to H(A)^{\otimes \langle\pi,\rho\rangle \backslash [m]},\quad 
f^{\rho,\langle\pi,\rho\rangle}\colon H(A)^{\otimes \langle\rho\rangle \backslash [m]}\to H(A)^{\otimes \langle\pi,\rho\rangle \backslash [m]}
\end{equation*}
(see~\eqref{calo}), and the surjection $\langle\pi\rho\rangle \backslash [m]\lra \langle\pi,\rho\rangle \backslash [m]$ defines
\begin{equation*}
f_{\langle\pi,\rho\rangle,\langle\pi\rho\rangle}\colon H(A)^{\otimes \langle\pi,\rho\rangle \backslash [m]}\to H(A)^{\otimes \langle\pi\rho\rangle \backslash [m]},
\end{equation*}
as in~\eqref{aumento}. One defines $\mu_{\pi,\rho}\colon H(A)^{\otimes \langle\pi\rangle \backslash [m]}\otimes H(A)^{\otimes \langle\rho\rangle \backslash [m]}
\to H(A)^{\otimes \langle\pi\rho\rangle \backslash [m]}$ by setting
\begin{equation*}
\mu_{\pi,\rho}(a\otimes b):=
\begin{cases}
f_{_{\langle\pi,\rho\rangle,\langle\pi\rho\rangle}}(f^{\pi,\langle\pi,\rho\rangle}(a)\cdot f^{\rho,\langle\pi,\rho\rangle}(b)) & \text{if $g(\pi,\rho)=0$,}\\
0 & \text{otherwise.}
\end{cases}
\end{equation*}
The multiplication on $H(A)\{\cS_m\}$ is defined by setting
\begin{equation*}
\zeta\pi\cdot \xi\rho:=\mu_{\pi,\rho}(\zeta,\xi)\pi\rho.
\end{equation*}
  The group $\cS_m$ acts on $H(A)\{\cS_m\}$, see p.~310 of~\cite{mancrist}, and one sets
\begin{equation}\label{anenne}
H(A)^{[m]}=\left(H(A)\{\cS_m\}\right)^{\cS_m}.
\end{equation}
The restriction of multiplication to $H(A)^{[m]}$ is graded commutative, and homogeneous of degree $2m$, see Proposition~2.13 and Proposition~2.15 of~\cite{mancrist}.
Let 
\begin{equation}\label{eccogamma}
\bigoplus_{m=0}^{\infty}H(A)^{[m]} \overset{\Gamma}{\lra} \bigoplus_{m=0}^{\infty}H(A^{[m]})=:\HH
\end{equation}
be the isomorphism of  vector spaces defined on p.~318 of~\cite{mancrist}. 
\begin{thm}[Lehn-Sorger~\cite{mancrist}]\label{thm:anello}
 The map in~\eqref{eccogamma} is an isomorphism of graded commutative rings, provided  we define $\deg H^p(A^{[m]}):=p-2m$. 
\end{thm}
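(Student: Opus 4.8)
This is the main theorem of~\cite{mancrist}; the plan is to reproduce the Lehn--Sorger argument, whose structure is as follows.

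\emph{Step 1: the underlying graded vector space.} By the theorem of Nakajima and Grojnowski, $\HH=\bigoplus_{m}H(A^{[m]})$ is an irreducible module over the Heisenberg algebra modelled on $H^{*}(A)$, hence is canonically a Fock space with a basis indexed by $H^{*}(A)$-coloured partitions. On the other side, the elementary bijection between the set of orbits of the powers of a permutation $\pi\in\cS_m$ and the cycle type of $\pi$ exhibits $\bigoplus_m H(A)^{[m]}$ as the same Fock space, and the map $\Gamma$ of~\eqref{eccogamma} is defined (p.~318 of~\cite{mancrist}) precisely so as to realise this identification. Thus both sides of~\eqref{eccogamma} are literally the same graded vector space, and the whole content of the theorem is that $\Gamma$ is multiplicative.

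\emph{Step 2: reduction to generators and to a boundary operator.} By work of Lehn and of Li-Qin-Wang, the ring $H^{*}(A^{[m]})$ is generated over $\QQ$ by the boundary class $\wt{\xi}_m$ together with the tautological classes pulled from $H^{*}(A)$; under $\Gamma$ these correspond to the elements of $H(A)^{[m]}$ supported on the identity and on the transpositions of $\cS_m$. Moreover, the operator $\mathfrak{d}$ of cup product with the boundary class on $\HH$ is, by Lehn's theorem, a \emph{universal} expression in the Nakajima creation and annihilation operators; the correction terms that in general involve $K_A$ and the topological Euler class $e_A$ vanish here because $A$ is an abelian surface --- this is the simplification alluded to in the text. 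It therefore suffices to (i) check that $\Gamma$ intertwines the combinatorial products of the identity- and transposition-supported classes with cup product of the corresponding tautological and boundary classes, and (ii) observe that, since $\mathfrak{d}$ and the Nakajima operators generate all of $\HH$, matching on these generators forces multiplicativity throughout.

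\emph{Step 3: matching the two products.} One computes the cup products of the generators geometrically, via the incidence correspondences on Hilbert schemes (such as the $\Gamma_m(A)$ of~\Ref{subsec}{ischemia}, and their nested analogues). The irreducible components of the resulting intersections are indexed by the $\langle\pi,\rho\rangle$-orbit configurations on which the graph defect $g(\pi,\rho)$ of~\cite{mancrist} vanishes identically; here $g(\pi,\rho)(B)$ is exactly the genus of the closed surface obtained by gluing along the cycle structures of $\pi$, $\rho$ and $\pi\rho$ over the block $B$, so ``$g=0$'' is precisely the condition for the corresponding geometric intersection to be of the expected codimension rather than contributing zero. On each such component the contribution is the contraction of the Frobenius-algebra pairings $T_I$ along the surjections $f^{*},f_{*}$, i.e.\ the ``connected-sum'' coefficient built into $\mu_{\pi,\rho}$; the minus signs in the definition of $T_r$ and the shift $\deg H^{p}(A^{[m]}):=p-2m$ are precisely what makes $\Gamma$ degree preserving and the products graded-commutative. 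Equivalently, Steps~2--3 can be packaged as: the boundary operator turns $\HH$ into an abstract ``model'' ring isomorphic to $\bigoplus_m H(A^{[m]})$, and a purely $2$-dimensional-TQFT computation for the graded Frobenius algebra $H^{*}(A)$ identifies this model with $\bigoplus_m (H(A)\{\cS_m\})^{\cS_m}=\bigoplus_m H(A)^{[m]}$.

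\emph{Main obstacle.} The real work is all in Step~3: establishing Lehn's formula for the boundary operator, and, above all, the intersection theory on $A^{[m]}$ --- identifying the components and multiplicities of the degenerate intersections of the natural correspondences in terms of the graph-defect combinatorics of~\cite{mancrist}. Once that geometric input is in hand, the Fock-space bootstrap from the generators and the sign-and-grading bookkeeping are routine.
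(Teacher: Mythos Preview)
The paper does not prove this theorem: it is stated as a citation of the main result of Lehn--Sorger~\cite{mancrist}, with no proof given. Your proposal is therefore not a reconstruction of anything in the present paper but a sketch of the argument from the cited reference; in that sense there is nothing to compare, and the appropriate ``proof'' here is simply the attribution.

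That said, as a summary of the Lehn--Sorger strategy your outline is broadly accurate in spirit --- Fock space identification, reduction via the boundary operator and Lehn's commutation formula, then matching on generators --- but it is a roadmap rather than a proof, and some of the claimed reductions (e.g.\ that checking multiplicativity on the identity- and transposition-supported classes ``forces multiplicativity throughout'') would themselves require justification that is essentially the content of the cited paper. For the purposes of the present paper, no such sketch is needed or provided.
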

\subsubsection{Product of certain elements of  $H(A)\{\cS_m\}$}\label{subsubsec:prodotti}
First we introduce some notation.
Let $\tau\in\cS_m$.  We define a total ordering $\preceq$ on the orbit set $\langle\tau\rangle\backslash[m]$ by setting $I\preceq J$ if $\min(I)\leq\min(J)$. Thus, letting $p$ be the cardinality of  $\langle\tau\rangle\backslash[m]$, we have a preferred isomorphism
\begin{equation}\label{preferito}
H(A)^{\otimes p} \overset{\sim}{\to} H(A)^{\otimes\la  \tau\ra\backslash[m]}.
\end{equation}
\begin{dfn}\label{dfn:notls}
Keep notation as above, and let $\beta_1,\ldots,\beta_p\in H(A)$. We may view $\beta_1\otimes\ldots\otimes\beta_p$ as an element of $H(A)^{\otimes\la  \tau\ra\backslash[m]}$  because of~\eqref{preferito}. This understood, we let
$\beta_1\otimes\ldots\otimes\beta_p\tau$ be the corresponding element of $H(A)\{\cS_m\}$.
\end{dfn}
Given $\alpha\in H(A)$ and $1\le i\le m$, we let
\begin{equation}
p_i^{*}(\alpha):= 1\otimes\ldots\otimes 1\otimes \underset{i}{\alpha}\otimes 1\otimes\ldots\otimes 1\in H(A)^{\otimes m}.
\end{equation}
\begin{dfn}\label{dfn:indiag}
Let $\xi\in H(A)$. For and $1\le i<j\le(n+1)$, let $\Delta^{ij}_{*}(\xi)\in H(A)^{\otimes m}$ be the image of $\Delta_{2,*}(\xi)$ under the  homomorphism $H(A)^{\otimes 2} \to   H(A)^{\otimes m}$ mapping $a\otimes b$ to $p_i^{*}(a)\cdot p_j^{*}(b)$. Similarly, for 
$1\le h<k<l\le m$, let $\Delta^{hkl}_{*}(\xi)\in H(A)^{\otimes m}$ be the image of $\Delta_{3,*}(\xi)$ under the  homomorphism $H(A)^{\otimes m} \to   H(A)^{\otimes m}$ mapping $a\otimes b \otimes c$ to $p_h^{*}(a)\cdot p_k^{*}(b)\cdot p_l^{*}(c)$.
\end{dfn}
Let $1\le i<j\le m$ and $1\le h<k\le m$. Then the following formulae hold  (cf.~Example 2.17 in~\cite{mancrist}):
\begin{equation}\label{citau}
%
%\scriptscriptstyle 
(p_i^{*}(\beta)(ij))\cdot
(p^{*}_{h}(\beta')(hk))=
\begin{cases}
 \Delta^{ij}_{*}(\beta\smile\beta') \Id &  \text{if $\{i,j\}=\{h,k\}$,}\\
 p^{*}_{\min\{i,j,h,k\}}(\beta\smile\beta') (ij)\cdot (hk) &  \text{if $|\{i,j\}\cap\{h,k\}|=1$,}\\
 z_{ijhk}(\beta,\beta') (ij)\cdot(hk) & 
 \text{if $\{i,j\}\cap\{h,k\}=\es$,}\\
\end{cases}
\end{equation}
where
\begin{equation}\label{prodtrans}
z_{ijhk}(\beta,\beta'):=
\begin{cases}
p^{*}_{i}{\beta}\cdot p^{*}_{h}{\beta'} &  \text{if $i<h<j$ or $h<i<k$,}\\
p^{*}_{i}{\beta}\cdot p^{*}_{h-1}{\beta'}  &  \text{if $i<j<h$,}\\
 p^{*}_{i-1}{\beta} \cdot p^{*}_{h}{\beta'} &  \text{if $h<k<i$.}\\
\end{cases}
\end{equation}
Let $1\le i<j<k\le m$. Then   (cf.~Example 2.17 in~\cite{mancrist}):
\begin{equation}\label{invcic}
%
%\scriptscriptstyle 
(p_i^{*}(\beta)(ijk))\cdot
(p^{*}_{i}(\beta')(kji))=
 \Delta^{ijk}_{*}(\beta\smile\beta') \Id.
\end{equation}
Lastly, let $1\le i<j\le m$ and $1\le h<k\le m$. Let $\beta,\beta',\gamma,\gamma'\in H(A)$, with $\gamma$ and $\beta'$ \emph{homogeneous}. Then 
\begin{equation}\label{invklein}
\scriptstyle 
(p_i^{*}(\beta)(ij))\cdot
(p^{*}_{h}(\beta')(hk)) \cdot
(p^{*}_{i}(\gamma)(ij))\cdot (p_h^{*}(\gamma')(hk))=(-1)^{|\beta'|\cdot |\gamma|}\Delta^{ij}_{*}(\beta\smile\gamma)\cdot \Delta^{hk}_{*}(\beta'\smile\gamma')\Id.
\end{equation}
\subsubsection{Cohomology classes and Grojnowski-Nakajima operators}
Our next task is to describe  elements of $H(A)^{[m]} $ which correspond to  classes in $H(A^{[m]})$ that are relevant for our computations. In order to avoid misunderstandings,  we let $\wt{\mu}^{[m]}_r\colon H^r(A)\to H^r(A^{[m]})$ be the map that was previously denoted by $\wt{\mu}_r$ (we add the superscript 
$[m]$), and similarly we let $\wt{\nu}^{[m]}_r\colon H^{r-2}(A)\to H^r(A^{[m]})$ be the map that was previously denoted by $\wt{\nu}_r$. 

Notice that $\sum_{i=1}^m p_i^{*}(\alpha) \Id\in H(A)^{[m]}$. 
\begin{prp}\label{prp:targamu}
Keep notation as above, and let $\alpha\in H^r(A)$. Then
\begin{equation}
\Gamma\left(\sum_{i=1}^m p_i^{*}(\alpha) \Id\right)=\wt{\mu}^{[m]}_r(\alpha).
\end{equation}
\end{prp}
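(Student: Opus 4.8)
The plan is to identify, under the Lehn--Sorger isomorphism $\Gamma$, the class $\wt\mu^{[m]}_r(\alpha)=\wt{\gh}_m^{*}\alpha^{(m)}$ with the symmetric tensor $\sum_{i=1}^m p_i^{*}(\alpha)\Id\in H(A)^{[m]}$. The natural approach is to trace through the construction on p.~318 of~\cite{mancrist} of the vector space isomorphism $\Gamma$ and to use the compatibility of $\Gamma$ with the Hilbert--Chow morphism. First I would recall that $\Gamma$ sends the ``diagonal'' (identity-permutation) part $H(A)^{\otimes[m]}\cdot\Id\subset H(A)\{\cS_m\}$, after taking $\cS_m$-invariants, precisely to the image of the pullback $\wt{\gh}_m^{*}\colon H(A^{(m)})\to H(A^{[m]})$; and that on this summand $\Gamma$ is, up to the grading shift, nothing but $\wt{\gh}_m^*$ composed with the identification $H(A^{(m)})=H(A^m)^{\cS_m}=(H(A)^{\otimes[m]})^{\cS_m}$.

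Granting that, the statement reduces to a purely combinatorial identity on $A^{(m)}$: the symmetrization $\alpha^{(m)}$ defined by~\eqref{simmetrizzo} corresponds, under $H(A^{(m)})=(H(A)^{\otimes m})^{\cS_m}$, to $\sum_{i=1}^m p_i^{*}(\alpha)$, because $\pi^{*}\alpha^{(m)}=\sum_i p_i^{*}\alpha$ is literally the definition of $\alpha^{(m)}$ and $\pi^*$ is the inclusion of $\cS_m$-invariants (the quotient map $\pi\colon A^m\to A^{(m)}$ induces an isomorphism onto $\cS_m$-invariant cohomology with $\QQ$-coefficients). Hence $\wt\mu^{[m]}_r(\alpha)=\wt{\gh}_m^{*}\alpha^{(m)}=\Gamma\bigl(\sum_{i=1}^m p_i^{*}(\alpha)\Id\bigr)$, where one must also check the element $\sum_i p_i^*(\alpha)\Id$ is indeed $\cS_m$-invariant (immediate, since the $\cS_m$-action permutes the factors and fixes $\Id$) so that it lies in $H(A)^{[m]}$ as defined in~\eqref{anenne}.

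Concretely the key steps are: (i) unwind the definition of $\Gamma$ on the $\Id$-sector and show it agrees with $\wt{\gh}_m^{*}$ under the evident identifications, paying attention to the degree shift $\deg H^p(A^{[m]}):=p-2m$ versus $\deg H^p(A):=p-2$ (the factor of $m$ copies accounts exactly for the shift by $2m$, with one ``$-2$'' per orbit of $\Id$, of which there are $m$); (ii) record that the grading shift does not affect the underlying linear map, so the combinatorial identity above suffices; (iii) verify $\cS_m$-invariance. I expect step~(i) to be the main obstacle: it requires care in matching O'Grady's normalizations (in particular the sign conventions hidden in $T_r$ with its minus signs, and in the definition of $\wt\xi_n$, $\mu_2$, $\nu_3$) against those of Lehn--Sorger, and in confirming that on the diagonal sector no ``graph defect'' corrections or fusion terms appear so that $\Gamma$ restricts to the naive pullback. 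Everything else is bookkeeping that I would not spell out in full.
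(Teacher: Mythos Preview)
Your approach is sound but takes a different route from the paper. The paper's proof is a two-line argument via Nakajima operators: since $\Gamma$ is \emph{defined} (p.~318 of~\cite{mancrist}) by sending $\cS_m$-orbits of tensors to products of creation operators acting on the vacuum, one need only check the identity
\[
\underbrace{\gp_{-1}(1)\cdots\gp_{-1}(1)}_{m-1}\cdot\gp_{-1}(\alpha)\cdot{\bf 1}=(m-1)!\,\wt{\mu}^{[m]}_r(\alpha),
\]
which is the well-known description of $\gp_{-1}$ in terms of the incidence correspondence (it adds a reduced point) together with the fact that adding $m$ reduced points in all orders gives $(m-1)!$ times the symmetrized class pulled back along Hilbert--Chow.

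Your plan instead asserts a structural fact---that $\Gamma$ restricted to the $\Id$-sector \emph{is} $\wt{\gh}_m^{*}$ under the identification $H(A^{(m)})\cong (H(A)^{\otimes m})^{\cS_m}$---and then reads off the result from the definition of $\alpha^{(m)}$. This is correct, but your step~(i) is precisely equivalent to the Nakajima computation above applied to a general tensor $\alpha_1\otimes\cdots\otimes\alpha_m$ rather than just $1\otimes\cdots\otimes 1\otimes\alpha$; unwinding the Lehn--Sorger definition on the identity sector \emph{is} computing products of $\gp_{-1}$'s on the vacuum. So what you buy is a cleaner conceptual statement (the whole $\Id$-sector at once), at the cost of having to prove a slightly more general Nakajima identity than the paper actually needs. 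The paper's approach buys brevity: it proves exactly the one identity required and nothing more.
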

\begin{proof}
Given $\ell\in\NN$ and $\gamma\in H(A)$, let $\gp_{-\ell}(\gamma)\colon\HH\to\HH$ be the Grojnowski-Nakajima operator, see p.~315 in~\cite{mancrist}. Let ${\bf 1}\in H^0(A^{[0]})$ be the function $\{\es\}\to\CC$ with value $1$ (the \emph{vacuum}). By definition of $\Gamma$, the proposition follows from the easily verified equality
\begin{equation*}
\underbrace{\gp_{-1}(1)\cdot\ldots\cdot\gp_{-1}(1)}_{m-1}\cdot\gp_{-1}(\alpha)\cdot{\bf 1}=(m-1)! \wt{\mu}^{[m]}_r(\alpha)
\end{equation*}
\end{proof}
For $\beta\in H(A)$, let
\begin{equation}
c_m(\beta):=\sum_{1\le i<j\le m} p^{*}_{i}(\beta)(ij).
\end{equation}
(This is the only place where our notation differs from that of~\cite{mancrist}, our $c_m(1)$ is denoted $-\epsilon_{m,2}$, see p.~319 op.~cit.)
Notice that 
$ c_m(\beta)\in H(A)^{[m]}$.
\begin{prp}\label{prp:nufock}
Let $\beta\in H^{r-2}(A)$, and keep notation as above. Then
\begin{equation}
\Gamma(c_m(\beta))=\frac{1}{2}\wt{\nu}^{[m]}_r(\beta).
\end{equation}
\end{prp}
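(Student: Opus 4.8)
The plan is to mimic the proof of Proposition~\ref{prp:targamu}, expressing the class $\wt{\nu}^{[m]}_r(\beta)$ in terms of Grojnowski--Nakajima operators acting on the vacuum and then matching with $\Gamma(c_m(\beta))$ via the definition of $\Gamma$ on p.~318 of~\cite{mancrist}. First I would recall that, by definition of the Nakajima operators and of the incidence variety $\Gamma_n$ used to define $\wt{\nu}$, the operator $\gp_{-2}(\beta)$ creates a length-$2$ nonreduced subscheme carrying the class $\beta$; more precisely the class $\wt{\nu}^{[m]}_r(\beta)$ is, up to a combinatorial constant, the image under $\Gamma$ of $\gp_{-2}(\beta)\cdot\gp_{-1}(1)^{m-2}\cdot{\bf 1}$, since $q_{*}([\Gamma_n]\cap p^{*}\beta)$ is exactly the cycle-theoretic incarnation of adding a fat point weighted by $\beta$ to $m-2$ free points. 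So the first key step is the identity
\begin{equation*}
\gp_{-2}(\beta)\cdot\underbrace{\gp_{-1}(1)\cdot\ldots\cdot\gp_{-1}(1)}_{m-2}\cdot{\bf 1}=c\cdot (m-2)!\,\wt{\nu}^{[m]}_r(\beta)
\end{equation*}
for an explicit constant $c$ coming from the normalization of $\gp_{-2}$ (this is where the factor $\tfrac12$ will ultimately originate, together with the comparison of orientations/signs in the shifted grading~\eqref{calodue}).

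The second key step is to compute $\Gamma^{-1}$ of the left-hand side in the Lehn--Sorger model. By the description on p.~318--319 of~\cite{mancrist}, the image of a product of Nakajima creation operators applied to the vacuum is obtained by taking, for each operator $\gp_{-\ell_i}(\gamma_i)$, an $\ell_i$-cycle in $\cS_m$ decorated by $\gamma_i$, and then symmetrizing; the operator $\gp_{-2}(\beta)$ contributes a transposition $(ij)$ decorated by $\beta$ (more precisely $p_i^{*}(\beta)(ij)$ in the notation of~\Ref{dfn}{notls}), while each $\gp_{-1}(1)$ contributes a fixed point decorated by $1$, i.e.\ the identity contribution. Averaging over $\cS_m$ then produces exactly $\sum_{1\le i<j\le m}p_i^{*}(\beta)(ij)=c_m(\beta)$ up to the same combinatorial factor $(m-2)!$ that appeared on the geometric side. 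Comparing the two computations yields $\Gamma(c_m(\beta))=\tfrac12\wt{\nu}^{[m]}_r(\beta)$.

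The main obstacle I anticipate is pinning down the precise constant, and in particular the factor $\tfrac12$: it requires carefully tracking three sources of scalars — the normalization convention for the Nakajima operator $\gp_{-2}$ used in~\cite{mancrist} (some authors include a $1/\ell$, some do not), the $(m-2)!$ overcounting coming from the $\gp_{-1}(1)$'s and the symmetrization, and the minus signs built into the pairing $T_r$ and the degree shift~\eqref{calodue}. Rather than carrying all of this out by hand, I would reduce to a single low-dimensional check: verify the identity directly for $m=2$ (where $A^{[2]}$ is the blow-up of $A^{(2)}$ along the diagonal, $c_2(\beta)=p_1^{*}(\beta)(12)$, and both sides can be evaluated explicitly against test cycles as in the proof of~\Ref{lmm}{restisom}), and then invoke the fact that both $\Gamma(c_m(\beta))$ and $\wt{\nu}^{[m]}_r(\beta)$ are compatible with the stabilization maps $H(A)^{[m]}\to H(A)^{[m+1]}$ (adding a free point), so the constant is independent of $m$. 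This sidesteps the global bookkeeping while still delivering the stated normalization.
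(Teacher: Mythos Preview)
Your approach is essentially the same as the paper's: the paper's proof consists of the single asserted identity
\[
\underbrace{\gp_{-1}(1)\cdot\ldots\cdot\gp_{-1}(1)}_{m-2}\cdot\gp_{-2}(\beta)\cdot{\bf 1}=(m-2)!\,\wt{\nu}^{[m]}_r(\beta),
\]
followed by the remark that the proposition then follows from the definition of~$\Gamma$. The only difference is that the paper states this identity with the exact constant (no undetermined $c$), so the factor $\tfrac12$ arises purely from the Lehn--Sorger normalization of $\Gamma$ on a transposition orbit, whereas you hedge on the constant and propose an $m=2$ check plus stabilization to pin it down; that workaround is sound but unnecessary once the Nakajima identity is written with the correct normalization.
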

\begin{proof}
 By definition of $\Gamma$, the proposition follows from the  equality
\begin{equation*}
\underbrace{\gp_{-1}(1)\cdot\ldots\cdot\gp_{-1}(1)}_{m-2}\cdot\gp_{-2}(\beta)\cdot{\bf 1}=(m-2)! \wt{\nu}^{[m]}_r(\beta)
\end{equation*}
\end{proof}
Let $\beta,\beta'\in H(A)$. The following formula (which holds by~\eqref{citau}) will be handy: 
\begin{multline}\label{ruedeseine}
c_{m}(\beta)\cdot c_{m}(\beta')=\sum_{1\le i<j\le m}\Delta^{ij}_{*}(\beta\smile\beta')\Id + \\
+\sum_{\substack {\{h,k,l\}\subset\{1,\ldots,m\} \\ |\{h,k,l\}|=3}}
p^{*}_{\min\{h,k,l\}}(\beta\cup\beta')(hkl)+
\sum_{\substack {1\le i<j\le m\\ 1\le h<k\le m  \\ \{i,j\}\cap\{h,k\}=\es}}
z_{ijhk}(\beta,\beta')(ij)(hk).
\end{multline}
(Notice that in the second summation in the right hand side of~\eqref{ruedeseine} every order $3$ cyclic permutation appears $3$ times.)  

Lastly, let $\eta_{A^{[m]}}\in H^{4m}(S^{[m]})$ be the fundamental class; it follows directly from the definition of $\Gamma$ (see in particular the definition of $\Phi$ on p.~311 of~\cite{mancrist}) that
\begin{equation}\label{classefond}
\Gamma(\eta_A^{\otimes m} \Id)=\frac{1}{m!}\eta_{A^{[m]}}.
\end{equation}
\subsection{Computation of $\vartheta_1(\ov{q}^{n-2})$}\label{subsec:grancul}
\setcounter{equation}{0}
Let $X$ be a $2n$ dimensional hyperk\"ahler manifold of Kummer type, and let $\ov{q}_X\in H^{2,2}_{\ZZ}(X)$ be the class  in~\Ref{dfn}{qubarra}. Then 
$\ov{q}_X^{n-2}\in  H^{2n-4,2n-4}_{\ZZ}(X)$, and hence $\vartheta(\ov{q}_X^{n-2})$ is well defined.
 In the present subsection we will prove the following  result.
\begin{prp}\label{prp:grancul}
Let $X$ be a $2n$ dimensional hyperk\"ahler manifold of Kummer type, where $n\ge 2$. Then
\begin{equation}
%
%\scriptstyle
%
\vartheta_1(\ov{q}^{n-2}_X)= -2^{n-2}(n+1)^{n-2}\frac{(2n+3)!!}{7!!}.
\end{equation}
\end{prp}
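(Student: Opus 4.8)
The plan is to compute $\vartheta_1(\ov q^{n-2})$ by evaluating both sides of the defining formula \eqref{espansione} on a single, carefully chosen pair of classes coming from $H^3(A)$. Since $\vartheta_1(\gU)$ is the coefficient of $\alpha\wedge\alpha'$ in the expansion of $\Phi(\gU)$, it suffices to pick $\alpha,\alpha'\in H^3(A)$ and a test class $\delta\in H^2(K_n(A))$ such that the only surviving term is $\vartheta_1(\ov q^{n-2})\,\alpha\wedge\alpha'$ paired against $\delta$, and then to compute directly the integral
\begin{equation*}
\int_{K_n(A)}\mu_3(\alpha)\smile\mu_3(\alpha')\smile\ov q_{K_n(A)}^{\,n-2}\smile\delta.
\end{equation*}
A convenient choice is $\alpha=\eta^\vee_1$, $\alpha'=\eta^\vee_2$ (in the notation of the proof of \Ref{thm}{accatre}), so that $\alpha\wedge\alpha'$ corresponds under $\bigwedge^2\lambda$ to a specific primitive vector in $\bigwedge^2 H^1(A)\cong H^2(A)$, paired against which the right-hand side of \eqref{espansione} picks out $\vartheta_1$ exactly; one then takes $\delta=\mu_2$ of the Poincaré-dual class in $H^2(A)$ so that the pairing $\langle\alpha\wedge\alpha',\delta\rangle$ is $\pm 1$ and the $\iota(\beta\wedge\beta')$ and $\xi_n^\vee$ terms are irrelevant (there are no $\beta$'s).

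The computational heart is the integral above. Here I would use the Lehn--Sorger model of \Ref{thm}{anello}: express $\mu_3(\alpha)$, $\mu_3(\alpha')$, $\delta$ via Propositions~\ref{prp:targamu} and the classes $c_m(\beta)$ via \Ref{prp}{nufock}, and expand $\ov q_{K_n(A)}=2(n+1)q^\vee_{K_n(A)}$ using the explicit formula \eqref{qudukum}, namely $q^\vee_{K_n(A)}=2\sigma_n-\frac{1}{2(n+1)}\xi_n^2$ with $\sigma_n=\sum_i\mu_2(e_i)\smile\mu_2(f_i)$. The powers $\ov q^{n-2}$ expand by the binomial theorem into sums of products $\sigma_n^a\smile\xi_n^{2b}$ with $a+b=n-2$, each multiplied by combinatorial constants. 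The key auxiliary computations are then integrals of the form $\int_{K_n(A)}\mu_3(\alpha)\smile\mu_3(\alpha')\smile\delta\smile\sigma_n^a\smile\xi_n^{2b}$ — these are the analogues for $H^3$ of the integral $\int_{K_n(A)}\sigma_n^i\smile\xi_n^{2n-2i}$ computed in the proof of \Ref{prp}{bellaform}, and I expect them to have a closed form of the shape $(n+1)\cdot(\text{rational in }n)\cdot(2n-2a-\text{something})!!$ obtained by the same Lehn--Sorger bookkeeping (tracking which Nakajima operators pair off, with the double-factorial factors coming from $T_I$-contractions on the symmetric-algebra side).

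Once these building-block integrals are in hand, assembling them with the binomial coefficients from the expansion of $(2\sigma_n-\frac{1}{2(n+1)}\xi_n^2)^{n-2}$ reduces the claim to a single double-factorial identity in $n$ and the summation index. I would then invoke \Ref{lmm}{ideban} (Bandiera's identity), most likely again the $k=2$ case or a shifted variant, exactly as in the proof of \Ref{prp}{bellaform}, to collapse the sum to the stated value $-2^{n-2}(n+1)^{n-2}\frac{(2n+3)!!}{7!!}$; the factor $2^{n-2}(n+1)^{n-2}$ is visibly the contribution of the $\ov q=2(n+1)q^\vee$ normalization times the leading $2\sigma_n$ term, and the sign comes from the $-\int_A$ conventions in \eqref{diagonale}. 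The main obstacle will be the bookkeeping in the Lehn--Sorger ring: correctly enumerating which terms in $c_m(1)$-type products and in $\sigma_n^a$ contribute non-trivially after imposing the graph-defect-zero condition and the degree constraint (only the top-degree part survives under $\int_{K_n(A)}$), and keeping the signs from \eqref{caloduerre} and \eqref{invklein} straight. This is the same kind of "painful computation" the introduction warns about, but it is entirely mechanical once the right test classes are fixed.
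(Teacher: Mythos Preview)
Your approach is valid in principle but takes a substantially different route from the paper's, and the difference is worth understanding.

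You propose to expand $\ov q^{n-2}=(4(n+1)\sigma_n-\xi_n^2)^{n-2}$ by the binomial theorem and then compute, for each $a+b=n-2$, the integral
\[
\int_{K_n(A)}\mu_3(\alpha)\smile\mu_3(\alpha')\smile\mu_2(\gamma)\smile\sigma_n^{\,a}\smile\xi_n^{2b}
\]
directly in the Lehn--Sorger model, finally collapsing the resulting sum via a Bandiera-type identity. The difficulty is that the factor $\xi_n^{2b}$ corresponds to $c_{n+1}(1)^{2b}$, and the combinatorics of these products (graph defects, triple transpositions, etc.) grows sharply with $b$. The paper's Propositions~\ref{prp:laprima} and~\ref{prp:spartan} handle only the cases $b=0$ and $b=1$, and already the latter requires care; pushing to general $b$ is where your ``entirely mechanical'' claim hides real work.

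The paper avoids this entirely. The key device is \Ref{lmm}{cidi}: by $\SL(H^3(A))$-equivariance, the projection of $\mu_3(\alpha)\smile\mu_3(\alpha')$ to the subring generated by $H^2$ lies in a \emph{two}-dimensional space,
\[
\Pi_3(\mu_3(\alpha)\smile\mu_3(\alpha'))=C_1(n)\,q^\vee\smile\mu_2(\iota^{-1}(\alpha\wedge\alpha'))+D_1(n)\,\mu_2(\iota^{-1}(\alpha\wedge\alpha'))\smile\xi_n^2.
\]
Hence only two integrals are needed to pin down $C_1(n),D_1(n)$ --- precisely the $b=0$ and $b=1$ integrals of Propositions~\ref{prp:laprima} and~\ref{prp:spartan} --- yielding $C_1(n)=-1/((n+1)(2n+5))$ and $D_1(n)=0$ (\Ref{crl}{ciunodiuno}). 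The $\ov q^{n-2}$ is then absorbed in one stroke by the already-proven \Ref{prp}{bellaform}, rather than by a fresh invocation of \Ref{lmm}{ideban}. So the paper trades your term-by-term expansion for a representation-theoretic reduction followed by one application of a general formula; your route would work but duplicates effort and faces the growing $\xi_n^{2b}$ combinatorics that the projection trick sidesteps.
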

The proof of~\Ref{prp}{grancul} is given at the end of the  subsection.

We start  by going through some preliminary results. 
Let $H(K_n(A))_{(2)}\subset H(K_n(A))$ be the graded subring generated by $H^2(K_n(A))$. By a Theorem of Verbitsky~\cite{verb-cohom,bog-cohom},  the restriction of the Poincar\'e pairing to $H(K_n(A))_{(2)}$ is perfect, and the kernel of the natural map $\Sym H^2(K_n(A))\to H(K_n(A))_{(2)}$ is generated by all elements $\alpha^{n+1}$, where $q(\alpha)=0$. Now
suppose that  $p\le n$. Then the map $\Sym^p H^2(K_n(A))\to H(K_n(A))^{2p}_{(2)}$
is an isomorphism, and hence we have a direct sum decomposition
$$H^{2p}(K_n(A))=\Sym^p H^2(K_n(A))\oplus  \left(H(K_n(A))_{(2)}^{\bot}\right)^{2p},$$
 where orthogonality is with respect to  the Poincar\'e pairing. 
Let
\begin{equation*}
\Pi_p\colon H^{2p}(K_n(A)) \lra \Sym^p H^2(K_n(A))
\end{equation*}
be the projection. 
\begin{lmm}\label{lmm:cidi}
Let $n\ge 3$. There exist $C_i(n),D_i(n)\in\QQ$ for $i\in\{1,2,3\}$ such that for all $\alpha,\alpha'\in H^3(A)$ and 
$\beta,\beta'\in H^1(A)$, 
\begin{eqnarray*}
%\scriptstyle
%
\Pi_3(\mu_3(\alpha)\smile \mu_3(\alpha')) & = & 
C_1(n) q^{\vee}\smile\mu_2(\iota^{-1}(\alpha\wedge\alpha'))+D_1(n) \mu_2(\iota^{-1}(\alpha\wedge\alpha'))\smile\xi_n^2, \label{cidiuno}\\
\Pi_3(\nu_3(\beta)\smile \nu_3(\beta')) & = & 
C_2(n) q^{\vee}\smile\mu_2(\beta\smile\beta')+D_2(n) \mu_2(\beta\smile\beta')\smile\xi_n^2, \label{cididue}\\
\Pi_3(\mu_3(\alpha)\smile \nu_3(\beta)) & = & 
C_3(n) \left(\int_A\alpha\smile\beta\right) q^{\vee}\smile\xi_n+D_3(n) \left(\int_A\alpha\smile\beta\right) \xi^3_n. \label{ciditre}\\
\end{eqnarray*}
\end{lmm}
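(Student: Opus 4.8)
The plan is to exploit the same monodromy-equivariance argument that produced the triple $\vartheta(\gU)$ in~\Ref{prp}{eccotheta}, but now applied to the projection $\Pi_3$ of a cup product of classes pulled back from $H^3(A)\oplus H^1(A)$. First I would observe that $\Pi_3$ is canonical, hence $\SL(H^1(A))$-equivariant (and also equivariant for the monodromy of $2$-dimensional complex tori, since $\Pi_3$ is defined purely in terms of the BBF form and cup product, which are monodromy-invariant). The three products $\mu_3(\alpha)\smile\mu_3(\alpha')$, $\nu_3(\beta)\smile\nu_3(\beta')$ and $\mu_3(\alpha)\smile\nu_3(\beta)$ land in $H^6(K_n(A))$, and $\Pi_3$ maps them into $\Sym^3 H^2(K_n(A))$, which by~\eqref{accaduekum} decomposes $\SL$-equivariantly as a sum of pieces $\Sym^a(\mu_2 H^2(A))\otimes \xi_n^{b}$ with $a+b=3$. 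The representation-theoretic content is that $H^2(A)\cong\bigwedge^2 H^1(A)$ as $\SL(H^1(A))$-modules, so the only invariant vectors in $\Sym^a H^2(A)$ that can receive $\alpha\wedge\alpha'$ (or $\beta\wedge\beta'$, or $\langle\alpha,\beta\rangle$) must carry exactly one ``slot'' transforming like $H^2(A)$, forcing $a=1$ in the first two cases and $a=0$ in the third.

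Concretely, I would run the argument case by case. For $\mu_3(\alpha)\smile\mu_3(\alpha')$: this is antisymmetric and bilinear in $(\alpha,\alpha')\in H^3(A)\cong H^1(A)^{\vee}$, so by Schur it factors through $\bigwedge^2 H^3(A)$; composing with $\iota^{-1}$ identifies the target with $H^2(A)\cong\mu_2 H^2(A)$. Thus $\Pi_3(\mu_3(\alpha)\smile\mu_3(\alpha'))$ must be a $\QQ$-linear combination of the two basis vectors of $\Sym^3 H^2(K_n(A))$ containing exactly one copy of $\mu_2 H^2(A)$ identified with $\mu_2(\iota^{-1}(\alpha\wedge\alpha'))$, namely $q^{\vee}\smile\mu_2(\iota^{-1}(\alpha\wedge\alpha'))$ and $\mu_2(\iota^{-1}(\alpha\wedge\alpha'))\smile\xi_n^2$ (here one uses that $q^{\vee}$, being $\SL$-invariant, is the unique — up to scalar — invariant quadratic expression in $\mu_2 H^2(A)$ and $\xi_n^2$). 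This gives $C_1(n),D_1(n)$. The case $\nu_3(\beta)\smile\nu_3(\beta')$ is identical with $\beta\wedge\beta'\in\bigwedge^2 H^1(A)\cong H^2(A)$ in place of $\iota(\beta\wedge\beta')$, yielding $C_2(n),D_2(n)$. For $\mu_3(\alpha)\smile\nu_3(\beta)$ the class is bilinear in $(\alpha,\beta)\in H^3(A)\times H^1(A)$ and $\SL$-equivariant, so it factors through the invariant pairing $\langle\alpha,\beta\rangle\in\CC$ of~\eqref{31duality}; hence it is a scalar multiple of the (unique up to scalar) invariant vectors in $\Sym^3 H^2(K_n(A))$ with no $\mu_2 H^2(A)$-slot, namely $q^{\vee}\smile\xi_n$ and $\xi_n^3$, producing $C_3(n),D_3(n)$.

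The remaining subtlety I would address carefully is why these three Schur-type conclusions really pin down the listed two-term forms and not something with more terms: one needs that $\Sym^p H^2(K_n(A))$, as an $\SL(H^1(A))$-module, has its invariant-isotypic and ``one copy of $H^2(A)$''-isotypic parts of exactly dimension $1$ and $2$ respectively in the relevant degree $p=3$. This follows from $H^2(A)$ being an irreducible $6$-dimensional $\SL_4$-module (the $\bigwedge^2$ of the standard), whose symmetric powers have controlled isotypic decompositions; in degree $3$, $\Sym^3 H^2(K_n(A)) = \bigoplus_{a+b=3}\Sym^a H^2(A)$, and the invariants of $\Sym^a H^2(A)$ are spanned by the (powers of the) symplectic form for $a$ even and vanish for $a$ odd, while the $H^2(A)$-isotypic component appears once in $\Sym^1$ and once (via multiplication by the symplectic form) in $\Sym^3$. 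I expect the main obstacle to be exactly this bookkeeping of isotypic multiplicities — making sure no extra invariant or extra copy of $H^2(A)$ sneaks in — rather than any hard computation; once the shape is fixed, the coefficients $C_i(n),D_i(n)$ are simply rational numbers determined by later explicit computations (as announced, $\vartheta_1$ will be extracted from them in~\Ref{subsec}{grancul}), so at this stage I would leave them as undetermined rationals and only assert existence.
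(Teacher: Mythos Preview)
Your proposal is correct and follows essentially the same route as the paper: define the three $\SL(H^1(A))$-equivariant maps $\Psi_i$ into $\Sym^3 H^2(K_n(A))=\bigoplus_{a+b=3}\Sym^a H^2(A)\otimes\CC\xi_n^b$, and use the irreducibility of the domains (resp.~the decomposition $H^3(A)\otimes H^1(A)\cong\End_0\oplus\CC$) together with the isotypic decomposition of the codomain to force the stated two-term shapes. The only point to make explicit in case~3 is the one the paper states directly: the adjoint representation $\End_0(H^3(A))$ does not occur in any $\Sym^a(\bigwedge^2 V)$ for $a\le 3$, which is what lets you drop the traceless part and factor through $\int_A\alpha\smile\beta$.
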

\begin{proof}
Let $\Psi_1\colon \bigwedge^2 H^3(A) \to \Sym^3 H^2(K_n(A))$, $\Psi_2\colon \bigwedge^2 H^1(A) \to \Sym^3 H^2(K_n(A))$, and $\Psi_3\colon  H^3(A)\otimes H^1(A) \to \Sym^3 H^2(K_n(A))$ be the linear maps  which have values $\Pi_3(\mu_3(\alpha)\smile \mu_3(\alpha'))$, $ \Pi_3(\nu_3(\beta)\smile \nu_3(\beta'))$ and $\Pi_3(\mu_3(\alpha)\smile \nu_3(\beta))$ on decomposable vectors $\alpha\wedge\alpha'$, $\beta\wedge\beta'$ and $\alpha\otimes\beta$ respectively.
Because of~\eqref{accaduekum}, we write the codomain of $\Psi_i$ as 
\begin{equation}\label{codpsi}
%\scriptstyle
%
 \Sym^3 H^2(A)\oplus  
\left(\Sym^2 H^2(A)\otimes \CC\xi_n\right)
\oplus  \left( H^2(A)\otimes \CC\xi^2_n\right)\oplus\CC\xi_n^3.
\end{equation}
The map $\Psi_i$ is equivariant for the action of the Monodromy group on domain and codomain. Since the monodromy group is 
$\SL H^3(A;\ZZ)$, $\Psi_i$ is equivariant for the action of $\SL H^3(A)$. The domains of $\Psi_1$ and  $\Psi_2$  are irreducible representations
  of $\SL H^3(A)$. 
Decomposing each summand of~\eqref{codpsi} into a direct sum of irreducible   $\SL H^3(A)$ representations, one gets the first two equations. The decomposition into irreducible summands of the domain of $\Psi_3$ is $\End_0(H^3(A))\oplus\CC\Id_{H^3(A)}$. Of these two representations, only the trivial one appears in the decomposition of~\eqref{codpsi}, and the third equation follows.
\end{proof}
Throughout  the present subsection we let  $\{\eta_1,\ldots,\eta_4\}$ be an oriented basis of $H^1(A)$, i.e.~such that $\eta:=\eta_1\smile\ldots\smile\eta_4$ is the fundamental class of $A$. 
\begin{prp}\label{prp:laprima}
Let $\alpha,\alpha'\in H^3(A)$, and $\gamma\in H^2(A)$. If $n\ge 2$, then
\begin{equation}\label{hallo}
\scriptstyle
\int\limits_{K_n(A)}\mu_3(\alpha)\smile \mu_3(\alpha')\smile \mu_2(\gamma)^{2n-3}=
-(2n-3)!!\left(\int_{A}\iota^{-1}(\alpha\wedge\alpha')\smile \gamma\right)\cdot \left(\int_{A}\gamma^2\right)^{n-2}.
\end{equation}
\end{prp}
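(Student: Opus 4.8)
The plan is to reduce the integral over $K_n(A)$ to a computation inside the Lehn--Sorger ring $H(A)^{[n+1]}$, using Theorem~\ref{thm:anello} and the explicit dictionary established in Propositions~\ref{prp:targamu} and~\ref{prp:nufock}. First I would pass from $K_n(A)$ to $A^{[n+1]}$: since $\mu_3$, $\nu_3$, $\mu_2$ are the restrictions of $\wt\mu_3^{[n+1]}$, $\wt\nu_3^{[n+1]}$, $\wt\mu_2^{[n+1]}$ to $K_n(A)$, and $K_n(A)$ is (up to the translation action of $A$) a fiber of the isotrivial fibration $A^{[n+1]}\to A$ given by the summation map, an integral of a product of $\mu$- and $\nu$-classes over $K_n(A)$ equals the corresponding integral over $A^{[n+1]}$ against the class of a fiber, i.e.\ against $\wt\mu_4^{[n+1]}(\mathrm{pt})$ (the pullback of the point class on $A$). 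Equivalently, I would work with classes on $A^{[n+1]}$ that already involve one factor $\wt\mu_4$; in practice the cleanest route is to note that $\mu_3(\alpha)\smile\mu_3(\alpha')\smile\mu_2(\gamma)^{2n-3}$ has complement of dimension $4$ in $K_n(A)$, and its integral over $K_n(A)$ matches the coefficient of the top class in $H(A)^{[n+1]}$ after multiplying by the appropriate point-class factor coming from the fibration. I would fix the normalization by the standard fact (as in~\cite{cogekum}) that integration over $K_n(A)$ corresponds, under $\Gamma$, to extracting the coefficient of $\frac{1}{(n+1)!}\eta_{A^{[n+1]}}$ after capping with the fiber, which by~\eqref{classefond} means reading off the coefficient of $\eta^{\otimes(n+1)}\Id$ in a suitably renormalized product and then multiplying by $(n+1)$ (the Fujiki-type constant that already appeared in~\eqref{fujipol}).

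Next I would assemble the relevant ring elements. By Proposition~\ref{prp:targamu}, $\wt\mu^{[m]}_r(\alpha)$ corresponds to $\sum_i p_i^{*}(\alpha)\Id$, and its $r$-th power expands combinatorially. By Proposition~\ref{prp:nufock}, $\wt\nu^{[m]}_3(\beta)$ corresponds to $2\,c_m(\beta)=2\sum_{i<j}p_i^{*}(\beta)(ij)$; here however we are only dealing with $\mu_3$ classes, so the cyclic/transposition subtleties of~\eqref{citau}--\eqref{invklein} do not enter, and the computation stays within the symmetric algebra on $\sum_i p_i^{*}(-)\Id$. The point class $\eta\in H^4(A)$ satisfies $\eta\smile\eta=0$, $\alpha\smile\alpha'\in H^6(A)=0$ for degree reasons, $\alpha\smile\beta=0$ unless we cap correctly, so most products $p_i^{*}(a)\cdot p_j^{*}(b)$ with $i=j$ vanish and only the ``all indices distinct'' terms survive; this is what forces the shape of the right-hand side. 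Expanding $\bigl(\sum_i p_i^{*}(\alpha)\bigr)\bigl(\sum_j p_j^{*}(\alpha')\bigr)\bigl(\sum_k p_k^{*}(\gamma)\bigr)^{2n-3}$ and keeping only the multi-degree $(n+1)$ monomials that hit $\eta$ in every factor, one gets a product of: one factor $\int_A \alpha\smile\alpha'$-type pairing — but since $H^6(A)=0$ the only nonzero contraction pairs $\alpha$ with $\alpha'$ through the non-degenerate form, giving $\int_A \iota^{-1}(\alpha\wedge\alpha')\smile\gamma$ once a $\gamma$ is glued in — together with $(n-2)$ factors of $\int_A\gamma^2$ coming from pairing the remaining $\gamma$'s in pairs. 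Counting the number of ways to distribute the $2n-3$ copies of $\gamma$ into the factor with $\alpha,\alpha'$ and into $n-2$ pairs, together with the Lehn--Sorger sign conventions (the minus in $T_r$, and the shifted grading~\eqref{calodue}), produces the constant $-(2n-3)!!$.

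The main obstacle I expect is bookkeeping of the combinatorial constant and the sign: one must correctly track (i) the factorials coming from the $\gp_{-1}(1)$ normalizations in Propositions~\ref{prp:targamu} and~\ref{prp:nufock}, (ii) the multinomial coefficient counting partitions of the $2n-3$ factors of $\gamma$ into one ``special'' slot and $n-2$ unordered pairs — this is where $(2n-3)!! = (2n-3)(2n-5)\cdots 3\cdot 1$ arises, as the number of perfect matchings after removing one element — and (iii) the overall sign from the $(-\int_A)$ in the definition of $T_r$ raised to the relevant power, which after the dust settles is a single $-1$. A cross-check is available: specializing $\gamma$ so that $q(\gamma)\ne0$ and comparing with the Fujiki relation~\eqref{fujipol} and with Proposition~\ref{prp:bellaform} should reproduce the same constant, and one can also sanity-check the case $n=2$ against~\cite{cogekum}. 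An alternative to the full Lehn--Sorger computation, which I would keep in reserve, is to invoke Fujiki's theorem directly: the left-hand side of~\eqref{hallo}, as a function of $\alpha,\alpha',\gamma$, is monodromy-equivariant and ``polynomial'' of the stated multidegree, hence must be a universal constant times $\bigl(\int_A\iota^{-1}(\alpha\wedge\alpha')\smile\gamma\bigr)\cdot\bigl(\int_A\gamma^2\bigr)^{n-2}$; then only one scalar needs to be pinned down, e.g.\ by a single explicit choice of $A=E\times F$ and of $\alpha,\alpha',\gamma$ as in the proof of Lemma~\ref{lmm:restisom}, reducing the Lehn--Sorger input to the minimum.
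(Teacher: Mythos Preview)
Your strategy is workable and ultimately converges with the paper's, but the paper takes a shorter and more elementary route that you should see, and there are two concrete slips in your bookkeeping.

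The paper does \emph{not} go through the Lehn--Sorger ring for this proposition. The observation is that every class in the integrand is a $\mu$-class, hence a pullback via the (degree-$1$) Hilbert--Chow map $\gh_n\colon K_n(A)\to W_{n+1}(A)$; so the integral equals the corresponding integral on $W_{n+1}(A)$. One then lifts by the degree-$(n+1)!$ cover $\wh{W}_{n+1}(A)\to W_{n+1}(A)$ and computes on $A^{n+1}$ by cupping with the Poincar\'e dual $\omega=\prod_{s=1}^4\sum_j p_j^{*}\eta_s$ of the fiber $\wh{W}_{n+1}(A)$. From there it is a direct monomial count on $A^{n+1}$ for basis elements $\alpha=\eta_{i_1}\smile\eta_{i_2}\smile\eta_{i_3}$, $\alpha'=\eta_{j_1}\smile\eta_{j_2}\smile\eta_{j_3}$: each nonvanishing monomial contributes $-(\int_A\iota^{-1}(\alpha\wedge\alpha')\smile\gamma)(\int_A\gamma^2)^{n-2}$, and there are $(n+1)!\,(2n-3)!!$ of them, which after dividing by $(n+1)!$ gives~\eqref{hallo}. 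Your Lehn--Sorger route would land at exactly this $A^{n+1}$ computation once you isolate the $\Id$-permutation terms, so nothing is lost---it is just heavier machinery than needed here (the paper saves Lehn--Sorger for the later propositions involving $\nu_3$ and $\xi_n$).

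Two corrections to your normalizations. First, the fiber class of $A^{[n+1]}\to A$ is \emph{not} $\wt{\mu}_4^{[n+1]}(\eta_A)$: the latter corresponds to $\sum_i p_i^{*}\eta_A$ on $A^{n+1}$, whereas the pullback of $\eta_A$ along the summation map is $\omega=\prod_{s=1}^4\sum_j p_j^{*}\eta_s$ (equivalently $\prod_s\wt{\mu}_1(\eta_s)$), a genuinely different class. Second, there is no ``multiply by $(n+1)$'' step: the correct relation is $\int_{K_n(A)}(\cdots)=\frac{1}{(n+1)!}\int_{A^{n+1}}(\cdots)\smile\omega$, and the Fujiki constant $(n+1)$ plays no role in this particular reduction. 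Your alternative plan---use monodromy equivariance to reduce to a single scalar and then evaluate on one explicit choice---is perfectly valid and is in fact the method the paper uses in neighbouring propositions (e.g.\ \Ref{prp}{spartan}), though for~\Ref{prp}{laprima} itself the paper simply runs the basis computation in full.
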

\begin{proof}
The required computation can be done on $A^{n+1}$ (without appealing to the Lehn-Sorger formulae), because of the following argument. 
Let  
\begin{equation*}
\sigma_{n+1}\colon A^{(n+1)}\to  A,\qquad \wh{\sigma}_{n+1}\colon A^{n+1}\to  A
\end{equation*}
 be the summation maps, and let 
\begin{equation}\label{doppiavu}
W_{n+1}(A):=\sigma_{n+1}^{-1}(0),\qquad \wh{W}_{n+1}(A):=\wh{\sigma}_{n+1}^{-1}(0).
\end{equation}
The restriction of the Hilbert-Chow map to $K_{n}(A)$ is a  map $\gh_n\colon K_n(A)\to W_{n+1}(A)$ of degree $1$ and, for $\lambda\in H^k(A)$, the class  $\mu_{k}(\lambda)$ is equal to $\gh_n^{*}(\lambda^{(n+1)}|_{W_{n+1}(A)})$.  Hence the computation may be done on $W_{n+1}(A)$. On the other hand the natural map $\wh{W}_{n+1}(A)\to W_{n+1}(A)$ has degree $(n+1)!$, and therefore  
 the computation may be done on $\wh{W}_{n+1}(A)$. Lastly, we may compute on $A^{n+1}$, because the relevant classes on $\wh{W}_{n+1}(A)$ are the restrictions
  of classes on $A^{n+1}$.

Let $p_i\colon A^{n+1}\to A$ be the projection to the $i$-th factor. 
The kernel of $\wh{\sigma}_{n+1}$, i.e.~$\wh{W}_{n+1}(A)$, has Poincar\'e dual the class 
\begin{equation}\label{eccomega}
\omega:=\sum\limits_{a=1}^{n+1} p_a^{*}(\eta_1)\cup \sum\limits_{b=1}^{n+1} p_b^{*}(\eta_2)\cup \sum\limits_{c=1}^{n+1} p_c^{*}(\eta_3)\cup \sum\limits_{d=1}^{n+1} p_d^{*}(\eta_4). 
\end{equation}
Thus~\eqref{hallo} is equivalent to the following equality:
\begin{equation}\label{compemma}
\scriptstyle
\int\limits_{[A^{n+1}]}\left(\sum\limits_{r=1}^{n+1}p_r^{*}\alpha\right)\smile \left(\sum\limits_{s=1}^{n+1}p_s^{*}\alpha'\right)\smile \left(\sum\limits_{t=1}^{n+1}p_t^{*}\gamma\right)^{2n-3}
\smile \omega=-(n+1)!\cdot (2n-3)!!\left(\int_{A}\iota^{-1}(\alpha\wedge\alpha')\smile \gamma\right)\cdot \left(\int_{A}\gamma\smile \gamma\right)^{n-2}.
\end{equation}
It suffices to prove that~\eqref{compemma} holds for all choices 
\begin{equation}\label{alfadecomp}
\scriptstyle
\alpha=\eta_{i_1}\cup\eta_{i_2}\cup\eta_{i_3},\quad  \alpha'=\eta_{j_1}\cup\eta_{j_2}\cup\eta_{j_3},\quad 
1\le i_1<i_2<i_3\le 4,\  1\le j_1<j_2<j_3\le 4.
\end{equation}
Let $i_0,j_0$ be such that 
\begin{equation}\label{ijzero}
\{i_0, i_1, i_2, i_3\}=\{1,\ldots,4\},\quad \{j_0, j_1, j_2, j_3\}=\{1,\ldots,4\}. 
\end{equation}
By interchanging $\alpha$ and $\alpha'$, if necessary, we may assume that $i_0<j_0$.
 Let $h_0<k_0$ be such that 
 \begin{equation}\label{hkzero}
 \{i_0,j_0,h_0,k_0\}=\{1,\ldots,4\}. 
\end{equation}
Then
 \begin{equation}\label{bianca}
\iota^{-1}(\alpha\wedge\alpha')=-\eta_{h_0}\smile\eta_{k_0}.
\end{equation}
The integrand in the left hand side of~\eqref{compemma} is the sum of monomials, i.e.~products of the addends of the factors.  Each  non vanishing monomial is equal to
\begin{equation}\label{alex}
\scriptstyle
(-1)^{i_0+j_0}p_r^{*}(\alpha\smile\eta_{i_0})\smile p_s^{*}(\alpha'\smile\eta_{j_0})\smile p_{t_1}^{*}(\gamma^2)\smile \ldots\smile p_{t_{n-2}}^{*}(\gamma^{2})\smile 
p_{t_{n-1}}^{*}(\gamma\smile\eta_{h_0}\smile\eta_{k_0}),
\end{equation}
 where $\{r,s,t_1,\ldots,t_{n-1}\}=\{1,\ldots,n+1\}$.

By~\eqref{bianca}, the integral over $A^{n+1}$ of the  class in~\eqref{alex}  equals 
$$- \left(\int_{A}\iota^{-1}(\alpha\wedge\alpha')\smile \gamma\right)\cdot \left(\int_{A}\gamma^2\right)^{n-2}.$$ 
 Since there are $(n+1)!(2n-3)!!$ such integrals appearing, the proposition follows.
\end{proof}
\begin{prp}\label{prp:spartan}
Let $\alpha,\alpha'\in H^3(A)$, and $\gamma\in H^2(A)$. If $n\ge 3$, then 
\begin{equation}\label{ween}
\scriptstyle
\int\limits_{K_n(A)}\mu_3(\alpha)\smile \mu_3(\alpha')\smile \mu_2(\gamma)^{2n-5}\smile\xi_n^2=
2(n+1)\cdot(2n-5)!! \left(\int_{A}\iota^{-1}(\alpha\wedge\alpha')\smile \gamma\right)\cdot \left(\int_{A}\gamma^2\right)^{n-3}.
\end{equation}
\end{prp}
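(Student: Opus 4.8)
The plan is to imitate the proof of~\Ref{prp}{laprima}, with one essential change: the class $\xi_n$ is \emph{not} pulled back from $A^{n+1}$, so one cannot descend to the big torus (and one cannot circumvent a genuine computation by writing $\xi_n^2=4(n+1)\sigma_n-\ov{q}$ and invoking~\Ref{prp}{laprima}, since $\ov{q}=2(n+1)q^{\vee}$ and $q^{\vee}_{K_n(A)}$ itself contains $\xi_n^2$, so that substitution is circular). Instead I would lift the integral to $A^{[n+1]}$ and evaluate it in the Lehn--Sorger model of~\Ref{thm}{anello}. Write $m:=n+1$. The summation morphism $\pi\colon A^{[m]}\to A$ (Hilbert--Chow followed by $\sigma_m$) is a surjective submersion with fibre $K_n(A)=\pi^{-1}(0)$, and for $\lambda\in H^1(A)$ one has $\pi^{*}\lambda=\wt{\mu}_1(\lambda)$ (over $H^1$, summation pulls back to symmetrization). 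Since $\{\eta_1,\dots,\eta_4\}$ is an oriented basis, $\eta_1\smile\dots\smile\eta_4$ is the point class of $A$, hence the Poincar\'e dual of $[K_n(A)]$ in $A^{[m]}$ is $\wt{\mu}_1(\eta_1)\smile\dots\smile\wt{\mu}_1(\eta_4)$. Combining the projection formula with $\mu_k(\lambda)=\wt{\mu}_k(\lambda)|_{K_n(A)}$ and $\xi_n=\wt{\xi}_m|_{K_n(A)}$, the left-hand side of~\eqref{ween} equals
\begin{equation*}
\int_{A^{[m]}}\wt{\mu}_3(\alpha)\smile\wt{\mu}_3(\alpha')\smile\wt{\mu}_2(\gamma)^{2n-5}\smile\wt{\xi}_m^{2}\smile\wt{\mu}_1(\eta_1)\smile\dots\smile\wt{\mu}_1(\eta_4).
\end{equation*}

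Next I would transport this integral into $H(A)^{[m]}$. By~\Ref{prp}{targamu}, $\wt{\mu}_r(\lambda)=\Gamma\big(M(\lambda)\big)$ with $M(\lambda):=\sum_{j=1}^{m}p_j^{*}(\lambda)\,\Id$; and $\wt{\xi}_m=\Gamma\big(c_m(1)\big)$ (combine~\Ref{prp}{nufock} with $\wt{\nu}_2(1)=c_1(\cO(\wt{\Delta}_m))=2\wt{\xi}_m$), the sign being immaterial since $\wt{\xi}_m$ occurs squared. From~\eqref{classefond}, $\int_{A^{[m]}}\Gamma(x)$ equals $\tfrac{1}{m!}$ times the coefficient of $\eta_A^{\otimes m}\,\Id$ in $x$. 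So the quantity sought is $\tfrac{1}{m!}$ times the $\eta_A^{\otimes m}\,\Id$-coefficient of
\begin{equation*}
M(\alpha)\cdot M(\alpha')\cdot M(\gamma)^{2n-5}\cdot c_m(1)^{2}\cdot M(\eta_1)\cdots M(\eta_4).
\end{equation*}

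Every factor $M(\cdot)$ is supported on the permutation $\Id$; multiplication by such a factor has vanishing graph defect, keeps the underlying permutation, and merely cup-multiplies componentwise in $H(A)^{\otimes m}$ while adding internal degrees. Since a summand $\Gamma(\zeta\pi)$ can reach top degree only when $\langle\pi\rangle$ has $m$ orbits, i.e.\ $\pi=\Id$, only the $\Id$-part of $c_m(1)^{2}$ contributes, and by~\eqref{ruedeseine} (with $\beta=\beta'=1$) that part is $\sum_{1\le i<j\le m}\Delta^{ij}_{*}(1)\,\Id$, where $\Delta_{2,*}(1)$ is characterised by~\eqref{diagonale} (it is the negative of the K\"unneth decomposition of the diagonal of $A$). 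Hence we must extract the $\eta_A^{\otimes m}\,\Id$-coefficient of
\begin{equation*}
\sum_{1\le i<j\le m}M(\alpha)\,M(\alpha')\,M(\gamma)^{2n-5}\big(\Delta^{ij}_{*}(1)\,\Id\big)M(\eta_1)\cdots M(\eta_4),
\end{equation*}
which, all factors being $\Id$-supported, is a componentwise cup product: the coefficient is a finite sum over a pair $i<j$, a K\"unneth summand $a\otimes b$ of $\Delta_{2,*}(1)$ placed on the $i$-th and $j$-th factors, and a distribution of $\alpha,\alpha'$, the $2n-5$ copies of $\gamma$ and $\eta_1,\dots,\eta_4$ among the $m$ factors such that the classes landing on each factor multiply to a scalar multiple of $\eta_A$, each admissible configuration contributing the product of those scalars with the appropriate Koszul sign.

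The last step is the bookkeeping, organised by the bidegree $(|a|,|b|)$ of the K\"unneth summand. When $\{|a|,|b|\}=\{0,4\}$ the factor carrying $\eta_A$ admits no further class, and the distribution of the remaining classes over the other $m-1$ factors is precisely the sum treated in the proof of~\Ref{prp}{laprima}; the cases $\{1,3\}$ and $\{2,2\}$ are handled by the same pattern (each factor not meeting $\{i,j\}$ receives either two of the $\gamma$'s, producing $\int_A\gamma^{2}$, or one $\gamma$ together with the relevant degree-one or degree-two pieces), using $\eta_a^{\vee}\smile\eta_b=\delta_{ab}\eta_A$, the reproducing tensor $\sum_a\eta_a^{\vee}\otimes\eta_a$ for the pairing between $H^3(A)$ and $H^1(A)$, and the reproducing tensor $\sum_i(e_i\otimes f_i+f_i\otimes e_i)$ for the intersection form on $H^2(A)$, so as to produce the factor $\int_A\iota^{-1}(\alpha\wedge\alpha')\smile\gamma$ and the power $\big(\int_A\gamma^{2}\big)^{n-3}$. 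Collecting the combinatorial multiplicities and dividing by $m!=(n+1)!$ yields $2(n+1)(2n-5)!!\,\big(\int_A\iota^{-1}(\alpha\wedge\alpha')\smile\gamma\big)\big(\int_A\gamma^{2}\big)^{n-3}$. The main obstacle is exactly this enumeration — the case analysis of how the cohomological degrees sum to $4$ on each of the $n+1$ factors, together with the careful tracking of Koszul signs and of the multinomial coefficients; much of it, however, is already isolated in the proof of~\Ref{prp}{laprima}.
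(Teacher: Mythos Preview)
Your approach matches the paper's through the reduction to the $\sum_{i<j}\Delta^{ij}_{*}(1)\,\Id$ part of $c_{m}(1)^{2}$: the lift to $A^{[n+1]}$ via the Poincar\'e dual $\wt{\mu}_1(\eta_1)\smile\cdots\smile\wt{\mu}_1(\eta_4)$, the passage to the Lehn--Sorger model, and the observation that non-trivial permutations cannot contribute are exactly what the paper does. The endgame differs. Instead of expanding $\Delta^{ij}_{*}(1)$ by K\"unneth bidegree and enumerating distributions for general $\alpha,\alpha'$, the paper (i) reads $-\Delta^{ij}_{*}(1)$ geometrically as the Poincar\'e dual of $\{a_i=a_j\}$, reducing the integral to one over $A^{n}$ with coefficient $2$ on the merged factor, and (ii) invokes~\Ref{prp}{eccotheta} to specialise once and for all to $\alpha=\eta_1\eta_2\eta_3$, $\alpha'=\eta_1\eta_2\eta_4$, so that $\iota^{-1}(\alpha\wedge\alpha')=\eta_1\smile\eta_2$ and most cross-terms vanish on sight. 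Your K\"unneth-by-bidegree plan is valid in principle, and your identification of the $\{0,4\}$ piece with the~\Ref{prp}{laprima} sum (shifted $n\mapsto n-1$) is correct; but as you acknowledge, the enumeration across all bidegrees with general $\alpha,\alpha'$ is the hard part, and the paper's two moves are precisely what make it tractable.
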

\begin{proof}
Let $Q$ be the number such that
\begin{equation}\label{eccoqu}
\scriptscriptstyle
c_{n+1}(1)^2\cdot
\left(\sum_{i=1}^{n+1}p_i^{*}(\alpha)\Id\right)\cdot\left(\sum_{i=1}^{n+1}p_i^{*}(\alpha')\Id\right)
\cdot\left(\sum_{i=1}^{n+1}p_i^{*}(\gamma)\Id\right)^{2n-5} 
\cdot\prod\limits_{s=1}^4\left(p_1^{*}(\eta_s)\Id+\ldots+p_{n+1}^{*}(\eta_s)\Id\right)=
Q \eta^{\otimes (n+1)}.
\end{equation}
By the results recalled in~\Ref{subsec}{hilbring}, the integral in the left hand side of~\eqref{ween} is equal to $Q/(n+1)!$. Now consider  Equation~\eqref{ruedeseine} for  $\beta=\beta'=1$, and plug it into the left hand side of~\eqref{eccoqu}: the terms in the right hand side of~\eqref{ruedeseine} which involve non trivial permutations will give zero when multiplied by the other factors, hence we get that the left hand side of~\eqref{eccoqu} is equal to the  sum, for $1\le i<j\le(n+1)$,  of the products obtained by substituting   $c_{n+1}(1)^2$ with $\Delta^{ij}_{*}(1)\Id$ in the left hand side of~\eqref{eccoqu}. 
Since there are $n(n+1)/2$ such terms, and each contributes (by symmetry) the same amount to $Q$, it follows that
\begin{multline}\label{pontoise}
\scriptstyle
\int\limits_{K_n(A)}\mu_3(\alpha)\smile \mu_3(\alpha')\smile \mu_2(\gamma)^{2n-5}\smile\xi_n^2=\\
\scriptstyle
\frac{1}{(n-1)!\cdot 2}\int\limits_{A^{n+1}}\Delta^{n,(n+1)}_{*}(1)\smile 
\left(\sum_{i=1}^{n+1}p_i^{*}(\alpha)\right)\smile
\left(\sum_{i=1}^{n+1}p_i^{*}(\alpha')\right)
\smile\left(\sum_{i=1}^{n+1}p_i^{*}(\gamma)\right)^{2n-5}
\smile\prod\limits_{s=1}^4\left(p_1^{*}(\eta_s)+\ldots+p_{n+1}^{*}(\eta_s)\right).
\end{multline}
 Next, notice that $-\Delta^{n,(n+1)}_{*}(1)$  is the Poincar\'e dual of $\{a\in A^{n+1}\mid a_n=a_{n+1}\}$. Thus, letting  $\nu$ be the cohomology class on $A^n$ given by
\begin{equation}
\scriptstyle
\nu:=\left(p_1^{*}(\gamma)+\ldots +p_{n-1}^{*}(\gamma)+2p_{n}^{*}(\gamma)\right)^{2n-5} 
\smile\prod\limits_{s=1}^4\left(p_1^{*}(\eta_s)+\ldots+p_{n-1}^{*}(\eta_s)+2p_{n}^{*}(\eta_s)\right),
\end{equation}
  the integral in the right hand side of~\eqref{pontoise} is equal to
\begin{equation}\label{portonovo}
\scriptstyle
-\int\limits_{A^{n}}
\left(p_1^{*}(\alpha)+\ldots+p_{n-1}^{*}(\alpha)+2p_n^{*}(\alpha)\right)\smile
\left(p_1^{*}(\alpha')+\ldots+p_{n-1}^{*}(\alpha')+2p_n^{*}(\alpha')\right)
\smile\nu.
\end{equation}
By~\Ref{prp}{eccotheta} it suffices to prove that~\eqref{ween} holds with one choice of $\alpha,\alpha'$ such that  $\alpha\wedge\alpha'\not=0$. We choose
\begin{equation}
\alpha=\eta_1\smile\eta_2\smile\eta_3,\qquad \alpha'=\eta_1\smile\eta_2\smile\eta_4.
\end{equation}
Notice that
\begin{equation}\label{naige}
\iota^{-1}(\alpha\wedge\alpha')=\eta_1\smile\eta_2.
\end{equation}
The integrand in~\eqref{portonovo} equals
\begin{equation}
\scriptstyle
\sum\limits_{\stackrel{i\not=j}{ 1\le i,j\le (n-1)}}
p_i^{*}(\alpha)\smile p_j^{*}(\alpha')\smile\nu +2\sum\limits_{ i=1}^{n-1}
p_i^{*}(\alpha)\smile p_n^{*}(\alpha')\smile\nu+2\sum\limits_{ j=1}^{n-1}
p_n^{*}(\alpha)\smile p_j^{*}(\alpha')\smile\nu.
\end{equation}
Since $\nu$ is $\cS_n$-invariant, it follows that the 
 integral in~\eqref{portonovo} equals
\begin{equation}\label{tresorelle}
\scriptscriptstyle
(n-1)(n-2)\int_{A^{n}} p_1^{*}(\alpha)\smile p_2^{*}(\alpha')\smile\nu+
2(n-1)\int_{A^{n}} p_1^{*}(\alpha)\smile p_n^{*}(\alpha')\smile\nu+
2(n-1)\int_{A^{n}} p_n^{*}(\alpha)\smile p_1^{*}(\alpha')\smile\nu.
\end{equation}
Expanding $\nu$ as a sum of monomials,  one gets that
\begin{multline}\label{nicoletta}
\scriptscriptstyle
 p_1^{*}(\alpha)\smile p_2^{*}(\alpha')\smile\nu= \\
\scriptscriptstyle
=\sum\limits_{i=3}^{n-1}4(n-3)!(2n-5)!!
 p_1^{*}(\alpha)\smile p_2^{*}(\alpha')\smile p_{3}^{*}(\gamma^2)\smile\ldots\smile p_{i-1}^{*}(\gamma^2)\smile 
p_{i}^{*}(\gamma)\smile  p_{i+1}^{*}(\gamma^2)\smile \ldots\smile p_{n}^{*}(\gamma^2)\smile p_i^{*}(\eta_1)\smile p_i^{*}(\eta_2)\smile 
p_2^{*}(\eta_3)\smile p_1^{*}(\eta_4)+ \\
\scriptscriptstyle
+8(n-3)!(2n-5)!! p_1^{*}(\alpha)\smile p_2^{*}(\alpha')\smile p_{3}^{*}(\gamma^2)\smile\ldots\smile p_{n-1}^{*}(\gamma^2)\smile 
p_{n}^{*}(\gamma)\smile p_n^{*}(\eta_1)\smile p_n^{*}(\eta_2)\smile 
p_2^{*}(\eta_3)\smile p_1^{*}(\eta_4).
\end{multline}
Thus,  recalling~\eqref{naige}, Equation~\eqref{nicoletta} gives
\begin{equation}
\scriptstyle
\int_{A^{n}} p_1^{*}(\alpha)\smile p_2^{*}(\alpha')\smile\nu=-4(n-1)(n-3)!(2n-5)!! \left(\int\limits_A\gamma\smile\iota^{-1}(\alpha\wedge\alpha')\right)\left(\int\limits_A\gamma^2\right)^{n-3}.
\end{equation}
Expanding again $\nu$ as a sum of monomials, one gets that
\begin{multline*}
\scriptscriptstyle
 p_1^{*}(\alpha)\smile p_n^{*}(\alpha')\smile\nu= \\
\scriptscriptstyle
=\sum\limits_{i=2}^{n-1} 2(n-3)!(2n-5)!!
 p_1^{*}(\alpha)\smile p_n^{*}(\alpha')\smile p_{2}^{*}(\gamma^2)\smile\ldots\smile p_{i-1}^{*}(\gamma^2)\smile 
p_{i}^{*}(\gamma)\smile  p_{i+1}^{*}(\gamma^2)\smile \ldots\smile p_{n-1}^{*}(\gamma^2)\smile p_i^{*}(\eta_1)\smile p_i^{*}(\eta_2)\smile 
 p_n^{*}(\eta_3)\smile p_1^{*}(\eta_4).
\end{multline*}
Recalling~\eqref{naige}, it follows that
\begin{equation}\label{amore}
\scriptstyle
\int\limits_{A^{n}} p_1^{*}(\alpha)\smile p_n^{*}(\alpha')\smile\nu= -2(n-2)!(2n-5)!!
\left(\int\limits_A\gamma\smile\iota^{-1}(\alpha\wedge\alpha')\right)\left(\int\limits_A\gamma^2\right)^{n-3}.
\end{equation}
Exchanging $\alpha$ and $\alpha'$ we see that the third integral appearing in~\eqref{tresorelle} is also equal to the right hand side of~\eqref{amore}. 
By~\eqref{tresorelle} it follows 
that the integral in the right hand side of~\eqref{pontoise} is equal (recall the minus sign in~\eqref{portonovo}) to 
\begin{equation*}
4(n+1)(n-1)!(2n-5)!!\left(\int\limits_A\gamma\smile\iota^{-1}(\alpha\wedge\alpha')\right)\left(\int\limits_A\gamma^2\right)^{n-3}.
\end{equation*}
The proposition now follows from~\eqref{pontoise}.
\end{proof}
\begin{crl}\label{crl:ciunodiuno}
Let $n\ge 3$. Then (notation as in~\Ref{lmm}{cidi})
\begin{equation*}
%\scriptstyle
%
C_1(n)=
-\frac{1}{ (n+1)(2n+5)}, \qquad
D_1(n) =0.
\end{equation*}
\end{crl}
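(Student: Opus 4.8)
The plan is to extract $C_1(n)$ and $D_1(n)$ by evaluating both sides of the two equations from \Ref{lmm}{cidi} (more precisely, the first one, for $\Pi_3(\mu_3(\alpha)\smile\mu_3(\alpha'))$) against suitable test classes, and then solving the resulting linear system. Concretely, by \Ref{lmm}{cidi} we have
$$\Pi_3(\mu_3(\alpha)\smile\mu_3(\alpha'))=C_1(n)\, q^{\vee}\smile\mu_2(\iota^{-1}(\alpha\wedge\alpha'))+D_1(n)\,\mu_2(\iota^{-1}(\alpha\wedge\alpha'))\smile\xi_n^2.$$
Since the projection $\Pi_3$ is orthogonal for the Poincaré pairing and since $\mu_3(\alpha)\smile\mu_3(\alpha')$ already lies in the image of $\Sym$ paired against things in $\Sym^3 H^2$ (the complement $(H_{(2)}^{\perp})$ pairs to zero), we have, for any $\delta\in\Sym^{n-3}H^2(K_n(A))$ and $\gamma\in H^2(K_n(A))$,
$$\int_{K_n(A)}\mu_3(\alpha)\smile\mu_3(\alpha')\smile\delta=\int_{K_n(A)}\Pi_3(\mu_3(\alpha)\smile\mu_3(\alpha'))\smile\delta.$$
So I would pick two independent test classes in $\Sym^{n-3}H^2(K_n(A))$ — namely $\mu_2(\gamma)^{2n-3}$ (taking $\delta$ to account for the right total degree; more precisely the appropriate power $\mu_2(\gamma)^{2n-3}$ occurring in \eqref{hallo}) and $\mu_2(\gamma)^{2n-5}\smile\xi_n^2$ occurring in \eqref{ween} — and pair.

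The first pairing is exactly \Ref{prp}{laprima}: integrating $\mu_3(\alpha)\smile\mu_3(\alpha')\smile\mu_2(\gamma)^{2n-3}$ gives $-(2n-3)!!\big(\int_A\iota^{-1}(\alpha\wedge\alpha')\smile\gamma\big)\big(\int_A\gamma^2\big)^{n-2}$. On the other hand I compute the same integral using the right-hand side of the displayed formula for $\Pi_3$: I need $\int_{K_n(A)}q^{\vee}\smile\mu_2(\iota^{-1}(\alpha\wedge\alpha'))\smile\mu_2(\gamma)^{2n-3}$ and $\int_{K_n(A)}\mu_2(\iota^{-1}(\alpha\wedge\alpha'))\smile\xi_n^2\smile\mu_2(\gamma)^{2n-3}$. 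These are integrals of degree-$2$ classes cupped together on $K_n(A)$, so they are governed by the Fujiki relation \eqref{fujipol} (equivalently \Ref{prp}{bellaform} with the BBF form on $H^2(K_n(A))$). Choosing $\gamma$ so that $q(\gamma)\neq 0$ and $(\mu_2(\iota^{-1}(\alpha\wedge\alpha')),\mu_2(\gamma))$ is nonzero — e.g. $\gamma$ orthogonal to nothing in particular, just generic — and using that $(\mu_2(a),\mu_2(b))=\int_A a\smile b$ and $(\mu_2(a),\xi_n)=0$, $(\xi_n,\xi_n)=-2(n+1)$, these two integrals become explicit polynomial-in-$n$ multiples of $\big(\int_A\iota^{-1}(\alpha\wedge\alpha')\smile\gamma\big)\big(\int_A\gamma^2\big)^{n-2}$ with a universal constant in front, coming from the combinatorics of the Fujiki polarization and from $q^{\vee}=\frac{1}{2(n+1)}\ov{q}$ expressed via \eqref{qudukum}. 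The second pairing is \Ref{prp}{spartan}, which produces $2(n+1)(2n-5)!!\big(\int_A\iota^{-1}(\alpha\wedge\alpha')\smile\gamma\big)\big(\int_A\gamma^2\big)^{n-3}$; matching it against the same right-hand side cupped with $\mu_2(\gamma)^{2n-5}\smile\xi_n^2$ gives a second linear relation in $C_1(n),D_1(n)$.

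Solving the resulting $2\times 2$ linear system over $\QQ$ yields $C_1(n)=-\frac{1}{(n+1)(2n+5)}$ and $D_1(n)=0$. The value $D_1(n)=0$ should in fact drop out cleanly: the class $\mu_3(\alpha)\smile\mu_3(\alpha')$ pulls back (after the degree-$1$ Hilbert--Chow map and passage to $A^{n+1}$, as in the proof of \Ref{prp}{laprima}) from $A^{n+1}$ and involves only $\mu$-classes, so intuitively its $\Sym^3 H^2$-projection carries no $\xi_n^2$-component in the relevant piece; the computation via \Ref{prp}{laprima} and \Ref{prp}{spartan} will confirm this. The main obstacle is purely bookkeeping: carefully evaluating the two auxiliary integrals $\int_{K_n(A)}q^{\vee}\smile\mu_2(\cdot)\smile\mu_2(\gamma)^{2n-3}$ and $\int_{K_n(A)}\mu_2(\cdot)\smile\xi_n^2\smile\mu_2(\gamma)^{2n-3}$ (and their $2n-5$ analogues) using the Fujiki multilinear formula \eqref{fujipol} and \Ref{prp}{bellaform}, keeping track of the double-factorial combinatorial factors and of the normalization $\ov{q}=2(n+1)q^{\vee}$; once those are in hand the algebra is immediate. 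I expect no conceptual difficulty beyond this, since all the hard geometry has already been done in \Ref{prp}{laprima} and \Ref{prp}{spartan}.
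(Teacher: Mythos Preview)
Your proposal is correct and follows essentially the same approach as the paper: pair the identity from \Ref{lmm}{cidi} against the two test classes $\mu_2(\gamma)^{2n-3}$ and $\mu_2(\gamma)^{2n-5}\smile\xi_n^2$, use \Ref{prp}{laprima} and \Ref{prp}{spartan} for the left-hand sides, evaluate the right-hand sides via \Ref{prp}{bellaform} (the case $\ell=1$) together with the polarized Fujiki formula~\eqref{fujipol}, and solve the resulting $2\times 2$ linear system (which the paper writes out explicitly as~\eqref{sistemino}). Your side remark that $D_1(n)=0$ ``should drop out cleanly'' from the pullback description is heuristic only and not a substitute for the computation, but you correctly note that the linear algebra confirms it.
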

\begin{proof}
By~\Ref{lmm}{cidi}, we have 
\begin{multline}
\scriptstyle
\int\limits_{K_n(A)}\mu_3(\alpha)\smile \mu_3(\alpha')\smile \mu_2(\gamma)^{2n-3}=C_1(n) \int\limits_{K_n(A)}
q^{\vee}\smile\mu_2(\iota^{-1}(\alpha\wedge\alpha'))\smile \mu_2(\gamma)^{2n-3}+ \\
\scriptstyle
+D_1(n)  \int\limits_{K_n(A)}\mu_2(\iota^{-1}(\alpha\wedge\alpha'))\smile\xi_n^2\smile \mu_2(\gamma)^{2n-3},
\end{multline}
and
\begin{multline}
\scriptstyle
\int\limits_{K_n(A)}\mu_3(\alpha)\smile \mu_3(\alpha')\smile \mu_2(\gamma)^{2n-5}\smile\xi_n^2=C_1(n) \int\limits_{K_n(A)}
q^{\vee}\smile\mu_2(\iota^{-1}(\alpha\wedge\alpha'))\smile \mu_2(\gamma)^{2n-5}\smile\xi_n^2+ \\
\scriptstyle
+D_1(n) \int\limits_{K_n(A)}\mu_2(\iota^{-1}(\alpha\wedge\alpha'))\smile \mu_2(\gamma)^{2n-5}\smile\xi_n^4.
\end{multline}
Each of the integrals appearing in the right hand side of the above equations may be computed by invoking  the case $\ell=1$ of~\Ref{prp}{bellaform}, or Equation~\eqref{duenne} (see also~\Ref{rmk}{fujipol}). By~\Ref{prp}{laprima} and~\Ref{prp}{spartan}, it follows that  $C_1(n)$ and $D_1(n)$ are the solutions of the system of linear equations
\begin{equation}\label{sistemino}
\begin{array}{rcl}
\scriptstyle -(2n-3)!!  & \scriptstyle  = & \scriptstyle  (n+1)(2n+5)\cdot (2n-3)!!C_1(n)- 2(n+1)^2 \cdot (2n-3)!!  D_1(n),\\
\scriptstyle  2(n+1) \cdot (2n-5)!! & \scriptstyle  = & \scriptstyle  - 2 (n+1)^2(2n+5)\cdot (2n-5)!! C_1(n)+ 12(n+1)^3 \cdot (2n-5)!!D_1(n).
\end{array}
\end{equation}
Solving for $C_1(n)$ and $D_1(n)$ one gets the formulae of the proposition.
\end{proof}
\begin{proof}[Proof of~\Ref{prp}{grancul}]
We must prove that if  $\alpha,\alpha'\in H^3(A)$ and  $\gamma\in H^2(A)$, then
\begin{equation}\label{aldunque}
\scriptstyle
\int\limits_{K_n(A)}\mu_3(\alpha)\smile\mu_3(\alpha')\smile \ov{q}^{n-2}\smile\mu_2(\gamma)=
 -2^{n-2}(n+1)^{n-2}\frac{(2n+3)!!}{7!!}\int\limits_A \iota^{-1}(\alpha\wedge\alpha')\smile\gamma.
\end{equation}
If $n=2$, Equation~\eqref{aldunque} follows directly from~\eqref{hallo}. If $n\ge 3$, one  applies~\Ref{lmm}{cidi}. In fact, one assigns to $C_1(n)$ and $D_1(n)$ the values given by~\Ref{crl}{ciunodiuno}, and then one
 applies~\eqref{duenne} (see also~\Ref{rmk}{fujipol}) and~\Ref{prp}{bellaform} in order to carry out the required computations.
\end{proof}
\subsection{Computation of $\vartheta_2(\ov{q}^{n-2})$}\label{subsec:granculdue}
\setcounter{equation}{0}
 We will prove the following result. 
\begin{prp}\label{prp:duetre}
Let $X$ be a $2n$ dimensional hyperk\"ahler manifold of Kummer type, where $n\ge 2$. Then 
\begin{equation}\label{duetre}
\vartheta_2(\ov{q}^{n-2}_X)=-2^{n-2}(n+1)^{n-1}\frac{(2n+3)!!}{7!!}
\end{equation}
\end{prp}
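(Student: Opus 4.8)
The plan is to compute $\vartheta_2(\ov q^{n-2}_X)$ by the same strategy used for $\vartheta_1$ in~\Ref{prp}{grancul}: reduce to an integral on $K_n(A)$, then to an integral on $A^{n+1}$ (and then on $A^n$), and finally read off the coefficient. Concretely, by~\Ref{prp}{eccotheta} the number $\vartheta_2(\ov q^{n-2})$ is characterized by
\begin{equation*}
\int\limits_{K_n(A)}\nu_3(\beta)\smile\nu_3(\beta')\smile\ov q^{n-2}\smile\mu_2(\gamma)=\vartheta_2(\ov q^{n-2})\int\limits_A\beta\smile\beta'\smile\gamma
\end{equation*}
for $\beta,\beta'\in H^1(A)$ and $\gamma\in H^2(A)$ (using that $\iota(\beta\wedge\beta')=\iota^{-1}$-type pairing with $\gamma$ reduces to $\int_A\beta\smile\beta'\smile\gamma$). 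So first I would establish the $\nu_3$-analogues of~\Ref{prp}{laprima} and~\Ref{prp}{spartan}, namely explicit formulas for
\begin{equation*}
\int\limits_{K_n(A)}\nu_3(\beta)\smile\nu_3(\beta')\smile\mu_2(\gamma)^{2n-3}\qquad\text{and}\qquad\int\limits_{K_n(A)}\nu_3(\beta)\smile\nu_3(\beta')\smile\mu_2(\gamma)^{2n-5}\smile\xi_n^2,
\end{equation*}
which then pin down $C_2(n)$ and $D_2(n)$ of~\Ref{lmm}{cidi} by solving a $2\times2$ linear system exactly as in~\Ref{crl}{ciunodiuno}. Once $C_2(n),D_2(n)$ are known, expanding $\ov q^{n-2}=(2(n+1)q^\vee)^{n-2}$ and applying~\Ref{prp}{bellaform} together with~\eqref{duenne} yields~\eqref{duetre}; I expect the clean relation $\vartheta_2=(n+1)\vartheta_1$ to emerge from the fact that $\nu_3$ classes restrict to $\xi_n$-flavored classes carrying an extra factor coming from the normalization $2(n+1)$ hidden in~\eqref{bbfkum} and~\eqref{qudukum}.

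The computation of the two key integrals is where the work lies, and unlike the $\mu_3$ case it genuinely requires the Lehn--Sorger machinery recalled in~\Ref{subsec}{hilbring}, since $\nu_3(\beta)$ is \emph{not} pulled back from $A^{(n+1)}$. Here I would use~\Ref{prp}{nufock}: $\Gamma(c_{n+1}(\beta))=\tfrac12\wt\nu^{[n+1]}_3(\beta)$, so that $\nu_3(\beta)\smile\nu_3(\beta')$ on $K_n(A)$ corresponds, up to the factor $4$ and restriction to $K_n(A)$, to $c_{n+1}(\beta)\cdot c_{n+1}(\beta')$ in $H(A)^{[n+1]}$. I would plug in the product formula~\eqref{ruedeseine} for $c_{n+1}(\beta)\cdot c_{n+1}(\beta')$, multiply by the appropriate power of $\sum_i p_i^*(\gamma)\Id$ (and by $c_{n+1}(1)^2$ in the second integral), and by $\prod_{s=1}^4(\sum_i p_i^*(\eta_s)\Id)$ to cut down to $\wh W_{n+1}(A)$, then extract the coefficient of $\eta^{\otimes(n+1)}$ as in~\eqref{eccoqu}. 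The terms in~\eqref{ruedeseine} involving nontrivial permutations $(hkl)$ or $(ij)(hk)$ will mostly die against the $\eta_s$ factors (a clean class has to appear in every tensor slot), leaving the $\sum_{i<j}\Delta^{ij}_*(\beta\smile\beta')\Id$ term plus, in the three-cycle part, controlled contributions; since $|\beta|=|\beta'|=-1$ in the shifted grading, $\beta\smile\beta'\in H^2(A)$ and the diagonal push-forwards $\Delta^{ij}_*(\beta\smile\beta')$ are manageable.

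After the symmetry reduction ($n(n+1)/2$ equal terms for the diagonal piece, each computed on $A^n$ after collapsing $\Delta^{n,n+1}_*$ as in the passage from~\eqref{pontoise} to~\eqref{portonovo}), I would choose a convenient test case, e.g.\ $\beta=\eta_1$, $\beta'=\eta_2$, $\gamma$ a multiple of $\eta_3\smile\eta_4$, so that $\int_A\beta\smile\beta'\smile\gamma\neq0$, and carry out the monomial bookkeeping to get the double-factorial constants. The main obstacle I anticipate is exactly this bookkeeping: in the second integral there are \emph{two} independent sources of transpositions/three-cycles (one from $c_{n+1}(1)^2$, one from $c_{n+1}(\beta)\cdot c_{n+1}(\beta')$), so the cross-terms in~\eqref{ruedeseine}, and the possibility of four-point configurations governed by~\eqref{invklein}, must be checked to vanish (or their contributions summed correctly) against the $\gamma$- and $\eta_s$-factors; getting the signs right through the grading shift~\eqref{calodue} and the minus signs in $T_r$ and in~\eqref{diagonale} is the delicate part. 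Once the two integrals are in hand, the remaining algebra — solving for $C_2(n),D_2(n)$ and invoking~\Ref{prp}{bellaform} and~\Ref{lmm}{ideban} (case $k=2$) — is routine and parallels~\Ref{subsec}{grancul} verbatim.
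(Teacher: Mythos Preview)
Your plan is exactly the paper's: it proves the two integrals you single out as~\Ref{prp}{sironi} and~\Ref{prp}{mario}, deduces $C_2(n)=4(n+1)C_1(n)$ and $D_2(n)=4(n+1)D_1(n)$ in~\Ref{crl}{cidue}, and then the proof of~\Ref{prp}{duetre} is one line: $\vartheta_2=(n+1)\vartheta_1$. Two small slips to fix. First, since ${\mathsf F}(0,\beta)=\nu_3(\beta)/2$ (see~\eqref{bronte}), your displayed characterization of $\vartheta_2$ is off by a factor of $4$; this factor is exactly why $C_2=4(n+1)C_1$ gives $\vartheta_2=(n+1)\vartheta_1$ rather than $4(n+1)\vartheta_1$. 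Second, your proposed test class $\gamma$ proportional to $\eta_3\smile\eta_4$ has $\int_A\gamma^2=0$, so for $n\ge 3$ both key integrals vanish identically and you extract no information about $C_2(n),D_2(n)$; keep $\gamma$ generic as the paper does (only $\beta=\eta_1$, $\beta'=\eta_2$ is specialized).
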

The proof of~\Ref{prp}{duetre} will be given at the end of this subsection. Throughout the subsection, $\{\eta_1,\ldots,\eta_4\}$ is an oriented basis of $H^1(A)$, i.e.~$\eta:=\eta_1\smile\ldots\smile\eta_4$ is the fundamental class of $A$.
\begin{prp}\label{prp:sironi}
Let $\beta,\beta'\in H^1(A)$ and $\gamma\in H^2(A)$. If $n\ge 2$, then
\begin{equation}\label{betabeta}
\scriptstyle
 \int\limits_{K_n(A)}\nu_3(\beta)\smile \nu_3(\beta')\smile \mu_2(\gamma)^{2n-3} = 
 -4(n+1)(2n-3)!!\left(\int_{A}\beta\smile\beta'\smile \gamma\right)\cdot \left(\int_{A}\gamma^2\right)^{n-2}.
\end{equation}
\end{prp}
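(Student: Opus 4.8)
The plan is to mimic the proof strategy already used for Proposition~\ref{prp:laprima} and Proposition~\ref{prp:spartan}, i.e.\ pull the computation back from $K_n(A)$ to $A^{n+1}$, where the classes $\nu_3(\beta)$ become (up to the degree-$(n+1)!$ cover $\wh{W}_{n+1}(A)\to W_{n+1}(A)$ and the degree-$1$ map $\gh_n$) manageable. The subtlety compared to the $\mu_3$ case is that $\nu_3(\beta)$ is \emph{not} of the form $\gh_n^*(\text{symmetrized class})$; instead it is built from the incidence variety $\Gamma_{n+1}(A)$, so I would rather work with the Lehn--Sorger model. By Proposition~\ref{prp:nufock}, $\nu_3(\beta)=2\Gamma(c_{n+1}(\beta))$ where, via the shift~\eqref{calodue}, $\beta\in H^1(A)$ sits in degree $-1$ inside $H(A)$. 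Likewise $\mu_2(\gamma)=\Gamma(\sum_i p_i^*(\gamma)\Id)$ by Proposition~\ref{prp:targamu}, and the fundamental class contributes the factor in~\eqref{classefond}. So the integral in~\eqref{betabeta} equals $\tfrac{4}{(n+1)!}$ times the coefficient $Q$ of $\eta^{\otimes(n+1)}$ in
\[
c_{n+1}(\beta)\cdot c_{n+1}(\beta')\cdot\Bigl(\sum_{i=1}^{n+1}p_i^{*}(\gamma)\Id\Bigr)^{2n-3},
\]
computed in $H(A)\{\cS_{n+1}\}$ and then symmetrized.

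The key step is to expand $c_{n+1}(\beta)\cdot c_{n+1}(\beta')$ using formula~\eqref{ruedeseine}. Exactly as in the proof of Proposition~\ref{prp:spartan}, when we multiply by the remaining factor $\bigl(\sum_i p_i^{*}(\gamma)\Id\bigr)^{2n-3}$ (all of whose terms are supported on the identity permutation), every contribution from the second and third summations of~\eqref{ruedeseine} — the ones involving non-trivial permutations $(hkl)$ or $(ij)(hk)$ — dies, because a non-identity permutation survives only against another transposition sharing its support. (Here one must also check the sign conventions: since $|\beta|=|\beta'|=-1$ in the shifted grading, the diagonal term $\Delta^{ij}_{*}(\beta\smile\beta')$ appears with the sign dictated by~\eqref{citau}, and $\beta\smile\beta'\in H^2(A)$.) Hence only $\sum_{1\le i<j\le n+1}\Delta^{ij}_{*}(\beta\smile\beta')\Id$ survives, and by $\cS_{n+1}$-symmetry all $\binom{n+1}{2}$ terms contribute equally. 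This reduces the computation to a single integral over $A^{n}$, analogous to~\eqref{pontoise}: collapsing the diagonal $a_n=a_{n+1}$, and using $-\Delta^{n,(n+1)}_{*}(\beta\smile\beta')$ is the Poincar\'e dual of $\{a_n=a_{n+1}\}$ pushed along with $\beta\smile\beta'$, we get
\[
\int\limits_{K_n(A)}\nu_3(\beta)\smile\nu_3(\beta')\smile\mu_2(\gamma)^{2n-3}
= \frac{4}{(n-1)!\cdot 2}\Bigl(-\int\limits_{A^{n}} p_n^{*}(\beta\smile\beta')\smile\Bigl(\textstyle\sum_{i=1}^{n-1}p_i^{*}(\gamma)+2p_n^{*}(\gamma)\Bigr)^{2n-3}\Bigr),
\]
(being slightly schematic about the factor). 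Expanding the power of the $\gamma$-sum as a sum of monomials over $A^n$ and counting which monomials integrate to something nonzero — one needs exactly $(n-2)$ factors $p_i^{*}(\gamma^2)$ on the first $n-1$ slots plus one $p_j^{*}(\gamma)$ matching $p_n^{*}(\beta\smile\beta')$, or the $\gamma$ landing on slot $n$ — gives a clean multinomial count producing $(2n-3)!!$, the combinatorial factor, a power $\bigl(\int_A\gamma^2\bigr)^{n-2}$, and a single $\int_A\beta\smile\beta'\smile\gamma$.

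Then the final identity follows by the same route as Proposition~\ref{prp:grancul}: for $n=2$ the displayed formula~\eqref{betabeta} is the base case (it is essentially~\eqref{hallo} with $\nu_3$ replacing $\mu_3$ and will come straight out of the $A^{3}$ computation), and for $n\ge 3$ one feeds the values $C_2(n),D_2(n)$ coming from the analogue of Corollary~\ref{crl:ciunodiuno} — obtained by solving a $2\times2$ linear system exactly as in~\eqref{sistemino}, using~\eqref{betabeta}, its $\xi_n^2$-twisted companion, and~\eqref{duenne} together with Proposition~\ref{prp:bellaform} — into Lemma~\ref{lmm:cidi} and evaluates $\int_{K_n(A)}\nu_3(\beta)\smile\nu_3(\beta')\smile\ov{q}^{n-2}\smile\mu_2(\gamma)$, finally reading off $\vartheta_2(\ov{q}^{n-2}_X)$ from Proposition~\ref{prp:eccotheta}. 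The main obstacle I anticipate is \textbf{bookkeeping of signs}: the Lehn--Sorger grading shift~\eqref{calodue} makes $\beta,\beta'$ odd, so Koszul signs intervene both in~\eqref{ruedeseine} and when commuting the odd classes past each other on $A^{n}$; getting the sign of $\Delta^{ij}_{*}(\beta\smile\beta')$ right (and consistent with the minus sign built into $T_I$) is exactly what decides whether the answer carries the correct overall $-$ in~\eqref{duetre}. The combinatorics itself — mirroring Proposition~\ref{prp:spartan} almost verbatim — should go through without surprises.
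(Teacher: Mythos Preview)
Your overall strategy---use the Lehn--Sorger model, expand $c_{n+1}(\beta)\cdot c_{n+1}(\beta')$ via~\eqref{ruedeseine}, observe that only the $\Delta^{ij}_*(\beta\smile\beta')\Id$ terms survive against the identity-supported $\gamma$-factors, and reduce by symmetry to a single integral over $A^n$---is exactly what the paper does. But there is a genuine gap.

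You have dropped the factor $\prod_{s=1}^4\bigl(\sum_{j=1}^{n+1}p_j^{*}(\eta_s)\Id\bigr)$ that must multiply your product before extracting the coefficient of $\eta^{\otimes(n+1)}$. The Lehn--Sorger ring computes cup products in $H(A^{[n+1]})$, not in $H(K_n(A))$; to get $\int_{K_n(A)}(\cdots)$ you need $\int_{A^{[n+1]}}(\cdots)\smile[K_n(A)]$, and the Poincar\'e dual of $K_n(A)\subset A^{[n+1]}$ is the pullback of the class of a point under the summation map, i.e.\ $\mu_1(\eta_1)\smile\cdots\smile\mu_1(\eta_4)$. This is precisely the factor appearing in the paper's equation~\eqref{testaccio} and in the $\omega$ of~\eqref{eccomega}. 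Without it your degrees do not match: in your displayed $A^n$-integral the integrand has real degree $2+2(2n-3)=4n-4$ on a $4n$-dimensional manifold, so it vanishes identically. Once you insert the missing factor, the reduction to $A^n$ produces the paper's equation~\eqref{tennis}, with the extra $\prod_s(2p_1^*(\eta_s)+p_2^*(\eta_s)+\cdots+p_n^*(\eta_s))$ that makes the monomial count (your \lq\lq clean multinomial count\rq\rq) actually work out to~\eqref{betabeta}.

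A secondary confusion: your final paragraph describes the proof of \Ref{prp}{duetre} (the computation of $\vartheta_2(\ov{q}^{n-2})$ via $C_2(n),D_2(n)$), not of \Ref{prp}{sironi}. Equation~\eqref{betabeta} is the full statement for all $n\ge2$ and is proved directly by the single Lehn--Sorger/$A^n$ computation; there is no separate $n=2$ base case or appeal to \Ref{lmm}{cidi}.
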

\begin{proof}
 Let $M$ be the integer  such that 
\begin{equation}\label{testaccio}
\scriptstyle
c_{n+1}(\beta)\cdot c_{n+1}(\beta')\cdot\left(\sum_{i=1}^{n+1}p_i^{*}(\gamma)\Id\right)^{2n-3}\cdot \prod_{s=1}^4\left(\sum_{j=1}^{n+1}p_j^{*}(\eta_s)\Id\right)=
M\eta^{\otimes(n+1)} \Id.
\end{equation}
By~\Ref{prp}{targamu}, \Ref{prp}{nufock} and~\eqref{classefond}, we have
\begin{equation}\label{anita}
%\scriptstyle
%
 \int\limits_{K_n(A)}\nu_3(\beta)\smile \nu_3(\beta')\smile \mu_2(\gamma)^{2n-3} = 
\frac{4M}{(n+1)!}.
\end{equation}
Let us compute $M$.  By~\eqref{ruedeseine}
\begin{multline}\label{csaventino}
M=\sum_{1\le h<k\le(n+1)}\int\limits_{A^{n+1}}\Delta^{hk}_{*}(\beta\smile\beta')\smile\left(\sum\limits_{i=1}^{n+1}p_i^{*}(\gamma)\right)^{2n-3}\smile
\prod\limits_{s=1}^4\left(\sum\limits_{j=1}^{n+1}p_j^{*}(\eta_s)\right)= \\
\frac{n(n+1)}{2}\int\limits_{A^{n+1}}\Delta^{12}_{*}(\beta\smile\beta')\smile\left(\sum\limits_{i=1}^{n+1}p_i^{*}(\gamma)\right)^{2n-3}\smile
\prod\limits_{s=1}^4\left(\sum\limits_{j=1}^{n+1}p_j^{*}(\eta_s)\right).
\end{multline}
Since 
$$\Delta^{12}_{*}(\beta\smile\beta')=-p_1^{*}(\beta\smile\beta')\smile {\rm P.D.}\{a\in A^{n+1} \mid a_1=a_2\},$$
(here ${\rm P.D.}$ stands for \lq\lq Poincar\'e dual\rq\rq), it follows that
\begin{multline}\label{tennis}
\scriptstyle
 \int\limits_{K_n(A)}\nu_3(\beta)\smile \nu_3(\beta')\smile \mu_2(\gamma)^{2n-3} = \\
\scriptstyle
=-\frac{2}{(n-1)!}\int\limits_{A^n} p_1^{*}(\beta\smile\beta')\smile(2p_1^{*}(\gamma)+p_2^{*}(\gamma)+\ldots+p_n^{*}(\gamma))^{2n-3}
\smile \prod\limits_{s=1}^4 (2p_1^{*}(\eta_s)+p_2^{*}(\eta_s)+\ldots+p_n^{*}(\eta_s)).
\end{multline}
By~\Ref{prp}{eccotheta} it suffices to prove that~\eqref{betabeta} holds with one choice of $\beta,\beta'$ such that  $\beta\smile \beta'\not=0$. We choose
\begin{equation}
\beta=\eta_1,\qquad \beta'=\eta_2.
\end{equation}
The integrand in the right hand side of~\eqref{tennis} is equal to
\begin{multline}\label{palestra}
\scriptscriptstyle
 \sum\limits_{i=2}^n 2(n-2)!(2n-3)!! p_1^{*}(\beta\smile\beta')\smile p_1^{*}(\gamma)\smile p_2^{*}(\gamma^2)\smile\ldots 
 \smile p_{i-1}^{*}(\gamma^2) \smile p_{i+1}^{*}(\gamma^2)\smile\ldots\smile p_{n}^{*}(\gamma^2)\smile p_i^{*}(\eta)+\\
 \scriptscriptstyle
+\sum\limits_{i=2}^n 4(n-2)!(2n-3)!! p_1^{*}(\beta\smile\beta')\smile p_2^{*}(\gamma^2)\smile\ldots 
 \smile p_{i-1}^{*}(\gamma^2)\smile p_i^{*}(\gamma) \smile p_{i+1}^{*}(\gamma^2)\smile\ldots\smile p_{n}^{*}(\gamma^2)
 \smile p_i^{*}(\eta_1\smile \eta_2)\smile p_1^{*}(\eta_3\smile \eta_4)+\\
 \scriptscriptstyle
+\sum\limits_{2\le i<j\le n} 4(n-2)!(2n-3)!! p_1^{*}(\beta\smile\beta')\smile  p_1^{*}(\gamma)\smile  
p_2^{*}(\gamma^2)\smile\ldots 
 \smile p_{i-1}^{*}(\gamma^2)\smile p_i^{*}(\gamma) \smile p_{i+1}^{*}(\gamma^2)\smile\ldots 
  \smile p_{j-1}^{*}(\gamma^2)\smile p_j^{*}(\gamma) \smile p_{j+1}^{*}(\gamma^2)\smile \ldots \smile p_{n}^{*}(\gamma^2)
 \smile \nu_{ij},
\end{multline}
where
\begin{equation}\label{nutella}
\nu_{ij}:=\sum\limits_{\stackrel{1\le a<b\le 4}{\stackrel{1\le c<d\le 4}{\{a,b,c,d\}=\{1,\ldots,4\}}}}
 (-1)^{a+b-1} p_i^{*}(\eta_a\smile\eta_b)\smile p_j^{*}(\eta_c\smile\eta_d).
\end{equation}
Thus
\begin{multline*}
\scriptstyle
\int\limits_{A^n} p_1^{*}(\beta\smile\beta')\smile(2p_1^{*}(\gamma)+p_2^{*}(\gamma)+\ldots+p_n^{*}(\gamma))^{2n-3}
\smile \prod\limits_{s=1}^4 (2p_1^{*}(\eta_s)+p_2^{*}(\eta_s)+\ldots+p_n^{*}(\eta_s))=\\
\scriptstyle
=2(n+1)(n-1)!(2n-3)!!
\left(\int_{A}\beta\smile\beta'\smile \gamma\right)\cdot \left(\int_{A}\gamma\smile \gamma\right)^{n-2},
\end{multline*}
and the proposition follows from~\eqref{tennis}.
\end{proof}
\begin{prp}\label{prp:mario}
Let $\beta,\beta'\in H^1(A)$, and $\gamma\in H^2(A)$. If $n\ge 3$, then
\begin{equation}\label{ossola}
\scriptstyle
\int\limits_{K_n(A)}\nu_3(\beta)\smile \nu_3(\beta')\smile \mu_2(\gamma)^{2n-5}\smile\xi_n^2 =
8(n+1)^2(2n-5)!! \left(\int_{A}\beta\smile\beta'\smile \gamma\right)\cdot \left(\int_{A}\gamma^2\right)^{n-3}.
 \end{equation}
\end{prp}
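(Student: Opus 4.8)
The plan is to follow the pattern of~\Ref{prp}{sironi} and~\Ref{prp}{spartan}, reducing the integral to a Lehn--Sorger computation on $A^{n+1}$ and then to an integral on $A^n$ (or $A^{n-1}$). By~\Ref{prp}{targamu}, \Ref{prp}{nufock} and~\eqref{classefond}, and using that $\Gamma(c_{n+1}(1))=\wt{\xi}_{n+1}$ while each of $c_{n+1}(\beta)$, $c_{n+1}(\beta')$ corresponds to $\tfrac12\wt{\nu}_3$, one gets
\begin{equation*}
\int\limits_{K_n(A)}\nu_3(\beta)\smile\nu_3(\beta')\smile\mu_2(\gamma)^{2n-5}\smile\xi_n^2=\frac{4N}{(n+1)!},
\end{equation*}
where $N$ is the integer determined by
\begin{equation*}
c_{n+1}(1)^2\cdot c_{n+1}(\beta)\cdot c_{n+1}(\beta')\cdot\left(\sum_{i=1}^{n+1}p_i^{*}(\gamma)\Id\right)^{2n-5}\cdot\prod_{s=1}^4\left(\sum_{j=1}^{n+1}p_j^{*}(\eta_s)\Id\right)=N\,\eta^{\otimes(n+1)}\Id.
\end{equation*}
By~\Ref{prp}{eccotheta} it suffices to establish~\eqref{ossola} for one pair $\beta,\beta'$ with $\beta\smile\beta'\neq 0$; I would take $\beta=\eta_1$, $\beta'=\eta_2$, keeping $\gamma\in H^2(A)$ arbitrary.

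Next I would expand the four--fold product of cycle operators. Inserting~\eqref{ruedeseine} for $c_{n+1}(1)^2$ and using that, after multiplication by the identity--permutation factors $\left(\sum p_i^{*}(\gamma)\Id\right)^{2n-5}$ and $\prod_s\left(\sum p_j^{*}(\eta_s)\Id\right)$, only terms whose resulting permutation is trivial survive, the contributions reduce to a short list governed by~\eqref{citau}, \eqref{invcic} and~\eqref{invklein}: the ``diagonal--diagonal'' terms $\Delta^{hk}_{*}(1)\,\Delta^{i'j'}_{*}(\beta\smile\beta')\Id$ coming from the $\Delta$--parts of $c_{n+1}(1)^2$ and of $c_{n+1}(\beta)c_{n+1}(\beta')$ with $\{h,k\}\cap\{i',j'\}=\es$; ``$\Delta_3$'' terms $\Delta^{hkl}_{*}(\beta\smile\beta')\Id$, arising either when $|\{h,k\}\cap\{i',j'\}|=1$ or when a $3$--cycle of $c_{n+1}(1)^2$ is composed with the inverse $3$--cycle of $c_{n+1}(\beta)c_{n+1}(\beta')$ (via~\eqref{invcic}); and Klein--type terms $\Delta^{hk}_{*}(\beta)\,\Delta^{i'j'}_{*}(\beta')\Id$, arising when a pair of disjoint transpositions of $c_{n+1}(1)^2$ is matched with the same pair of disjoint transpositions in $c_{n+1}(\beta)c_{n+1}(\beta')$ (via~\eqref{invklein}). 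A key simplification here, as in~\Ref{subsec}{hilbring}, is that $\chi(A)=0$: this forces $\Delta^{hk}_{*}(1)\smile\Delta^{hk}_{*}(\zeta)=0$ and, more generally, makes all self--intersections of partial diagonals vanish, so that the list above is complete.

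For each surviving term I would translate to an integral over $A^{n+1}$, use that $-\Delta^{\ldots}_{*}(1)$ is the Poincar\'e dual of the corresponding partial diagonal to carry out the push--forward, and so reduce to an integral over $A^n$ or $A^{n-1}$ in which the merged factors carry a coefficient $2$ (respectively $3$), exactly as with the class $\nu$ in the proof of~\Ref{prp}{spartan}; $\cS$--invariance of the remaining integrand then collapses the sums over index sets. Specializing $\beta=\eta_1$, $\beta'=\eta_2$ and expanding each integrand into monomials --- each monomial contributing the usual double--factorial count of ways of distributing the copies of $\gamma$ among the factors that must receive $\gamma^{2}$ --- yields expressions of the form $(\text{const})\cdot\left(\int_A\eta_1\smile\eta_2\smile\gamma\right)\left(\int_A\gamma^2\right)^{n-3}$; summing the multiplicities and signs of the several term types gives total constant $8(n+1)^2(2n-5)!!$, which is~\eqref{ossola}.

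The main obstacle is precisely this expansion of the product of four cycle operators. Unlike~\Ref{prp}{sironi}, where only two such operators occur, or~\Ref{prp}{spartan}, where $c_{n+1}(1)^2$ is multiplied only by identity--permutation factors, here the nontrivial permutations produced by $c_{n+1}(1)^2$ can cancel those produced by $c_{n+1}(\beta)c_{n+1}(\beta')$, so several genuinely different families of terms ($\Delta_2\Delta_2$, $\Delta_3$, and Klein--type) all contribute, and each must be tracked through the contraction maps $\Delta^{ij}_{*}$, $\Delta^{ijk}_{*}$ and $z_{ijhk}$ of~\Ref{subsubsec}{prodotti} with the correct sign and multiplicity, with the $\chi(A)=0$ vanishings applied carefully, before the reduction to $A^{n}$. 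Once the list of surviving configurations is pinned down, the remaining work is the same kind of monomial bookkeeping as in~\Ref{prp}{sironi} and~\Ref{prp}{spartan}. (This proposition, together with~\Ref{prp}{sironi}, determines the constants $C_2(n),D_2(n)$ of~\Ref{lmm}{cidi}, and hence $\vartheta_2(\ov{q}^{n-2}_X)$.)
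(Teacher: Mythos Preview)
Your proposal is correct and follows essentially the same approach as the paper: reduce to a Lehn--Sorger product of four cycle operators, keep only the identity-permutation terms, classify the survivors, push down along partial diagonals to integrals over $A^{n-1}$, specialize to $\beta=\eta_1,\ \beta'=\eta_2$, and do the monomial bookkeeping. The paper writes the product as $c_{n+1}(\beta)\cdot c_{n+1}(\beta')\cdot c_{n+1}(1)^2$ and records the identity-permutation piece as
\[
\sum_{a<b,\,c<d}\Delta^{ab}_{*}(\beta\smile\beta')\,\Delta^{cd}_{*}(1)\,\Id
\;+\;18\sum_{h<k<l}\Delta^{hkl}_{*}(\beta\smile\beta')\,\Id
\;+\;2\sum_{\substack{r<s,\ t<u\\ \{r,s\}\cap\{t,u\}=\es}}\Delta^{rs}_{*}(\beta)\,\Delta^{tu}_{*}(\beta')\,\Id,
\]
then splits the first sum into the overlapping case $\{a,b\}\cap\{c,d\}\neq\es$ and the disjoint case; after restricting to the relevant partial diagonals both the overlapping $\Delta_2\Delta_2$ term and the $\Delta_3$ term reduce to the same $\tau_3$-type integral on $A^{n-1}$, while the disjoint $\Delta_2\Delta_2$ and the Klein-type terms reduce to $\tau_{2,2}$-type integrals. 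Your grouping---folding the overlapping $\Delta_2\Delta_2$ contribution into the $\Delta_3$ family from the outset---is equivalent (indeed $\Delta^{12}_{*}(\cdot)\,\Delta^{13}_{*}(1)=\Delta^{123}_{*}(\cdot)$). One small correction: the vanishing of $\Delta^{ab}_{*}(\beta\smile\beta')\cdot\Delta^{ab}_{*}(1)$ that the paper uses is for dimension reasons rather than because $\chi(A)=0$.
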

\begin{proof}
 Let $P$ be the integer  such that 
\begin{equation}\label{cinquanta}
\scriptstyle
c_{n+1}(\beta)\cdot c_{n+1}(\beta')\cdot  c_{n+1}(1)\cdot c_{n+1}(1)\cdot 
\left(\sum_{i=1}^{n+1}p_i^{*}(\gamma)\Id\right)^{2n-5}\cdot\prod_{s=1}^4\left(\sum_{j=1}^{n+1}p_j^{*}(\eta_s)\Id\right)=
P\eta^{\otimes(n+1)} \Id.
\end{equation}
By~\Ref{prp}{targamu}, \Ref{prp}{nufock} and~\eqref{classefond}, we have
\begin{equation}\label{anita}
%\scriptstyle
%
 \int\limits_{K_n(A)}\nu_3(\beta)\smile \nu_3(\beta')\smile \mu_2(\gamma)^{2n-5}\smile\xi_n^2 = 
\frac{4P}{(n+1)!}.
\end{equation}
Let us compute $P$. 
By~\eqref{ruedeseine} and the formulae in~\Ref{subsubsec}{prodotti}, we have
\begin{multline*}
%\scriptstyle
%
c_{n+1}(\beta)\cdot c_{n+1}(\beta')\cdot  c_{n+1}(1)\cdot c_{n+1}(1)=
\sum_{\substack {1\le a<b\le(n+1)\\ 1\le c<d\le(n+1)}}\Delta^{ab}_{*}(\beta\smile\beta')\cdot \Delta^{cd}_{*}(1)\Id+ \\
+18\left(\sum_{1\le h<k<l \le(n+1)}\Delta^{hkl}_{*}(\beta\smile\beta')\Id\right) 
+2\left(\sum_{\substack {1\le r<s \le(n+1)\\ 1\le t<u\le(n+1)\\ \{r,s\}\cap\{t,u\}=\es}}\Delta^{rs}_{*}(\beta)\cdot \Delta^{tu}_{*}(\beta')\Id\right) + \cR,
\end{multline*}
where the remainder $\cR$ is a sum of terms involving non trivial permutations. 

Let $\tau:=\left(\sum_{i=1}^{n+1}p_i^{*}(\gamma)\Id\right)^{2n-5}\cdot\prod_{s=1}^4\left(\sum_{j=1}^{n+1}p_j^{*}(\eta_s)\Id\right)$, where $p_i$ is  projection to the $i$-th factor.
 Since $\tau$ is $\cS_{n+1}$-invariant, 
\begin{multline}\label{sommaint}
\scriptstyle
P=(n+1)n(n-1)\left(\int\limits_{A^{n+1}}\Delta^{12}_{*}(\beta\smile\beta')\smile \Delta^{13}_{*}(1)\smile  \tau\right)
+\frac{1}{4}(n+1)n(n-1)(n-2) \left(\int\limits_{A^{n+1}}\Delta^{12}_{*}(\beta\smile\beta')\smile \Delta^{34}_{*}(1)\smile \tau\right)+ \\
\scriptstyle
+3(n+1)n(n-1)\left(\int\limits_{A^{n+1}} \Delta^{123}_{*}(\beta\smile\beta')\smile \tau\right)
+\frac{1}{2}(n+1)n(n-1)(n-2)\left(\int\limits_{A^{n+1}}\Delta^{12}_{*}(\beta)\smile \Delta^{34}_{*}(\beta')\smile  \tau\right).
\end{multline}
(Notice that  $\Delta^{ab}_{*}(\beta\smile\beta')\cdot \Delta^{ab}_{*}(1)=0$ for dimension reasons.)

Next, notice that $\Delta_{r,*}(1)$ is the Poincar\'e dual of the small diagonal in $A^r$ \emph{multiplied by $(-1)^{r+1}$}. Moreover, if $\gamma\in H(A)$ then $\Delta_{r,*}(\gamma)=p_1^{*}(\gamma)\smile\Delta_{r,*}(1)$. It follows that the integrals in~\eqref{sommaint}  are equal to integrals over the subset $\{(x,x,x,y_1,\ldots,y_{n-2}\}\subset A^{n+1}$, or the subset $\{(x,x,y,y,z_1,\ldots,z_{n-3}\}\subset A^{n+1}$. More precisely, let $\tau_3$ and $\tau_{2,2}$ be the top cohomology classes on $A^{n-1}$ given by
\begin{eqnarray*}
\tau_3 & := & \left(3 p_1^{*}(\gamma)+\sum_{i=2}^{n-1}p_i^{*}(\gamma)\right)^{2n-5}\smile 
 \prod_{s=1}^4\left(3 p_1^{*}(\eta_s)+\sum_{j=2}^{n-1}p_j^{*}(\eta_s)\right), \\
\tau_{2,2} & := &  \left(2 p_1^{*}(\gamma)+2 p_2^{*}(\gamma)+\sum_{i=3}^{n-1}p_i^{*}(\gamma)\right)^{2n-5}\smile
 \prod_{s=1}^4 \left( 2 p_1^{*}(\eta_s)+2 p_2^{*}(\eta_s)+\sum_{j=3}^{n-1}p_j^{*}(\eta_s)\right).
\end{eqnarray*}
Then~\eqref{sommaint} reads
\begin{multline}\label{intdiag}
\scriptstyle
P=(n+1)n(n-1)\left(\int\limits_{A^{n-1}} p_1^{*}(\beta\smile\beta') \smile  \tau_3 \right)
+\frac{1}{4}(n+1)n(n-1)(n-2) \left(\int\limits_{A^{n-1}}  p_1^{*}(\beta\smile\beta')\smile \tau_{2,2}\right)+ \\
\scriptstyle
+3(n+1)n(n-1)\left(\int\limits_{A^{n-1}} p_1^{*}(\beta\smile\beta')\smile  \tau_3 \right)
+\frac{1}{2}(n+1)n(n-1)(n-2)\left(\int\limits_{A^{n-1}} p_1^{*}(\beta)\smile p_2^{*}(\beta')\smile  \tau_{2,2}\right).
\end{multline}
By~\Ref{prp}{eccotheta}, it suffices to prove that~\eqref{ossola} holds for one choice of $\beta,\beta'$ such that $\beta\smile\beta'\not=0$. We let
 \begin{equation}\label{guiccardini}
%\scriptstyle
%
\beta=\eta_{1},\qquad \beta'=\eta_{2}.
\end{equation}
A computation gives that
\begin{multline}\label{palestra}
\scriptscriptstyle
p_1^{*}(\beta\smile\beta') \smile  \tau_3= \sum\limits_{i=2}^{n-1} 3(n-3)!(2n-5)!! p_1^{*}(\beta\smile\beta')\smile p_1^{*}(\gamma)\smile p_2^{*}(\gamma^2)\smile\ldots 
 \smile p_{i-1}^{*}(\gamma^2) \smile p_{i+1}^{*}(\gamma^2)\smile\ldots\smile p_{n-1}^{*}(\gamma^2)\smile p_i^{*}(\eta)+\\
 \scriptscriptstyle
+\sum\limits_{i=2}^{n-1} 9(n-3)!(2n-5)!!  p_1^{*}(\beta\smile\beta')\smile p_2^{*}(\gamma^2)\smile\ldots 
 \smile p_{i-1}^{*}(\gamma^2)\smile p_i^{*}(\gamma) \smile p_{i+1}^{*}(\gamma^2)\smile\ldots\smile p_{n-1}^{*}(\gamma^2)
 \smile p_i^{*}(\eta_1\smile \eta_2)\smile p_1^{*}(\eta_3\smile \eta_4)+\\
 \scriptscriptstyle
+\sum\limits_{2\le i<j\le (n-1)} 6(n-3)!(2n-5)!!  p_1^{*}(\beta\smile\beta'\smile  \gamma)\smile  
p_2^{*}(\gamma^2)\smile\ldots 
 \smile p_{i-1}^{*}(\gamma^2)\smile p_i^{*}(\gamma) \smile p_{i+1}^{*}(\gamma^2)\smile\ldots 
  \smile p_{j-1}^{*}(\gamma^2)\smile p_j^{*}(\gamma) \smile p_{j+1}^{*}(\gamma^2)\smile \ldots \smile p_{n-1}^{*}(\gamma^2)
 \smile \nu_{ij},
\end{multline}
where $\nu_{ij}$ is given by~\eqref{nutella}.
Thus
\begin{equation}\label{tautre}
\scriptstyle
\int\limits_{A^{n-1}} p_1^{*}(\beta\smile\beta') \smile  \tau_3 =3(n+1)(n-2)!(2n-5)!! 
\left(\int_{A}\beta\smile\beta'\smile \gamma\right)\cdot \left(\int_{A}\gamma\smile \gamma\right)^{n-3}.
\end{equation}
Similarly, 
\begin{multline}\label{palestra}
\scriptscriptstyle
p_1^{*}(\beta\smile\beta') \smile  \tau_{2,2}= 32(n-3)! (2n-5)!!   p_1^{*}(\beta\smile\beta')\smile p_1^{*}(\gamma)\smile p_3^{*}(\gamma^2)\smile\ldots \smile p_{n-1}^{*}(\gamma^2)\smile p_2^{*}(\eta)+\\
\scriptscriptstyle
+ \sum\limits_{i=3}^{n-1}  8(n-3)! (2n-5)!!  p_1^{*}(\beta\smile\beta')\smile p_1^{*}(\gamma)\smile 
p_2^{*}(\gamma^2)\smile p_3^{*}(\gamma^2)\smile\ldots
  p_{i-1}^{*}(\gamma^2) \smile p_{i+1}^{*}(\gamma^2)\smile\ldots\smile p_{n-1}^{*}(\gamma^2)\smile p_i^{*}(\eta)+\\ 
  \scriptscriptstyle
+ 32(n-3)! (2n-5)!!   p_1^{*}(\beta\smile\beta')\smile p_2^{*}(\gamma)\smile p_3^{*}(\gamma^2)\smile\ldots \smile p_{n-1}^{*}(\gamma^2)
 \smile p_2^{*}(\eta_1\smile \eta_2)\smile p_1^{*}(\eta_3\smile \eta_4)+\\
  \scriptscriptstyle
+\sum\limits_{i=3}^{n-1}   16(n-3)! (2n-5)!!    p_1^{*}(\beta\smile\beta')\smile p_2^{*}(\gamma^2)\smile  p_3^{*}(\gamma^2)\smile\ldots 
 \smile p_{i-1}^{*}(\gamma^2)\smile p_i^{*}(\gamma) \smile p_{i+1}^{*}(\gamma^2)\smile\ldots\smile p_{n-1}^{*}(\gamma^2)
 \smile p_i^{*}(\eta_1\smile \eta_2)\smile p_1^{*}(\eta_3\smile \eta_4)+\\ 
 \scriptscriptstyle
+\sum\limits_{j=3}^{n-1} 32(n-3)! (2n-5)!!   p_1^{*}(\beta\smile\beta'\smile  \gamma)\smile  
p_2^{*}(\gamma)\smile p_3^{*}(\gamma^2)\smile \ldots 
 \smile p_{j-1}^{*}(\gamma^2)\smile p_j^{*}(\gamma) \smile p_{j+1}^{*}(\gamma^2)\smile\ldots  \smile p_{n-1}^{*}(\gamma^2)
 \smile \nu_{2j}+\\
 \scriptscriptstyle
+\sum\limits_{3\le i<j\le (n-1)}  16(n-3)! (2n-5)!!   p_1^{*}(\beta\smile\beta'\smile  \gamma)\smile  
p_2^{*}(\gamma^2) \smile p_3^{*}(\gamma^2)\smile\ldots 
 \smile p_{i-1}^{*}(\gamma^2)\smile p_i^{*}(\gamma) \smile\\ 
 \scriptscriptstyle
\smile p_{i+1}^{*}(\gamma^2)\smile\ldots 
  \smile p_{j-1}^{*}(\gamma^2)\smile p_j^{*}(\gamma) \smile p_{j+1}^{*}(\gamma^2)\smile \ldots \smile p_{n-1}^{*}(\gamma^2)
 \smile \nu_{ij},
\end{multline}
where $\nu_{ij}$ is given by~\eqref{nutella}.
Thus
\begin{equation}\label{tauduedue}
\scriptstyle
\int\limits_{A^{n-1}} p_1^{*}(\beta\smile\beta') \smile  \tau_{2,2} = 8(n+1)(n-1)(n-3)!(2n-5)!! 
\left(\int_{A}\beta\smile\beta'\smile \gamma\right)\cdot \left(\int_{A}\gamma\smile \gamma\right)^{n-3}.
\end{equation}
Lastly, for $a\not= b\in\{1,\ldots,4\}$, let
\begin{equation*}
\epsilon_{a,b}:=
\begin{cases}
(-1)^{a+b-1} & \text{if $a<b$,} \\
(-1)^{a+b} & \text{if $a>b$.}
\end{cases}
\end{equation*}
and 
\begin{equation*}
\omega_i:=\sum\limits_{\stackrel{\{a,b,c,d\}=\{1,\ldots,4\}}{c<d}} \epsilon_{a,b} \cdot p_1^{*}(\eta_a)\smile p_2^{*}(\eta_b)\smile 
p_i^{*}(\eta_c\smile \eta_{d}).
\end{equation*}
Then
\begin{multline}\label{palestra}
\scriptscriptstyle
p_1^{*}(\beta)\smile p_2^{*}(\beta') \smile  \tau_{2,2}= 32(n-3)! (2n-5)!!  p_1^{*}(\beta)\smile   p_2^{*}(\beta')\smile p_1^{*}(\gamma)\smile p_3^{*}(\gamma^2)\smile\ldots \smile p_{n-1}^{*}(\gamma^2)\smile p_2^{*}(\eta_1)\smile p_1^{*}(\eta_2)\smile p_2^{*}(\eta_3\smile \eta_4)+\\
\scriptscriptstyle
+ 32(n-3)! (2n-5)!!   p_1^{*}(\beta)\smile   p_2^{*}(\beta')\smile p_2^{*}(\gamma)\smile 
p_3^{*}(\gamma^2)\smile\ldots \smile p_{n-1}^{*}(\gamma^2)\smile p_2^{*}(\eta_1)\smile p_1^{*}(\eta_2\smile\eta_3\smile \eta_4)+\\ 
  \scriptscriptstyle
+\sum\limits_{i=3}^{n-1}    32(n-3)! (2n-5)!!     p_1^{*}(\beta)\smile\ p_2^{*}(\beta')\smile  p_1^{*}(\gamma)\smile  p_2^{*}(\gamma)\smile  
p_3^{*}(\gamma^2)\smile\ldots  \smile p_{i-1}^{*}(\gamma^2)\smile p_i^{*}(\gamma) \smile p_{i+1}^{*}(\gamma^2)\smile\ldots\smile 
p_{n-1}^{*}(\gamma^2) \smile \omega_i.\\ 
\end{multline}
Thus
\begin{equation}\label{taudueduebis}
\scriptstyle
\int\limits_{A^{n-1}} p_1^{*}(\beta)\smile p_2^{*}(\beta') \smile  \tau_{2,2} = -16(n+1)(n-3)! (2n-5)!!  
\left(\int_{A}\beta\smile\beta'\smile \gamma\right)\cdot \left(\int_{A}\gamma\smile \gamma\right)^{n-3}.
\end{equation}
Thus  $P=2(n+1)!(n+1)^2(2n-5)!!$ by~\eqref{intdiag}, \eqref{tautre}, \eqref{tauduedue}, \eqref{taudueduebis}, and  the proposition follows from~\eqref{anita}.
\end{proof}
\begin{crl}\label{crl:cidue}
Let $n\ge 3$. Then (notation as in~\Ref{lmm}{cidi})
\begin{equation*}
%\scriptstyle
%
C_2(n)=
-\frac{4}{(2n+5)}, \qquad
D_2(n) =0.
\end{equation*}
\end{crl}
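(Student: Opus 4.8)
The plan is to follow, \emph{mutatis mutandis}, the proof of \Ref{crl}{ciunodiuno}, with $\nu_3$ in the role of $\mu_3$. Start from the second identity of \Ref{lmm}{cidi},
\[\Pi_3(\nu_3(\beta)\smile \nu_3(\beta'))=C_2(n)\, q^{\vee}\smile\mu_2(\beta\smile\beta')+D_2(n)\, \mu_2(\beta\smile\beta')\smile\xi_n^2,\]
and cup both sides with $\mu_2(\gamma)^{2n-3}$ and, separately, with $\mu_2(\gamma)^{2n-5}\smile\xi_n^2$, then integrate over $K_n(A)$. The classes $\mu_2(\gamma)^{2n-3}$, $\mu_2(\gamma)^{2n-5}\smile\xi_n^2$, $\mu_2(\gamma)^{2n-5}\smile\xi_n^4$, $q^{\vee}\smile\mu_2(\beta\smile\beta')$ and $\mu_2(\beta\smile\beta')\smile\xi_n^2$ all lie in the Verbitsky subring $H(K_n(A))_{(2)}$, so $\Pi_3$ drops out under the integral sign, and one is left with two linear relations in $C_2(n),D_2(n)$ whose left-hand sides are supplied by \Ref{prp}{sironi} and \Ref{prp}{mario}.

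The next step is to evaluate, via the Beauville--Bogomolov--Fujiki calculus, the four integrals
\[\int\limits_{K_n(A)} q^{\vee}\smile\mu_2(\beta\smile\beta')\smile\mu_2(\gamma)^{2n-3},\qquad \int\limits_{K_n(A)} \mu_2(\beta\smile\beta')\smile\xi_n^2\smile\mu_2(\gamma)^{2n-3},\]
\[\int\limits_{K_n(A)} q^{\vee}\smile\mu_2(\beta\smile\beta')\smile\mu_2(\gamma)^{2n-5}\smile\xi_n^2,\qquad \int\limits_{K_n(A)} \mu_2(\beta\smile\beta')\smile\mu_2(\gamma)^{2n-5}\smile\xi_n^4.\]
Write $s:=\int_A\beta\smile\beta'\smile\gamma$ and $t:=\int_A\gamma^2$. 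Under the BBF form, $\mu_2(\beta\smile\beta')$ is isotropic and orthogonal to $\xi_n$, while $\mu_2(H^2(A))\perp\xi_n$, $(\xi_n,\xi_n)=-2(n+1)$, and $q(\mu_2(\gamma))=t$. For the two integrals not involving $q^{\vee}$ I would apply \eqref{duenne} in polarized form (\Ref{rmk}{fujipol}); for the two involving $q^{\vee}$ I would apply the $\ell=1$ case of \Ref{prp}{bellaform}, polarized in $\gamma$. Here $q^{\vee}$ is fixed by the monodromy, so $\int_{K_n(A)} q^{\vee}\smile\gamma_1\smile\cdots\smile\gamma_{2n-2}$ is an $O(q)$-invariant symmetric multilinear form, hence a multiple of $\sum_M\prod_{\{i,j\}\in M}(\gamma_i,\gamma_j)$ (sum over perfect matchings $M$ of $\{1,\dots,2n-2\}$); specializing all $\gamma_i=\gamma$ and comparing with \Ref{prp}{bellaform} identifies the multiple as $(n+1)(2n+5)$. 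Counting the matchings in which $\mu_2(\beta\smile\beta')$ is forced to pair with a copy of $\mu_2(\gamma)$ and the copies of $\xi_n$ are forced to pair among themselves, the four integrals come out to be, in order,
\[(n+1)(2n+5)(2n-3)!!\,s\,t^{n-2},\qquad -2(n+1)^2(2n-3)!!\,s\,t^{n-2},\]
\[-2(n+1)^2(2n+5)(2n-5)!!\,s\,t^{n-3},\qquad 12(n+1)^3(2n-5)!!\,s\,t^{n-3}.\]

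Feeding these, together with the values $-4(n+1)(2n-3)!!\,s\,t^{n-2}$ and $8(n+1)^2(2n-5)!!\,s\,t^{n-3}$ of \Ref{prp}{sironi} and \Ref{prp}{mario}, into the two relations and cancelling the common factor (legitimate since, e.g., for $\beta=\eta_1$, $\beta'=\eta_2$ and a suitable $\gamma\in H^2(A)$ one has $s\not=0$ and $t\not=0$), the statement reduces to the linear system
\[-4=(2n+5)\,C_2(n)-2(n+1)\,D_2(n),\qquad 8=-2(2n+5)\,C_2(n)+12(n+1)\,D_2(n),\]
the exact analogue of the system \eqref{sistemino}. Adding twice the first equation to the second gives $8(n+1)\,D_2(n)=0$, hence $D_2(n)=0$; the first equation then gives $C_2(n)=-4/(2n+5)$.

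There is no conceptual obstacle: the substantive geometric input is \Ref{prp}{sironi} and \Ref{prp}{mario}, which are already established. The only delicate point is the bookkeeping in the polarized Fujiki evaluations — keeping the double-factorial factors $(2n-3)!!$, $(2n-5)!!$, $(2n-7)!!$ and the combinatorial matching counts consistent — together with the elementary check that the chosen $\beta,\beta',\gamma$ really make $s\,t^{n-\ell}\not=0$, so that the final cancellation is valid.
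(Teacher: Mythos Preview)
Your proof is correct and follows essentially the same strategy as the paper's: derive two linear equations in $C_2(n),D_2(n)$ by cupping the identity of \Ref{lmm}{cidi} with $\mu_2(\gamma)^{2n-3}$ and with $\mu_2(\gamma)^{2n-5}\smile\xi_n^2$, evaluate the left-hand sides via \Ref{prp}{sironi} and \Ref{prp}{mario}, and the right-hand sides via the polarized Fujiki formulae. Your four right-hand-side evaluations and the resulting system are exactly right.

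The only (cosmetic) difference is that the paper shortcuts the arithmetic: comparing \Ref{prp}{sironi}, \Ref{prp}{mario} with \Ref{prp}{laprima}, \Ref{prp}{spartan}, one sees the two $\nu_3$-integrals equal $4(n+1)$ times the corresponding $\mu_3$-integrals, so the system for $(C_2,D_2)$ is the system~\eqref{sistemino} with left-hand sides multiplied by $4(n+1)$, whence $C_2(n)=4(n+1)C_1(n)$ and $D_2(n)=4(n+1)D_1(n)$ and one concludes from \Ref{crl}{ciunodiuno}. You instead re-evaluate the four right-hand-side integrals and solve the $2\times 2$ system directly, which is equally valid.
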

\begin{proof}
By~\Ref{prp}{laprima}, \Ref{prp}{spartan}, \Ref{prp}{sironi}, and~\Ref{prp}{mario} we have
\begin{eqnarray*}
\scriptstyle \int\limits_{K_n(A)}\nu_3(\beta)\smile \nu_3(\beta')\smile \mu_2(\gamma)^{2n-3} & \scriptstyle  = & 
\scriptstyle 4(n+1)\int\limits_{K_n(A)}\mu_3(\alpha)\smile \mu_3(\alpha')\smile \mu_2(\gamma)^{2n-3}, \\
\scriptstyle  \int\limits_{K_n(A)}\nu_3(\beta)\smile \nu_3(\beta')\smile \mu_2(\gamma)^{2n-5}\smile\xi_n^2 & 
\scriptstyle  = &
\scriptstyle  4(n+1) \int\limits_{K_n(A)}\mu_3(\alpha)\smile \mu_3(\alpha')\smile \mu_2(\gamma)^{2n-5}\smile\xi_n^2.
\end{eqnarray*}
Thus,  going through the proof of~\Ref{crl}{ciunodiuno} one gets that $C_2(n)$ and $D_2(n)$ satisfy the system of linear equations obtained from~\eqref{sistemino} by multiplying the left hand terms by $4(n+1)$ and replacing  $C_1(n)$ $D_1(n)$  by  $C_2(n)$, $D_2(n)$ respectively. Hence $C_2(n)=4(n+1)C_1(n)$ and  $D_2(n)=4(n+1)D_1(n)$. Thus the result follows from~\Ref{crl}{ciunodiuno}.
\end{proof}
\begin{proof}[Proof of~\Ref{prp}{duetre}]
We must prove that $\vartheta_2(\ov{q}^{n-2})=(n+1) \vartheta_1(\ov{q}^{n-2})$. This holds because $C_2(n)=4(n+1)C_1(n)$ and $D_2(n)=4(n+1)D_1(n)$. (Recall that ${\mathsf F}(0,\beta)=\nu(\beta)/2$, where $\mathsf F$ is the isomorphism in~\eqref{bronte}.)
\end{proof}
\subsection{Proof of the first main result}\label{subsec:quadtre}
We will prove~\Ref{thm}{primoteor}. 

Let us prove Item~(1), i.e.~surjectivity of $\phi$. Since  $\vartheta_1(\ov{q}^{n-2})$ and $\vartheta_2(\ov{q}^{n-2})$ are non zero, the map $\phi$ is non zero. Let $X_0$ be very general. Then there is no non trvial sub Hodge structure of $H^2(X_0)^{\vee}$, and hence $\phi$ is surjective.  Since $\phi$ is flat for the Gauss-Manin connection, it follows that  $\phi$ is surjective for every $X$.

Before going to Item~(2), we note the following.
\begin{lmm}\label{lmm:niunozero}
Let $X$ be a HK of Kummer type, of dimension $2n$. Then $\vartheta_3(\ov{q}^{n-2})$ is non zero.
\end{lmm}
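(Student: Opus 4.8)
The plan is to compute $\vartheta_3(\ov{q}^{n-2})$ directly via the Lehn--Sorger formalism, exactly as was done for $\vartheta_1$ and $\vartheta_2$, and to check that the resulting number is non-zero. By \Ref{prp}{eccotheta}, the integer $\vartheta_3(\gU)$ is characterized (up to a non-zero scalar coming from $\la\alpha,\beta'\ra$) by the value of $\Phi(\gU)$ on a mixed wedge $(\alpha,0)\wedge(0,\beta')$ with $\la\alpha,\beta'\ra\ne 0$. Concretely, one must compute an integral of the shape
\begin{equation*}
\int\limits_{K_n(A)}\mu_3(\alpha)\smile \nu_3(\beta')\smile \ov{q}^{\,n-2}\smile \xi_n
\end{equation*}
for a suitable choice of $\alpha\in H^3(A)$ and $\beta'\in H^1(A)$ with $\int_A\alpha\smile\beta'\ne 0$, and show it is non-zero. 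Note the presence of a single $\xi_n$ factor (rather than $\mu_2(\gamma)$): this is forced because $\nu_3(\beta')$ has odd total $H(A)$-degree contributions, so pairing with $\mu_3(\alpha)$ lands in $\Ann F^1$, i.e.\ along $\xi_n^\vee$, and the companion computation must saturate the remaining degree with $\xi_n$'s.

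**Key steps.** First I would reduce, as in the proof of \Ref{lmm}{cidi}, to the analogue of $C_3(n), D_3(n)$: by monodromy ($\SL H^3(A)$)-equivariance, $\Pi_p(\mu_3(\alpha)\smile\nu_3(\beta'))$ is forced to be of the form $C_3(n)(\int_A\alpha\smile\beta')\, q^\vee\smile\xi_n + D_3(n)(\int_A\alpha\smile\beta')\,\xi_n^3$, already recorded in \Ref{lmm}{cidi}. Second, I would compute the two integrals
\begin{equation*}
\int\limits_{K_n(A)}\mu_3(\alpha)\smile\nu_3(\beta')\smile\mu_2(\gamma)^{2n-4}\smile\xi_n, \qquad
\int\limits_{K_n(A)}\mu_3(\alpha)\smile\nu_3(\beta')\smile\mu_2(\gamma)^{2n-6}\smile\xi_n^3
\end{equation*}
by translating to $A^{n+1}$ via $\Gamma$, using \Ref{prp}{targamu} for $\mu_3$, \Ref{prp}{nufock} for $\nu_3$, formula \eqref{ruedeseine} (with one factor $c_{n+1}(1)$ for each $\xi_n$), and the product formulae \eqref{citau}--\eqref{invklein}. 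As in \Ref{prp}{spartan} and \Ref{prp}{mario}, the terms with non-trivial permutations drop out after multiplying by the symmetric factor $\prod_s(\sum_j p_j^*(\eta_s)\,\Id)$, leaving only the $\Delta$-diagonal contributions, which reduce to integrals over small diagonals in $A^{n+1}$, hence to products $(\int_A\alpha\smile\beta'\smile\gamma\text{-stuff})\cdot(\int_A\gamma^2)^{\text{power}}$ with explicit combinatorial coefficients. Third, solve the resulting $2\times 2$ linear system for $C_3(n), D_3(n)$ (as in \Ref{crl}{ciunodiuno}); both cannot vanish, since the original integrals do not. Fourth, feed $C_3(n), D_3(n)$ together with \Ref{prp}{bellaform} and \eqref{fujipol} into $\int_{K_n(A)}\mu_3(\alpha)\smile\nu_3(\beta')\smile\ov{q}^{\,n-2}\smile\xi_n$ and read off $\vartheta_3(\ov{q}^{n-2})$, checking it is a non-zero integer (most likely of the form $\pm 2^{n-2}(n+1)^{c}\tfrac{(2n+3)!!}{7!!}$ by analogy with $\vartheta_1,\vartheta_2$).

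**Main obstacle.** The conceptual content is light; the difficulty is entirely bookkeeping. The awkward feature compared to \Ref{prp}{spartan} is that $\nu_3(\beta')$ carries a transposition $(ij)$ while $\mu_3(\alpha)$ is diagonal, so the relevant products in \eqref{citau}--\eqref{citau} mix a $\Delta^{ij}_*$ with the $p_i^*$-insertions, and the signs depending on $|\beta'|$ in \eqref{invklein} must be tracked carefully; with several $c_{n+1}(1)$ factors one also gets a proliferation of index-overlap cases. The honest shortcut — which I expect is what the paper does — is to observe that we do not actually need the exact value: \textbf{it suffices to show the two ``test'' integrals above are not both zero}, which follows already from the diagonal terms being manifestly positive (up to a common sign) after choosing, say, $\alpha=\eta_1\smile\eta_2\smile\eta_3$, $\beta'=\eta_4$ so that $\int_A\alpha\smile\beta'=1$ and $\gamma$ with $\int_A\gamma^2\ne 0$; since the image of a non-zero vector under a morphism of Hodge structures is non-zero iff one such pairing is non-zero, non-vanishing of $\vartheta_3(\ov{q}^{n-2})$ drops out without computing the constant. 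If one insists on the exact constant, the main obstacle is just executing the $A^{n+1}$-monomial expansion cleanly, and this is where I would budget most of the effort.
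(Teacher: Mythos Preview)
Your approach of directly computing the relevant integral via the Lehn--Sorger formalism would work in principle, but it is far more laborious than what the paper actually does. The paper's proof is a one-liner that uses no computation at all: it simply invokes the surjectivity of $\phi$, which has just been established as Item~(1) of \Ref{thm}{primoteor} in the paragraph immediately preceding this lemma.

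Here is the paper's argument in full. By \Ref{prp}{eccotheta}, the map $\Phi(\ov{q}^{n-2})$ takes values in $\bigwedge^2 H^3(A)\oplus\CC\xi_n^{\vee}$, and the formula~\eqref{espansione} shows that the \emph{only} term contributing to the $\xi_n^{\vee}$-component is $\vartheta_3(\ov{q}^{n-2})(\la\alpha,\beta'\ra-\la\alpha',\beta\ra)\xi_n^{\vee}$. Hence if $\vartheta_3(\ov{q}^{n-2})=0$, the image of $\phi$ would be contained in $\bigwedge^2 H^3(A)\subsetneq H^2(K_n(A))^{\vee}$, contradicting surjectivity. That is the entire proof.

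Note also that the proof of surjectivity in Item~(1) uses only that $\vartheta_1$ and $\vartheta_2$ are non-zero (to get $\phi\not=0$), together with the fact that for very general $X_0$ there is no non-trivial sub-Hodge structure of $H^2(X_0)^{\vee}$; so there is no circularity in then deducing $\vartheta_3\not=0$ from surjectivity. Your proposed direct computation would of course yield more, namely the actual value of $\vartheta_3$ --- and indeed the paper carries this out later, in \Ref{crl}{thirdman} (up to sign) and \Ref{rmk}{menosei} (for $n=2$). But for the lemma as stated, the surjectivity argument is the intended and far simpler route.
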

\begin{proof}
 We may assume that $X=K_n(A)$. Since $\phi$ is surjective, it follows that $\vartheta_3(\ov{q}^{n-2})$ is non zero.
\end{proof}
\begin{rmk}
We will compute  $\vartheta_3(\ov{q}^{n-2})$ up to sign, see~\Ref{crl}{thirdman}.
\end{rmk}
In proving Item~(2), we may assume that $X$ is a generalized Kummer $K_n(A)$, 
 Identify $H^3(A;\ZZ)\oplus H^1(A;\ZZ)$ with $H^3(K_n(A);\ZZ)$   via~\Ref{thm}{accatre}, see~\eqref{bronte}. Identify $H^1(A;\ZZ)$ with $H^3(A;\ZZ)^{\vee}$ via~\eqref{31duality}. Given these identifications, we define the following integral unimodular quadratic form:
\begin{equation}\label{eccobi}
\begin{matrix}
H^3(K_n(A))/\Tors & \overset{{\bf q}_{K_n(A)}}{\lra} & \CC \\
(\alpha,\beta) & \mapsto & 2\beta(\alpha)
\end{matrix}
\end{equation}
 Let $\gamma\in H^3(K_n(A))$. Then, since the components of  $\vartheta(\ov{q}^{n-2})$ are non zero,  
\begin{equation}
\dim\phi(\gamma\wedge H^3(K_n(A))=
\begin{cases}
0 & \text{if $\gamma=0$,} \\
4 & \text{if $\gamma\not=0$ and $q(\gamma)=0$,} \\
7 & \text{if  $q(\gamma)\not=0$.} \\
\end{cases}
\end{equation}
Item~(2) of~\Ref{thm}{primoteor}  follows.

Lastly, we prove Item~(3). Let $0\not=\gamma\in H^{2,1}(X)$. Then $\phi(\gamma\wedge H^{2,1}(X))\subset\Ann(F^1 H^2(X))$, and hence  $\dim\phi(\gamma\wedge H^3(X))\le 5$. Thus $[\gamma]\in{\bf Q}(X)$.

\section{Reconstructing $H^3(X)$ from  $H^2(X)$}\label{sec:alglin}
\subsection{Summary}
\setcounter{equation}{0}
In the present section we will prove~\Ref{thm}{secondoteor}. We will assume that we are given an abstract version of the  map  in~\eqref{espansione}, i.e.~a linear map 
\begin{equation}\label{moltastra}
\bigwedge^2(V_{\CC}\oplus V_{\CC}^{\vee})\overset{\Phi_{\vartheta}}{\lra} \bigwedge^2 V_{\CC}\oplus \CC,
\end{equation}
depending on a choice of  $\vartheta:=(\vartheta_1,\vartheta_2,\vartheta_3)\in \ZZ^3$, where  $V$ is a free $\ZZ$-module of rank $4$. 
Assuming that  all the components of $\vartheta$ are non zero, we determine for which $4$-dimensional subspaces $\Gamma$ of the domain the image $\Phi_{\vartheta}(\bigwedge^2\Gamma)$ is one dimensional. The motivation is the~\Ref{kob}{chiave}. Next, we equip the codomain of 
$\Phi_{\vartheta}$ with a non degenerate quadratic form modelled on the BBF  of generalized Kummers, and we get a corresponding open subset of a quadric, call it $\cD$,  parametrizing weight $2$ Hodge structure of $K3$ type. 
We show that for  $\Gamma$ as above,  $\Phi_{\vartheta}(\bigwedge^2\Gamma)$ is an element of $\cD$, and that conversely every element of $\cD$ comes from a unique $\Gamma$. Thus associated to each point of $\cD$ there is an integral effective weight $1$ Hodge structure, and hence a compact complex torus. In the last subsection we prove~\Ref{thm}{secondoteor}.
\subsection{Set up}\label{subsec:genset}
\setcounter{equation}{0}
Keeping notation as above, let  $\vol\colon \bigwedge^4 V\overset{\sim}{\lra} \ZZ$ be a volume form. Let $(,)$ be the  bilinear symmetric non degenerate form on  $\bigwedge^2 V$ defined by
$(\alpha,\beta):=\vol(\alpha\wedge\beta)$.
We extend bilinearly $(,)$ to $V_{\CC}:=V\otimes_{\ZZ}\CC$, and we denote it by the same symbol. Let
\begin{equation}\label{mappaiota}
\iota\colon \bigwedge^2 V_{\CC}^{\vee}\overset{\sim}{\lra} \bigwedge^2 V_{\CC}
\end{equation}
be the isomorphism  defined by $(,)$.

We define the map $\Phi_{\vartheta}$ in~\eqref{moltastra} to be the one induced   by the bilinear antisymmetric  map
\begin{equation}\label{eccophi}
\begin{matrix}
(V_{\CC}\oplus V_{\CC}^{\vee})\times(V_{\CC}\oplus V_{\CC}^{\vee}) & \lra & \bigwedge^2 V_{\CC}\oplus \CC \\
((v,g),(w,h)) & \mapsto & (\vartheta_1 v\wedge w+\vartheta_2\iota(g\wedge h),\vartheta_3(g(w)-h(v)))
\end{matrix}
\end{equation}
\begin{rmk}\label{rmk:oasibirra}
Let $X$ be a HK manifold of Kummer type of dimension  $2n$. Let $\gU_X\in H^{2n-4,2n-4}_{\ZZ}(X)$ be an integral  Hodge class which remains of Hodge type for all deformations of $X$, e.g.~$\gU_X=\ov{q}_X^{n-2}$. Let  $\vartheta(\gU_X)$ be as in~\Ref{dfn}{eccotheta}. By~\Ref{prp}{eccotheta}, there exist  isomorphisms $H^3(X;\ZZ)\cong V\oplus (V^{\vee}/2)$  and $H^2(X;\ZZ)^{\vee}\cong (\bigwedge^2 V\oplus\ZZ)$, such that $\Phi(\gU_X)$ gets identified 
with $\Phi_{\vartheta(\gU_X)}$.  
\end{rmk}
\begin{ntn}\label{ntn:zetaclass}
Keeping notation as above, we let $\zeta:=(0,1)\in(\bigwedge^2 V_{\CC}\oplus\CC)$.
\end{ntn}
\subsection{A result in linear algebra}\label{subsec:alglin}
\setcounter{equation}{0}
Let $f\colon V_{\CC}\to V_{\CC}^{\vee}$ be a linear  map.  We  let  $\omega_f\in\bigwedge^2 V_{\CC}^{\vee}$ be the antisymmetric form defined by $f$, i.e.
\begin{equation}\label{antiform}
\omega_f(v,w)=\frac{1}{2}\left(\la f(w),v\ra-\la f(v),w\ra\right),
\end{equation}
where $\la,\ra$ denotes the natural pairing between $ V_{\CC}^{\vee}$ and $ V_{\CC}$.

We let $f=f_{+}+f_{-}$ be the decomposition into the sum of a symmetric  and an antisymmetric linear map. Notice that $\omega_{f_{-}}=\omega_f$.

Now assume that $f$ is  antisymmetric. 
 The \emph{Pfaffian} of $f$ is the Pfaffian of (any) matrix associated to $f$ by the choice of a basis $\{v_1,\ldots,v_4\}$ of $V_{\CC}$ of volume $1$, and the dual basis $\{v^{\vee}_1,\ldots,v^{\vee}_4\}$  of $V_{\CC}^{\vee}$. We denote the 
  Pfaffian of $f$ by $\Pf(f)$. If $\omega_f=\sum_{1\le i<j\le 4}a_{ij}v_i^{\vee}\wedge v_j^{\vee}$, then 
\begin{equation}\label{eccopfaff}
\Pf(f)=a_{12}a_{34}-a_{13}a_{24}+a_{14}a_{23}.
\end{equation}
The motivation for proving the proposition below is provided by the~\Ref{kob}{chiave} and~\Ref{rmk}{oasibirra}.
\begin{prp}\label{prp:grafanti}
Keep notation as above, and assume that $\vartheta_1$, $\vartheta_2$ and $\vartheta_3$ are  non zero.
Let $\Gamma\subset (V_{\CC}\oplus V_{\CC}^{\vee})$ be a vector subspace of dimension $4$. 
Then  $\Phi_{\vartheta}(\bigwedge^2\Gamma)$ is a one-dimensional subspace of  $\bigwedge^2 V\oplus \CC$ if and only if one of the following holds:
\begin{enumerate}
\item
There exists an antisymmetric non degenerate  linear  map $f\colon V_{\CC}\to V_{\CC}^{\vee}$ such that 
\begin{equation}\label{prima}
\vartheta_1=\vartheta_2 \cdot \Pf(f),
\end{equation}
and 
\begin{equation}\label{grafico}
\Gamma:=\{(v,f(v))\mid v\in V_{\CC}\}.  
\end{equation}
If this is the case, then
\begin{equation}\label{unodim}
\Phi_{\vartheta}(\bigwedge^2\Gamma)=\Span\{ \vartheta_2\iota(\omega_f)-2\vartheta_3\zeta\}.
\end{equation}
\item
$\Gamma=U\oplus U^{\bot}$, where $U\subset V_{\CC}$ is a $2$ dimensional subspace, and $U^{\bot}\subset V_{\CC}^{\vee}$ is the annihilator of $U$.  If this is the case, then
\begin{equation}\label{facile}
\Phi_{\vartheta}(\bigwedge^2\Gamma)=\bigwedge^2 U=\iota(\bigwedge^2 U^{\bot}).
\end{equation}
\end{enumerate}
\end{prp}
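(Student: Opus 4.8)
The plan is to parametrize the $4$-dimensional subspaces $\Gamma \subset V_\CC \oplus V_\CC^\vee$ according to the rank of their two projections $\pr_1 \colon \Gamma \to V_\CC$ and $\pr_2 \colon \Gamma \to V_\CC^\vee$, and to compute $\Phi_\vartheta(\bigwedge^2\Gamma)$ in each stratum. The two items of the statement correspond respectively to the generic case $\pr_1$ an isomorphism (so $\Gamma$ is a graph $\{(v,f(v))\}$) and to a degenerate case; I would first argue that after the analysis these are the only two possibilities yielding a one-dimensional image. The key observation to exploit throughout is that $\zeta = (0,1)$ spans the second summand, and that for $(v,g),(w,h) \in \Gamma$ one has
\begin{equation*}
\Phi_\vartheta\bigl((v,g)\wedge(w,h)\bigr) = \bigl(\vartheta_1\, v\wedge w + \vartheta_2\,\iota(g\wedge h),\ -2\vartheta_3\,\omega_{\text{(something)}}\bigr),
\end{equation*}
so the $\zeta$-component is controlled by an antisymmetric form and the $\bigwedge^2 V_\CC$-component by the sum of two bivectors, one from each projection.

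\textbf{The graph case.} Suppose $\pr_1\colon\Gamma\to V_\CC$ is an isomorphism; write $\Gamma = \{(v,f(v))\}$ for a linear map $f\colon V_\CC\to V_\CC^\vee$. Then $\Phi_\vartheta((v,f(v))\wedge(w,f(w))) = (\vartheta_1 v\wedge w + \vartheta_2\iota(\omega_f(v,w)\text{-part}),\, \vartheta_3(\la f(v),w\ra - \la f(w),v\ra))$. Using $\omega_f = \omega_{f_-}$ and the definition \eqref{antiform}, the $\zeta$-component is $-2\vartheta_3\omega_f(v,w)$, and the first component is $\vartheta_1 v\wedge w + \vartheta_2\iota(f(v)\wedge f(w))$. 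Now I would use the elementary linear-algebra identity that for the map $\iota$ dual to the volume pairing, the composite $v\wedge w \mapsto \iota(f(v)\wedge f(w))$ equals $\Pf(f_-)\cdot (v\wedge w)$ plus a term governed by $f_+$ — more precisely I expect, after choosing a volume-$1$ basis and computing with \eqref{eccopfaff}, that $\iota(f_-(v)\wedge f_-(w)) = \Pf(f_-)\,v\wedge w$ for antisymmetric $f_-$, while the symmetric part $f_+$ contributes a bivector not proportional to $v\wedge w$. Hence $\Phi_\vartheta(\bigwedge^2\Gamma)$ is spanned by a single vector iff the $\bigwedge^2 V_\CC$-component is everywhere proportional to $\vartheta_3\omega_f$, which forces $f_+ = 0$ (so $f$ antisymmetric and $\omega_f$ nondegenerate, whence $f$ nondegenerate since $\vartheta_3\neq 0$ is needed for a nonzero $\zeta$-component) and then $\vartheta_1 = \vartheta_2\Pf(f)$; this gives \eqref{prima}, and substituting back yields \eqref{unodim}. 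The converse is the same computation read in reverse.

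\textbf{The degenerate cases.} If $\pr_1$ is not an isomorphism, I would stratify by $k := \dim\pr_1(\Gamma)\in\{0,1,2,3\}$, noting $\Gamma\cap V_\CC^\vee$ has dimension $4-k$. For each $k$ I would pick an adapted basis of $\Gamma$ and compute $\Phi_\vartheta(\bigwedge^2\Gamma)$: when $k=0$, $\Gamma = V_\CC^\vee$ and $\Phi_\vartheta(\bigwedge^2\Gamma) = \iota(\bigwedge^2 V_\CC^\vee)$ is $6$-dimensional — not one-dimensional; similarly $k=3$ gives a large image. The only degenerate stratum producing a one-dimensional image should be $k=2$ with $\Gamma = U\oplus W$, $U\subset V_\CC$ and $W\subset V_\CC^\vee$ both $2$-dimensional: then the $\zeta$-component of $\Phi_\vartheta$ on $\bigwedge^2\Gamma$ comes only from pairings of $U$ against $W$ (since $g(w) = 0$ when both lie in $U$, etc.), and it vanishes identically iff $W$ annihilates $U$, i.e.\ $W = U^\perp$; the $\bigwedge^2 V_\CC$-component is then $\vartheta_1\bigwedge^2 U + \vartheta_2\iota(\bigwedge^2 U^\perp)$, a single line, and the identity $\iota(\bigwedge^2 U^\perp) = \bigwedge^2 U$ (a standard fact about the volume pairing on a $4$-space: $\bigwedge^2 U$ and $\bigwedge^2 U^\perp$ are the two lines fixed by $\iota$ associated to a splitting) collapses this to $\bigwedge^2 U$, giving \eqref{facile}. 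I would check $k=1$ by a direct basis computation shows the $\zeta$-component cannot vanish while keeping the first component one-dimensional, ruling it out.

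\textbf{Main obstacle.} The delicate point is the precise behaviour of the quadratic map $v\wedge w\mapsto\iota(f(v)\wedge f(w))$ and the verification that the symmetric part $f_+$ genuinely obstructs one-dimensionality — i.e.\ that no conspiracy between $\vartheta_1$, $\vartheta_2$, $\vartheta_3$ and a nonzero $f_+$ can accidentally collapse the image. I would handle this by the explicit $4\times 4$ Pfaffian computation in a volume-$1$ basis, comparing coefficients; this is routine but must be done carefully to confirm that the $f_+$-contribution to the $\bigwedge^2 V_\CC$-component is linearly independent from both $v\wedge w$ and $\omega_f$-type vectors as $v,w$ vary, so that one-dimensionality of the total image really does force $f_+ = 0$.
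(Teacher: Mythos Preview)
Your overall stratification strategy matches the paper's, but the key identity you state in the graph case is wrong, and the error propagates to both the constraint on $f$ and the description of the image.

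You write that for antisymmetric $f_-$ one expects
\[
\iota\bigl(f_-(v)\wedge f_-(w)\bigr)=\Pf(f_-)\,v\wedge w.
\]
This is false. The correct relation, which follows from the polarized Pfaffian identity
\[
\Pf(f)\,\vol(v\wedge w\wedge a\wedge b)=\omega_f(v,w)\omega_f(a,b)-\omega_f(v,a)\omega_f(w,b)+\omega_f(v,b)\omega_f(w,a),
\]
is
\[
\iota\bigl(f(v)\wedge f(w)\bigr)=\omega_f(v,w)\,\iota(\omega_f)-\Pf(f)\,v\wedge w.
\]
The extra term $\omega_f(v,w)\iota(\omega_f)$ is exactly what makes the argument work. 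With it, for $f$ antisymmetric one gets
\[
\Phi_\vartheta\bigl((v,f(v))\wedge(w,f(w))\bigr)=(\vartheta_1-\vartheta_2\Pf(f))\,v\wedge w+\omega_f(v,w)\bigl(\vartheta_2\iota(\omega_f)-2\vartheta_3\zeta\bigr),
\]
so one-dimensionality forces $\vartheta_1=\vartheta_2\Pf(f)$ (your sign would be wrong) and the image is $\Span\{\vartheta_2\iota(\omega_f)-2\vartheta_3\zeta\}$ (with your formula the image would collapse to $\Span\{\zeta\}$, contradicting~\eqref{unodim}). This also shows immediately why $f$ must be nondegenerate: because $\Pf(f)=\vartheta_1/\vartheta_2\neq 0$, not for the reason you give.

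Two smaller points. First, in the graph case you still need to rule out one-dimensionality when $f_-$ is degenerate (rank $2$ or $0$); the paper treats these separately, and the rank-$2$ case in particular requires a short but genuine argument, not just ``forces $f_+=0$''. Second, in your $k=2$ degenerate stratum you assume $\Gamma=U\oplus W$ with $U\subset V_\CC$, but in general $\Gamma$ is only an extension of $U$ by $W\subset V_\CC^\vee$, i.e.\ $\Gamma=\{(u,g(u)):u\in U\}\oplus(\{0\}\times W)$ for some $g\colon U\to V_\CC^\vee/W$; you must check that a nonzero $g$ (mod $W$) also fails to give a one-dimensional image before concluding $W=U^\perp$ and $g\equiv 0$.
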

\begin{proof}
\emph{Suppose that~\eqref{grafico} holds.}
Then  decomposable elements of $\bigwedge^2\Gamma$ are given by $(v,f(v))\wedge(w,f(w)$ for $v,w\in V_{\CC}$, and
\begin{equation}
%\scriptstyle
\Phi_{\vartheta}\left((v,f(v))\wedge(w,f(w)\right)=\vartheta_1 v\wedge w+\vartheta_2\iota(f(v)\wedge f(w))-2\vartheta_3 \omega_f(v,w)\zeta.
\end{equation}
It follows that 
\begin{equation}\label{semputil}
\text{if $f_{-}\not=0$, then $\Phi_{\vartheta}(\bigwedge^2\Gamma)$ contains a vector $\alpha+s\zeta$ where $\alpha\in\bigwedge^2 V_{\CC}$ and $s\not=0$.}
\end{equation}
Now, we do a case-by-case analysis - always assuming that~\eqref{grafico} holds.
\begin{enumerate}
\item{\boxed{\text{$f_{-}$ is nondegenerate.}}} 
Let $v,w,a,b\in V_{\CC}$; then
\begin{equation*}
\Pf(f_{-})\vol(v\wedge w\wedge a\wedge b)=\omega_f(v,w)\cdot\omega_f(a,b)-\omega_f(v,a)\cdot\omega_f(w,b)+\omega_f(v,b)\cdot\omega_f(w,a),
\end{equation*}
i.e.
\begin{equation*}
\Pf(f_{-})v\wedge w=\iota\left(\omega_f(v,w)\cdot\omega_f-f_{-}(v)\wedge f_{-}(w)\right).
\end{equation*}
By hypothesis $\Pf(f_{-})\not=0$, hence
\begin{equation}
v\wedge w=\Pf(f_{-})^{-1}\iota\left(\omega_f(v,w)\cdot\omega_f-f_{-}(v)\wedge f_{-}(w)\right).
\end{equation}
It follows  that
\begin{multline}\label{pappa}
\scriptstyle
\Phi_{\vartheta}\left((v,f(v))\wedge(w,f(w))\right)=\\
\scriptstyle=\vartheta_1 \cdot\Pf(f_{-})^{-1}\omega_f(v,w)\iota(\omega_f )+\vartheta_2 \iota(f_{+}(v)\wedge f_{+}(w) +f_{+}(v)\wedge f_{-}(w) +f_{-}(v)\wedge f_{+}(w))+\\
\scriptstyle
+(\vartheta_2-\vartheta_1\cdot\Pf(f_{-})^{-1})\iota(f_{-}(v)\wedge f_{-}(w))-2\vartheta_3\omega_f(v,w)\zeta.
\end{multline}
Next, we distinguish between the two subcases: $f$ antisymmetric or $f$ not antisymmetric.
\begin{enumerate}
\item{\boxed{\text{$f_{+}=0$.}}}
Equation~\eqref{pappa} shows that if~\eqref{grafico} and~\eqref{prima} hold, and $f$ is antisymmetric non degenerate, then 
  $\Phi_{\vartheta}(\bigwedge^2\Gamma)$ is  one-dimensional, given by~\eqref{unodim}. 

Let us  prove that  if  $\Phi_{\vartheta}(\bigwedge^2\Gamma)$ is   one dimensional,  \eqref{grafico} holds, and $f$ is antisymmetric non degenerate,   then~\eqref{prima} holds. Let $v,w\in V_{\CC}$ be linearly independent, and such that $\omega_f(v,w)=0$. Then the right hand side of~\eqref{pappa} is equal to
\begin{equation*}
(\vartheta_2-\vartheta_1\cdot\Pf(f_{-})^{-1})\iota(f_{-}(v)\wedge f_{-}(w)). 
\end{equation*}
Since $f_{-}(v)\wedge f_{-}(w)$ is not zero (recall that $f_{-}$ is non degenerate by hypothesis),  the above vector and the vector in~\eqref{semputil} are linearly dependent only if 
$\vartheta_2-\vartheta_1\cdot\Pf(f_{-})^{-1}=0$, i.e.~\eqref{prima} holds. 
\item{\boxed{\text{$f_{+}\not=0$.}}}
Assume that  $\Phi_{\vartheta}(\bigwedge^2\Gamma)$ is   one dimensional; we will reach a contradiction.
Let $v,w\in V_{\CC}$ be such that $\omega_f(v,w)=0$. By~\eqref{pappa}, $\Phi_{\vartheta}(\bigwedge^2\Gamma)$ contains the vector
\begin{equation}\label{vasari}
\scriptstyle
\vartheta_2\iota((f_{+}(v)\wedge f_{+}(w) +f_{+}(v)\wedge f_{-}(w) +f_{-}(v)\wedge f_{+}(w))+(\vartheta_2-\vartheta_1\cdot\Pf(f_{-})^{-1}) f_{-}(v)\wedge f_{-}(w).
\end{equation}
By~\eqref{semputil}, we get that the vector in~\eqref{vasari} is zero.
Multiplying the vector in~\eqref{vasari}  by $f_{-}(v)$, we get (recall that by hypothesis $\vartheta_2\not=0$)
\begin{equation}
%\scriptstyle
%
f_{+}(v)\wedge f_{-}(v)\wedge f(w) =0.
\end{equation}
We claim that this is a contradiction, i.e.~that there exist $v,w\in V_{\CC}$  such that $\omega_f(v,w)=0$, and  $f_{+}(v), f_{-}(v), f(w)$ are linearly independent. In fact, 
since  $f_{-}$ is non degenerate and $f_{+}$ is non zero,
$f_{+}(v), f_{-}(v)$ are linearly independent for generic $v$. 
Now suppose first that $f$ is non degenerate. Let $v$ be such that $f_{+}(v), f_{-}(v)$ are linearly independent. Let $v^{\bot}\subset V_{\CC}$ be the orthogonal to $v$ with respect to $\omega_f$. Then $v^{\bot}$ is $3$-dimensional, and hence so is $f(v^{\bot})$. Thus there exists $u\in f(v^{\bot})\setminus\Span\{f_{+}(v), f_{-}(v)\}$. Letting $w:=f^{-1}(u)$, we get that  $\omega_f(v,w)=0$, and $f_{+}(v), f_{-}(v), f(w)$ are linearly independent. 

One argues similarly if $\rk f\in\{2,3\}$ (since $f_{-}=(f-f^{t})/2$ is non degenerate, $\rk f\ge 2$). More precisely, if $\rk f=3$, and $\ker f$ is generated by $v_0$, we repeat the argument above, with $v$ a generic vector not in $v_0^{\bot}$. If $\rk f=2$ we let  $v$ be a generic vector in $V_{\CC}$. Then $f_{-}(v)\notin \im f$, the span of $f_{+}(v)$ and $f_{-}(v)$ intersects $\im f$ in a one dimensional space, and $v^{\bot}$ does not contain $\ker f$. In particular $f(v^{\bot})=\im f$, and hence 
if  $w\in v^{\bot}$ is generic, then $f(w)$ is not contained in the span of $f_{+}(v), f_{-}(v)$. 
\end{enumerate}
\item{\boxed{\text{$\rk(f_{-})=2$.}}} 
Suppose that   $\Phi_{\vartheta}(\bigwedge^2\Gamma)$ is one dimensional; we will reach a contradiction. Let $v,w\in V_{\CC}$ be such that  $\omega_f(v,w)=0$; then $\Phi_{\vartheta}((v,f(v))\wedge(w,f(w)))=0$ by~\eqref{semputil}. Let $a,b\in V_{\CC}$; multiplying the first component of $\Phi_{\vartheta}((v,f(v))\wedge(w,f(w)))$ (as given in~\eqref{eccophi})   by $a\wedge b$, we get that
\begin{equation}\label{ernia}
\vartheta_1 \vol(v\wedge w\wedge a\wedge b)+\vartheta_2(\la f(v),a\ra\cdot\la f(w),b\ra-\la f(v),b\ra\cdot\la f(w),a\ra)=0.
\end{equation}
Now let $v\in V_{\CC}$ be a non zero element of the (two dimensional) kernel of $f_{-}$, i.e.~such that $f(v)=f^t(v)$. Then $f^{-1}\Ann(v)$ has dimension at least $3$, hence there exist 
$a,b\in f^{-1}\Ann(v)$ such that $v,a,b$ are linearly independent. Complete  $\{v,a,b\}$ to a basis  $\{v,a,b,w\}$ of $V_{\CC}$.  Then the left hand side of~\eqref{ernia} is non zero. In fact
$\la f(v),a\ra=\la f(a),v\ra=0$ because $f(v)=f^t(v)$ and $a\in f^{-1}\Ann(v)$. Similarly $\la f(v),b\ra=0$. Hence  the left hand side of~\eqref{ernia} is equal to 
$\vartheta_1 \vol(v\wedge w\wedge a\wedge b)$, which is non zero because   $\{v,a,b,w\}$ is a basis of $V_{\CC}$ (and $\vartheta_1\not=0$). On the other hand  $\omega_f(v,w)=0$ because 
$v$ is in the  kernel of $f_{-}$. That is a contradiction, and hence  $\Phi_{\vartheta}(\bigwedge^2\Gamma)$ is not one dimensional. 

\item{\boxed{\text{$f_{-}=0$.}}} 
Then $f$ is symmetric, and hence we may diagonalize $f$. An explicit computation shows that  $\Phi_{\vartheta}(\bigwedge^2\Gamma)$ is not one dimensional. 
\end{enumerate}
\medskip
\n
{\it Suppose that there does not exist a linear  map $f\colon V_{\CC}\to V_{\CC}^{\vee}$ such that~\eqref{grafico} holds.\/} Thus $\Gamma\cap (V_{\CC},0)$ is  non trivial. An easy case-by-case analysis shows that Item~(2) holds. Viceversa, it is clear that if Item~(2) holds, then  $\Phi_{\vartheta}(\bigwedge^2\Gamma)$ is one dimensional, given by~\eqref{facile}. 
\end{proof}
\subsection{From weight $2$ to weight $1$}\label{subsec:twoone}
\setcounter{equation}{0}
Let $m\in\QQ_{+}$. We let $(,)$ be the  bilinear symmetric non degenerate form on  $(\bigwedge^2 V_{\CC}\oplus \CC)\times(\bigwedge^2 V_{\CC}\oplus \CC)$ defined by
\begin{equation}\label{formaest}
(\alpha+x\zeta,\beta+y\zeta):=\vol(\alpha\wedge\beta)-mxy.
\end{equation}
\begin{expl}\label{expl:bbfkum}
Let $A$ be an abelian surface. Let $V=H^1(A;\ZZ)$, and let 
\begin{equation*}
\vol\colon \bigwedge^4 H^1(A;\ZZ)\overset{\sim}{\lra}\ZZ
\end{equation*}
 be defined by the orientation of $A$. Then $\bigwedge^2 V$ is identified with $H^2(A;\ZZ)$, and the bilinear symmetric form on $\bigwedge^2 V_{\CC}\times\bigwedge^2 V_{\CC}$ defined by $(\alpha,\beta)\mapsto\vol(\alpha\wedge\beta)$ is identified with cup product.  By~\eqref{bbfkum}, the  BBF bilinear form on $H^2(K_n(A))\times H^2(K_n(A))$   is identified  with~\eqref{formaest}, if we let 
$\zeta=\xi_n$, and $m=2(n+1)$. 
\end{expl}
\begin{expl}\label{expl:bbfdual}
Let $A$ be an abelian surface. Let $V= H^3(A;\ZZ)\cong H^1(A;\ZZ)^{\vee}$, where the isomorphism is defined by~\eqref{31duality}. 
Let $\zeta=\xi^{\vee}_n$, see~\Ref{ntn}{xiduale}.  Then we have an isomorphism
\begin{equation}\label{giugno}
\bigwedge^2 V_{\CC}\oplus\CC \zeta \overset{\sim}{\lra} H^2(K_n(A))^{\vee}. 
\end{equation}
 Since the  BBF bilinear symmetric form is non degenerate, it defines an isomorphism $H^2(K_n(A))\overset{\sim}{\lra} H^2(K_n(A))^{\vee}$, and hence a (dual BBF) rational bilinear symmetric form on $ H^2(K_n(A))^{\vee}\times H^2(K_n(A))^{\vee}$. 
By~\eqref{bbfkum}, the  dual BBF bilinear symmetric form    is identified  with~\eqref{formaest} if we let 
 $m=\frac{1}{2(n+1)}$.
\end{expl}
Complex conjugation   defines a conjugation operator  on $\bigwedge^2 V_{\CC}\oplus\CC$. Let 
\begin{equation}\label{perdom}
\cD:=\{[\sigma]\in\PP( \bigwedge^2 V_{\CC}\oplus \CC) \mid (\sigma,\sigma)=0,\quad (\sigma,\ov{\sigma})>0\}.
\end{equation}
Then $\cD$ is a connected complex manifold of dimension $5$. We recall that $\cD$ parametrizes integral weight $2$ Hodge structures  
$(\bigwedge^2 V\oplus \ZZ,H^{p,q})$ of $K3$ type as follows. Given $[\sigma]\in\cD$, we let
\begin{equation}\label{pesodue}
H^{2,0}_{[\sigma]}:=[\sigma],\quad H^{1,1}_{[\sigma]}:=\{\sigma,\ov{\sigma}\}^{\bot},\quad H^{0,2}_{[\sigma]}:=[\ov{\sigma}].
\end{equation}
\begin{prp}\label{prp:relazione}
Suppose that $\vartheta_1$, $\vartheta_2$, and $\vartheta_3$ are non zero.  
Let $[\sigma]\in\cD\setminus \zeta^{\bot}$,  and assume that there exists an    
integral (effective) Hodge structure $(V\oplus V^{\vee}, H^{p,q})$  of weight $1$ such that $\Phi_{\vartheta}$ is a morphism of Hodge structures.  Then
\begin{equation}\label{seconda}
\vartheta_1\cdot\vartheta_2 = 2m\vartheta_3^2.
\end{equation}
\end{prp}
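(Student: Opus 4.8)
The plan is to extract from the existence of the weight-$1$ Hodge structure the single fact that $\Phi_{\vartheta}$ carries a certain $\bigwedge^2$ of a $4$-plane onto the line $\CC\sigma$, feed this into~\Ref{prp}{grafanti}, and then convert the isotropy condition $(\sigma,\sigma)=0$ into the numerical identity~\eqref{seconda}.

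First I would record the shape of the two Hodge structures. Set $\Gamma:=H^{1,0}(V\oplus V^{\vee})$; since the weight-$1$ Hodge structure is effective and of rank $8$, $\Gamma$ is a $4$-dimensional subspace of $V_{\CC}\oplus V_{\CC}^{\vee}$, and the induced weight-$2$ Hodge structure on $\bigwedge^2(V_{\CC}\oplus V_{\CC}^{\vee})$ has $H^{2,0}=\bigwedge^2\Gamma$. The codomain carries the $K3$-type weight-$2$ Hodge structure attached to $[\sigma]$ by~\eqref{pesodue}, with $H^{2,0}=\CC\sigma$. By hypothesis $\Phi_{\vartheta}$ is a morphism of Hodge structures, so $\Phi_{\vartheta}(\bigwedge^2\Gamma)\subseteq\CC\sigma$. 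Next I would note that $\Phi_{\vartheta}$ is surjective: from~\eqref{eccophi} the vectors $\Phi_{\vartheta}((v,0)\wedge(w,0))=(\vartheta_1\,v\wedge w,0)$ span $\bigwedge^2 V_{\CC}$ since $\vartheta_1\ne0$, and $\Phi_{\vartheta}((v,0)\wedge(v,h))=(0,-\vartheta_3 h(v))$ produces $\zeta$ since $\vartheta_3\ne0$. A surjective morphism of pure Hodge structures of the same weight is strict, hence induces a surjection on each $(p,q)$-summand; therefore $\Phi_{\vartheta}(\bigwedge^2\Gamma)=\CC\sigma$, and in particular it is one-dimensional.

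Now I would apply~\Ref{prp}{grafanti} to the $4$-plane $\Gamma$, using that $\vartheta_1,\vartheta_2,\vartheta_3$ are nonzero. Item~(2) of that proposition is impossible here: it would give $\Phi_{\vartheta}(\bigwedge^2\Gamma)\subseteq\bigwedge^2 V_{\CC}$, forcing $\sigma\in\bigwedge^2 V_{\CC}$ and hence $(\sigma,\zeta)=0$, which contradicts $[\sigma]\notin\zeta^{\bot}$. So Item~(1) holds: there is an antisymmetric non-degenerate $f\colon V_{\CC}\to V_{\CC}^{\vee}$ with $\Gamma=\{(v,f(v))\mid v\in V_{\CC}\}$, with $\vartheta_1=\vartheta_2\,\Pf(f)$ by~\eqref{prima}, and $\Phi_{\vartheta}(\bigwedge^2\Gamma)=\Span\{\vartheta_2\iota(\omega_f)-2\vartheta_3\zeta\}$ by~\eqref{unodim}. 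Comparing with $\Phi_{\vartheta}(\bigwedge^2\Gamma)=\CC\sigma$, the vector $\sigma$ is a nonzero scalar multiple of $\vartheta_2\iota(\omega_f)-2\vartheta_3\zeta$.

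Finally I would expand the relation $(\sigma,\sigma)=0$, valid by the definition~\eqref{perdom} of $\cD$, using the explicit form~\eqref{formaest}. Since $\iota(\omega_f)\in\bigwedge^2 V_{\CC}$ and $\zeta=(0,1)$, one has $(\iota(\omega_f),\zeta)=0$ and $(\zeta,\zeta)=-m$, so $(\sigma,\sigma)$ is proportional to $\vartheta_2^2\bigl(\iota(\omega_f),\iota(\omega_f)\bigr)-4m\vartheta_3^2$. The one genuine computation is $\bigl(\iota(\omega_f),\iota(\omega_f)\bigr)=\vol\bigl(\iota(\omega_f)\wedge\iota(\omega_f)\bigr)=2\Pf(f)$; this drops out after writing $\omega_f$ in a unit-volume basis, computing $\iota(\omega_f)$ componentwise, and using that $\eta\wedge\eta$ is $2$ times the Pfaffian of the coefficient matrix of $\eta$ times $\vol$ for a $2$-form $\eta$ in four variables, together with~\eqref{eccopfaff}. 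Thus $0=(\sigma,\sigma)\propto 2\vartheta_2^2\Pf(f)-4m\vartheta_3^2$, i.e.\ $\vartheta_2^2\Pf(f)=2m\vartheta_3^2$; multiplying~\eqref{prima} by $\vartheta_2$ then gives $\vartheta_1\vartheta_2=\vartheta_2^2\Pf(f)=2m\vartheta_3^2$, which is~\eqref{seconda}. The main (and rather mild) obstacle is the bookkeeping behind the identity $\bigl(\iota(\omega_f),\iota(\omega_f)\bigr)=2\Pf(f)$; the rest is formal once strictness of morphisms of Hodge structures and~\Ref{prp}{grafanti} are in hand.
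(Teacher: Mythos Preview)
Your proof is correct and follows essentially the same approach as the paper: use that $\Phi_{\vartheta}$ is a surjective morphism of Hodge structures to conclude $\Phi_{\vartheta}(\bigwedge^2 H^{1,0})=\CC\sigma$, apply~\Ref{prp}{grafanti} to obtain the antisymmetric $f$ with $\vartheta_1=\vartheta_2\Pf(f)$ and $\sigma\propto\vartheta_2\iota(\omega_f)-2\vartheta_3\zeta$, and then expand $(\sigma,\sigma)=0$ using $\vol(\iota(\omega_f)\wedge\iota(\omega_f))=2\Pf(f)$. You are slightly more explicit than the paper in ruling out Item~(2) of~\Ref{prp}{grafanti} via the hypothesis $[\sigma]\notin\zeta^{\bot}$, and in checking surjectivity of $\Phi_{\vartheta}$ directly from~\eqref{eccophi}.
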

\begin{proof}
Since  $\Phi_{\vartheta}$ is a morphism of Hodge structures,  
$\Phi_{\vartheta}(\bigwedge H^{1,0}_{[\sigma]}(\vartheta))\subset H^{2,0}_{[\sigma]}$, and we have equality because $\Phi_{\vartheta}$ is surjective. Since $H^{1,0}_{[\sigma]}(\vartheta)$ is a $4$ dimensional subspace of $V_{\CC}\oplus V^{\vee}_{\CC}$, and  $H^{2,0}_{[\sigma]}$ is $1$ dimensional, we may apply~\Ref{prp}{grafanti}; we get that 
there exists an  antisymmetric non degenerate map  $f\colon V_{\CC}\to V_{\CC}^{\vee}$ 
 such that~\eqref{prima} holds,  
\begin{equation*}
H^{1,0}=\{(v,f(v)) \mid v\in V_{\CC}\}, 
\end{equation*}
and
\begin{equation*}
H^{2,0}_{[\sigma]}=\Phi_{\vartheta}(\bigwedge^2 H^{1,0})=\Span\{ \vartheta_2\iota(\omega_f)-2\vartheta_3\zeta\}.
\end{equation*}
Thus 
\begin{equation}\label{newrong}
0=(\vartheta_2\iota(\omega_f)-2\vartheta_3\zeta,\vartheta_2\iota(\omega_f)-2\vartheta_3\zeta)=\vartheta_2^2\vol(\iota(\omega_f)\wedge \iota(\omega_f))-4m\vartheta_3^2.
\end{equation}
Let $\vol^{\vee}\colon\bigwedge^4 V^{\vee}_{\CC}\overset{\sim}{\lra}\CC$ be the volume form dual to $\vol$.
Since $\vol(\iota(\omega_f)\wedge\iota(\omega_f))=\vol^{\vee}(\omega_f\wedge\omega_f)=2\Pf(f)$, Equation~\eqref{seconda} follows from~\eqref{newrong} and~\eqref{prima}.
\end{proof}
\begin{crl}\label{crl:thirdman}
Let $X$ be a $2n$ dimensional hyperk\"ahler manifold of Kummer type, where $n\ge 2$. Then
\begin{equation}\label{lula}
%
%\scriptstyle
%
\vartheta(\ov{q}^{n-2}_X)=- 2^{n-2}(n+1)^{n-2}\frac{(2n+3)!!}{7!!}(1,(n+1),(-1)^{\epsilon_n} (n+1)),
\end{equation}
for some $\epsilon_n\in\{0,1\}$.
\end{crl}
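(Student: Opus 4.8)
The plan is to combine the three previous computations with the linear-algebra constraint from Proposition~\ref{prp:relazione}. From Proposition~\ref{prp:grancul} and Proposition~\ref{prp:duetre} we already know the first two entries of $\vartheta(\ov{q}^{n-2}_X)$ exactly:
\[
\vartheta_1(\ov{q}^{n-2}_X)=-2^{n-2}(n+1)^{n-2}\frac{(2n+3)!!}{7!!},\qquad
\vartheta_2(\ov{q}^{n-2}_X)=-2^{n-2}(n+1)^{n-1}\frac{(2n+3)!!}{7!!}.
\]
By Lemma~\ref{lmm:niunozero} the third entry $\vartheta_3(\ov{q}^{n-2}_X)$ is nonzero, so it remains only to pin down its absolute value (the sign being absorbed into the unknown $\epsilon_n$).

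First I would set up the hypotheses of Proposition~\ref{prp:relazione} in the concrete situation $X=K_n(A)$, using Remark~\ref{rmk:oasibirra}: there are integral identifications $H^3(X;\ZZ)\cong V\oplus(V^{\vee}/2)$ and $H^2(X;\ZZ)^{\vee}\cong\bigwedge^2 V\oplus\ZZ$ under which $\phi(\ov{q}^{n-2})$ becomes $\Phi_{\vartheta}$ with $\vartheta=\vartheta(\ov{q}^{n-2}_X)$. Moreover, by Example~\ref{expl:bbfdual}, the quadratic form on the codomain $H^2(X)^{\vee}$ that is dual to the BBF form corresponds precisely to the form~\eqref{formaest} with $m=\frac{1}{2(n+1)}$. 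Since $X$ carries a genuine weight-$1$ Hodge structure on $H^3(X)$ (the intermediate Jacobian), and since Theorem~\ref{thm:primoteor} together with the Key observation~\ref{kob:chiave} shows that $\phi$ restricted to $\bigwedge^2 H^{2,1}(X)$ lands in the one-dimensional space $\Ann F^1H^2(X)$, the hypotheses of Proposition~\ref{prp:relazione} are met for a general Kummer (take $[\sigma]\in\cD\setminus\zeta^{\bot}$ corresponding to a very general weight-$2$ Hodge structure, which is realized by some $K_n(A)$). Hence the conclusion~\eqref{seconda} applies:
\[
\vartheta_1(\ov{q}^{n-2}_X)\cdot\vartheta_2(\ov{q}^{n-2}_X)=2m\,\vartheta_3(\ov{q}^{n-2}_X)^2=\frac{1}{n+1}\vartheta_3(\ov{q}^{n-2}_X)^2.
\]

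Then I would simply solve for $\vartheta_3$. Plugging in the known values,
\[
\vartheta_3(\ov{q}^{n-2}_X)^2=(n+1)\,\vartheta_1\vartheta_2
=(n+1)\cdot 2^{2n-4}(n+1)^{2n-3}\left(\frac{(2n+3)!!}{7!!}\right)^2
=2^{2n-4}(n+1)^{2n-2}\left(\frac{(2n+3)!!}{7!!}\right)^2,
\]
so that $|\vartheta_3(\ov{q}^{n-2}_X)|=2^{n-2}(n+1)^{n-1}\frac{(2n+3)!!}{7!!}$, which is exactly the claimed third entry up to sign. Writing the common sign ambiguity as $(-1)^{\epsilon_n}$ with $\epsilon_n\in\{0,1\}$ gives formula~\eqref{lula}. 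The only mild subtlety — and the one place where care is needed — is to make sure Proposition~\ref{prp:relazione} is genuinely applicable, i.e.\ that there is a generalized Kummer $K_n(A)$ whose period point $[\sigma]$ lies in $\cD\setminus\zeta^{\bot}$ and whose $H^3$ carries the requisite integral effective weight-$1$ Hodge structure making $\Phi_{\vartheta}$ a morphism of Hodge structures; but this is automatic, since for \emph{every} $K_n(A)$ the intermediate Jacobian supplies such a structure and $\phi$ is a morphism of Hodge structures, while the condition $[\sigma]\notin\zeta^{\bot}$ just says $c_1$ of an ample class pairs nontrivially, which holds for a suitable choice of $A$ and polarization. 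No serious obstacle remains; this corollary is a bookkeeping consequence of the three hard computations already done.
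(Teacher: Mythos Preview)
Your approach is exactly the paper's: quote $\vartheta_1,\vartheta_2$ from \Ref{prp}{grancul} and \Ref{prp}{duetre}, note $\vartheta_3\ne 0$ from \Ref{lmm}{niunozero}, and then invoke \Ref{prp}{relazione} with $m=\tfrac{1}{2(n+1)}$ from \Ref{expl}{bbfdual} to get $\vartheta_3^2=(n+1)\vartheta_1\vartheta_2$.

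There is, however, one genuine slip in your justification of the hypothesis $[\sigma]\notin\zeta^{\bot}$. You assert this can be arranged for a suitable generalized Kummer $K_n(A)$, and you paraphrase the condition as ``$c_1$ of an ample class pairs nontrivially''. Neither is correct. In the identification of \Ref{expl}{bbfdual} one has $\zeta=\xi_n^{\vee}$, and $[\sigma]$ is the line $\Ann F^1H^2(X)\subset H^2(X)^{\vee}$; thus $[\sigma]\in\zeta^{\bot}$ precisely when $\xi_n\in F^1H^2(X)$, i.e.\ when $\xi_n$ is of type $(1,1)$. For \emph{every} actual generalized Kummer $K_n(A)$ the class $\xi_n$ is algebraic, hence $(1,1)$, so the period of $K_n(A)$ \emph{always} lies in $\zeta^{\bot}$ and \Ref{prp}{relazione} never applies directly to $K_n(A)$. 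The fix is immediate: since $\vartheta(\ov q^{n-2}_X)$ is a deformation invariant (\Ref{dfn}{eccotheta}) and the period map for HK's of Kummer type is locally surjective onto $\cD$, one deforms to a HK $X$ of Kummer type whose period lies in the open set $\cD\setminus\zeta^{\bot}$; for such $X$ the weight-$1$ Hodge structure on $H^3(X)$ and the fact that $\phi$ is a morphism of Hodge structures supply the hypotheses of \Ref{prp}{relazione}. With this correction your argument is complete and identical to the paper's.
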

\begin{proof}
The values of $\vartheta_1(\ov{q}^{n-2}_X)$ and $\vartheta_2(\ov{q}^{n-2}_X)$ are given   by~\Ref{prp}{grancul} and~\Ref{prp}{duetre} respectively. It remains to compute  $\vartheta_3(\ov{q}^{n-2}_X)$ up to sign.
By~\Ref{lmm}{niunozero}, $\vartheta_3(\ov{q}^{n-2}_X)$ is non zero.
By~\Ref{expl}{bbfdual} and~\Ref{prp}{relazione}, we get that
\begin{equation*}
(n+1)\vartheta_1(\ov{q}^{n-2}_X)\cdot \vartheta_2(\ov{q}^{n-2}_X)=\vartheta_3(\ov{q}^{n-2}_X)^2, 
\end{equation*}
and hence $\vartheta_3(\ov{q}^{n-2}_X)=\pm 2^{n-2}(n+1)^{n-1}\frac{(2n+3)!!}{7!!}$.
\end{proof}
\begin{rmk}\label{rmk:menosei}
Let $\alpha\in H^3(A)$ and $\beta\in H^1(A)$. A straightforward computation (similar to the computations carried out to prove~\Ref{prp}{spartan}, \Ref{prp}{duetre} and~\Ref{prp}{mario}, but much simpler) gives that
\begin{equation}
\int_{K_2(A)}\mu_3(\alpha)\smile \nu_3(\beta)\smile\xi_2=-6\int_A\alpha\smile\beta.
\end{equation}
Thus $\vartheta_3(1)=-3$. Equivalently  $\epsilon_2=1$.
\end{rmk}
\begin{thm}\label{thm:grafanti}
Suppose that $\vartheta_1$, $\vartheta_2$, and $\vartheta_3$ are non zero, and that~\eqref{seconda} holds. Let $[\sigma]\in\cD$ (see~\eqref{perdom}). There exists a unique   
integral (effective) Hodge structure 
\begin{equation}\label{pesuno}
(V\oplus V^{\vee}, H_{[\sigma]}^{p,q}(\vartheta))
\end{equation}
 of weight $1$ with the property that $\Phi_{\vartheta}$ is a morphism of integral Hodge structures, and it is described as follows: 
\begin{enumerate}
\item
If $\sigma\notin \zeta^{\bot}$ (see~\Ref{ntn}{zetaclass}), rescale $\sigma$ so that $\sigma=\vartheta_2 \alpha-2\vartheta_3\zeta$, where $\alpha\in\bigwedge^2 V_{\CC}$. Thus 
 $\alpha\wedge\alpha\not=0$ because $\sigma$ is isotropic. Let $f\colon V_{\CC}\to V_{\CC}^{\vee}$ be the  antisymmetric non degenerate map 
 such that $\iota(\omega_f)=\alpha$. Then 
\begin{equation}\label{accauno}
H_{[\sigma]}^{1,0}(\vartheta)=\{(v,f(v)) \mid v\in V_{\CC}\},\qquad H_{[\sigma]}^{0,1}(\vartheta):=\ov{H_{[\sigma]}^{1,0}(\vartheta)}.
\end{equation}
\item
If $\sigma\in \zeta^{\bot}$, i.e.~$\sigma$ is a decomposable element of $\bigwedge^2 V_{\CC}$, then  
\begin{equation}
H_{[\sigma]}^{1,0}(\vartheta)=U\oplus U^{\bot},\qquad H_{[\sigma]}^{0,1}(\vartheta):=\ov{H_{[\sigma]}^{1,0}(\vartheta)},
\end{equation}
where $U\in\Gr(2,V_{\CC})$ is the unique element such that $\bigwedge^2 U=[\sigma]$.
\end{enumerate}
\end{thm}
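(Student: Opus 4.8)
The plan is to establish existence first, then uniqueness. For existence, fix $[\sigma]\in\cD$ and consider the two cases of the statement. When $\sigma\notin\zeta^{\bot}$, rescale so that $\sigma=\vartheta_2\alpha-2\vartheta_3\zeta$ with $\alpha\in\bigwedge^2 V_{\CC}$; the isotropy condition $(\sigma,\sigma)=0$ together with~\eqref{formaest} reads $\vartheta_2^2\vol(\alpha\wedge\alpha)=4m\vartheta_3^2$, so $\vol(\alpha\wedge\alpha)\neq 0$ (using $\vartheta_2,\vartheta_3\neq 0$), hence $\alpha$ is a non-degenerate (symplectic) form and there is a unique antisymmetric non-degenerate $f\colon V_{\CC}\to V_{\CC}^{\vee}$ with $\iota(\omega_f)=\alpha$. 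Moreover $\vol(\alpha\wedge\alpha)=2\Pf(f)$, so the isotropy relation becomes $\vartheta_2^2\Pf(f)=2m\vartheta_3^2$, which by hypothesis~\eqref{seconda} ($\vartheta_1\vartheta_2=2m\vartheta_3^2$) simplifies to $\vartheta_1=\vartheta_2\Pf(f)$, i.e.~\eqref{prima} holds. Thus we are exactly in situation~(1) of~\Ref{prp}{grafanti}, and that proposition tells us $\Phi_{\vartheta}(\bigwedge^2 H^{1,0}_{[\sigma]}(\vartheta))=\Span\{\vartheta_2\iota(\omega_f)-2\vartheta_3\zeta\}=[\sigma]$. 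Defining $H^{1,0}$ by~\eqref{accauno} and $H^{0,1}$ as its conjugate, one checks $H^{1,0}\cap H^{0,1}=0$ (a vector in the intersection would give a real element $(v,f(v))=(\ov v,f(\ov v))$ forcing, via non-degeneracy of $f$ and the fact that $f$ is the complexification of nothing real unless $\alpha$ is real, a contradiction with $(\sigma,\ov\sigma)>0$); the cleanest route is to observe that $\Phi_{\vartheta}$ restricted to $\bigwedge^2 H^{1,0}$ lands in $[\sigma]$ and restricted to $\bigwedge^2 H^{0,1}$ lands in $[\ov\sigma]$, while the mixed part $H^{1,0}\otimes H^{0,1}$ surjects onto $\{\sigma,\ov\sigma\}^{\bot}=H^{1,1}_{[\sigma]}$, so that $\Phi_{\vartheta}$ is a morphism of Hodge structures of the required bidegrees; counting dimensions ($\dim H^{1,0}=4$) then forces $V_{\CC}\oplus V^{\vee}_{\CC}=H^{1,0}\oplus H^{0,1}$. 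When $\sigma\in\zeta^{\bot}$, $\sigma\in\bigwedge^2 V_{\CC}$ is isotropic for $\vol(\cdot\wedge\cdot)$, hence decomposable, so $\sigma$ spans $\bigwedge^2 U$ for a unique $U\in\Gr(2,V_{\CC})$; setting $H^{1,0}=U\oplus U^{\bot}$ and invoking~\eqref{facile} of~\Ref{prp}{grafanti} gives $\Phi_{\vartheta}(\bigwedge^2 H^{1,0})=\bigwedge^2 U=[\sigma]$, and again $H^{1,0}\cap\ov{H^{1,0}}=0$ because $U\cap\ov U=0$ (as $\bigwedge^2 U=[\sigma]\neq[\ov\sigma]=\bigwedge^2\ov U$ — here one uses $(\sigma,\ov\sigma)>0$, which for decomposable $\sigma$ means $U$ and $\ov U$ are transverse).

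For uniqueness, suppose $(V\oplus V^{\vee},H^{p,q})$ is any integral weight-$1$ Hodge structure making $\Phi_{\vartheta}$ a morphism. Then $\Phi_{\vartheta}(\bigwedge^2 H^{1,0})\subseteq H^{2,0}_{[\sigma]}=[\sigma]$, and since $\Phi_{\vartheta}$ is surjective (a consequence of $\vartheta_1,\vartheta_2$ nonzero, as used repeatedly) equality holds; in particular $\Phi_{\vartheta}(\bigwedge^2 H^{1,0})$ is one-dimensional. Applying~\Ref{prp}{grafanti} to the $4$-dimensional subspace $\Gamma=H^{1,0}$, we land in case~(1) or case~(2) of that proposition. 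In case~(1) we get an antisymmetric non-degenerate $f$ with $H^{1,0}=\{(v,f(v))\}$ and $\Phi_{\vartheta}(\bigwedge^2\Gamma)=\Span\{\vartheta_2\iota(\omega_f)-2\vartheta_3\zeta\}$; equating this line with $[\sigma]$ and using $\vartheta_2,\vartheta_3\neq 0$ shows that, after the rescaling of~(1), $\alpha=\iota(\omega_f)$, so $f$ is the map produced in the existence argument and $H^{1,0}$ is forced. In case~(2), $\Phi_{\vartheta}(\bigwedge^2\Gamma)=\bigwedge^2 U$, so $[\sigma]=\bigwedge^2 U$ lies in $\zeta^{\bot}$ and $U$ is uniquely determined, again forcing $H^{1,0}$. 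Finally $H^{0,1}=\ov{H^{1,0}}$ since the Hodge structure has weight $1$.

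The main obstacle I anticipate is not any single deep step but rather the bookkeeping needed to verify that the $H^{1,0}$ we write down genuinely defines a Hodge structure, i.e.~that $(V\oplus V^{\vee})_{\CC}=H^{1,0}\oplus\ov{H^{1,0}}$ and that $\Phi_{\vartheta}$ respects \emph{all} the Hodge components (not just $\bigwedge^2 H^{1,0}\to H^{2,0}$). Concretely, one must check that $\Phi_{\vartheta}(H^{1,0}\otimes\ov{H^{1,0}})\subseteq H^{1,1}_{[\sigma]}=\{\sigma,\ov\sigma\}^{\bot}$; this amounts to the orthogonality computation $\big(\Phi_{\vartheta}((v,f(v))\wedge(\ov w,\ov{f(w)})),\,\sigma\big)=0$, which expands into an identity in $\vartheta_1,\vartheta_2,\vartheta_3,m$ that holds precisely because of~\eqref{seconda} and the Pfaffian identity — essentially the same calculation that appears in the proof of~\Ref{prp}{relazione}, read in reverse. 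I would isolate this as a short lemma. The transversality $H^{1,0}\cap\ov{H^{1,0}}=0$ then follows either directly from the sign condition $(\sigma,\ov\sigma)>0$ in the definition of $\cD$, or automatically once one knows $\Phi_{\vartheta}$ is a morphism of Hodge structures with surjective complexification onto a Hodge structure of $K3$ type whose $(1,1)$-part has the right dimension.
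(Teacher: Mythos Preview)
Your plan is essentially the paper's own proof: uniqueness via~\Ref{prp}{grafanti}, existence by verifying that the candidate $H^{1,0}$ makes $\Phi_{\vartheta}$ a morphism of Hodge structures, with the derivation of~\eqref{prima} from isotropy and~\eqref{seconda} exactly as in the proof of~\Ref{prp}{relazione}. Two places where your sketch is looser than the paper deserve a word. First, the mixed-part inclusion $\Phi_{\vartheta}(H^{1,0}\wedge H^{0,1})\subset H^{1,1}_{[\sigma]}$ is not literally ``\Ref{prp}{relazione} in reverse'': the paper reduces it to the separate identity $\vol^{\vee}(\omega_f\wedge f(a)\wedge \ov f(b))=\Pf(f)\,\omega_{\ov f}(a,b)$, which combined with~\eqref{prima} kills the pairing with $\sigma$. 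Second, for transversality in case~(1) your argument is incomplete: a single real vector $(v,f(v))$ does not by itself force $\alpha$ real. The paper's trick is to use that $\Pf(f)=\vartheta_1/\vartheta_2\in\RR$; starting from one real $v$ with $f(v)$ real, one builds a real basis in which $\omega_f=v_1^{\vee}\wedge v_2^{\vee}+c\,v_3^{\vee}\wedge v_4^{\vee}$, and then $\Pf(f)=c\in\RR$ forces $\omega_f$ (hence $\sigma$) real, contradicting $(\sigma,\ov\sigma)>0$. Your alternative route (``automatically once one knows $\Phi_{\vartheta}$ is a morphism'') is circular, since the Hodge decomposition is what you are trying to establish.
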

\begin{proof}
Suppose that there exists an effective integral weight $1$ Hodge structure~\eqref{pesuno} such that $\Phi_{\vartheta}$ is morphism of Hodge structures. Then 
$\Phi_{\vartheta}(\bigwedge H^{1,0}_{[\sigma]}(\vartheta))\subset H^{2,0}_{[\sigma]}$, and we have equality because $\Phi_{\vartheta}$ is surjective. Since $H^{1,0}_{[\sigma]}(\vartheta)$ is a $4$ dimensional subspace of $V_{\CC}\oplus V^{\vee}_{\CC}$, and  $H^{2,0}_{[\sigma]}$ is $1$ dimensional, we may apply~\Ref{prp}{grafanti}; we get that either~(1) or~(2) holds. 

Next, we show that~\eqref{pesuno} does  indeed define an effective integral weight $1$ Hodge structure, and   that $\Phi_{\vartheta}$ is morphism of Hodge structures. If 
$\sigma\in \zeta^{\bot}$, the verification is straightforward. 
Thus we assume that  $\sigma\notin \zeta^{\bot}$.   By definition of $f$, $\sigma=\vartheta_2\iota(\omega_f)-2\vartheta_3\zeta$. The proof of~\Ref{prp}{relazione} gives 
that~\eqref{prima} holds.
 Thus, by~\Ref{prp}{grafanti}, 
 \begin{equation}\label{gorgonzola}
\Phi_{\vartheta}(\bigwedge H^{1,0}_{[\sigma]}(\vartheta))=H^{2,0}_{[\sigma]}
\end{equation}
by~\Ref{prp}{grafanti}. Since $\Phi_{\vartheta}$ is real, it follows that $\Phi_{\vartheta}(\bigwedge H^{0,1}_{[\sigma]}(\vartheta))=H^{0,2}_{[\sigma]}$. 
Let us prove that
\begin{equation}\label{propfit}
%\scriptstyle
%
 \Phi_{\vartheta}(H^{1,0}_{[\sigma]}(\vartheta)\wedge H^{0,1}_{[\sigma]}(\vartheta))\subset H^{1,1}_{[\sigma]}. 
\end{equation}
It suffices to prove that 
\begin{equation}
(\vartheta_2\iota(\omega_f)-2\vartheta_3\zeta)\bot \Phi_{\vartheta}(H^{1,0}_{[\sigma]}(\vartheta)\wedge H^{0,1}_{[\sigma]}(\vartheta)).
\end{equation}
Let $(v,f(v)),(w,f(w))\in H^{1,0}_{[\sigma]}(\vartheta)$. Then
\begin{multline}
 \left(\vartheta_2\iota(\omega_f)-2\vartheta_3\zeta,\Phi_{\vartheta}((v,f(v))\wedge(\ov{w},\ov{f}(w)))\right)= \\
 =\vartheta_2(\vartheta_2\vol^{\vee}(\omega_f\wedge f(v)\wedge \ov{f}(w))-\vartheta_1\omega_{\ov{f}}(v,w)).
\end{multline}
The right hand side vanishes because of the formula (proved by a straightforward computation)
\begin{equation}
\vol^{\vee}(\omega_f\wedge f(a)\wedge \ov{f}(b))  =  \Pf(f)\omega_{\ov{f}}(a,b).
\end{equation}
 It remains to prove   that 
 \begin{equation}\label{coniugato}
 H^{1,0}_{[\sigma]}(\vartheta)\cap (V_{\RR}\oplus V_{\RR}^{\vee})=\{0\}. 
\end{equation}
Suppose the contrary. It follows  that there exists a basis $\{v_1,v_2,v_3,v_4\}$ of $V_{\RR}$ such that $\omega_f=v_1^{\vee}\wedge v_2^{\vee}+c v_3^{\vee}\wedge v_4^{\vee}$. By~\eqref{prima}, $\Pf(f)$ is real, and hence $c\in\RR$. It follows that $(\sigma,\ov{\sigma})=(\sigma,\sigma)=0$, and that contradicts the hypothesis that $[\sigma]\in\cD$. 
\end{proof}
 \begin{expl}\label{expl:francese}
Let  $K_n(A)$ be a generalized Kummer of dimension $2n$. We adopt the identifications of~\Ref{expl}{bbfdual}, and we set 
$\vartheta=\vartheta(\ov{q}^{n-2})$. Let $\Gamma\subset(V_{\CC}\oplus V_{\CC}^{\vee})$ be the graph of a non degenerate linear map such that there exists a HK of Kummer type of dimension $2n$ and a Gauss-Manin parallel transport operator $H^3(X)\overset{\sim}{\lra} 
(V_{\CC}\oplus V_{\CC}^{\vee})$ sending $H^{2,1}(X)$ to $\Gamma$. By~\Ref{kob}{chiave} and~\Ref{prp}{grafanti}, $f$ is skew-symmetric and  
\begin{equation}\label{uguaglio}
\Pf(f)=\frac{\vartheta_1(\ov{q}^{n-2})}{\vartheta_2(\ov{q}^{n-2})}=\frac{1}{(n+1)}.
\end{equation}
Conversely, let $f$ be a  generic skew-symmetric map  as above, such that~\eqref{uguaglio} holds, and let $\Gamma$ be  the graph  of $f$. Then by~\Ref{thm}{grafanti}    there exists a HK of Kummer type of dimension $2n$ and a Gauss-Manin parallel transport operator $H^3(X)\overset{\sim}{\lra} 
(V_{\CC}\oplus V_{\CC}^{\vee})$ sending $H^{2,1}(X)$ to $\Gamma$. 
\end{expl}
\subsection{The compact complex torus associated to a point of $\cD$}
\setcounter{equation}{0}
\begin{dfn}\label{dfn:torosi}
Keep notation and hypotheses as in~\Ref{thm}{grafanti}. We let
\begin{equation}
J_{[\sigma]}(\vartheta):=(V_{\CC}\oplus V_{\CC}^{\vee})/(H_{[\sigma]}^{1,0}(\vartheta)+(V\oplus V^{\vee})).
\end{equation}
Thus $J_{[\sigma]}(\vartheta)$ is a compact complex torus of dimension $4$. 
\end{dfn}
 \begin{expl}\label{expl:jacint}
Set  $m=1/2(n+1)$, and $\vartheta=\vartheta(\ov{q}_X^{n-2})$, where $X$ is a  HK of Kummer type, of dimension $2n$.  
Going through the identifications in~\Ref{expl}{bbfdual}, we may identify $\Ann F^1(X)$ with a point $[\sigma]\in  \cD$.  We recall that the integral cohomology  $H^3(X;\ZZ)$ is identified with  $V\oplus V^{\vee}$, see~\Ref{thm}{accatre}. Thus we have  an isomorphism
\begin{equation}\label{isogenia}
J_{[\sigma]}(\vartheta) \overset{\sim}{\lra} J^3(X).
\end{equation}
\end{expl}
For a very general $[\sigma]\in\cD$ the torus $J_{[\sigma]}(\vartheta)$ is not projective. We will prove that if  there exists a rational class of positive square in the orthogonal $\sigma^{\bot}$, then $J_{[\sigma]}(\vartheta)$ is an abelian variety. 
\begin{dfn}\label{dfn:domacca}
Let    $h\in (\bigwedge^2 V\oplus \ZZ)^{\vee}$ be non zero. We let
\begin{equation*}
\cD_h:=\{[\sigma]\in\cD \mid \la h,\sigma\ra=0\},
\end{equation*}
where $\la,\ra$ is the duality pairing.
\end{dfn}
\begin{dfn}
Since the bilinear  symmetric form defined in~\eqref{formaest} is non degenerate, it defines a rational isomorphism
\begin{equation}\label{identici}
 (\bigwedge^2 V_{\CC}\oplus\CC)^{\vee}\overset{\sim}{\lra} (\bigwedge^2 V_{\CC}\oplus\CC),
\end{equation}
and hence also  a rational  bilinear  symmetric form on $(\bigwedge^2 V_{\CC}\oplus\CC)^{\vee}$, that will be denoted $(,)^{\vee}$. 
\end{dfn}
\begin{rmk}
Let    $h\in (\bigwedge^2 V\oplus \ZZ)^{\vee}$ be a class of (strictly) positive square. Then $\cD_h$ is not connected, in fact
it has two connected   components, interchanged by conjugation. Each connected component of $\cD_h$ is a Type IV bounded symmetric domain.
\end{rmk}
 \begin{prp}\label{prp:poveraitalia}
Let    $h\in (\bigwedge^2 V\oplus \ZZ)^{\vee}$ be a class of strictly positive square. Let  $\sigma\in\cD_h$.   Then
\begin{equation}\label{nonnullo}
\la i\Phi_{\vartheta}(\alpha\wedge\ov{\alpha}),h\ra \not= 0 \quad \forall \alpha\in H_{[\sigma]}^{1,0}(\vartheta)\setminus\{0\}.
\end{equation}
\end{prp}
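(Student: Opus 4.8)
The plan is to show that $\la i\Phi_{\vartheta}(\alpha\wedge\ov\alpha),h\ra$ is, up to a positive constant, the value of a polarizing form on the weight-$1$ Hodge structure $H_{[\sigma]}^{1,0}(\vartheta)$, and hence cannot vanish on a nonzero $\alpha$. Concretely, since $[\sigma]\in\cD_h$ we have $\la h,\sigma\ra=0$ and $\la h,\ov\sigma\ra=0$, so $h$, viewed via the isomorphism~\eqref{identici} as an element $h^{\vee}\in\bigwedge^2 V_{\CC}\oplus\CC$, lies in $\{\sigma,\ov\sigma\}^{\bot}=H^{1,1}_{[\sigma]}$; moreover $h$ has positive square, so $(h^{\vee},h^{\vee})>0$. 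The pairing $\la i\Phi_{\vartheta}(\alpha\wedge\ov\alpha),h\ra$ equals $(i\Phi_{\vartheta}(\alpha\wedge\ov\alpha),h^{\vee})$ under~\eqref{identici}, so the claim is that the Hermitian form $\alpha\mapsto (i\Phi_{\vartheta}(\alpha\wedge\ov\alpha),h^{\vee})$ on $H^{1,0}_{[\sigma]}(\vartheta)$ is (semi-)definite and nondegenerate.

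The key computation is to evaluate this Hermitian form explicitly using~\Ref{thm}{grafanti}. First I would dispose of the decomposable case $\sigma\in\zeta^{\bot}$ (Item~(2)): here $H^{1,0}=U\oplus U^{\bot}$ with $\bigwedge^2 U=[\sigma]$, and a direct check shows $\Phi_{\vartheta}(\alpha\wedge\ov\alpha)$ lands in the span of $\{\sigma,\ov\sigma\}$ together with a positive multiple of $\iota(\omega_{f})$-type terms — in fact one reduces to the standard positivity of the polarization on an abelian variety. The main case is $\sigma\notin\zeta^{\bot}$, where $H^{1,0}_{[\sigma]}(\vartheta)=\{(v,f(v))\mid v\in V_{\CC}\}$ for the antisymmetric nondegenerate $f$ with $\iota(\omega_f)=\alpha_0$ (the $\bigwedge^2 V_{\CC}$-component of a rescaled $\sigma$), satisfying $\Pf(f)=\vartheta_1/\vartheta_2$. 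For $\alpha=(v,f(v))$ I would compute, using~\eqref{eccophi},
\begin{equation*}
\Phi_{\vartheta}\bigl((v,f(v))\wedge(\ov v,\ov f(v))\bigr)=\vartheta_1\, v\wedge\ov v+\vartheta_2\,\iota(f(v)\wedge\ov f(v))-2\vartheta_3\,\omega_f(v,\ov v)\,\zeta,
\end{equation*}
then pair with $h^{\vee}$ and insert the factor $i$. The crucial identity to extract is the analogue of $\vol^{\vee}(\omega_f\wedge f(a)\wedge\ov f(b))=\Pf(f)\,\omega_{\ov f}(a,b)$ already used in the proof of~\Ref{thm}{grafanti}, applied with $h^{\vee}$ in place of (a multiple of) $\iota(\omega_f)$; this rewrites the pairing as a fixed nonzero scalar times a Hermitian expression of the form $i\cdot(\text{bilinear in }v,\ov v)$ built from $f$ and $h^{\vee}$.

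The heart of the argument — and the step I expect to be the main obstacle — is verifying that this resulting Hermitian form on $V_{\CC}$ is actually definite (not merely nonzero for generic $v$), i.e. that it does not vanish on any single nonzero $v$. The clean way is to identify it with (a positive rescaling of) the first Hodge--Riemann bilinear relation for the polarization induced by $h$ on the weight-$1$ Hodge structure $(V\oplus V^{\vee},H^{p,q}_{[\sigma]}(\vartheta))$: the form $(x,y)\mapsto \la \Phi_{\vartheta}(x\wedge y),h\ra$ is a rational antisymmetric form on $V\oplus V^{\vee}$ (antisymmetry from the antisymmetry of $\Phi_{\vartheta}$ in~\eqref{eccophi}), it is a morphism of Hodge structures to $\ZZ(-1)$ up to the scalar $\la h,\cdot\ra$, and since $h^{\vee}\in H^{1,1}_{[\sigma]}$ one checks $\Phi_{\vartheta}(H^{1,0}\wedge H^{1,0})=H^{2,0}\perp h^{\vee}$ and $\Phi_{\vartheta}(H^{0,1}\wedge H^{0,1})\perp h^{\vee}$, so the form is of type $(1,1)$ in the relevant sense. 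Nondegeneracy of this form (which follows because $(h^{\vee},h^{\vee})\neq0$ and $\Phi_{\vartheta}$ is surjective with controlled kernel) then forces, by the two-dimensional Hodge--Riemann relations, that $i\la\Phi_{\vartheta}(\alpha\wedge\ov\alpha),h\ra$ is sign-definite on $H^{1,0}_{[\sigma]}(\vartheta)$ after possibly replacing $h$ by $-h$; in particular it is nonzero for every $\alpha\neq0$, which is exactly~\eqref{nonnullo}. The care needed is to track that the scalars $\vartheta_1,\vartheta_2,\vartheta_3$ and the relation $\vartheta_1\vartheta_2=2m\vartheta_3^2$ combine so that the constant in front is genuinely nonzero; this is where the hypothesis that all $\vartheta_i\neq0$ is used.
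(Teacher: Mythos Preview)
Your setup is right: passing to $h^{\vee}\in H^{1,1}_{[\sigma]}$ via~\eqref{identici}, noting $(h^{\vee},h^{\vee})>0$, and observing that $i\Phi_{\vartheta}(\alpha\wedge\ov\alpha)\in H^{1,1}_{[\sigma],\RR}$ is exactly how one should begin. But the step you flag as the main obstacle really is one, and your appeal to ``Hodge--Riemann'' does not close it. If you mean the weight-$1$ relations: a nondegenerate rational alternating form of Hodge type $(1,1)$ on a weight-$1$ structure need \emph{not} give a definite Hermitian form on $H^{1,0}$ --- the second Hodge--Riemann relation (positivity) is precisely what the proposition asserts, so invoking it is circular. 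If you mean the weight-$2$ relations on $\bigwedge^2 V_{\CC}\oplus\CC$: these say only that a real $H^{1,1}$-class orthogonal to the positive class $h^{\vee}$ has \emph{nonpositive} square. That is indeed the opening move of the paper's proof, but by itself it yields no contradiction.

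The paper argues by contradiction and supplies the two missing ingredients. First, an explicit computation: writing $\alpha=(v,g)$ and using $g(v)=0$ (which holds in both cases of \Ref{thm}{grafanti}) together with $\vartheta_1\vartheta_2=2m\vartheta_3^2$, one finds
\[
\bigl(i\Phi_{\vartheta}(\alpha\wedge\ov\alpha),\,i\Phi_{\vartheta}(\alpha\wedge\ov\alpha)\bigr)=2\vartheta_1\vartheta_2\bigl(\re g(\ov v)\bigr)^2\ge 0.
\]
Combined with the signature bound $\le 0$, this forces the square to vanish, hence $g(\ov v)=0$ and $\vartheta_1 v\wedge\ov v+\vartheta_2\iota(g\wedge\ov g)=0$. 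Second, ruling out these constraints is not a formality: in the generic case (Item~(1) of \Ref{thm}{grafanti}) it requires a page of coordinate manipulation with the matrix entries of $f$, exploiting $\Pf(f)=\vartheta_1/\vartheta_2$ and $(\sigma,\ov\sigma)>0$; in the decomposable case (Item~(2)) a separate sign argument is needed. Your sketch provides neither computation, and in particular your treatment of the decomposable case (``reduces to the standard positivity of the polarization on an abelian variety'') is again the statement to be proved rather than a proof of it.
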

 \begin{proof}
First of all, notice that $i\Phi_{\vartheta}(\alpha\wedge\ov{\alpha})\in(\bigwedge^2 V_{\RR}\oplus\RR)$.
Suppose that~\eqref{nonnullo} does not hold, and that $\alpha\in H_{[\sigma]}^{1,0}(\vartheta)$ provides a counterexample - we will arrive at a contradiction. Let $\ell\in (\bigwedge^2 V\oplus \QQ)$ be the class corresponding to $h$ via the isomorphism in~\eqref{identici}. The  restriction to  $(\bigwedge^2 V_{\RR}\oplus\RR)$ of the bilinear symmetric form $(,)$ has signature $(3,4)$. The real subspace  $W\subset (\bigwedge^2 V_{\RR}\oplus\RR)$ spanned by $\ell$ and $\{c\sigma+\ov{c\sigma} \mid c\in\CC\}$ is $3$ dimensional and the restriction of $(,)$ to $W$ is positive definite because $(\ell,\ell)>0$. Since $\Phi_{\vartheta}$ is a morphism of Hodge structures, and since ~\eqref{nonnullo} does not hold, $i\Phi_{\vartheta}(\alpha\wedge\ov{\alpha})$ is orthogonal to $W$. It follows that  $(i\Phi_{\vartheta}(\alpha\wedge\ov{\alpha}),i\Phi_{\vartheta}(\alpha\wedge\ov{\alpha}))\le 0$, with equality only if 
$i\Phi_{\vartheta}(\alpha\wedge\ov{\alpha})=0$. Let $\alpha=(v,g)$, where $v\in V_{\CC}$ and $g\in V^{\vee}_{\CC}$. By~\Ref{thm}{grafanti}
\begin{equation}\label{svanisce}
g(v)=0.
\end{equation}
We have
\begin{equation}
\Phi_{\vartheta}(\alpha\wedge\ov{\alpha})= 
\vartheta_1 v\wedge\ov{v}+\vartheta_2\iota(g\wedge\ov{g})+2\vartheta_3i\im g(\ov{v})\zeta.
\end{equation}
(In the above equation $\im g(\ov{v})$ is the imaginary part of  $g(\ov{v})$.)
Thus
\begin{multline}
(i\Phi_{\vartheta}(\alpha\wedge\ov{\alpha}),i\Phi_{\vartheta}(\alpha\wedge\ov{\alpha}))= \\
=-\vol((\vartheta_1 v\wedge\ov{v}+\vartheta_2\iota(g\wedge\ov{g})\wedge(\vartheta_1 v\wedge\ov{v}+\vartheta_2\iota(g\wedge\ov{g}))
-4m\vartheta_3^2(\im g(\ov{v}))^2= \\
=2\vartheta_1\vartheta_2 |g(\ov{v})|^2-4m\vartheta_3^2(\im g(\ov{v}))^2=2\vartheta_1\vartheta_2( |g(\ov{v})|^2-(\im g(\ov{v}))^2).
\end{multline}
(The second-to-last equality follows from~\eqref{svanisce}, the last equality follows from~\eqref{seconda}.)  Since 
$(i\Phi_{\vartheta}(\alpha\wedge\ov{\alpha}),i\Phi_{\vartheta}(\alpha\wedge\ov{\alpha}))\le 0$, with equality only if $\Phi_{\vartheta}(\alpha\wedge\ov{\alpha})=0$, it follows that 
\begin{eqnarray}
g(\ov{v}) & = & 0, \label{umbria}\\
  \vartheta_1 v\wedge\ov{v}+\vartheta_2\iota(g\wedge\ov{g}) & = & 0. \label{marche}
\end{eqnarray}
By~\eqref{coniugato}  one (at least) among $v\wedge \ov{v}$ and $g\wedge \ov{g}$ is non zero.  Since $\vartheta_1$ and $\vartheta_2$ are both non zero, it follows that  
\begin{equation}
 v\wedge \ov{v}\not=0,\qquad g\wedge \ov{g}\not=0.
\end{equation}
Either Item~(1) or Item~(2) of~\Ref{thm}{grafanti} holds. 
We deal separately with the two cases.  

Suppose that Item~(1) holds. 
 There exists a basis  $\{v_1,\ldots,v_4\}$  of $V_{\RR}$ such that $v=v_1+i v_2$ and $\vol(v_1\wedge\ldots\wedge v_4)=1$. Let $A=(a_{ij})$ be the matrix of $f\colon V_{\CC}\to V_{\CC}^{\vee}$ with respect to the bases  $\{v_1,\ldots,v_4\}$ and  $\{v^{\vee}_1,\ldots,v^{\vee}_4\}$. Thus $A^t=-A$. 
The linear function $g$ is equal to $f(v)$. Since $\la f(v),\ov{v}\ra=0$, we have $a_{12}=0$. Thus
\begin{equation}
\Pf(f)=a_{14}a_{23}-a_{13}a_{24}=\frac{\vartheta_1}{\vartheta_2}.
\end{equation}
(The second equality follows from~\eqref{prima}.) 
Next we notice that by~\eqref{seconda}, the inequality $(\vartheta_2 \iota(\omega_f)-2\vartheta_3\zeta,\vartheta_2 \iota(\omega_f)-2\vartheta_3\zeta)>0$ is equivalent to 
\begin{equation}\label{positivo}
(\iota(\omega_f),\ov{\iota(\omega_f)})>\frac{2\vartheta_1}{\vartheta_2}.
\end{equation}
We will write the above inequality in an equivalent form. Straightforward computations give
\begin{multline}\label{conti}
\left| 
\begin{array}{cc}
a_{13}-\ov{a}_{13}  &  a_{14}-\ov{a}_{14}     \\
 a_{23}-\ov{a}_{23}   & a_{24}-\ov{a}_{24}   
\end{array}
\right|= -\Pf(f)-\ov{\Pf(f)}+2\re(a_{14}\ov{a}_{23}-a_{13}\ov{a}_{24}) = \\
=-\frac{2\vartheta_1}{\vartheta_2}+2\re(a_{14}\ov{a}_{23}-a_{13}\ov{a}_{24}) =-\frac{2\vartheta_1}{\vartheta_2}+ (\iota(\omega_f),\ov{\iota(\omega_f)}).
\end{multline}
Let $D$ be the real number such that
\begin{equation*}
4D=
\left| 
\begin{array}{cc}
a_{13}-\ov{a}_{13}  &  a_{14}-\ov{a}_{14}     \\
 a_{23}-\ov{a}_{23}   & a_{24}-\ov{a}_{24}   
\end{array}
\right|
\end{equation*}
By~\eqref{positivo} and~\eqref{conti}, we have  
\begin{equation}\label{negativo}
D>0.
\end{equation}
Straighforward computations give
\begin{eqnarray*}
v\wedge \ov{v} & = & -2i v_1\wedge v_2, \\
  \iota(f(v)\wedge \ov{f(v)}) & = &  2i \left(\re(a_{14}\ov{a}_{23}-a_{13}\ov{a}_{24})+
\im( a_{13}\ov{a}_{14}+a_{23}\ov{a}_{24}) \right) v_1\wedge v_2.
\end{eqnarray*}
Using~\eqref{conti}, we get that~\eqref{marche} holds if and only if
\begin{equation}\label{prepotenti}
2D+\im( a_{13}\ov{a}_{14}+a_{23}\ov{a}_{24})=0.
\end{equation}
Now let
\begin{equation*}
a_{13}=x_1+iy_1,\ \ a_{14}=x_2+iy_2,\ \ a_{23}=x_3+iy_3,\ \ a_{24}=x_4+iy_4,\quad x_k,y_k\in\RR.
\end{equation*}
Writing~\eqref{prepotenti} and the equation $\im \Pf(f)=0$ in terms of $x_1,\ldots,x_4,y_1,\ldots,y_4$, we get that
\begin{eqnarray}
x_1y_2-x_2y_1  & = & -x_3y_4+x_4 y_3+2D,\\
x_1y_4-x_2 y_3  & = & x_3 y_2-x_4 y_1.
\end{eqnarray}
By Cramer's formula 
\begin{eqnarray}\label{albero}
Dx_1 & = & -x_3y_3 y_4+x_4 y^2_3-x_3 y_1 y_2+x_4 y_1^2+2D y_3, \label{albero} \\
Dx_2  & = & -x_3 y_4^2+x_4 y_3 y_4-x_3 y_2^2+x_4 y_1 y_2+2D y_4. \label{preferito}
\end{eqnarray}
Writing out the equation $\re\Pf(f)=\vartheta_1/\vartheta_2$ in terms of  $x_1,\ldots,x_4,y_1,\ldots,y_4$, multiplying it by $D$, replacing $Dx_1$ 
by the  expression in the right hand side of~\eqref{albero}, and   
$Dx_2$ by the  expression in the right hand side of~\eqref{preferito}, we get that
\begin{equation}
-(D-x_3y_4+x_4y_3)^2-(x_3y_2-x_4y_1)^2=D\frac{\vartheta_1}{\vartheta_2}.
\end{equation}
Since $D$ and $\vartheta_1/\vartheta_2$ are strictly positive by~\eqref{negativo} and~\eqref{seconda} respectively, the above equation is absurd.
We have reached a contradiction if  Item~(1)  of~\Ref{thm}{grafanti} holds. 

Now suppose that Item~(2) holds. By~\eqref{coniugato} both $v\wedge\ov{v}$ and $g\wedge\ov{g}$ are non zero. Complete $v$ to a basis $\{v,w\}$ of $U$. The inequality $(\sigma,\ov{\sigma})>0$ translates into
\begin{equation}\label{caldo}
\vol(v\wedge w\wedge\ov{v}\wedge\ov{w})>0.
\end{equation}
Since $g(v)=g(\ov{v})=0$, we have
\begin{equation}
\iota(g\wedge\ov{g})=\lambda v\wedge\ov{v},\quad \lambda\in \RR^{*}.
\end{equation}
 Moreover
\begin{equation*}
\vol(\lambda v\wedge\ov{v} \wedge w\wedge\ov{w})=\la\iota(g\wedge\ov{g},w\wedge\ov{w}\ra=g(w)\cdot \ov{g}(\ov{w})-\ov{g}(w)\cdot g(\ov{w})
=-|g(\ov{w})|^2<0.
\end{equation*}
(Recall that $g\in U^{\bot}$.) By~\eqref{caldo} it follows that $\lambda>0$. This contradicts~\eqref{marche} because $\vartheta_1$ and $\vartheta_2$ have the same sign by~\eqref{seconda} . 
\end{proof}

Let    $h\in (\bigwedge^2 V\oplus \ZZ)^{\vee}$. We  define the following skew-symmetric bilinear form on $V_{\CC}\oplus V_{\CC}^{\vee}$:
\begin{equation}\label{formbil}
\la\alpha,\beta\ra_{\vartheta,h}:=\la h,\Phi_{\vartheta}(\alpha\wedge\beta)\ra.
\end{equation}
Note that $\la,\ra$ in the right hand side of the above equation denotes the duality pairing.

\begin{dfn}\label{dfn:polab}
Let    $h\in (\bigwedge^2 V\oplus \ZZ)^{\vee}$. Let  $\sigma\in\cD_h$. Restricting the  skew-symmetric  bilinear  form in~\eqref{formbil} to $H_{[\sigma]}^{0,1}(\vartheta)\times H_{[\sigma]}^{1,0}(\vartheta)$, i.e.~the product of the  tangent space at the origin of 
$J_{[\sigma]}(\vartheta)$ and its complex conjugate, we get a translation invariant rational $(1,1)$-form on $J_{[\sigma]}(\vartheta)$. We let
\begin{equation}
\Theta_{[\sigma]}(\vartheta)\in H^{1,1}_{\QQ}(J_{[\sigma]}(\vartheta)
\end{equation}
be the corresponding cohomology class. 
\end{dfn}
 \begin{prp}\label{prp:ampio}
Let    $h\in (\bigwedge^2 V\oplus \ZZ)^{\vee}$ be a class of positive square. For one of the two connected components of $\cD_h$, call it  $\cD^{+}_h$, the following holds. 
Let  $[\sigma]\in\cD^{+}_h$; then the cohomology class $\Theta_{[\sigma]}(\vartheta)$ is ample on $J_{[\sigma]}(\vartheta)$.
\end{prp}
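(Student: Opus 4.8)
The plan is to deduce ampleness from the Riemann bilinear relations, using Proposition~\ref{prp:poveraitalia} to certify positive-definiteness of the associated Hermitian form on one of the two components of $\cD_h$. Recall that for a compact complex torus $T = W/(H^{1,0}+L)$ (where $W = H^{1,0}\oplus H^{0,1}$, $L$ is the lattice), a rational $(1,1)$-class $\Theta$ coming from an alternating form $E$ on $L_{\QQ}$ which satisfies $E(i x, i y) = E(x,y)$ is ample if and only if the Hermitian form $H(x,y) := E(ix,y) + iE(x,y)$ on $H^{0,1}(T)$ is positive definite (equivalently, $\alpha \mapsto i E(\alpha,\bar\alpha)$ is a positive definite real-valued form on $H^{1,0}$). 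So the whole statement reduces to showing that, for a suitable choice of component, $i\la \alpha,\bar\alpha\ra_{\vartheta,h} = i\la h, \Phi_\vartheta(\alpha\wedge\bar\alpha)\ra > 0$ for every nonzero $\alpha\in H^{1,0}_{[\sigma]}(\vartheta)$.

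First I would record that $\la\cdot,\cdot\ra_{\vartheta,h}$ is defined over $\QQ$ and alternating, and that it is compatible with the Hodge structure: since $\Phi_\vartheta$ is a morphism of Hodge structures of weight $1$ (by Theorem~\ref{thm:grafanti}) landing in the weight-$2$ Hodge structure determined by $[\sigma]$, and since $[\sigma]\in\cD_h$ means $\la h,\sigma\ra = 0$, the pairing kills $H^{1,0}\otimes H^{1,0}$ and $H^{0,1}\otimes H^{0,1}$ (their images under $\Phi_\vartheta$ land in $H^{2,0}_{[\sigma]} = \CC\sigma$, resp. $H^{0,2}_{[\sigma]} = \CC\bar\sigma$, both annihilated by $h$). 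Hence $\la\cdot,\cdot\ra_{\vartheta,h}$ satisfies the first Riemann relation, i.e. $E(ix,iy)=E(x,y)$, so it defines a genuine rational $(1,1)$-class, namely $\Theta_{[\sigma]}(\vartheta)$ as in Definition~\ref{dfn:polab}. The remaining content is the second Riemann relation (positivity).

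Next I would invoke Proposition~\ref{prp:poveraitalia}: it gives exactly that $\la i\Phi_\vartheta(\alpha\wedge\bar\alpha),h\ra\neq 0$ for all nonzero $\alpha\in H^{1,0}_{[\sigma]}(\vartheta)$, for every $[\sigma]\in\cD_h$. Thus the real-valued continuous function $\alpha\mapsto \la h, i\Phi_\vartheta(\alpha\wedge\bar\alpha)\ra$ is nowhere zero on the unit sphere of the $4$-dimensional complex space $H^{1,0}_{[\sigma]}(\vartheta)$, hence of constant sign, and that sign is a locally constant function of $[\sigma]$ on $\cD_h$. Since $\cD_h$ has exactly two connected components interchanged by complex conjugation (as noted in the Remark after Definition~\ref{dfn:domacca}), and conjugation sends $\Phi_\vartheta(\alpha\wedge\bar\alpha)$ to $\Phi_\vartheta(\bar\alpha\wedge\alpha) = -\Phi_\vartheta(\alpha\wedge\bar\alpha)$ (using that $\Phi_\vartheta$ is real and alternating), the two components carry opposite signs. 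Define $\cD_h^+$ to be the component on which the sign is positive, i.e. $\la h, i\Phi_\vartheta(\alpha\wedge\bar\alpha)\ra > 0$ for all $0\neq\alpha\in H^{1,0}_{[\sigma]}(\vartheta)$ and all $[\sigma]\in\cD_h^+$. For such $[\sigma]$, the Hermitian form $H$ on $H^{0,1}_{[\sigma]}(\vartheta) = \overline{H^{1,0}_{[\sigma]}(\vartheta)}$ associated to the alternating form $\la\cdot,\cdot\ra_{\vartheta,h}$ is positive definite, so by the Riemann relations $\Theta_{[\sigma]}(\vartheta)$ is an ample class on the abelian variety $J_{[\sigma]}(\vartheta)$, which also re-proves that the torus is projective.

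The only subtle point — and the one I would write out carefully — is the bookkeeping of which component gets which sign, and the precise conversion between "the alternating form $E$ restricted to $H^{0,1}\times H^{1,0}$" in Definition~\ref{dfn:polab} and "$\alpha\mapsto iE(\alpha,\bar\alpha)$ positive on $H^{1,0}$" in the standard positivity criterion; this is a sign/orientation chase that must be aligned with the conventions for $\Theta_{[\sigma]}(\vartheta)$ and with the statement of Proposition~\ref{prp:poveraitalia}. There is no real analytic difficulty here: Proposition~\ref{prp:poveraitalia} is doing all the hard work (the nonvanishing), and the present proof is essentially the observation that nonvanishing plus connectedness-in-two-pieces plus the Riemann relations yields ampleness on the correct component.
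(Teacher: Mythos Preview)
Your proposal is correct and follows essentially the same approach as the paper: nonvanishing from Proposition~\ref{prp:poveraitalia}, constancy of sign via connectedness, and opposite signs on the two components via conjugation. The paper packages the connectedness argument slightly differently (it works directly with the connected total space $\cV^j = \{([\sigma],\alpha):[\sigma]\in\cD^j_h,\ 0\neq\alpha\in H^{0,1}_{[\sigma]}\}$, the complement of the zero section in a vector bundle over $\cD^j_h$) and is terser about the Riemann relations, but the substance is identical.
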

 \begin{proof}
Let $\cD^{1}_h,\cD^{2}_h$ be the two connected components of $\cD_h$. The set $\cV^{j}$ of couples $([\sigma],\alpha)$ where  $[\sigma]\in\cD^{j}_h$ and $0\not=\alpha\in H_{[\sigma]}^{0,1}(\vartheta)$ is the complement of the zero section in a  complex vector bundle over the connected space $\cD^{j}_h$. Thus  $\cV^{j}$  is connected. 
By~\Ref{prp}{poveraitalia}, the real number
\begin{equation}\label{segno}
\la i\Phi_{\vartheta}(\alpha\wedge\ov{\alpha}),h\ra 
\end{equation}
is either strictly positive for all $([\sigma],\alpha)\in\cV^{j}_h$, or always strictly negative.  Conjugation $([\sigma],\alpha)\mapsto ([\ov{\sigma}],\ov{\alpha})$ maps bijectively $\cV^1$ to $\cV^2$. Since conjugation changes sign to the number in~\eqref{segno}, the proposition follows.
\end{proof}
 \begin{expl}\label{expl:posteta}
Let us go back to~\Ref{expl}{jacint}, and assume that  $X$ is projective. Let $L$ be an ample line bundle on $X$.    Referring to~\Ref{expl}{bbfdual},  $c_1(L)$ gets identified with an element of $h\in  (\bigwedge^2 V_{\ZZ}\oplus\ZZ \zeta)^{\vee}$ (see~\eqref{giugno}) of positive square.
  By~\Ref{rmk}{oasibirra}, the bilinear form~\eqref{formbil} is  identified, via the isomorphism in~\eqref{isogenia}, with the  bilinear  form 
\begin{equation}\label{polnat}
\begin{matrix}
H^3(X) \times H^3(X) & \lra & \CC \\
(\alpha,\beta) & \mapsto  &  \int_{[X]}\alpha\smile\beta\smile \ov{q}_X^{n-2}\smile c_1(L).
\end{matrix}
\end{equation}
It follows that if $n=2$,  the  isomorphism
in~\eqref{isogenia} 
matches $\Theta_{[\sigma]}(\vartheta)$ and the  polarization $\Theta_L$ of $J^3(X)$.  Later on we will show that an analogous statement holds also for $n>2$.
\end{expl}
\subsection{A rank $7$ sub local system of the local system with fiber $S^{+}(X)$}
\setcounter{equation}{0}
Let $\bf q$ be the integral unimodular bilinear symmetric form on $V_{\CC}\oplus V_{\CC}^{\vee}$ defined by
\begin{equation}\label{quadstand}
\begin{matrix}
V_{\CC}\oplus V_{\CC}^{\vee} & \overset{\bf q}{\lra} & \CC \\
(v,\ell) & \mapsto & 2\ell(v)
\end{matrix}
\end{equation}
(See~\eqref{eccobi}.) Let ${\bf Q}:=V(q)\subset \PP(V_{\CC}\oplus V_{\CC}^{\vee})$. One of the two spinor representations of $O(\bf q)$ may be identified with $S^{+}:=\bigwedge^{ev}V_{\CC}=\CC\oplus\bigwedge^{2}V_{\CC}\oplus \bigwedge^{4}V_{\CC}$. We recall  the identification of a specific quadric hypersurface in $\PP(S^{+})$ with one of the two irreducible components of the variety parametrizing $3$-dimensional linear subspaces of ${\bf Q}$, see \S 20.3 in~\cite{fulhar}. Denote elements of 
$\bigwedge^{ev}V_{\CC}$ as follows:
\begin{equation}\label{axib}
\alpha+\eta+\beta, \qquad \alpha\in \CC,\ \eta \in \bigwedge^2 V_{\CC},\ \beta \in \bigwedge^4 V_{\CC}.
\end{equation}
Let ${\bf q}^{+}$ be the integral unimodular bilinear symmetric form on $\bigwedge^{ev}V_{\CC}$ defined by
\begin{equation}\label{quadstand}
\begin{matrix}
\bigwedge^{ev}V_{\CC} & \overset{{\bf q}^{+}}{\lra} & \CC \\
\alpha+\eta+\beta & \mapsto & \vol(\eta\wedge\eta-2\alpha\beta)
\end{matrix}
\end{equation}
Let ${\bf Q}^{+}\subset\PP(\bigwedge^{ev}V_{\CC})$ be the set of zeroes of ${\bf q}^{+}$. 

Given $\ell\in V_{\CC}^{\vee}$ and $\eta\in\bigwedge^{\bullet}V_{\CC}$, we let $\ell(\eta)$ be the contraction of $\ell$ and $\eta$. Given 
$[\alpha+\eta+\beta]\in {\bf Q}^{+}$, we let
\begin{equation}
Z_{[\alpha+\eta+\beta]}:=\{[(v,\ell)]\in\PP(V_{\CC}\oplus V_{\CC}^{\vee}) \mid \alpha v+\ell(\eta)+\eta\wedge v+\ell(\beta)=0\}.
\end{equation}
\begin{lmm}\label{lmm:livella}
Let $[\alpha+\eta+\beta]\in {\bf Q}^{+}$, and suppose that $\eta\wedge\eta\not=0$. Then $Z_{[\alpha+\eta+\beta]}$ is the graph of a non degenerate skew-symmetric map $f\colon V_{\CC}\to V^{\vee}_{\CC}$ such that 
\begin{equation}\label{macron}
\vol(\beta) \iota(\omega_f)=-\eta.
\end{equation}
\end{lmm}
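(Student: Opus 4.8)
The plan is to analyze the defining equation of $Z_{[\alpha+\eta+\beta]}$ componentwise: a point $[(v,\ell)]$ lies in $Z_{[\alpha+\eta+\beta]}$ iff $\alpha v + \ell(\eta) + \eta\wedge v + \ell(\beta) = 0$ in $\bigwedge^{\bullet}V_{\CC}$, and since $\alpha v + \ell(\eta) \in V_{\CC} = \bigwedge^1$, while $\eta\wedge v + \ell(\beta) \in \bigwedge^3$, the equation splits into
\begin{equation*}
\alpha v + \ell(\eta) = 0, \qquad \eta\wedge v + \ell(\beta) = 0.
\end{equation*}
First I would observe that $\eta\wedge\eta \neq 0$ forces $\vol(\beta)\neq 0$: from ${\bf q}^{+}([\alpha+\eta+\beta])=0$ we get $\vol(\eta\wedge\eta) = 2\alpha\,\vol(\beta)$, so if $\vol(\beta)=0$ then $\vol(\eta\wedge\eta)=0$, i.e.\ $\eta\wedge\eta=0$ (as $\vol$ is an isomorphism on $\bigwedge^4$), contradiction. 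So $\beta = \vol(\beta)\cdot\omega$ for the volume-dual generator $\omega$ of $\bigwedge^4 V_{\CC}$, and in particular $\ell\mapsto\ell(\beta)$ is an isomorphism $V_{\CC}^{\vee}\to\bigwedge^3 V_{\CC}$.

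Next I would solve for $\ell$. Contraction $\bigwedge^3 V_{\CC}\to V_{\CC}^{\vee}$ by $\beta$ (up to the scalar $\vol(\beta)$) is inverse to $\ell\mapsto\ell(\beta)$, so the second equation $\ell(\beta) = -\eta\wedge v$ determines $\ell$ uniquely as a linear function of $v$: explicitly $\ell = f(v)$ where $f\colon V_{\CC}\to V_{\CC}^{\vee}$ is the linear map sending $v$ to minus the contraction of $\eta\wedge v$ against $\beta$, suitably normalized by $\vol(\beta)$. This shows $Z_{[\alpha+\eta+\beta]} = \{[(v,f(v))] \mid v\in V_{\CC}\}$ is the graph of $f$, provided one then checks that the first equation $\alpha v + f(v)(\eta) = 0$ is automatically satisfied — this should follow from the quadric relation ${\bf q}^{+}=0$ together with standard $\bigwedge^{\bullet}V_{\CC}$ contraction identities (this is exactly the kind of bookkeeping that the cited \S 20.3 of~\cite{fulhar} packages). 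I would then identify the antisymmetric form $\omega_f\in\bigwedge^2 V_{\CC}^{\vee}$ attached to $f$: pairing $f(v)$ with $w$ and antisymmetrizing, the contraction identity $\vol^{\vee}$-dual to $\eta\wedge v\wedge w = (\text{something})\cdot\eta$ gives $\langle f(v),w\rangle - \langle f(w),v\rangle$ proportional to the coefficient of $\eta$, yielding $\vol(\beta)\,\omega_f = -$ (the form on $V_{\CC}$ represented by $\eta$ under $\iota^{-1}$), i.e.\ $\vol(\beta)\,\iota(\omega_f) = -\eta$. Applying $\iota$ (the isomorphism of~\eqref{mappaiota}) turns the statement about $\omega_f\in\bigwedge^2 V_{\CC}^{\vee}$ into the claimed identity $\vol(\beta)\iota(\omega_f) = -\eta$ in $\bigwedge^2 V_{\CC}$.

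Finally I would check nondegeneracy of $f$: since $\vol(\beta)\iota(\omega_f) = -\eta$ and $\iota$ is an isomorphism, $\omega_f$ corresponds to a nonzero multiple of $\eta$, and $\Pf(f)$ is computed by $\vol(\iota(\omega_f)\wedge\iota(\omega_f)) = \vol^{\vee}(\omega_f\wedge\omega_f) = 2\Pf(f)$ (as used in the proof of~\Ref{prp}{relazione}); from $\vol(\beta)^2\vol(\iota(\omega_f)\wedge\iota(\omega_f)) = \vol(\eta\wedge\eta) \neq 0$ we conclude $\Pf(f)\neq 0$, so $f$ is non degenerate and skew-symmetric. The main obstacle I anticipate is the sign/scalar bookkeeping: getting the precise normalization constants in the contraction identities right (so that the scalar is exactly $\vol(\beta)$ and not, say, $\pm\vol(\beta)$ or $2\vol(\beta)$), and verifying that the $\bigwedge^1$-component of the defining equation is redundant given the $\bigwedge^3$-component plus ${\bf q}^{+}([\alpha+\eta+\beta])=0$. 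This is where I would lean on the explicit formulas for the spinor/half-spin correspondence in~\cite{fulhar} rather than rederive everything by hand.
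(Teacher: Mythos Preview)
Your outline is correct and essentially dual to the paper's argument. Both of you split the defining equation into its $\bigwedge^1$ and $\bigwedge^3$ components, but you take the $\bigwedge^3$ equation $\eta\wedge v+\ell(\beta)=0$ as primary (solving for $\ell$ via the isomorphism $\ell\mapsto\ell(\beta)$, which uses $\vol(\beta)\neq 0$) and then argue the $\bigwedge^1$ equation is redundant; the paper does the reverse, taking $\alpha v+\ell(\eta)=0$ as primary (which uses $\alpha\neq 0$) and deriving the $\bigwedge^3$ equation by multiplying by~$\eta$ and using $\eta\wedge\eta=2\alpha\beta$. Your redundancy step is indeed valid: from $\ell(\beta)=-\eta\wedge v$ one gets $2\ell(\eta)\wedge\eta=\ell(\eta\wedge\eta)=2\alpha\ell(\beta)=-2\alpha\,\eta\wedge v$, hence $(\ell(\eta)+\alpha v)\wedge\eta=0$, and since $\eta\wedge\eta\neq 0$ the map $u\mapsto u\wedge\eta$ is injective on $V_{\CC}$, giving $\ell(\eta)+\alpha v=0$. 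So there is no need to defer this to Fulton--Harris.

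Two points where your sketch is thin compared to the paper. First, you never actually verify that $f$ is skew-symmetric (you only compute its antisymmetrization $\omega_f$); the paper gets this in one line from the $\bigwedge^1$ equation via $0=\ell(\alpha v+\ell(\eta))=\alpha\ell(v)$. From your side it also follows, since $f(v)(w)\cdot\beta=w\wedge f(v)(\beta)=-w\wedge\eta\wedge v=v\wedge\eta\wedge w=-f(w)(v)\cdot\beta$. Second, for the formula~\eqref{macron} the paper simply picks a basis with $\eta=v_1\wedge v_2+t\,v_3\wedge v_4$ and computes $\omega_f$ directly, which is quicker and cleaner than tracking signs through general contraction identities; your worry about the exact scalar is well-founded, and the basis computation is the efficient way to resolve it.
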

\begin{proof}
First we note that 
\begin{equation}\label{eccozeta}
Z_{[\alpha+\eta+\beta]}:=\{[(v,\ell)]\in\PP(V_{\CC}\oplus V_{\CC}^{\vee}) \mid \alpha v+\ell(\eta)=0\}.
\end{equation}
In fact,  assume that $\alpha v+\ell(\eta)=0$. Multiplying by $\eta$ we get that $\alpha v\wedge\eta+\ell(\eta)\wedge\eta=0$. On the other hand $\eta\wedge\eta=2\alpha\beta$, hence 
\begin{equation*}
\ell(\eta)\wedge\eta=\frac{1}{2}\ell(\eta\wedge\eta)=\alpha\ell(\beta).
\end{equation*}
 Thus $\alpha v\wedge\eta+\alpha\ell(\beta)=0$, and since $\alpha\not=0$, it follows that $v\wedge\eta+\ell(\beta)=0$. This proves~\eqref{eccozeta}.
From~\eqref{eccozeta} we get that $Z_{[\alpha+\eta+\beta]}$ is the graph of a nondegenerate map  $f\colon V_{\CC}\to V^{\vee}_{\CC}$. Since 
$0=\ell(\alpha v+\ell(\eta))=\alpha\ell(v)$, the map $f$ is skew-symmetric. 

Lastly, we prove~\eqref{macron}. We may choose a basis 
$\{v_1,\ldots,v_4\}$ of $V_{\CC}$ of volume $1$ such that $\eta=v_1\wedge v_2+tv_3\wedge v_4$, for some $t\in\CC^{*}$. A computation gives that
\begin{equation*}
\omega_f=-\alpha v_1^{\vee}\wedge v_2^{\vee}-\frac{\alpha}{t}v_3^{\vee}\wedge v_4^{\vee}.
\end{equation*}
Equation~\eqref{macron} follows from the above equality.
\end{proof}
The (well-known) result below follows easily from~\Ref{lmm}{livella}.
\begin{prp}\label{prp:qupiu}
Let $[\alpha+\eta+\beta]\in {\bf Q}^{+}$. Then $Z_{[\alpha+\eta+\beta]}$ 
 is a $3$ dimensional linear subspace of ${\bf Q}$, and it is the graph of a non degenerate skew-symmetric map $f\colon V_{\CC}\to V^{\vee}_{\CC}$ if and only if $\eta\wedge\eta\not=0$.
 The map assigning  $Z_{[\alpha+\eta+\beta]}$ to $[\alpha+\eta+\beta]\in {\bf Q}^{+}$ is an isomorphism between ${\bf Q}^{+}$ and one of the two irreducible components of the variety parametrizing maximal dimensional linear subspaces of ${\bf Q}$. 
\end{prp}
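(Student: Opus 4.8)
The plan is to recover the classical spinor-variety statement from the explicit pencil description in Lemma~\ref{lmm:livella}, verifying each clause of the proposition by hand. First I would establish that $Z_{[\alpha+\eta+\beta]}$ is always a $3$-dimensional linear subspace contained in ${\bf Q}$. Linearity is clear: the defining equation $\alpha v+\ell(\eta)+\eta\wedge v+\ell(\beta)=0$ is linear in $(v,\ell)$, so $Z_{[\alpha+\eta+\beta]}$ is the projectivization of a linear subspace $\widetilde Z\subset V_{\CC}\oplus V_{\CC}^{\vee}$. For the dimension count one checks that the four equations (the components of $\alpha v+\ell(\eta)\in V_{\CC}$ and of $\eta\wedge v+\ell(\beta)\in\bigwedge^3 V_{\CC}$) cut out a $4$-dimensional subspace; by the spinor formalism of \S20.3 of~\cite{fulhar} these are not independent — the condition ${\bf q}^{+}([\alpha+\eta+\beta])=0$, i.e.\ $\eta\wedge\eta=2\alpha\beta$, is exactly what forces the rank of this system to drop to $4$, so $\dim\widetilde Z=4$ and $\dim Z_{[\alpha+\eta+\beta]}=3$. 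To see that $Z_{[\alpha+\eta+\beta]}\subset{\bf Q}$, take $(v,\ell)\in\widetilde Z$ and pair the defining relation with $\ell$: in the case $\alpha\ne 0$ one gets $\alpha\ell(v)=\ell(\alpha v+\ell(\eta))=0$ (the term $\ell(\ell(\eta))$ vanishes since $\ell\wedge\ell=0$), hence ${\bf q}(v,\ell)=2\ell(v)=0$; the case $\alpha=0$ follows by continuity, or by the dual argument pairing with $v$ using $\eta\wedge\eta=2\alpha\beta=0$ together with the equation $\eta\wedge v+\ell(\beta)=0$.

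Next I would prove the ``graph of a skew map'' dichotomy. When $\eta\wedge\eta\ne 0$, Lemma~\ref{lmm:livella} already gives that $Z_{[\alpha+\eta+\beta]}$ is the graph of a non-degenerate skew-symmetric $f\colon V_{\CC}\to V_{\CC}^{\vee}$ with $\vol(\beta)\iota(\omega_f)=-\eta$. Conversely, if $Z_{[\alpha+\eta+\beta]}$ is the graph of a non-degenerate skew map, then the projection of $\widetilde Z$ to $V_{\CC}$ is an isomorphism, so $\widetilde Z\cap(0\oplus V_{\CC}^{\vee})=0$; inspecting the defining equations this forces $\eta\wedge\eta\ne 0$ (if $\eta\wedge\eta=2\alpha\beta=0$ one can exhibit a nonzero $\ell$ with $\ell(\eta)=0$ and $\ell(\beta)=0$, giving $(0,\ell)\in\widetilde Z$). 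This settles the second sentence of the proposition.

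For the last clause — that $[\alpha+\eta+\beta]\mapsto Z_{[\alpha+\eta+\beta]}$ is an isomorphism of ${\bf Q}^{+}$ onto one irreducible component of the Fano variety of $3$-planes in ${\bf Q}$ — I would invoke the general theory of spinor embeddings for the $6$-dimensional quadric ${\bf Q}=V({\bf q})\subset\PP^7$ (see \S20.3 of~\cite{fulhar}): the variety of maximal linear subspaces of a smooth $6$-dimensional quadric has two isomorphic components, each isomorphic to a $6$-dimensional quadric in $\PP(S^{\pm})$, and the half-spin construction realizes that isomorphism. Concretely, one checks that the assignment is algebraic (the equations defining $Z_{[\alpha+\eta+\beta]}$ depend linearly on the coordinates $\alpha,\eta,\beta$), injective (recover $\eta$ up to scale from $Z$ via $\iota(\omega_f)$ when $\eta\wedge\eta\ne 0$, and then $\alpha,\beta$ from the remaining equations; handle the locus $\eta\wedge\eta=0$ by a limit/closedness argument since ${\bf Q}^{+}$ is irreducible and this locus has positive codimension), and dominant onto one component by the dimension count $\dim{\bf Q}^{+}=6=\dim(\text{component})$. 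Since both source and target are smooth irreducible projective $6$-folds and the map is a bijective morphism in characteristic $0$, it is an isomorphism.

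The main obstacle is the bookkeeping in the dimension count and the injectivity check on the degenerate locus $\{\eta\wedge\eta=0\}\subset{\bf Q}^{+}$, where $Z_{[\alpha+\eta+\beta]}$ is no longer a graph; here one must argue either by the irreducibility of ${\bf Q}^{+}$ and the fact that the correspondence $\{([\alpha+\eta+\beta],[v,\ell]): [v,\ell]\in Z_{[\alpha+\eta+\beta]}\}$ is a closed subvariety with equidimensional fibers, or by directly citing the spinor incidence variety of~\cite{fulhar}. Since the statement is flagged as well-known, I expect the cleanest route is to do the $\eta\wedge\eta\ne 0$ computation explicitly (which Lemma~\ref{lmm:livella} essentially already does) and then appeal to~\cite{fulhar} for the global identification, rather than re-deriving the whole half-spin correspondence from scratch.
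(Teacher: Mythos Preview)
Your approach matches the paper's, which simply notes that the result is well-known and follows easily from Lemma~\ref{lmm:livella}; you have supplied the details the paper omits, relying on that lemma for the open locus $\eta\wedge\eta\neq 0$ and on~\cite{fulhar} for the global spinor identification, exactly as the paper intends.

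There is one small gap in your converse argument for the dichotomy. You claim that whenever $\eta\wedge\eta=0$ one can exhibit a nonzero $\ell$ with $\ell(\eta)=0$ and $\ell(\beta)=0$, forcing $(0,\ell)\in\widetilde Z$ and hence preventing $\widetilde Z$ from being a graph. This fails in the subcase $\alpha=0$, $\beta\neq 0$: since $\beta$ generates $\bigwedge^4 V_{\CC}$, the contraction $\ell\mapsto\ell(\beta)$ is an isomorphism $V_{\CC}^{\vee}\to\bigwedge^3 V_{\CC}$, so $\ell(\beta)=0$ forces $\ell=0$. In fact in this subcase $\widetilde Z$ \emph{is} the graph of a skew-symmetric map $f\colon V_{\CC}\to V_{\CC}^{\vee}$, determined by $\ell(\beta)=-\eta\wedge v$; but a direct check (writing $\eta=e_1\wedge e_2$) shows that $f$ annihilates the $2$-plane $U$ with $\bigwedge^2 U=\langle\eta\rangle$, so $f$ is degenerate. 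Thus the conclusion you want---that $Z$ is not the graph of a \emph{non-degenerate} skew map---still holds, just via a different mechanism in this one subcase. The remaining subcases ($\alpha\neq 0,\ \beta=0$ and $\alpha=\beta=0$) are covered by your argument as written, and since $\eta\wedge\eta=2\alpha\beta$ on ${\bf Q}^{+}$ there is no subcase with both $\alpha$ and $\beta$ nonzero.
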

\begin{rmk}\label{rmk:livella}
Let $U\subset V_{\CC}$ be a $2$ dimensional subspace. Let $\eta$ be a generator of $\bigwedge^2 U\subset \bigwedge^2 V_{\CC}$. Then $[0,\eta,0]\in{\bf Q}^{+}$, and $Z_{[\alpha+\eta+\beta]}=\PP(U\oplus U^{\bot})$.
\end{rmk}
The following definition makes sense by the~\Ref{kob}{chiave}, \Ref{prp}{grafanti}, \Ref{lmm}{livella}, and~\Ref{rmk}{livella}.
\begin{dfn}\label{dfn:pimpinelli}
Let $X$ be a HK of Kummer type, of dimension $2n$. Let $T^{+}(X)\subset S^{+}(X)$ be the minimal vector subspace such that  $\PP(T^{+}(X))$ contains all $x\in{\bf Q}^{+}(X)$ parametrizing a $3$ dimensional linear space $\PP(\Gamma)\subset{\bf Q}(X)$ for which there exist a HK  $Y$ of Kummer type of dimension $2n$, and a parallel transport operator $g\colon H^3(Y)\overset{\sim}{\lra} H^3(X)$ such that $g(H^{2,1}(X))=\Gamma$. 
\end{dfn}
\begin{rmk}\label{rmk:pimpinelli}
Let $\pi\colon\cX\to B$ be a family of HK's of Kummer type. Let $b_0,b_1\in B$ and let $X_0:=\pi^{-1}(b_0)$, $X_1:=\pi^{-1}(b_1)$. Let $\lambda$ be an arc starting in $b_0$ and ending in $b_1$. By~\Ref{dfn}{pimpinelli} the induced isomorphism $S^{+}(\lambda)\colon S^{+}(X_0)\to S^{+}(X_1)$  maps $T^{+}(X_0)$ to $T^{+}(X_1)$.
\end{rmk}
\begin{prp}\label{prp:dicarcaci}
Keeping notation as above, the following hold:
\begin{enumerate}
\item 
Suppose that $X=K_n(A)$. As usual,  identify $H^3(A;\ZZ)\oplus H^1(A;\ZZ)$ with $H^3(K_n(A);\ZZ)$   via~\Ref{thm}{accatre}, and identify $H^1(A;\ZZ)$ with $H^3(A;\ZZ)^{\vee}$ via~\eqref{31duality}. Then
\begin{equation}\label{tartaglia}
T^{+}(K_n(A)):=\{(\alpha+\eta+\beta)\in \bigwedge^{ev}H^3(A) \mid (n+1)\alpha-\vol(\beta)=0\}.
\end{equation}
\item 
If $\pi\colon\cX\to B$ is a family of HK's of Kummer type, then there is a sub local system of rank $7$ of $S^{+}(\pi)$ with fiber $T^{+}(\pi^{-1}(b))$ over $b\in B$. 
\end{enumerate}
\end{prp}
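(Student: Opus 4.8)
The plan is to prove the two items of Proposition~\ref{prp:dicarcaci} in the order stated, deducing (2) from (1) plus monodromy invariance (\Ref{rmk}{pimpinelli}).

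For Item~(1), fix $X=K_n(A)$ and use the identifications of \Ref{expl}{bbfdual}, so that $V=H^3(A;\ZZ)\cong H^1(A;\ZZ)^{\vee}$, $S^{+}(X)=\bigwedge^{ev}V_{\CC}$, and the abstract map $\Phi_{\vartheta}$ with $\vartheta=\vartheta(\ov{q}^{n-2})$ is identified with $\phi(\ov{q}^{n-2})$ via \Ref{rmk}{oasibirra}. Write $\vartheta=-c(1,(n+1),(-1)^{\epsilon_n}(n+1))$ with $c=2^{n-2}(n+1)^{n-2}(2n+3)!!/7!!$, by \Ref{crl}{thirdman}; the key numerical input is the ratio $\vartheta_1/\vartheta_2=1/(n+1)$, which equals $\Pf(f)$ for the graph $\Gamma$ of any admissible $f$ by \eqref{uguaglio} in \Ref{expl}{francese}. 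Now spell out the right-hand side of \eqref{tartaglia}: a class $[\alpha+\eta+\beta]\in{\bf Q}^{+}$ with $\eta\wedge\eta\neq0$ lies in $\PP(T^{+}(K_n(A)))$ iff $(n+1)\alpha=\vol(\beta)$. By \Ref{lmm}{livella}, such a point parametrizes the graph of the non-degenerate skew map $f$ with $\vol(\beta)\iota(\omega_f)=-\eta$; since $[\alpha+\eta+\beta]\in{\bf Q}^{+}$ means $\vol(\eta\wedge\eta)=2\alpha\vol(\beta)$, we can compute $\Pf(f)$ directly. Choosing a volume-$1$ basis $\{v_1,\dots,v_4\}$ with $\eta=v_1\wedge v_2+t\,v_3\wedge v_4$, the computation in the proof of \Ref{lmm}{livella} gives $\omega_f=-\tfrac{\alpha}{\vol(\beta)}(v_1^{\vee}\wedge v_2^{\vee}+\tfrac1t v_3^{\vee}\wedge v_4^{\vee})$, hence $\Pf(f)=\tfrac{\alpha^2}{t\vol(\beta)^2}$. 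On the other hand $\vol(\eta\wedge\eta)=2t$, and the ${\bf Q}^{+}$-condition reads $2t=2\alpha\vol(\beta)$, i.e. $t=\alpha\vol(\beta)$; substituting yields $\Pf(f)=\tfrac{\alpha^2}{\alpha\vol(\beta)\cdot\vol(\beta)^2}$ — I'll need to redo this carefully with the correct normalization, but the upshot is that $\Pf(f)=\alpha/\vol(\beta)$. Then $\Pf(f)=1/(n+1)$ becomes exactly $(n+1)\alpha=\vol(\beta)$. Combined with \Ref{dfn}{pimpinelli}, \Ref{prp}{grafanti} and \Ref{expl}{francese}, which tell us that the points of ${\bf Q}^{+}(X)$ parametrizing some $H^{2,1}(Y)$ are precisely the graphs of non-degenerate skew $f$ with $\Pf(f)=1/(n+1)$ (the generic such $f$ is realized, and $T^{+}$ is their span), we conclude that the span $T^{+}(K_n(A))$ of these points, together with the degenerate limits $[0,\eta,0]$ from \Ref{rmk}{livella}, is contained in the codimension-$1$ subspace $\{(n+1)\alpha=\vol(\beta)\}$; a dimension/spanning count (the graphs with $\Pf(f)=1/(n+1)$ form a hypersurface in the $7$-dimensional ${\bf Q}^{+}$, and their span is all of the hyperplane) gives the reverse inclusion.

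For Item~(2), the subspace $T^{+}(X)\subset S^{+}(X)$ is defined by \Ref{dfn}{pimpinelli} purely in terms of parallel transport operators, hence by \Ref{rmk}{pimpinelli} it is preserved under the isomorphisms $S^{+}(\lambda)$ induced by arcs. Therefore, for a family $\pi\colon\cX\to B$, the assignment $b\mapsto T^{+}(\pi^{-1}(b))$ is a sub local system of $S^{+}(\pi)$; by Item~(1) it has rank $7$. (Local triviality: the flat connection on $R^3\pi_{*}\ZZ$ induces the flat connection on $S^{+}(\pi)$, and monodromy invariance of $T^{+}$ makes the union of the $T^{+}$-fibers a flat subbundle.)

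The main obstacle will be the dimension/spanning argument at the end of Item~(1): showing that the points $[\alpha+\eta+\beta]\in{\bf Q}^{+}$ with $\eta\wedge\eta\neq0$ and $(n+1)\alpha=\vol(\beta)$ actually \emph{span} the $7$-dimensional hyperplane $\{(n+1)\alpha=\vol(\beta)\}$ (so that $T^{+}$, defined as their span plus the decomposable boundary classes, is not smaller), and not just that each such point lies in it. Concretely one must exhibit $7$ linearly independent such $[\alpha+\eta+\beta]$, or argue that the quadric ${\bf Q}^{+}\cap\{(n+1)\alpha=\vol(\beta)\}$ is non-degenerate in that hyperplane so its points span it; one also has to check that a \emph{generic} skew $f$ with $\Pf(f)=1/(n+1)$ is realized as some $H^{2,1}(Y)$, which is exactly the content of the converse direction in \Ref{expl}{francese} (via \Ref{thm}{grafanti}), so this is available. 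Everything else is bookkeeping with the explicit formulas already established.
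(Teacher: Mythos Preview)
Your proposal is correct and follows essentially the same route as the paper's proof. The paper computes $\Pf(f)=\alpha/\vol(\beta)$ more cleanly from \eqref{macron} via $\Pf(f)=\tfrac12\vol(\iota(\omega_f)\wedge\iota(\omega_f))=\tfrac{1}{2\vol(\beta)^2}\vol(\eta\wedge\eta)=\alpha/\vol(\beta)$ (bypassing the basis computation you flagged as needing redoing), and handles the reverse inclusion exactly as you outline: a dense open subset of $\PP(\{(n+1)\alpha=\vol(\beta)\})\cap{\bf Q}^{+}$ is realized by some $H^{2,1}(Y)$ via \Ref{expl}{francese}, so $T^{+}$ is the full hyperplane.
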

\begin{proof}
Let us prove Item~(1). Suppose  that $Z_{[\alpha+\eta+\beta]}=\PP(\Gamma)$ where $\Gamma\subset(V_{\CC}\oplus V_{\CC}^{\vee})$ is as in~\Ref{dfn}{pimpinelli}. 
By~\Ref{prp}{grafanti}, either  $\Gamma$  
 is the graph of a non degenerate antisymmetric map $f\colon V_{\CC}\to V_{\CC}^{\vee}$ such that $\Pf(f)=1/(n+1)$ (see~\Ref{expl}{francese} for the last equality), or $\Gamma=U\oplus U^{\bot}$, where  $U\subset V_{\CC}$ is a two dimensional subspace.   
In the first case, by~\eqref{macron} we have
\begin{equation*}
\frac{1}{(n+1)}=\Pf(f)=\frac{1}{2}\vol(\omega_f\wedge\omega_f)=\frac{1}{2\vol(\beta)^2}\vol(\eta\wedge\eta)=\frac{\alpha}{\vol(\beta)}.
\end{equation*}
Thus $[\alpha+\eta+\beta]$ belongs to the right hand side of~\eqref{tartaglia}. In the second case, the same holds because $\alpha=\beta=0$.
This proves that $T^{+}(K_n(A))$ is contained in the right hand side of~\eqref{tartaglia}. 
On the other hand (see~\Ref{expl}{francese}) there is a dense (in the Zariski topology)  open (in the classical topology) subset of 
$\PP(T^{+}(K_n(A)))\cap {\bf Q}^{+}(X)$ parametrizing $\Gamma$'s as in~\Ref{dfn}{pimpinelli}. Item~(1) follows.

It is clear that there is a sub local system  of $S^{+}(\pi)$ with fiber $T^{+}(\pi^{-1}(b))$ over $b\in B$. It has  rank $7$ by Item~(1).
\end{proof}
\subsection{Proof of the second main result}
\setcounter{equation}{0}
Item~(1) of~\Ref{thm}{secondoteor} follows from~\Ref{dfn}{pimpinelli} and Item~(2) of~\Ref{prp}{dicarcaci}. 

Next we proceed to prove Item~(2) of~\Ref{thm}{secondoteor}. Let $X=K_n(A)$. As usual identify $H^2(K_n(A))^{\vee}=\bigwedge^2 H^3(A)\oplus \CC\xi_n^{\vee}$, see~\Ref{expl}{bbfdual}. With this identification, $S^{+}(K_n(A))=\CC\oplus\bigwedge^2 H^3(A)\oplus \bigwedge^4 H^3(A)$. Let $\tau \in\bigwedge^4 H^3(A)$ be the element of volume $1$. 
We let
\begin{equation}\label{immergo}
\begin{matrix}
\bigwedge^2 H^3(A)\oplus \CC\xi_n^{\vee} & \overset{i}{\hra} & S^{+}(K_n(A)) \\
\eta+x\xi_n^{\vee} & \mapsto & \frac{(-1)^{\epsilon_n}x}{2(n+1)}+\eta+\frac{(-1)^{\epsilon_n}x}{2}\tau
\end{matrix}
\end{equation}
By Item~(1) of~\Ref{prp}{dicarcaci}, the above map defines an isomorphism between $H^2(K_n(A))^{\vee}$ and $T^{+}(K_n(A))$. 
Let ${\bf q}^{+}_{K_n(A))}$ be the quadratic form on $ S^{+}(K_n(A))$ defined by~\eqref{quadstand} with $V=H^3(A;\ZZ)$. One checks easily that the restriction of ${\bf q}^{+}_{K_n(A))}$ to $H^2(K_n(A))^{\vee}$ is the dual of the BBF. This proves Item~(1) of~\Ref{thm}{secondoteor} for $X=K_n(A)$. 
Item~(2) of~\Ref{thm}{secondoteor} for $X=K_n(A)$ follows from~\Ref{lmm}{livella}. 
\begin{clm}\label{clm:bendef}
The map $i$ in~\eqref{immergo} is equivariant up to sign for the action of monodromy. 
\end{clm}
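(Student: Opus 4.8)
The plan is to show that $\PP(i)$ is pinned down, up to an overall scalar, by a condition that is manifestly preserved by the monodromy group $\mathrm{Mon}$ of HK's of Kummer type of dimension $2n$, and then to promote \lq\lq up to a scalar\rq\rq\ to \lq\lq up to a sign\rq\rq\ by using that $i$ is an isometry. First I would record the two $\mathrm{Mon}$-actions in play: the standard action on $H^2(K_n(A))^{\vee}$, and the action on $S^+(K_n(A))$ coming from the flat connection on the local system $S^+(\pi)$, which preserves the sub-local system $T^+(K_n(A))$ by Item~(2) of~\Ref{prp}{dicarcaci}. By Item~(1) of~\Ref{prp}{dicarcaci}, $i$ is a $\QQ$-linear isomorphism $H^2(K_n(A))^{\vee}\overset{\sim}{\to}T^+(K_n(A))$ carrying the dual BBF form to the restriction of ${\bf q}^+_{K_n(A)}$. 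The goal is then $g\circ i=\pm\, i\circ g$ for every $g\in\mathrm{Mon}$.

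Next I would invoke the characterization of $i$ supplied, for $X=K_n(A)$, by the already-established Item~(1) of~\Ref{thm}{secondoteor} together with~\Ref{prp}{grafanti}, \Ref{lmm}{livella} and~\Ref{expl}{francese}: whenever $\Gamma\subset H^3(K_n(A))$ is the graph of a nondegenerate skew map $f$ with $\Pf(f)=\vartheta_1(\ov{q}^{n-2})/\vartheta_2(\ov{q}^{n-2})$, the image $\phi(\ov{q}^{n-2})(\bigwedge^2\Gamma)$ is one dimensional, $\PP(\Gamma)$ is a maximal linear subspace of ${\bf Q}(K_n(A))$, and $\PP(i)$ sends the line $\phi(\ov{q}^{n-2})(\bigwedge^2\Gamma)$ to the point $[\sigma_\Gamma]\in{\bf Q}^+(K_n(A))$ that the spinor construction of \S20.3 of~\cite{fulhar} attaches to $\PP(\Gamma)$; concretely, by~\eqref{unodim} and~\eqref{macron} both lines equal $\Span\{\vartheta_2\iota(\omega_f)-2\vartheta_3\zeta\}$. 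I would then check that these lines span $H^2(K_n(A))^{\vee}_{\CC}$ as $f$ varies: $\iota(\omega_f)$ sweeps out the affine quadric $\{(\omega,\omega)=2\Pf(f)\}$ in $\bigwedge^2 V_{\CC}$, which spans that space and misses the origin, so differences of two such vectors span $\bigwedge^2 V_{\CC}$ and $\zeta$ is then recovered because $\vartheta_3\neq 0$. Hence $i$ is determined up to a nonzero scalar by the requirement that $\PP(i)$ match $\phi(\ov{q}^{n-2})(\bigwedge^2\Gamma)$ with $[\sigma_\Gamma]$ for all such $\Gamma$.

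Then I would argue that this determining requirement is $\mathrm{Mon}$-equivariant: the map $\phi(\ov{q}^{n-2})$ is flat for the Gauss--Manin connection (see~\Ref{rmk}{oasibirra} and the discussion preceding it), while the assignment sending a maximal linear subspace of ${\bf Q}(K_n(A))$ to its spinor point is intrinsic to the quadratic space $(H^3(K_n(A)),q)$ together with the choice of component $S^+$; and by Item~(2) of~\Ref{prp}{dicarcaci} the monodromy acts on $H^3$ through the special orthogonal group and does not interchange $S^+$ with $S^-$. So for each $g\in\mathrm{Mon}$ the map $g\circ i\circ g^{-1}$ satisfies the same requirement as $i$, whence $g\circ i\circ g^{-1}=c(g)\, i$ for a scalar $c(g)$; moreover $c(g)\in\QQ^{\times}$ since $i$, $\phi(\ov{q}^{n-2})$ and the spinor construction are all defined over $\QQ$. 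Finally, as $g$ acts isometrically on both $H^2(K_n(A))^{\vee}$ and $S^+(K_n(A))$ and $i$ is an isometry, $c(g)\, i$ is again an isometry, forcing $c(g)^2=1$, i.e.~$c(g)=\pm 1$; this gives $g\circ i=\pm\, i\circ g$. As a sanity check I would note that on the subgroup of $\mathrm{Mon}$ coming from deformations of $K_n(T)$ over the moduli of compact complex $2$-tori, which acts on $V=H^3(A;\ZZ)$ through $\SL(V)$ and hence trivially on $\CC\oplus\bigwedge^4 V$ and standardly on $\bigwedge^2 V$, inspection of~\eqref{immergo} shows $i$ is exactly equivariant ($c(g)=1$).

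The step I expect to be the main obstacle is verifying, cleanly and without circularity, the $\mathrm{Mon}$-equivariance of the spinor parametrization of maximal linear subspaces of ${\bf Q}(K_n(A))$ by points of ${\bf Q}^+(K_n(A))$ — that is, making sure one may legitimately use that $S^+(\pi)$ is a genuine local system (so that monodromy never swaps the two rulings) — together with the care needed to see that the characterizing condition of the second paragraph really does determine $\PP(i)$ on a spanning set and that the resulting scalar $c(g)$ is forced to be rational. The isometry argument in the last step is then routine.
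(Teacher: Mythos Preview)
Your proposal is correct and follows essentially the same route as the paper: both arguments pin down $\PP(i)$ by the requirement that it send the line $\phi(\ov q^{\,n-2})(\bigwedge^2\Gamma)$ to the spinor point of $\PP(\Gamma)$ for all admissible $\Gamma$, observe that this requirement is monodromy-equivariant (because $\phi$ is Gauss--Manin flat and $S^{+}(\pi)$ is a local system), and conclude that $g\circ i$ and $i\circ g$ agree up to a scalar. Your write-up is in fact more explicit than the paper's at two points the paper leaves implicit: the spanning argument showing these lines determine $\PP(i)$, and the isometry step forcing the scalar to be $\pm 1$; the paper simply asserts ``Equation~\eqref{daniele} follows'' once agreement on a Zariski-dense subset of the quadric in $T^{+}$ is established. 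Your worry about circularity is unfounded: Items~(1) and~(2) of~\Ref{thm}{secondoteor} are proved for $X=K_n(A)$ \emph{before} this claim, and the claim is then used to propagate them to general $X$. The non-swapping of $S^{+}$ and $S^{-}$ that you flag as the main obstacle is exactly what the paper builds in by declaring $S^{+}(\pi)$ a local system, justified by Item~(3) of~\Ref{thm}{primoteor}.
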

\begin{proof}
Let $\pi\colon\cX\to B$ be a family of HK's of Kummer type, with the fiber $\pi^{-1}(b_0)$ isomorphic to $K_n(A)$. Let $\lambda$ be a loop in $B$ based at $b_0$. Then $\lambda$ defines a diffeomorphism $\lambda_{*}\colon K_n(A)\to K_n(A)$. 
Let $H^2(\lambda_{*})^t$ be the action of $\lambda_{*}$ on  $H^2(K_n(A))^{\vee}$, and let $S^{+}(\lambda_{*})$ be the action of $\lambda_{*}$ on $S^{+}(K_n(A))$. We must prove that 
\begin{equation}\label{daniele}
S^{+}(\lambda_{*})\circ i=\pm i\circ H^2(\lambda_{*})^t. 
\end{equation}
First $S^{+}(\lambda_{*})$ maps $T^{+}(K_n(A))$ to itself - see~\Ref{rmk}{pimpinelli}.  Next, let $\Gamma\subset H^3(K_n(A))$ be a $4$-dimensional vector subspace such that $\phi(\bigwedge^2\Gamma)$ has dimension $1$. Then $S^{+}(\lambda_{*})\circ i(\phi(\bigwedge^2\Gamma_{*}))=i\circ H^2(\lambda_{*})^t\phi(\bigwedge^2\Gamma)$ by Item~(2) (which has been proved for $X=K_n(A)$). The set of elements of $i(H^2(K_n(A))^{\vee}\cap {\bf Q}^{+}(K_n(A))$ of the form $\phi(\bigwedge^2\Gamma)$ for $\Gamma$ as above is an open dense subset of $i(H^2(K_n(A))^{\vee}\cap {\bf Q}^{+}(K_n(A))$. Equation~\eqref{daniele} follows. 
\end{proof}
Now we may define $i\colon H^2(X)^{\vee} \overset{i}{\hra}  S^{+}(X)$ for arbitrary $X$ acting by parallel transport  on the map $i$ in~\eqref{immergo}. The map is well-defined up to sign, i.e.~independent (up to sign) of the chosen path connecting $K_n(A)$ to $X$ (in a connected family of HK's in which both $K_n(A)$ and $X$ are fibers), because of~\Ref{clm}{bendef}. Items~(2) of~\Ref{thm}{secondoteor} for $X$ follows from the corresponding statements for $K_n(A)$.

\begin{proof}[Proof of~\Ref{crl}{monlim}]
We may assume that $X=K_n(A)$. Thus we have the identifications of~\Ref{prp}{dicarcaci}.  Let $\pi\colon\cX\to B$ be a family of HK's of Kummer type, with  fiber $\pi^{-1}(b_0)$ isomorphic to $K_n(A)$. Let $\lambda$ be a loop in $B$ based at $b_0$, and let $\lambda_{*}\colon K_n(A)\to K_n(A)$ 
be the associated diffeomorphism. We have associated maps
\begin{equation*}
\scriptstyle
H^2(\lambda_{*})\in O(H^2(X;\ZZ),q_{X}),\quad  S^{+}(\lambda_{*})\in O(S^{+}(X),{\bf q}^{+}_{X}).
\end{equation*}
(We let  $X=K_n(A)$ for typographical reasons.)  Since the BBF quadratic form $q_X$ is non-degenerate, it defines an embedding $H^2(K_n(A);\ZZ)^{\vee}\subset H^2(K_n(A);\QQ)$. By~\eqref{daniele} we have
\begin{equation}\label{intreccio}
 H^2(\lambda_{*})^t=\pm i^{-1}\circ S^{+}(\lambda_{*})\circ i.
\end{equation}
 Thus we examine $S^{+}(\lambda_{*})$.  In order to simplify notation, we let $\rho:=S^{+}(\lambda_{*})$.
First, we notice that
\begin{equation}\label{detuno}
\Det \rho=1
\end{equation}
because by Item~(3) of~\Ref{thm}{primoteor} monodromy does not exchange the two irreducible components of the variety parametrizing maximal linear subspaces of ${\bf Q}(K_n(A))$.

Let $\tau \in\bigwedge^4 H^3(A)$ be the element of volume $1$. Let
\begin{equation}
u:=1-(n+1)\tau,\quad w:=1+(n+1)\tau.
\end{equation}
Then $u,w\in S^{+}(K_n(A))$, $u$ is orthogonal to $T^{+}(K_n(A))$, and $w\in T^{+}(K_n(A))$.

Since $\rho$ maps $T^{+}(K_n(A))$ to iself, we have $\rho(u)=\pm u$. By~\eqref{detuno}, and the equation $i(2(n+1)\xi_n^{\vee})=w$ we must prove that
\begin{equation}\label{duecasi}
\rho(w)=
\begin{cases}
(1+2a(n+1))w+2(n+1)y,\ \ a\in\ZZ,\  y\in T^{+}(K_n(A);\ZZ) & \text{if $\rho(u)= u$,} \\
(-1+2a(n+1))w+2(n+1)y,\ \ a\in\ZZ,\ y\in T^{+}(K_n(A);\ZZ) & \text{if $\rho(u)= -u$.}
\end{cases}
\end{equation}
Suppose that  $\rho(u)= u$. The $\rho(w)-u=\rho(w-u)=\rho(2(n+1)\tau)=2(n+1)\rho(\tau)$. Since $\rho(\tau)\bot u$, we get that there exist $a\in\ZZ$ and 
$ y\in T^{+}(K_n(A);\ZZ)$ such that $\rho(\tau)=aw+\tau+y$. Thus 
\begin{equation}
\rho(w)=u+2(n+1)\tau+2a(n+1)w+2(n+1)y=(1+2a(n+1))w+2(n+1)y.
\end{equation}
This proves~\eqref{duecasi} if $\rho(u)= u$. The proof in the case $\rho(u)= -u$ is similar.
\end{proof}
\section{Polarization type of $J^3(X)$ for $X$ of dimension $4$}\label{sec:polandmon}
If $X$ is a polarized HK of Kummer type, then $J^3(X)$ is a four dimensional abelian variety, with a polarization associated to the polarization of $X$. In the present subsection we  compute the discrete invariants (elementary divisors)  of  the polarization of $J^3(X)$, for $X$ of dimension $4$.
\subsection{Set up}
\setcounter{equation}{0}
Let $X$ be a HK fourfold of Kummer type, and let $L$ be an ample line bundle on $X$. The  skew-symmetric form
\begin{equation}\label{polarizzo}
\begin{matrix}
H^3(X)\times H^3(X) & \overset{\la,\ra}{\lra} & \CC \\
(\alpha,\beta) & \mapsto & \int_X\alpha\smile\beta\smile c_1(L)
\end{matrix}
\end{equation}
defines a polarization $\Theta_L$ of $J^3(X)$ by the Hodge-Riemann bilinear relations. We may assume that $X$ is a generalized Kummer $K_2(A)$. We recall that~\eqref{accaduekum} and the map $\mathsf F$ in~\eqref{bronte} give identifications
\begin{eqnarray}
 H^2(K_2(A);\ZZ) & = &\bigwedge^2 H^1(A;\ZZ)\oplus \ZZ\xi_2,\\
  H^3(K_2(A);\ZZ) & = & H^3(A;\ZZ)\oplus H^1(A;\ZZ).
\end{eqnarray}
We identify $H^1(A;\ZZ)$ with $H^3(A;\ZZ)^{\vee}$ via~\eqref{31duality}, and we set $V:=H^3(A;\ZZ)$. 
Let  $\{v_1,\ldots,v_4\}$ be a basis of $V$ such that $\vol(v_1\wedge v_2\wedge v_3\wedge v_4)=1$. We may write 
\begin{equation}\label{eccopol}
c_1(L)=c(e v^{\vee}_1\wedge v^{\vee}_2+v^{\vee}_3\wedge v^{\vee}_4)+s\zeta^{\vee},
\end{equation}
where
\begin{equation}\label{ipotesi}
 c\in\NN_{+},\quad s\in\ZZ,\quad \gcd\{c,s\}=1.
\end{equation}

Let $(w,f),(w',f')\in (V\oplus V^{\vee})$. By~\Ref{crl}{thirdman} and~\Ref{rmk}{menosei} we have
\begin{equation*}
\scriptstyle
\la(w,f),(w',f')\ra=\la- w\wedge w'-3 \iota(f\wedge f')-3(\la w,f'\ra-\la w',f\ra)\zeta^{\vee},c(e v^{\vee}_1\wedge v^{\vee}_2+v^{\vee}_3\wedge v^{\vee}_4)-s\zeta^{\vee}\ra.
\end{equation*}
In the above equation, the angle brackets in the left hand side denote the polarization form in~\eqref{polarizzo}, those in the right hand side denote the duality pairing. 
Let 
\begin{multline}\label{alfabeto}
\alpha_1:=(0,v_1^{\vee}),\ \alpha_2:=(v_2,0),\ \alpha_3:=(v_4,0),\ \alpha_{4}:=(0,v_3^{\vee}),\\ 
\beta_1:=(0,-v_2^{\vee}),\ \beta_2:=(v_1,0),\ \beta_3:=(v_3,0),\ \beta_{4}:=(0,-v_4^{\vee}).
\end{multline}
Then $\{\alpha_1,\ldots,\beta_4\}$ is a basis of $V\oplus V^{\vee}/2$, and  both the span of the $\alpha_i$'s and the span of the $\beta_j$'s   are $\la,\ra_{\vartheta,h}$-isotropic  subgroups of $V\oplus V^{\vee}/2$.  The intersection matrix between the $\alpha_i$'s and the $\beta_j$'s   is equal to
\begin{equation}\label{intmat}
(\la \alpha_i,\beta_j\ra_{\vartheta,h})=
\left(
\begin{array}{cccc}
3c  & 3s  & 0 & 0   \\
 3s & c\cdot e & 0  & 0  \\
0  & 0  & c & 3s \\
0 & 0 & 3s & 3c\cdot e  
\end{array}
\right)
\end{equation}
 Let $X$ be a $4$ dimensional generalized Kummer. By Corollary 4.8 in~\cite{marmer} (the proof is in~\cite{mar-on-weil}), non zero elements  $\alpha\in H^2(X;\ZZ)$  up to monodromy  are classified by the value $q_X(\alpha)$ and by the divisibility $\divisore(\alpha)$ (see~\Ref{subsec}{ginevra} for the definition of  $\divisore(\alpha)$). The divisibility is an element of $\{1,2,3,6\}$. Thus we distinguish four cases.
\subsection{Divisibility $1$}
\setcounter{equation}{0}
Suppose that 
\begin{equation}\label{essezero}
c_1(L)=ev^{\vee}_1\wedge v^{\vee}_2+ v^{\vee}_3\wedge v^{\vee}_4,
\end{equation}
i.e.~$c=1$ and $s=0$ in the notation of~\eqref{eccopol}. Then  
 $(c_1(L),c_1(L))=2e$, and $\divisore(c_1(L))=1$.
Let $g:=\gcd\{3,e\}$, and let $x,y$ be integers such that $3x+ey=g$. A basis of $V\oplus\frac{1}{2} V^{\vee}$ is given by 
\begin{equation*}
%\scriptstyle
%
\{\alpha_{3},\ x\alpha_1+y\alpha_2,\ (e\alpha_1-3\alpha_2)/g,\ \alpha_4,\ \beta_{3},\ \beta_1+\beta_2,\ 
(ey\beta_1-3x\beta_2)/g,\ \beta_4\}.
\end{equation*}
The matrix of  $\la,\ra$ in the above basis is equal to 
$\begin{pmatrix} 0 & \Delta \\
-\Delta & 0\end{pmatrix}$, where $\Delta$ is the $4\times 4$ diagonal matrix with entries $1,g,3e/g,3e$. 
\subsection{Divisibility $2$}
\setcounter{equation}{0}
Suppose that 
\begin{equation}\label{essezero}
c_1(L)=2(e v^{\vee}_1\wedge v^{\vee}_2+ v^{\vee}_3\wedge v^{\vee}_4)+\zeta^{\vee},
\end{equation}
i.e.~$c=2$ and $s=1$ in the notation of~\eqref{eccopol}. Then  
  $(c_1(L),c_1(L))=2(4e-3)$, and  $\divisore(c_1(L))=2$. Let $g:=\gcd\{3,e\}=\gcd\{3,2e\}$, and let $x,y\in\ZZ$ be such that $3x+2ey=g$. A basis of $V\oplus\frac{1}{2} V^{\vee}$ is given by
\begin{equation*}
%\scriptstyle
%
\{\alpha_3,\, x\alpha_1+y\alpha_2,\, (2e\alpha_1-3\alpha_2)/g,\,(6e-3)\alpha_3-\alpha_4,\,
\beta_4-\beta_3,\,\beta_2,\,(g\beta_1-(6x+3y)\beta_2)/g,\,3\beta_3-2\beta_4\}.
\end{equation*}
The matrix of  $\la,\ra$ in the above basis is equal to 
$\begin{pmatrix} 0 & \Delta \\
-\Delta & 0\end{pmatrix}$, where $\Delta$ is the $4\times 4$ diagonal matrix with entries $1,g,3(4e-3)/g,3(4e-3)$. 

\subsection{Divisibility $3$}
\setcounter{equation}{0}
Suppose that 
\begin{equation}\label{essezero}
c_1(L)=3(e v^{\vee}_1\wedge v^{\vee}_2+ v^{\vee}_3\wedge v^{\vee}_4)+\zeta^{\vee},
\end{equation}
i.e.~$c=3$ and $s=1$ in the notation of~\eqref{eccopol}. Then  
  $(c_1(L),c_1(L))=6(3e-1)$, and  $\divisore(c_1(L))=3$. A basis of $V\oplus\frac{1}{2} V^{\vee}$ is given by
\begin{equation*}
%\scriptstyle
%
\{\alpha_3,\ \alpha_1,\ \alpha_{4}-\alpha_3,\ e\alpha_1-\alpha_2,\ \beta_3,\ \beta_2,\ \beta_{4}-\beta_3,\ \beta_1-3\beta_2\}.
\end{equation*}
The matrix of  $\la,\ra$ in the above basis is equal to 
$\begin{pmatrix} 0 & \Delta \\
-\Delta & 0\end{pmatrix}$, where $\Delta$ is the $4\times 4$ diagonal matrix with entries $3,3,3(3e-1),3(3e-1)$. 
\subsection{Divisibility $6$}
\setcounter{equation}{0}
Suppose that 
\begin{equation}\label{essezero}
c_1(L)=6(e v^{\vee}_1\wedge v^{\vee}_2+ v^{\vee}_3\wedge v^{\vee}_4)+\zeta^{\vee},
\end{equation}
i.e.~$c=6$ and $s=1$ in the notation of~\eqref{eccopol}. Then  
  $(c_1(L),c_1(L))=6(12e-1)$, and  $\divisore(c_1(L))=6$. A basis of $V\oplus\frac{1}{2} V^{\vee}$ is given by
\begin{equation*}
%\scriptstyle
%
\{\alpha_1,\,\alpha_3,\,2\alpha_3-\alpha_4,\,2e\alpha_1-\alpha_2,\,\beta_2,\,\beta_4,\,\beta_3-6e\beta_4,\,\beta_1-6\beta_2\}.
\end{equation*}
The matrix of  $\la,\ra$ in the above basis is equal to 
$\begin{pmatrix} 0 & \Delta \\
-\Delta & 0\end{pmatrix}$, where $\Delta$ is the $4\times 4$ diagonal matrix with entries $3,3,3(12e-1),3(12e-1)$.

\section{ Weil type}\label{sec:tipoweil}
\setcounter{equation}{0}
\subsection{Abelian varieties of Weil type}\label{subsec:weilrecap}
\setcounter{equation}{0}
We recall that a compact complex torus $T$ of dimension $2g$ is of \emph{Weil type} (see~\cite{weil-on-hc}) if there exists an endomorphism $\varphi\colon T\to T$ such that the following hold:
\begin{enumerate}
\item
$\varphi\circ\varphi=-D \Id_T$, where $D$ is a strictly positive integer.
\item
The restriction of $\varphi^{*}$ to $H^{1,0}(T)$ decomposes as the direct sum of $\pm\sqrt{-D}$ eigenspaces of the same dimension $g$.
\end{enumerate}
Such a torus $T$ has a $2$ dimensional space of classes in $H^{g,g}_{\ZZ}(T)$ which are not in the ring generated by  
$H^{1,1}_{\ZZ}(T)$ unless $g=1$. Voisin~\cite{voisin-hc-nonproj} proved that they provide counterexamples to the extension of the Hodge conjecture to compact K\"ahler manifolds. On the other hand, for certain families of abelian varieties of Weil type it is known that the Weil classes are algebraic~\cite{schoen-hc}. As references for what follows, we recommend~\cite{berthodge}  and~\cite{schoen-hc-add}. 

If $A$ is an abelian variety of Weil type, with endomorphism $\varphi$, there exists a polarization $\Theta$ such that $\varphi^{*}\Theta\equiv d\Theta$. If this is the case, one says that $(A,\varphi,\Theta)$ is a polarized abelain variety of Weil type. Let us view the polarization $\Theta$ as a bilinear alternating function $E\colon H_1(A;\QQ)\times H_1(A;\QQ)\to\QQ$. The endomorphism $\varphi$ gives $H_1(A;\QQ)$ the structure of a vector space over the quadratic field $\KK:=\QQ[\sqrt{-D}]$. One defines
\begin{equation}\label{formahermit}
\begin{matrix}
H_1(A;\QQ)\times H_1(A;\QQ) & \overset{H}{\lra} & \KK \\
(\alpha,\beta) & \mapsto & E(\alpha,\varphi_{*}\beta)+\sqrt{-D} E(\alpha,\beta)
\end{matrix}
\end{equation}
As is easily checked $H$ is $\KK$ linear in the second entry, and $H(\beta,\alpha)=\ov{(\alpha,\beta)}$. 
Thus $H$ is a nondegenerate Hermitian form on the $\KK$ vector space $H_1(A;\QQ)$. The determinant of the Hermitian matrix associated to $H$ by a choice of $\KK$-basis of  $H_1(A;\QQ)$ is well-determined modulo moltiplication by elements of $\Nm(\KK^{*})$.  
Thus we may associate to $H$ its determinant $\Det H\in \QQ^{*}\backslash \Nm(\KK^{*})$.  We denote $\Det H$ by $\Det \Theta$. 

Given an imaginary quadratic field $\KK$, and an element of $ \QQ^{*}\backslash \Nm(\KK^{*})$, one may construct a complete (up to isogeny) \emph{irreducible} family of $2g$ dimensional polarized abelian 
varieties of Weil type $(A,\varphi,\Theta)$ with associated field $\KK$, and  assigned $\Det$ of the polarization, of dimension $g^2$.  complete up to isogeny means that every polarized abelian 
variety of Weil type $(A,\varphi,\Theta)$ with the given field  and determinant is isogenous to one of the varieties in the family (of course the isogeny matches the endomorphisms and the polarizations). 
\subsection{The abelian variety associated to a point of $\cD_h$ is of Weil type}\label{subsec:tipoweil}
\setcounter{equation}{0}
We suppose that $\vartheta=(\vartheta_1,\vartheta_2,\vartheta_3)\in\ZZ^3$, with all entries nonzero. We suppose also that $m$ is a (strictly) positive rational number, and that Equation~\eqref{seconda} holds. We will adopt the notation of~\Ref{sec}{alglin} without further notice. We will prove the following two results.
\begin{thm}\label{thm:sorpresa}
Let    $h\in (\bigwedge^2 V\oplus \ZZ)^{\vee}$ be a vector of positive square, and assume that $\sigma\in\cD^{+}_h$ (see~\Ref{prp}{ampio} for the definition of $\cD^{+}_h$). Let $J_{[\sigma]}(\vartheta)$ be the compact complex torus in~\Ref{dfn}{torosi}, and let $\Theta_{[\sigma]}(\vartheta)\in H^{1,1}_{\QQ}(J_{[\sigma]}(\vartheta)$ be the ample class 
in~\Ref{dfn}{polab}.  Then $(J_{[\sigma]}(\vartheta),\Theta_{[\sigma]}(\vartheta))$ is of Weil type, with an 
embedding 
\begin{equation}\label{eccocampo}
\QQ[\sqrt{-m(h,h)}]\subset\End(J_{[\sigma]}(\vartheta),\Theta_{[\sigma]}(\vartheta))_{\QQ}.
\end{equation}
The determinant of the   polarization $\Theta_{[\sigma]}(\vartheta)$ is $1$.  By varying  $\sigma\in\cD^{+}_h$, one gets a complete (up to isogeny)  family of $4$ dimensional  polarized abelian varieties of Weil type  with associated field $\QQ[\sqrt{-m(h,h)}]$, and  $\Det\equiv 1$.
\end{thm}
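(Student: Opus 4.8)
The strategy is to construct explicitly the endomorphism $\varphi$ witnessing Weil type, check that it satisfies the two defining conditions, verify compatibility with the polarization, and finally identify the determinant invariant and the moduli count. All the structural input is already in place: by Theorem~\ref{thm:grafanti} the Hodge structure on $V\oplus V^{\vee}$ is pinned down by $[\sigma]\in\cD$, and by Proposition~\ref{prp:ampio} the class $\Theta_{[\sigma]}(\vartheta)$ is a polarization on $\cD^{+}_h$. So the real content is to produce $\varphi$. The natural candidate comes from the quadratic field: let $\ell\in(\bigwedge^2 V_{\QQ}\oplus\QQ)$ be the vector dual to $h$ under the isomorphism~\eqref{identici}, so $(\ell,\ell)=(h,h)^{\vee}$ is a positive rational, and $\Phi_{\vartheta}$ identifies $\bigwedge^2(V\oplus V^{\vee})$ with (a part of) $\bigwedge^2 V\oplus\CC$; the skew form $\la\,,\,\ra_{\vartheta,h}$ of~\eqref{formbil} together with the ambient symmetric form $(,)$ produces, by the usual "multiplication by $\sqrt{-D}$" recipe for Weil type, an endomorphism $\varphi$ of $V_{\QQ}\oplus V^{\vee}_{\QQ}$ with $\varphi^2=-m(h,h)\Id$. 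Concretely I would define $\varphi$ on $V_{\CC}\oplus V^{\vee}_{\CC}$ by requiring that on $H^{1,0}_{[\sigma]}(\vartheta)$ it acts as multiplication by $\sqrt{-m(h,h)}$ and on $H^{0,1}_{[\sigma]}(\vartheta)$ by $-\sqrt{-m(h,h)}$, and then check it is defined over $\QQ$; the rationality is forced by the fact that $\Phi_{\vartheta}$, $\la\,,\,\ra_{\vartheta,h}$ and $(,)$ are all rational, together with the relation~\eqref{seconda} $\vartheta_1\vartheta_2=2m\vartheta_3^2$ which is exactly what makes the composite square to a scalar.

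Once $\varphi$ is in hand, condition~(1) of Weil type ($\varphi\circ\varphi=-D\Id$ with $D=m(h,h)>0$) is immediate from the construction, and condition~(2) (the $\pm\sqrt{-D}$-eigenspaces of $\varphi^{*}$ on $H^{1,0}$ have equal dimension) holds because $\dim H^{1,0}_{[\sigma]}(\vartheta)=4=2\cdot 2$ and, by construction, $\varphi^{*}$ acts on all of $H^{1,0}_{[\sigma]}(\vartheta)$ as a single scalar $\sqrt{-D}$ — so actually one must be slightly more careful: the honest recipe is that $\varphi$ acts on $H^{1,0}$ with \emph{both} eigenvalues present, so I would instead build $\varphi$ from the pairing so that on $H^{1,0}$ it has $\sqrt{-D}$ and $-\sqrt{-D}$ each with multiplicity $2$; this is automatic once $\varphi$ is rational and $\varphi^2=-D$, because a rational endomorphism squaring to a negative scalar acts on $H^{1,0}$ of a torus with matching eigenspace dimensions iff $\varphi$ generates an imaginary quadratic field acting on $H_1$ — and the $\QQ$-linear structure of $V_{\QQ}\oplus V^{\vee}_{\QQ}$ as a $2$-dimensional $\KK$-vector space, $\KK=\QQ[\sqrt{-m(h,h)}]$, will be visible directly. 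For compatibility with the polarization, I would verify $\varphi^{*}\Theta_{[\sigma]}(\vartheta)\equiv D\,\Theta_{[\sigma]}(\vartheta)$ by a direct computation with the skew form~\eqref{formbil}: since $\la\,,\,\ra_{\vartheta,h}$ is built from $\Phi_{\vartheta}$ and $h$, and $\varphi$ is built from the same data, the identity $\la\varphi\alpha,\varphi\beta\ra_{\vartheta,h}=D\la\alpha,\beta\ra_{\vartheta,h}$ should fall out of the definitions and~\eqref{seconda}.

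The determinant computation is the remaining concrete task. Using the basis-type arguments already rehearsed in Section~\ref{sec:polandmon} (the explicit isotropic bases $\{\alpha_i\},\{\beta_j\}$ and intersection matrices there), together with the Hermitian form $H$ of~\eqref{formahermit} attached to $(\Theta_{[\sigma]}(\vartheta),\varphi)$, I would exhibit a $\KK$-basis of $H_1(J_{[\sigma]}(\vartheta);\QQ)=V_{\QQ}\oplus V^{\vee}_{\QQ}$ in which the Hermitian matrix of $H$ is the identity up to $\Nm(\KK^{*})$, giving $\Det\Theta_{[\sigma]}(\vartheta)\equiv 1$; the "trivial determinant" should be traceable to the fact that the starting symmetric form $(,)$ on $\bigwedge^2 V\oplus\CC$ is split (of signature $(3,4)$ over $\RR$, with a large hyperbolic part over $\QQ$). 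Finally, the completeness-up-to-isogeny statement is a dimension count: $\cD^{+}_h$ has dimension $4$ (it is a component of $\cD_h$, a Type~IV domain of dimension $5-1=4$), and by the last paragraph of Subsection~\ref{subsec:weilrecap} the complete irreducible family of $4$-dimensional polarized abelian varieties of Weil type with field $\KK$ and fixed determinant also has dimension $g^2=4$; since $[\sigma]\mapsto(J_{[\sigma]}(\vartheta),\varphi,\Theta_{[\sigma]}(\vartheta))$ is injective on Hodge structures (Theorem~\ref{thm:grafanti} gives uniqueness) and the target family is irreducible of the same dimension, the image is dense, hence the family is complete up to isogeny.

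\textbf{Main obstacle.} The delicate point is producing the endomorphism $\varphi$ \emph{over $\QQ$} with the correct eigenvalue multiplicities on $H^{1,0}$, rather than just an abstract isometry of the Hodge structure. Getting the rationality cleanly will require using relation~\eqref{seconda} in an essential way and identifying $V_{\QQ}\oplus V^{\vee}_{\QQ}$ explicitly as a $\KK$-vector space compatibly with $\Phi_{\vartheta}$; once that identification is made transparent (likely via the map $\iota$ and the vector $\ell$ dual to $h$), conditions (1), (2), polarization-compatibility, and the determinant should all follow by bookkeeping modeled on Sections~\ref{sec:alglin} and~\ref{sec:polandmon}.
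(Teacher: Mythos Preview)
Your proposal identifies the right \emph{shape} of the argument but has a genuine gap at its heart: you never actually construct the endomorphism $\varphi$. Your first attempt (let $\varphi$ act by $\sqrt{-D}$ on $H^{1,0}$ and by $-\sqrt{-D}$ on $H^{0,1}$) would make $\varphi$ equal to $\sqrt{D}$ times the complex structure $J$, which is almost never rational and in any case gives eigenspace multiplicities $(4,0)$ on $H^{1,0}$, not $(2,2)$; you correctly noticed this, but your fallback (``build $\varphi$ from the pairing so that on $H^{1,0}$ it has both eigenvalues'') is not a construction, and the sentence ``this is automatic once $\varphi$ is rational and $\varphi^2=-D$'' is circular: the $(2,2)$ split on $H^{1,0}$ is exactly the Weil condition you are trying to prove, not a consequence of rationality.

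What the paper actually does is write down $\Psi$ explicitly. Put $h=h_0+s\zeta^{\vee}$ with $h_0\in\bigwedge^2 V_{\QQ}^{\vee}$, let $g\colon V_{\CC}\to V_{\CC}^{\vee}$ be the invertible skew map with $\omega_g=h_0$, and set
\[
\Psi(v,\ell)=\bigl(g^{-1}(\ell)-b\,v,\ b\,\ell-N\,g(v)\bigr),\qquad N,b\in\QQ.
\]
This is visibly rational and satisfies $\Psi^2=-(N-b^2)\Id$. The nontrivial point is that $\Psi$ preserves $H^{1,0}_{[\sigma]}(\vartheta)=\{(v,f(v))\}$ iff $g^{-1}\!f$ satisfies the quadratic $X^2-2bX+N=0$; and a Pfaffian identity for products of $4\times 4$ skew matrices (\Ref{lmm}{dragomir}) says $(g^{-1}f)^2-\tfrac{1}{2}\Tr(g^{-1}f)\,g^{-1}f+\Pf(g^{-1})\Pf(f)=0$. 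The trace here is rational \emph{precisely because} $\la h,\sigma\ra=0$ (this is where $[\sigma]\in\cD_h$ enters), and $\Pf(g^{-1})\Pf(f)$ is rational by~\eqref{prima}. So one can choose $N,b\in\QQ$ and compute $N-b^2$ to be a positive rational multiple of $m(h,h)^{\vee}$ via~\eqref{seconda}. The equal eigenspace dimensions on $H^{1,0}$ are then checked at a boundary point $[\sigma]\in\zeta^{\bot}$ (where $H^{1,0}=U\oplus U^{\bot}$ and the split is visible) and propagated by connectedness of $\cD_h^{+}$. Polarization compatibility and the determinant are computed from the explicit $\pm i\sqrt{N-b^2}$ eigenspaces of $\Psi$, which are $\{(v,\lambda_{\pm}g(v))\}$; in particular the Gram matrix of the Hermitian form in the $\KK$-basis $\{(v_i,0)\}$ is antidiagonal with entries in $i\sqrt{N-b^2}\cdot\QQ$, giving $\Det\equiv 1$. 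For completeness, the paper does not rely on a soft dimension count but shows directly that any weight-$1$ Hodge structure preserved by $\Psi$ with $(2,2)$ eigenspaces equals $H^{1,0}_{[\sigma]}(\vartheta)$ for some $[\sigma]\in\cD_h^{+}$.

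The missing idea in your plan, then, is the explicit Ansatz $\Psi(v,\ell)=(g^{-1}\ell-bv,\,b\ell-Ng(v))$ together with the $4\times4$ Pfaffian identity that forces $g^{-1}f$ to satisfy a rational quadratic. Without that, there is no mechanism linking the transcendental subspace $H^{1,0}_{[\sigma]}(\vartheta)$ to a rational endomorphism.
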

Before proving~\Ref{thm}{sorpresa},  we will go through a few elementary results. 
The proof of the next lemma is a straightforward exercise.
\begin{lmm}\label{lmm:invskew}
Let $X=(x_{ij})$ be a $4\times 4$ invertible antisymmetric matrix with coefficients in a field $\KK$. Then
\begin{equation*}
X^{-1}=
\Pf(X)^{-1}\left(
\begin{array}{rrrr}
 0 & -x_{34}  &  x_{24} & -x_{23} \\
x_{34}  & 0  & -x_{14} & x_{13}   \\
-x_{24}  & x_{14}  & 0 & -x_{12} \\
  x_{23} & -x_{13} & x_{12} & 0
\end{array}
\right)
\end{equation*}
\end{lmm}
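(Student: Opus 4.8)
The plan is to avoid computing the adjugate abstractly and instead verify directly that the displayed matrix is the inverse. Write $Y=(y_{ij})$ for the antisymmetric matrix appearing on the right-hand side \emph{without} the factor $\Pf(X)^{-1}$, so that $y_{12}=-x_{34}$, $y_{13}=x_{24}$, $y_{14}=-x_{23}$, $y_{23}=x_{14}$, $y_{24}=-x_{13}$, $y_{34}=x_{12}$, and $y_{ji}=-y_{ij}$, $y_{ii}=0$. The claim is equivalent to the matrix identity
\begin{equation}\label{eq:XYpfaff}
X\cdot Y=\Pf(X)\,\Id_4 ,
\end{equation}
together with the observation that $\Pf(X)\neq 0$ because $\det X=\Pf(X)^2$ and $X$ is invertible by hypothesis; dividing~\eqref{eq:XYpfaff} by $\Pf(X)$ then gives the asserted formula for $X^{-1}$.

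First I would establish~\eqref{eq:XYpfaff} by a direct entrywise computation. For the diagonal entries one finds, using $x_{ii}=0$,
\begin{equation*}
(XY)_{ii}=\sum_{k}x_{ik}y_{ki}=x_{12}x_{34}-x_{13}x_{24}+x_{14}x_{23}=\Pf(X)\qquad(i=1,\ldots,4),
\end{equation*}
where the middle equality is checked one value of $i$ at a time and reproduces~\eqref{eccopfaff} in each case (the three surviving monomials are always $x_{12}x_{34}$, $-x_{13}x_{24}$, $x_{14}x_{23}$ up to the overall consistent choice of signs built into $Y$). For the off-diagonal entries, say $(XY)_{12}=\sum_k x_{1k}y_{k2}=x_{13}y_{32}+x_{14}y_{42}=x_{13}x_{14}-x_{14}x_{13}=0$, and the remaining eleven off-diagonal entries vanish by the same two-term cancellation. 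Since $XY$ is symmetric under this verification pattern (each off-diagonal entry is a difference $x_{ia}x_{ib}-x_{ib}x_{ia}$ type expression that collapses), only $\binom{4}{2}=6$ genuinely distinct off-diagonal computations and $1$ diagonal computation are needed.

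There is essentially no conceptual obstacle here: the only thing to watch is the bookkeeping of signs in the definition of $Y$, which is precisely the arrangement that makes every diagonal dot product equal to the Pfaffian~\eqref{eccopfaff} and every off-diagonal one telescope to zero. One could alternatively invoke the classical facts that for a $4\times4$ antisymmetric matrix the adjugate is again antisymmetric with entries the $3\times3$ cofactors, and that $\det X=\Pf(X)^2$, and then match cofactors against the entries of $\Pf(X)\,Y$; but the verification of~\eqref{eq:XYpfaff} above is shorter and self-contained, so that is the route I would take.
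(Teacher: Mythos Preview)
Your approach---verifying $XY=\Pf(X)\Id_4$ entrywise---is exactly the straightforward exercise the paper has in mind (the paper gives no proof beyond that remark), and the strategy is sound. However, watch your transcription: from the displayed matrix one reads $y_{23}=-x_{14}$, $y_{24}=x_{13}$, $y_{34}=-x_{12}$, not the values you list, and indeed your sample computation of $(XY)_{12}$ uses $y_{32}=x_{14}$, $y_{42}=-x_{13}$, which are the correct entries of the paper's matrix but contradict your own stated $y_{23},y_{24}$ via antisymmetry. With the signs corrected the diagonal check $(XY)_{ii}=\Pf(X)$ goes through for all $i$, so this is a bookkeeping slip rather than a gap in the argument.
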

\begin{lmm}\label{lmm:dragomir}
Let $X,Y$ be $4\times 4$ antisymmetric matrices over a field $\KK$ (if $\charact\KK=2$, a matrix $X=(x_{ij})$ is antisymmetric if $X^t=-X$, and $x_{ii}=0$ for all $i$). Then
\begin{equation}
(X\cdot Y)^2-\frac{1}{2}\Tr(X\cdot Y)X\cdot Y+\Pf(X)\cdot\Pf(Y)1_4=0.
\end{equation}
(Note:  $\frac{1}{2}\Tr(X\cdot Y)=-\sum_{1\le i<j\le 4}x_{ij}y_{ij}$, hence it makes sense even if $\charact\KK=2$.) 
\end{lmm}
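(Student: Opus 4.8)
The plan is to treat the statement as a polynomial identity in the entries $x_{ij},y_{ij}$ ($1\le i<j\le 4$) over $\ZZ$, which then specializes to any field (including $\charact\KK=2$, since all the coefficients appearing — the $\tfrac12\Tr$ term rewritten as $-\sum x_{ij}y_{ij}$, and the Pfaffians — are integer polynomials in the entries). So it suffices to verify the identity over $\QQ$, or even over a field where $X$ is invertible, and then invoke Zariski density / the fact that a polynomial identity with integer coefficients that holds on a dense set holds identically.

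First I would reduce to the generic invertible case. Work over the function field $\KK=\QQ(x_{ij},y_{ij})$; then $X$ is invertible with $\Pf(X)\ne 0$. By~\Ref{lmm}{invskew} we have the explicit formula for $X^{-1}$, and the key observation is that $X^{-1}$ is, up to the scalar $\Pf(X)^{-1}$, \emph{again an antisymmetric matrix} — indeed one checks directly from that formula that $\Pf(X)\,X^{-1}$ is the antisymmetric matrix with $(1,2)$-entry $-x_{34}$, $(1,3)$-entry $x_{24}$, $(1,4)$-entry $-x_{23}$, $(2,3)$-entry $x_{14}$, $(2,4)$-entry $-x_{13}$, $(3,4)$-entry $x_{12}$. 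Call this matrix $X^{\sharp}$, so $X\cdot X^{\sharp}=X^{\sharp}\cdot X=\Pf(X)\,1_4$. Then for the product $M:=X\cdot Y$ one computes
\begin{equation*}
M^{-1}=Y^{-1}X^{-1}=\Pf(X)^{-1}\Pf(Y)^{-1}\,Y^{\sharp}X^{\sharp},
\end{equation*}
and the ordinary Cayley–Hamilton theorem for the $4\times4$ matrix $M$ reads $M^4-c_1M^3+c_2M^2-c_3M+c_4 1_4=0$ with $c_4=\det M=\det X\det Y=\Pf(X)^2\Pf(Y)^2$. The point is that the desired relation is the \emph{quadratic} minimal-polynomial-type relation, so I must show that the characteristic polynomial of $M$ is the square of $t^2-\tfrac12\Tr(M)\,t+\Pf(X)\Pf(Y)$, equivalently that $c_1^2=4c_2-2\big(\tfrac12\Tr M\big)^2$ type relations hold; concretely, that
\begin{equation*}
c_1=\Tr M,\qquad c_2=\tfrac14(\Tr M)^2+2\Pf(X)\Pf(Y),\qquad c_3=\Pf(X)\Pf(Y)\,\Tr M,\qquad c_4=\Pf(X)^2\Pf(Y)^2.
\end{equation*}

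The cleanest route is to prove directly that $N:=M^2-\tfrac12\Tr(M)\,M+\Pf(X)\Pf(Y)\,1_4$ is the zero matrix by exhibiting $N$ as a product that manifestly vanishes. Using $X^{\sharp}X=\Pf(X)1_4$ one has $\Pf(X)\,M^{-1}=\Pf(Y)^{-1}Y^{\sharp}X^{\sharp}X Y^{-1}\cdot(\dots)$ — more usefully, left-multiply $N$ by $X^{\sharp}$ and right-multiply by $X$: since $X^{\sharp}MX=X^{\sharp}XYX=\Pf(X)\,YX$ and $X^{\sharp}M^2X=\Pf(X)\,YXYX=\Pf(X)(YX)^2$, we get $X^{\sharp}NX=\Pf(X)\big[(YX)^2-\tfrac12\Tr(M)(YX)+\Pf(X)\Pf(Y)1_4\big]$, and $\Tr(YX)=\Tr(XY)=\Tr M$, so $N=0$ for $XY$ iff $N=0$ for $YX$; this symmetry lets one assume whichever of $X,Y$ is more convenient is in a normal form. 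I would then put $X$ in the standard symplectic normal form $X=\mathrm{diag}\big(\begin{smallmatrix}0&a\\-a&0\end{smallmatrix},\begin{smallmatrix}0&b\\-b&0\end{smallmatrix}\big)$ with $\Pf(X)=ab$ (legitimate over the generic field after a change of basis, which preserves antisymmetry, Pfaffians, traces of products, and the vanishing of $N$), and verify the $2$-parameter-block identity by a short explicit $4\times4$ computation with general $Y$. The main obstacle is purely bookkeeping: carrying out that final explicit block computation and, separately, confirming that all intermediate coefficients are integer polynomials so that the $\charact 2$ case (where $\tfrac12\Tr(XY)$ must be read as $-\sum_{i<j}x_{ij}y_{ij}$) is covered by specialization rather than division by $2$; once the identity is established over $\ZZ[x_{ij},y_{ij}]$ it holds over every commutative ring, in particular every field, which is exactly what the later arguments need.
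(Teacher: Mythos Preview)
Your approach is correct and self-contained, but it differs from the paper's. The paper does not compute directly: it observes that the quadratic polynomial in question is exactly $p(\lambda):=\Pf(X)\cdot\Pf(\lambda X^{-1}-Y)$ (here $\lambda X^{-1}-Y$ is antisymmetric by~\Ref{lmm}{invskew}, so its Pfaffian makes sense; expanding via that lemma one checks the coefficients are $1$, $-\tfrac12\Tr(XY)$, $\Pf(X)\Pf(Y)$), and then invokes a result of Djokovi\'c asserting $p(XY)=0$. Note that $p(\lambda)^2=\det(\lambda\cdot 1_4-XY)$, so this is a Pfaffian square root of Cayley--Hamilton; your observation that the characteristic polynomial of $XY$ should be a perfect square is exactly this fact.

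Your route---reduce to a polynomial identity over $\ZZ$, normalize $X$, and verify by a block computation with general $Y$---works and has the advantage of needing no external reference. Two small comments. First, under the change of basis you actually need (namely $X\mapsto P^t XP$ and $Y\mapsto P^{-1}Y(P^{-1})^t$, so that $XY\mapsto P^t(XY)(P^t)^{-1}$ is a conjugate), it is only the \emph{product} $\Pf(X)\Pf(Y)$ that is preserved, not the individual Pfaffians; this is all you use, but the phrasing ``preserves \ldots\ Pfaffians'' should be adjusted. Second, the detour through the coefficients $c_i$ and the $XY\leftrightarrow YX$ symmetry is not needed: once $X$ is in block form the direct check that $N=0$ for general antisymmetric $Y$ is a routine computation, and that alone finishes the argument.
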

\begin{proof}
This is the content of the main result of~\cite{djokovic},  in the case of $4\times 4$  matrices. In fact, let  $p\in\ZZ[x_{ij},y_{kh}][\lambda]$ be 
equal to $\Pf(X)\cdot\Pf(\lambda X^{-1}-Y)$. In~\cite{djokovic}  it is proved that $p(X\cdot Y)=0$ (we replace $\lambda$ by $X\cdot Y$). Expanding   $\Pf(X)\cdot\Pf(\lambda X^{-1}-Y)$ (\Ref{lmm}{invskew} will be handy), one gets the lemma.
\end{proof}
\begin{proof}[Proof of~\Ref{thm}{sorpresa}.]
By the Theorem on elementary divisors, there exists a basis $\cB=\{v_1,\ldots,v_4\}$ of $V$ (of volume $1$) such that 
\begin{equation}\label{accanorm}
h=h_0+s\zeta^{\vee},\qquad h_0=c(e v^{\vee}_1\wedge v^{\vee}_2+ v^{\vee}_3\wedge v^{\vee}_4),\quad c,e\in\NN_{+},\ s\in\ZZ.
\end{equation}
 Let  $g\colon V_{\CC}\to V^{\vee}_{\CC}$ be the antisymmetric map such that $\omega_g=h_0$. Notice that  $g(V_{\QQ})=V_{\QQ}^{\vee}$.  For $N,b\in\QQ$, let
\begin{equation}\label{mappapsi}
\begin{matrix}
 V_{\CC}\oplus V_{\CC}^{\vee} & \overset{\Psi}{\lra} & V_{\CC} \oplus V_{\CC}^{\vee} \\
(v,\ell) & \mapsto & ( g^{-1}(\ell)-b v,b \ell -N g(v))
\end{matrix}
\end{equation}
Then $\Psi(V_{\QQ}\oplus V_{\QQ}^{\vee})=V_{\QQ}\oplus V_{\QQ}^{\vee}$, and
\begin{equation}
\Psi\circ\Psi=-(N-b^2)\Id_{V_{\CC}\oplus V_{\CC}^{\vee}}. 
\end{equation}
The map $\Psi$ induces an endomorphism of $J_{[\sigma]}(\vartheta)$  if
\begin{equation}\label{danilo}
\Psi(H^{1,0}_{[\sigma]})  \subset H^{1,0}_{[\sigma]}.
\end{equation}

If $[\sigma]\in\zeta^{\bot}$, then~\eqref{danilo} holds for any choice of $N,b$. 
In fact, by~\Ref{prp}{grafanti}, there exists a two dimensional subspace $U\subset V_{\CC}$ such that $H^{1,0}_{[\sigma]}(\vartheta)=U\oplus U^{\bot}$, and   $[\sigma]=\bigwedge^2 U$. Since $\la, h_0,\sigma\wedge \ra=0$ (because $[\sigma]\in\zeta^{\bot}$), and $\omega_g=h_0$, 
 the subspace $U$ is isotropic for the symplectic form $\omega_{g}$; it follows that $g(U)=U^{\bot}$. This shows that 
 $\Psi(u,0)\subset H^{1,0}_{[\sigma]}(\vartheta)$ for all $u\in U$. Similarly, one checks tht $\Psi(0,\ell)\subset H^{1,0}_{[\sigma]}(\vartheta)$ for all $\ell\in U^{\bot}$. This proves that~\eqref{danilo} holds if $[\sigma]\in\zeta^{\bot}$ and  $N,b$ are arbitrary.

Now let us assume  that $\sigma\not\in\zeta^{\bot}$. By~\Ref{thm}{grafanti}, we may 
assume that
\begin{equation}\label{periodo}
 \sigma=\vartheta_2\iota(\omega_f)-2\vartheta_3\zeta,\quad H^{1,0}_{[\sigma]}(\vartheta)  =  \{(v,f(v)) \mid v\in V_{\CC}\},
\end{equation}
where $f\colon V_{\CC}\to V_{\CC}^{\vee}$ is an invertible antisymmetric map, and $\omega_f\in\bigwedge^2 V^{\vee}$ is the symplectic form associated to $f$. In particular~\eqref{danilo} holds if and only if
\begin{equation}\label{quadrenne}
g^{-1}\circ f\circ g^{-1}\circ f-2b(g^{-1}\circ f)+N\Id_{V_{\CC}}=0.
\end{equation}
There exist $N,b\in \QQ$ such that~\eqref{quadrenne} holds because of~\Ref{lmm}{dragomir}. 
In fact, let $X,Y$ be the matrices of $g$ and $f$  respectively (with respect to the bases $\cB,\cB^{\vee}$). Because of the equality $\la h,\vartheta_2\iota(\omega_f)-2\vartheta_3\zeta\ra=0$, we have
\begin{equation}\label{traraz}
\Tr(X^{-1}\cdot Y)= 2c^{-1}e^{-1}(y_{12}+e y_{34})=2c^{-2}e^{-1}\la \iota(h_0,\iota(\omega_f)\ra=4c^{-2}e^{-1}s\frac{\vartheta_3}{\vartheta_2}.
\end{equation}
Equation~\eqref{traraz} is the key point: the trace on the left hand side is rational because $[\sigma]\in\cD_h$. Next, we have
\begin{equation*}
\Pf(X^{-1})= c^{-2} e^{-1},\quad \Pf(Y)=\frac{\vartheta_2}{\vartheta_1}.
\end{equation*}
 By~\Ref{lmm}{dragomir} it follows that~\eqref{quadrenne}  holds if we set
\begin{equation}\label{ennebi}
N:=c^{-2} e^{-1}\frac{\vartheta_1}{\vartheta_2},\quad b:=c^{-2} e^{-1}s\frac{\vartheta_3}{\vartheta_2}.
\end{equation}
We have proved that if $N,b$ are as above, then $\Psi$ induces an endomorphism 
\begin{equation}
\varphi\colon J_{[\sigma]}(\vartheta)\to J_{[\sigma]}(\vartheta)
\end{equation}
 such that $\varphi\circ\varphi=-(N-b^2)\Id$. Using~\eqref{seconda}, we get that
\begin{equation}
N-b^2=c^{-4} e^{-2}\frac{\vartheta^2_3}{\vartheta^2_2}(2mc^2e-s^2)=c^{-4} e^{-2}\frac{\vartheta^2_3}{\vartheta^2_2}m(h,h)^{\vee}.
\end{equation}
Thus $\varphi$ defines an embedding $\QQ[\sqrt{-m(h,h)}]\subset\End(J_{[\sigma]}(\vartheta))_{\QQ}$. 
 In order to prove~\eqref{eccocampo}, it remains to show that 
\begin{equation}\label{polaretto}
\la\Psi(\alpha),\Psi(\beta)\ra_{\vartheta,h}  = (N-b^2) \la \alpha,\beta\ra_{\vartheta,h}.
\end{equation}
Let $N,b$ be as in~\eqref{ennebi}, and let
\begin{equation*}
\lambda_{1}:=b+i\sqrt{N-b^2},\quad \lambda_{2}:=b-i\sqrt{N-b^2}.
\end{equation*}
Let   $E_{\pm i\sqrt{N-b^2}}\subset V_{\CC}\oplus V^{\vee}_{\CC}$ be the  $\Psi$-eigenspace  with eigenvalue $\pm i\sqrt{N-b^2}$. An easy computation gives that
\begin{equation}\label{camposcuola}
E_{i\sqrt{N-b^2}}  =  \left\{(v,\lambda_1 g(v)) \mid v\in V_{\CC}\right\}, \quad
E_{-i\sqrt{N-b^2}}  =  \left\{(v,\lambda_2  g(v))  \mid v\in V_{\CC}\right\}.
\end{equation}
We claim that
\begin{equation}\label{isotropo}
\la\,\,, \,\ra_{\vartheta,h}|_{E_{\pm i\sqrt{N-b^2}}}=0, 
\end{equation}
and that for $v,w\in V_{\CC}$, we have
\begin{equation}\label{fattore}
\la (v,\lambda_{1} g(v)),(w,\lambda_{2} g(w)\ra_{\vartheta,h} = \vartheta_1 c^{-2} e^{-1} (h,h)^{\vee} \omega_g(v,w).
\end{equation}
In order to prove~\eqref{isotropo}, let $j,k\in\{1,2\}$, and let $v,w\in V_{\CC}$. Then (keep in mind that $\omega_g=h_0$)
\begin{multline}\label{assaggini}
\la (v,\lambda_{j} g(v)),(w,\lambda_{k} g(w))\ra_{\vartheta,h} = \vartheta_1\la  \omega_g, v\wedge w\ra+\\
+\vartheta_2 \lambda_j \lambda_k \la \omega_g, \iota(g(v)\wedge g(w)\ra-s\vartheta_3(\lambda_j+\lambda_k) \omega_g(v,w).
\end{multline}
We have $\la  \omega_g, v\wedge w\ra=\omega_g(v,w)$, and a simple argument shows that
\begin{equation*}
 \la \omega_g, \iota(g(v)\wedge g(w))\ra=\Pf(g)\omega_g(v,w)=c^2 e \omega_g(v,w).
\end{equation*}
Thus~\eqref{assaggini} reads
\begin{equation}\label{assaggi}
\la (v,\lambda_{j} g(v)),(w,\lambda_{k} g(w))\ra_{\vartheta,h} = \omega_g(v,w)(\vartheta_2 c^2 e \lambda_j \lambda_k  
-s\vartheta_3(\lambda_j+\lambda_k)+\vartheta_1).
\end{equation}
A minimal polynomial of $\lambda_j$ is equal to $x^2-2bx+N$. By~\eqref{ennebi} we get that Equations~\eqref{isotropo} and~\eqref{fattore} follow from~\eqref{assaggi}. Equation~\eqref{polaretto} follows at once from~\eqref{isotropo} and~\eqref{fattore}.

We must also prove that the $\pm i\sqrt{N-b^2}$-eigenspaces of the action of $\Psi$ on $H^{1,0}_{[\sigma]}(\vartheta)$ have dimension $2$. Since $\cD^{+}_h$ is irreducible,   the dimensions of  $\pm i \sqrt{N-b^2}$-eigenspaces  are independent of $[\sigma]\in\cD^{+}_h$. Hence we may assume that $[\sigma]\in\zeta^{\bot}$. Thus there exists a two dimensional subspace $U\subset V_{\CC}$ such that $H^{1,0}_{[\sigma]}(\vartheta)=U\oplus U^{\bot}$. The statement about eigenspaces follows at once from~\eqref{camposcuola}. This finishes the proof that~\eqref{eccocampo} holds.

Next, we prove that $\Det\Theta_{[\sigma]}(\vartheta)\equiv 1$. Let $H$ be the Hermitian form  defined by~\eqref{formahermit}. We must compute the determinant of the Gram matrix of $H$ relative to a basis  f $V_{\QQ}\oplus V^{\vee}_{\QQ}$ as vector space over $\QQ[i\sqrt{N-b^2}]$. 
Let $\cB=\{v_1,\ldots,v_4\}$ be the basis of $V$ such that~\eqref{accanorm} holds. Then $\{(v_1,0),\ldots,(v_4,0)\}$ is a basis of $V_{\QQ}\oplus V^{\vee}_{\QQ}$ as vector space over $\QQ[i\sqrt{N-b^2}]$. 
A computation gives that the Gram matrix of $H$ relative to the chosen $\QQ[i\sqrt{N-b^2}]$-basis is equal to
\begin{equation*}
\begin{pmatrix}
0 & i\vartheta_1 ce\sqrt{N-b^2} & 0 & 0 \\
 -i\vartheta_1 ce\sqrt{N-b^2} & 0 & 0 & 0 \\
0 & 0 &  0 & i\vartheta_1 c\sqrt{N-b^2}  \\
0 & 0 & -i\vartheta_1 c\sqrt{N-b^2} & 0  \\
\end{pmatrix}.
\end{equation*}
Thus $\Det H\in\Nm(\QQ[i\sqrt{N-b^2}])$.

It remains to show that, by varying  $\sigma\in\cD_h$, one gets a complete (up to isogeny) family of \lq\lq polarized\rq\rq\ tori of Weil type with fixed discrete invariants.  Let $(V\oplus V^{\vee},H^{p,q})$ be a weight $1$ Hodge structure such that $\Psi$  induces a homomorphism of $ (V_{\CC}\oplus V^{\vee}_{\CC})/ (H^{1,0}+ V\oplus V^{\vee})$, i.e.~such that $\Psi(H^{1,0})\subset H^{1,0}$, and such that the restriction of $\Psi$ to $H^{1,0}$ has eigenspaces of equal dimensions (i.e.~of dimension $2$). Then $\dim(H^{1,0}\cap
 E_{\pm i\sqrt{N-b^2}})=2$. Moreover, since $H^{1,0}$ is isotropic for $\la , \ra_{\vartheta,h}$,
 we have
 \begin{equation*}
H^{1,0}\cap E_{-i\sqrt{N-b^2}}=(H^{1,0}\cap E_{i\sqrt{N-b^2}})^{\bot}.
\end{equation*}
(Recall that  $\la , \ra_{\vartheta,h}$ gives a perfect pairing between $E_{i\sqrt{N-b^2}}$ and $E_{-i\sqrt{N-b^2}}$.) Let $\pi\colon(V_{\CC}\oplus 
V^{\vee}_{\CC})\to V_{\CC}$ be the projection. Let 
 \begin{equation*}
W_{\pm}:=\pi\left(H^{1,0}\cap E_{\pm i\sqrt{N-b^2}}\right).
\end{equation*}
Then $W_{\pm}$ are two dimensional subspaces of $V_{\CC}$, and by~\eqref{fattore} they are orthogonal for the symplectic form $\omega_g$.  Thus either $W_{+}\cap W_{-}=\{0\}$, or $W_{+}=W_{-}$. In the former case $H^{1,0}$ is the graph of a non degenerete skew-symmetric map $f\colon V_{\CC}\to V^{\vee}_{\CC}$ such that $\Pf(f)=\vartheta_1/\vartheta_2$, in the latter case $H^{1,0}=U\oplus U^{\bot}$ for a $2$ dimensional subspace (equal to $W_{+}=W_{-}$) of $V_{\CC}$. Hence in both cases $H^{1,0}=H^{1,0}_{[\sigma]}(\vartheta)$ for some $[\sigma]\in\cD$. Since $H^{1,0}$ is isotropic for $\la,\ra_{\vartheta,h}$, we have 
$[\sigma]\in\cD_{h}$, and actually $[\sigma]\in\cD^{+}_{h}$ by the ampleness of $\Theta_{[\sigma]}(\vartheta)$. 
\end{proof}
\subsection{Proof of the third main result}
\setcounter{equation}{0}
We prove~\Ref{thm}{terzoteor}. The isogeny between $\KS(X,L)$ and $J^3(X)$ follows from Item~(1) of~\Ref{thm}{primoteor} and  results of F.~Charles~\cite{charles-univ} and van Geemen, Voisin~\cite{voisin-bert}. 

Next, we prove the other statements of the theorem. 
There exists an isomorphism
\begin{equation*}
\varphi\colon J_{[\sigma(X)]}(\vartheta)\overset{\sim}{\lra} J^3(X),
\end{equation*}
where $[\sigma(X)]=H^{2,0}(X)$ is the period point of $X$, and $\vartheta=\vartheta(\ov{q}_X^{n-2})$. If $n=2$, then $\varphi^{*}\Theta_L\equiv \Theta_{[\sigma(X)]}(\vartheta)$, and hence~\Ref{thm}{terzoteor} follows from~\Ref{thm}{sorpresa}. Now let $n>2$. Since  the definitions of $\varphi^{*}\Theta_L$ and $\Theta_{[\sigma(X)]}(\vartheta)$ are different (see~\Ref{expl}{posteta}), we argue as follows. For a very generic $X$ the N\'eron-Severi groups of  $ J_{[\sigma(X)]}(\vartheta)$ and of $J^3(X)$ have rank $1$, and hence 
there exists $c\in\QQ_{+}$ such that $\varphi^{*}\Theta_L\equiv c\Theta_{[\sigma(X)]}(\vartheta)$. Thus~\Ref{thm}{terzoteor} follows from~\Ref{thm}{sorpresa}.  
\begin{rmk}
The proof of~\Ref{thm}{terzoteor} provides the following non trivial statement. Let $X$ be a HK of Kummer type, of dimension $2n$. Let $\gamma\in H^2(X;\QQ)$ be a class of positive square. Then there exists $c_{\gamma}\in\QQ^{*}$ such that for all $\alpha,\beta\in H^3(X;\QQ)$
\begin{equation}
\int_X\alpha\smile\beta\smile \gamma^{2n-3}=c_{\gamma}\int_X \alpha\smile\beta\smile \gamma\smile (q_X^{\vee})^{n-2}.
\end{equation}
\end{rmk}
\subsection{An example}
\setcounter{equation}{0}
We work out one example in order to emphasize that our procedure is very explicit. We assume that $(X,L)$ is a polarized HK fourfold of Kummer type, 
and that $q_K(L)=2$ and $c_1(L)$ has divisibility $1$. By~\Ref{thm}{terzoteor} there exists an injection 
$\QQ[\sqrt{-3}]\subset\End(J^3(X),\Theta_L)_{\QQ}$. Since $\Det\Theta_L\equiv 1$, it follows that $J^3(X)$ is isogenous to the Prym variety of an \'etale cyclic triple cover $\wt{C}\to C$, where $C$ is a curve of genus $3$ (possibly a stable curve), i.e.~examples considered by Schoen~\cite{schoen-hc}, see also Section 7 in~\cite{berthodge}. We recall that these  abelian fourfolds are of Weil type, with an endomorphism which is a (non trivial) 
cube root of $\Id$, and the determinant of the Weil polarization is $1$.

By~\Ref{rmk}{menosei}, we may identify $(J^3(X),L)$ with $(J_{[\sigma]}(\vartheta),\Theta_{[\sigma]}(\vartheta))$, where $[\sigma]$ is the period point of $X$, $\vartheta=(-1,-3,-3)$ and $h=v^{\vee}_1\wedge v^{\vee}_2+ v^{\vee}_3\wedge v^{\vee}_4$ (we adopt the notation introduced in the proof of~\Ref{thm}{sorpresa}). Thus (in the notation introduced in the proof of~\Ref{thm}{sorpresa}) $N=1/3$ and $b=0$. It follows that 
\begin{equation*}
\begin{matrix}
 V_{\CC}\oplus V_{\CC}^{\vee} & \overset{\Psi_0}{\lra} & V_{\CC} \oplus V_{\CC}^{\vee} \\
(v,\ell) & \mapsto & ( 3 g^{-1}(\ell), - g(v))
\end{matrix}
\end{equation*}
defines an endomorphism of $(J^3(X),L)$ such that $\Psi_0\circ \Psi_0=-3\Id$. The endomorphism of $(V_{\CC}\oplus V_{\CC}^{\vee})$ defined by the cube root of $\Id$ given by
 $\Omega:=-(\Id +\Psi_0)/2$ does not map $V\oplus V^{\vee}$ to itself, and hence does not descend to an endomorphism of   $(J^3(X),L)$. 
On the other hand $\Omega$ does descend to an endomorphism of 
\begin{equation*}
Y:=J_{[\sigma]}(\vartheta)/ \{\overline{(v/2,g(v)/2)} \mid v\in V\}.
\end{equation*}
Now $Y$  is one of the abelian fourfolds of Weil type considered by Schoen, and the quotient map $J_{[\sigma]}(\vartheta)\to Y$ has a kernel isomorphic to $\FF_2^4$.

%%%%%%%%%%%%%%%%%%%%%%%%%%%%%%%%%%%%%%%
 \bibliography{ref-int-jac-of-kumm-type}

\providecommand{\bysame}{\leavevmode\hbox to3em{\hrulefill}\thinspace}
\providecommand{\MR}{\relax\ifhmode\unskip\space\fi MR }
% \MRhref is called by the amsart/book/proc definition of \MR.
\providecommand{\MRhref}[2]{%
  \href{http://www.ams.org/mathscinet-getitem?mr=#1}{#2}
}
\providecommand{\href}[2]{#2}
\begin{thebibliography}{vGV15}

\bibitem[Bea85]{beaucy}
Arnaud Beauville, \emph{Vari{\'e}t{\'e}s k{\"a}hl{\'e}riennes compactes avec
  {$c_1=0$}}, Ast{\'e}risque (1985), no.~126, 181--192, Geometry of $K3$
  surfaces: moduli and periods (Palaiseau, 1981/1982). \MR{785234}

\bibitem[Bog96]{bog-cohom}
Fedor Bogomolov, \emph{On the cohomology ring of a simple hyper-{K}{\"a}hler
  manifold (on the results of {V}erbitsky)}, Geom. Funct. Anal. \textbf{6}
  (1996), no.~4, 612--618. \MR{1406665}

\bibitem[Cha]{charles-univ}
Francois Charles, \emph{The kuga-satake construction is universal, in some
  sense}, unpublished.

\bibitem[Del72]{deligne-k3}
Pierre Deligne, \emph{La conjecture de {W}eil pour les surfaces {$K3$}},
  Invent. Math. \textbf{15} (1972), 206--226. \MR{0296076}

\bibitem[Djo91]{djokovic}
Dragomir~Z. Djokovi{\'c}, \emph{On the product of two alternating matrices},
  Amer. Math. Monthly \textbf{98} (1991), no.~10, 935--936. \MR{1137542}

\bibitem[FH91]{fulhar}
William Fulton and Joe Harris, \emph{Representation theory}, Graduate Texts in
  Mathematics, vol. 129, Springer-Verlag, New York, 1991, A first course,
  Readings in Mathematics. \MR{1153249}

\bibitem[Fuj87]{fujiki}
Akira Fujiki, \emph{On the de {R}ham cohomology group of a compact {K}{\"a}hler
  symplectic manifold}, Algebraic geometry, {S}endai, 1985, Adv. Stud. Pure
  Math., vol.~10, North-Holland, Amsterdam, 1987, pp.~105--165. \MR{946237}

\bibitem[G{\"o}t94]{gothesis}
Lothar G{\"o}ttsche, \emph{Hilbert schemes of zero-dimensional subschemes of
  smooth varieties}, Lecture Notes in Mathematics, vol. 1572, Springer-Verlag,
  Berlin, 1994. \MR{1312161}

\bibitem[KM16]{cogekum}
Simon Kapfer and Gr{\'e}goire Menet, \emph{Integral cohomology of the
  generalized kummer fourfold}, arXiv:1607.03431[math.AG] (2016).

\bibitem[KS67]{kuga}
Michio Kuga and Ichir\^o Satake, \emph{Abelian varieties attached to polarized
  {$K_{3}$}-surfaces}, Math. Ann. \textbf{169} (1967), 239--242. \MR{0210717}

\bibitem[Lom01]{lombardo-ks}
Giuseppe Lombardo, \emph{Abelian varieties of {W}eil type and {K}uga-{S}atake
  varieties}, Tohoku Math. J. (2) \textbf{53} (2001), no.~3, 453--466.
  \MR{1844377}

\bibitem[LS03]{mancrist}
Manfred Lehn and Christoph Sorger, \emph{The cup product of {H}ilbert schemes
  for {$K3$} surfaces}, Invent. Math. \textbf{152} (2003), no.~2, 305--329.
  \MR{1974889}

\bibitem[Mar18]{mar-on-weil}
E.~Markman, \emph{The monodromy of generalized kummer varieties and algebraic
  cycles on their intermediate jacobians}, arXiv:1805.11574v1 [math.AG] (2018).

\bibitem[MM17]{marmer}
Eyal Markman and Sukhendu Mehrotra, \emph{Hilbert schemes of {$K3$} surfaces
  are dense in moduli}, Math. Nachr. \textbf{290} (2017), no.~5-6, 876--884.
  \MR{3636385}

\bibitem[Mon16]{mongardimon}
Giovanni Mongardi, \emph{On the monodromy of irreducible symplectic manifolds},
  Algebr. Geom. \textbf{3} (2016), no.~3, 385--391. \MR{3504537}

\bibitem[Mor85]{morrison-ks}
David~R. Morrison, \emph{The {K}uga-{S}atake variety of an abelian surface}, J.
  Algebra \textbf{92} (1985), no.~2, 454--476. \MR{778462}

\bibitem[O'G17]{kieran-obw-2017}
Kieran O'Grady, \emph{Abelian varieties associated to hyperk\"ahler varieties
  of kummer type}, Doi: 10.4171/owr/2017/45, Report N. 45/2017, M. F.
  Oberwolfach, 2017.

\bibitem[Par88]{paranjape}
Kapil Paranjape, \emph{Abelian varieties associated to certain {$K3$}
  surfaces}, Compositio Math. \textbf{68} (1988), no.~1, 11--22. \MR{962501}

\bibitem[Sch88]{schoen-hc}
Chad Schoen, \emph{Hodge classes on self-products of a variety with an
  automorphism}, Compositio Math. \textbf{65} (1988), no.~1, 3--32. \MR{930145}

\bibitem[Sch98]{schoen-hc-add}
\bysame, \emph{Addendum to: ``{H}odge classes on self-products of a variety
  with an automorphism'' [{C}ompositio {M}ath. {\bf 65} (1988), no. 1, 3--32;
  {MR}0930145 (89c:14013)]}, Compositio Math. \textbf{114} (1998), no.~3,
  329--336. \MR{1665776}

\bibitem[Tot16]{tothilb}
Burt Totaro, \emph{The integral cohomology of the {H}ilbert scheme of two
  points}, Forum Math. Sigma \textbf{4} (2016), e8, 20. \MR{3490758}

\bibitem[Ver96]{verb-cohom}
Misha Verbitsky, \emph{Cohomology of compact hyper-{K}{\"a}hler manifolds and
  its applications}, Geom. Funct. Anal. \textbf{6} (1996), no.~4, 601--611.
  \MR{1406664}

\bibitem[vG94]{berthodge}
Bert van Geemen, \emph{An introduction to the {H}odge conjecture for abelian
  varieties}, Algebraic cycles and {H}odge theory ({T}orino, 1993), Lecture
  Notes in Math., vol. 1594, Springer, Berlin, 1994, pp.~233--252. \MR{1335243}

\bibitem[vG00]{bert-ks}
\bysame, \emph{Kuga-{S}atake varieties and the {H}odge conjecture}, The
  arithmetic and geometry of algebraic cycles ({B}anff, {AB}, 1998), NATO Sci.
  Ser. C Math. Phys. Sci., vol. 548, Kluwer Acad. Publ., Dordrecht, 2000,
  pp.~51--82. \MR{1744943}

\bibitem[vGV15]{voisin-bert}
Bert van Geemen and Claire Voisin, \emph{On a conjecture of matsushita},
  arXiv:1503.07323v1 (2015).

\bibitem[Voi02]{voisin-hc-nonproj}
Claire Voisin, \emph{A counterexample to the {H}odge conjecture extended to
  {K}{\"a}hler varieties}, Int. Math. Res. Not. (2002), no.~20, 1057--1075.
  \MR{1902630}

\bibitem[Wei80]{weil-on-hc}
Andr{\'e} Weil, \emph{Abelian varieties and the hodge ring in: Collected
  papers}, vol. III, pp.~421--429, Springer-Verlag, 1980.

\end{thebibliography}
 \end{document}